\newtheorem{Thm}{Theorem}[section]
\newtheorem{Lem}[Thm]{Lemma}
\newtheorem{Cor}[Thm]{Corollary}
\newtheorem{Prop}[Thm]{Proposition}
\newtheorem{example}[Thm]{Example}
\newtheorem{remark}[Thm]{Remark}
\newtheorem{Def}[Thm]{Definition}
\newcommand{\A}{\mathbb{A}}
\newcommand{\Z}{\mathbb{Z}}
\newcommand{\Pro}{\mathbb{P}}
\newcommand{\N}{\mathbb{N}}
\newcommand{\C}{\mathbb{C}}
\newcommand{\R}{\mathcal{R}}
\newcommand{\Hom}{\operatorname{Hom}}
\newcommand{\Lam}{\Lambda}
\newcommand{\bsm}{\begin{smallmatrix}}
\newcommand{\esm}{\end{smallmatrix}}
\def\cqfd{\hfill $\Box$ \medskip}
\def\CC{{\mathcal C}}
\def\resp{{\em resp.\ }}
\def\<{\langle\,}
\def\>{\,\rangle}
\def\P{{\mathbb P}}
\def\B{{\mathcal B}}
\def\g{\mathfrak g}
\def\<{\langle}
\def\>{\rangle}
\def\lra{\longrightarrow}
\def\ra{\rightarrow}
\def\1{\mathbf 1}
\def\ii{{\mathbf i}}
\def\P{{\mathcal P}}
\def\a{\alpha}
\def\mod{{\rm mod}\,}
\def\Hom{{\rm Hom}}
\def\De{\Delta}
\def\la{\lambda}
\def\AA{\mathcal{A}}
\def\O{\mathcal{O}}
\def\1{{\mathbf 1}}
\def\hg{\widehat{\mathfrak{g}}}
\def\G{\Gamma}
\def\bb{\mathbf b}
\def\bc{\mathbf c}
\def\bi{\mathbf{i}}
\def\ba{\mathbf{a}}
\def\Si{\Sigma}
\def\mod{\mathrm{mod}}
\def\ldeg{\mathrm{ldeg}}
\def\bQ{\underline{Q}}
\def\Spec{\mathrm{Spec}}
\def\Schk{\mathrm{Sch/_K}}
\def\Kalg{\mathrm{K \hspace{-0.1cm} - \hspace{-0.1cm}Alg}}
\def\Set{\mathrm{Set}}
\newcommand{\NC}{\mathcal{N}}
\newcommand{\oc}{\overline{ c}}
\newcommand{\trdeg}{\mathrm{trdeg}}
\newcommand{\red}{\color{red}}
\newcommand{\blue}{\color{blue}}
\newcommand{\green}{\color{green}}
\newcommand{\BB}{\mathcal{B}}
\newcommand{\VV}{\mathcal{V}}
\newcommand{\WW}{\mathcal{W}}
\newcommand{\wt}{\widetilde}
\newcommand{\Ucc}{U(c^{-1})\overline{c}}
\def\GG{\mathcal{G}}
\begin{document}
%%%%%%%%%%%%%%%%

\title{\bf Cluster structures on schemes of bands}
\author{L. Francone and B. Leclerc}

\date{}

%%%%%%%%%%
\maketitle
%%%%%%%%%%

\begin{abstract}
We introduce new geometric objects, called \emph{$(G,c)$-bands}, associated with a simple simply-con\-nec\-ted algebraic group $G$, and a Coxeter element $c$ in its Weyl group. We show that bands of a given type are the rational points of an infinite dimensional affine scheme, whose ring of regular functions has a cluster algebra structure. We also show that two important invariant subalgebras of this ring are cluster subalgebras. These three cluster structures have already appeared as Grothendieck rings of certain categories of representations of quantum affine algebras, their Borel subalgebras, and shifted quantum affine algebras. Schemes of bands provide a common geometric setting in which these Grothendieck rings can be studied and related to each other. 
\end{abstract}

\setcounter{tocdepth}{3}
{\small \tableofcontents}

\section{Introduction}

Cluster algebras have become ubiquitous combinatorial structures. They have in particular deep connections with algebraic groups and representation theory. A prototypical example is the cluster algebra structure on the ring 
$\C[U]$ of regular functions on a maximal unipotent subgroup $U$ of a simple complex algebraic group $G$ of type 
$A$, $D$, $E$. It was conjectured by Fomin and Zelevinsky and proved by Kang, Kashiwara, Kim and Oh \cite{KKKO} and Qin \cite{Q} that the set of cluster monomials of $\C[U]$ is contained in Lusztig's dual canonical basis. 
On the other hand it was shown in \cite{HL0} that $\C[U]$ is isomorphic to the complexified Grothendieck ring 
$\C\otimes K_0(\CC_Q)$ 
of a monoidal subcategory $\CC_Q$ of the category of finite-dimensional representations of a quantum affine algebra of the same Dynkin type as $G$, and that this isomorphism maps the dual canonical basis of $\C[U]$ to the basis of classes of simple modules of $\CC_Q$.

\smallskip
In \cite{GHL} cluster structures on the Grothendieck rings of certain categories $\O^{\rm shift}_\Z$ of 
representations of \emph{shifted} quantum affine algebras of type $A$, $D$, $E$ were discovered, and it was conjectured that cluster monomials are classes of simple objects of $\O^{\rm shift}_\Z$. These cluster algebras $\AA$ are of infinite rank, that is, each cluster consists of infinitely many cluster variables. It was observed in \cite{GHL} that the algebras $\C\otimes \AA$ contain infinitely many copies of the ring $\C[G]$ of regular functions on the simple and simply-connected algebraic group $G$ of the same type $A$, $D$, $E$.
The following question then arises : is there a natural algebro-geometric object whose ring of regular functions is isomorphic to $\C\otimes \AA$ ? In this paper we construct a family of such objects, parametrized by Coxeter elements $c$ of the Weyl group $W$ of $G$, which we call spaces of $(G,c)$-bands. Since $\AA$ has infinite rank these spaces have to be infinite-dimensional.

\smallskip
The definition of a $(G,c)$-band was motivated by \cite[\S10]{GHL}, which was in turn inspired by the  theory of $q$-opers and the notion of $q$-Wronskian developed in a series of papers by Frenkel, Koroteev, Sage and Zeitlin
\cite{KSZ,FKSZ,KZ}. 

Let $B$ be a Borel subgroup of $G$ containing the maximal torus $T$, and let $U$ be its unipotent radical.
Let $B^-$ be the opposite Borel subgroup of $B$ with respect to $T$, and $U^-$ its unipotent radical.
Let $U(c^{-1}) := U \cap (c\,U^-c^{-1})$, and let $\bar{c}$ denote a representative of $c$ in the normalizer of~$T$. 

\begin{Def}\label{def-band}
A $(G,c)$-band over $\C$ is a sequence $b=(g(s))_{s\in \Z}$ of elements $g(s)\in G$ such that 
\[
 g(s)g(s+1)^{-1} \in U(c^{-1})\,\bar{c},\qquad (s\in\Z).
\]
\end{Def}

To give a simple and concrete example, which will be developed in Section~\ref{sec:SL(n) cst}, if $G = SL(n)$ and $c  = c_{st} = s_1s_2\cdots s_{n-1}$ is the standard Coxeter element, then an $(SL(n),c_{st})$-band can be represented by
an infinite matrix $\mathrm{B} \in \mathrm{Mat}_{\infty,n}(\C)$ such that every submatrix of $\mathrm{B}$ consisting of $n$ consecutive rows has 
determinant 1.

The next theorem, which will be proved in Section~\ref{sec bands}, collects geometric properties of the space
of $(G,c)$-bands.

\begin{Thm}
The $(G,c)$-bands over $\C$ are the $\C$-rational points of an infinite-dimensional affine integral scheme $B(G,c)$. 
The ring $R(G,c)$ of regular functions on $B(G,c)$ is a unique factorization domain.
\end{Thm}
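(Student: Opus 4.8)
The plan is to \emph{trivialize} the space of bands. The defining relation is a first-order recursion in $s$, so a band should be determined by a single initial value together with its sequence of consecutive ``steps''; solving the recursion will identify $B(G,c)$ with a product of $G$ and infinitely many copies of an affine space. Concretely I would describe $B(G,c)$ by its functor of points: for a commutative $\C$-algebra $A$, let $B(G,c)(A)$ be the set of sequences $(g(s))_{s\in\Z}\in G(A)^{\Z}$ with $g(s)g(s+1)^{-1}\in U(c^{-1})(A)\,\bar c$ for every $s$. Writing $g(s)g(s+1)^{-1}=v(s)\,\bar c$ with $v(s)\in U(c^{-1})(A)$, the relation $g(s+1)=\bar c^{-1}v(s)^{-1}g(s)$ reconstructs the whole band from the value $g(0)$ and the step sequence $(v(s))_{s\in\Z}$. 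Hence $b\mapsto(g(0),(v(s))_{s\in\Z})$ is an isomorphism of functors
\[
 B(G,c)\ \cong\ G\times \prod_{s\in\Z}U(c^{-1}),
\]
both directions being given by the polynomial formulas above together with inversion in $G$.

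Since the tensor product is the coproduct of commutative $\C$-algebras, the functor $A\mapsto\prod_{s}U(c^{-1})(A)$ is represented by the infinite tensor product $\bigotimes_{s\in\Z}\C[U(c^{-1})]$; hence $B(G,c)$ is represented by an affine scheme with coordinate ring
\[
 R(G,c)\ \cong\ \C[G]\otimes\textstyle\bigotimes_{s\in\Z}\C[U(c^{-1})].
\]
Because $c$ is a Coxeter element, $U(c^{-1})=U\cap cU^-c^{-1}$ is a connected unipotent group of dimension $\ell(c)=r$ (the rank of $G$), hence isomorphic as a variety to $\A^{r}$; thus $\C[U(c^{-1})]$ is a polynomial ring in $r$ variables and $R(G,c)$ is a polynomial ring over $\C[G]$ in the countably many variables $x_i^{(s)}$ $(1\le i\le r,\ s\in\Z)$. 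In particular $R(G,c)$ is a domain of infinite Krull dimension, so $B(G,c)=\Spec R(G,c)$ is an infinite-dimensional affine integral scheme.

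For the factoriality of $R(G,c)$ I would argue in two steps. First, $\C[G]$ is a UFD: as $G$ is simple and simply connected it is smooth with $\mathrm{Pic}(G)=\mathrm{Cl}(G)=0$, so $\C[G]$ is a normal domain with trivial divisor class group, and I would cite the classical theorem that such a ring is factorial. Second, adjoining the variables $x_i^{(s)}$ preserves this: any purported factorization involves only the finitely many variables actually occurring, hence takes place in some $\C[G][x_{i_1}^{(s_1)},\dots,x_{i_n}^{(s_n)}]$, which is a UFD by Gauss's lemma and induction; and an element irreducible in such a finite polynomial subring stays irreducible in $R(G,c)$, since adjoining further variables cannot factor an element of degree $0$ in them, degrees being additive in a domain. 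Hence $R(G,c)$ is a UFD.

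I expect the genuinely delicate points to be organizational rather than conceptual: making the infinite product and the infinite tensor product precise, so that ``infinite-dimensional affine scheme'' is unambiguous and the functor above is really representable, together with cleanly handling the passage from finitely to infinitely many variables in the factoriality argument. The only external input doing real work is the factoriality of $\C[G]$ for simply connected $G$; everything else reduces to the explicit trivialization $B(G,c)\cong G\times\prod_{s}U(c^{-1})$.
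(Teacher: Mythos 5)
Your proposal is correct, and its mathematical core coincides with the paper's: your reconstruction map $b \mapsto \bigl(g(0),(v(s))_{s\in\Z}\bigr)$ is (up to reindexing) the inverse of the trivialization $\tau$ of Corollary~\ref{cor: bands product}, and the endgame — $R(G,c)$ is a polynomial ring over $\C[G]$ in countably many variables, hence an integral domain and a UFD because $\C[G]$ is factorial for $G$ simple and simply connected — is exactly the paper's conclusion. The genuine difference is in how the infinite-dimensional scheme is constructed. You prove representability of the band functor directly, identifying it with the functor of $G\times\prod_{s}U(c^{-1})$ and taking the infinite tensor product as the representing algebra. The paper instead first defines the finite-band varieties $B(G,c,M,N)$ as closed subvarieties of finite products of copies of $G$, trivializes each of them via $\tau_{M,N}$ (Lemma~\ref{lem: partial bands aff space}), and then defines $B(G,c)$ as the inverse limit of this system over the directed poset $\P$, transporting the trivialization along the limit. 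For the present theorem your route is leaner: no inverse systems, no cofinality bookkeeping. What the paper's scaffolding buys is the pro-variety structure itself — the finitely generated subrings $R(G,c,M,N)$, the injections $\pi_{M,N}^*$, and the fact that $R(G,c)$ is their increasing union — which is indispensable later when the cluster-structure result (Theorem~\ref{Thm4.4}) is reduced to its finite-band version (Theorem~\ref{Thm4.10}); so the paper would have had to build it anyway. One detail to make explicit in your factoriality step: your degree argument shows that any factorization of an element of a finite polynomial subring $S=\C[G][x_{i_1}^{(s_1)},\dots,x_{i_n}^{(s_n)}]$ already takes place in $S$, which gives existence of irreducible factorizations; for uniqueness you should add that $R(G,c)^\times = S^\times = \C^\times$ (as $G$ is semisimple), so that two factorizations, both confined to $S$, agree up to units there and hence in $R(G,c)$ — or, equivalently, note that an irreducible $p\in S$ stays prime in $R(G,c)$ since $R(G,c)/(p)$ is a polynomial ring over the domain $S/(p)$.
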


By construction there are natural projection maps:
\[
\pi_s : B(G,c) \longrightarrow G,\qquad (s\in \Z).
\]
At the level of $\C$-points, the map $\pi_s$ sends a band $b=(g(t))_{t\in \Z}$ to $g(s)$. We show that
the dual maps $\pi_s^* : \C[G] \to R(G,c)$ are injective, thus we get infinitely many subalgebras 
$\pi_s^*(\C[G])$ of $R(G,c)$ isomorphic to $\C[G]$.

Recall that Fomin and Zelevinsky have defined certain regular functions $\De_{u(\varpi_i),v(\varpi_i)}$ on $G$, called generalized minors, depending on the choice of $u,v\in W$ and of a fundamental weight $\varpi_i$
\cite[\S 1.4]{FZ}. These give rise to distinguished elements $\De^{(s)}_{u(\varpi_i),v(\varpi_i)} := \pi^*_s(\De_{u(\varpi_i),v(\varpi_i)})$ of $R(G,c)$.
Let $m_i$ be the smallest integer $k$ such that $c^k(\varpi_i) = w_0(\varpi_i)$. It follows from Definition~\ref{def-band} that these regular functions satisfy relations of the form
\begin{equation}\label{eq-glueing}
 \De^{(s)}_{c^k(\varpi_i),v(\varpi_i)} = \De^{(s+1)}_{c^{k-1}(\varpi_i),v(\varpi_i)},
 \qquad (1\le k \le m_i,\ v \in W),
\end{equation}
see Proposition~\ref{lem: glueing formulas} below.

As already mentioned, in \cite{GHL} a cluster algebra $\AA$ associated with $G$ was introduced. More precisely, $\AA$ is the algebra $\AA_{w_0}$ of \cite[\S3]{GHL} attached to the longest element $w_0$ of $W$. 
Moreover, a family of initial seeds of $\AA$ whose quivers $\Gamma_c$ correspond to Coxeter elements $c$ of $W$ was described. Given a Coxeter element $c$, let $\wt{c} := w_0c^{-1}w_0$. 
Our first main result is the following.

\begin{Thm}\label{Thm2}
There exists an algebra isomorphism $\C\otimes \AA \longrightarrow R(G,c)$ such that all cluster variables of the initial seed of $\AA$ with quiver $\Gamma_{\wt{c}}$ are mapped to regular functions of the form
$
 \De^{(s)}_{c^k(\varpi_i),\,\wt{c}^l(\varpi_i)}
$
for suitable integers $k,l,s$. 
\end{Thm}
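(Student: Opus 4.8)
The plan is to define the isomorphism explicitly on the initial seed with quiver $\Gamma_{\wt{c}}$ and then bootstrap to all of $\C\otimes\AA$ using the cluster structure. First I would recall from \cite{GHL} the precise combinatorial data of the initial seed attached to $\Gamma_{\wt{c}}$: its frozen and mutable cluster variables are indexed by a set that I will put into explicit bijection with the triples $(i,k,l)$ appearing in the expressions $\De^{(s)}_{c^k(\varpi_i),\,\wt{c}^{\,l}(\varpi_i)}$, the integer $s$ being pinned down through the gluing relations \eqref{eq-glueing}. One then defines a map $\phi$ on generators sending each initial variable to the corresponding generalized minor in $R(G,c)$. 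The first substantive point is that these minors are algebraically independent in $\operatorname{Frac}(R(G,c))$: this follows from the injectivity of the maps $\pi_s^*$ together with the fact that, on a single factor $G$, the chosen minors form (a subfamily of) a Berenstein--Fomin--Zelevinsky cluster on the relevant double Bruhat cell and are therefore algebraically independent. Consequently $\phi$ extends to an embedding of fraction fields $\operatorname{Frac}(\C\otimes\AA)\hookrightarrow\operatorname{Frac}(R(G,c))$.

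Next I would verify that this field embedding carries $\C\otimes\AA$ into $R(G,c)$ and is compatible with mutation. For each mutable vertex of $\Gamma_{\wt{c}}$ the exchange relation in $\AA$ must be matched with an identity among generalized minors. The relevant tool is the Fomin--Zelevinsky three-term relation
\[
\De_{u(\varpi_i),v(\varpi_i)}\,\De_{us_i(\varpi_i),vs_i(\varpi_i)}
=\De_{us_i(\varpi_i),v(\varpi_i)}\,\De_{u(\varpi_i),vs_i(\varpi_i)}
+\prod_{j\neq i}\De_{u(\varpi_j),v(\varpi_j)}^{-a_{ji}},
\]
specialized to $u=c^k$ and $v=\wt{c}^{\,l}$; pulling it back by $\pi_s^*$ and rewriting the various factors across neighbouring $G$-components by means of \eqref{eq-glueing} should yield exactly the exchange relation of $\Gamma_{\wt{c}}$ at that vertex. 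Since every cluster variable of $\AA$ is, by the Laurent phenomenon, a Laurent polynomial in the initial seed whose numerator these identities show to lie in $R(G,c)$, the image $\phi(\C\otimes\AA)$ is contained in $R(G,c)$; being the restriction of a field embedding, $\phi$ is automatically injective.

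Finally I would prove surjectivity, which I expect to be the main obstacle. The idea is that $R(G,c)$ is generated by the subalgebras $\pi_s^*(\C[G])$, and each $\C[G]$ is generated by its matrix coefficients in the fundamental representations, hence by generalized minors; the Berenstein--Fomin--Zelevinsky cluster structure then expresses all of these as subtraction-free Laurent expressions in minors of the form $\De_{c^k(\varpi_i),\wt{c}^{\,l}(\varpi_i)}$, so that each $\pi_s^*(\C[G])$ already lies in $\phi(\C\otimes\AA)$. The delicate part is the bookkeeping that ties together the infinitely many factors and their overlapping minors: one must check that \eqref{eq-glueing} translates the cluster combinatorics on individual factors into the single infinite-rank seed $\Gamma_{\wt{c}}$ without redundancy or gaps. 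Here the unique factorization property of $R(G,c)$ is the crucial tool, allowing a standard codimension-two (starfish-type) argument to show that the cluster subalgebra $\phi(\C\otimes\AA)$ coincides with $R(G,c)$ rather than a proper subalgebra, by verifying that each initial minor is a prime element and that no regular function fails to be a Laurent polynomial in every seed. I would organize the whole argument as an exhaustion over truncated band spaces $B_N(G,c)$ of finite length, so that $R(G,c)=\colim_N \C[B_N(G,c)]$ and $\AA=\colim_N\AA_n$, reducing the finite-rank statements to the known double Bruhat cell results and then passing to the colimit.
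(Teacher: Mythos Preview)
Your overall architecture is close to the paper's: define the map on the initial seed, verify algebraic independence, reduce to finite truncations, and apply a starfish argument using the UFD property. But there are two genuine gaps.

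First, your claim that every exchange relation of $\Gamma_{\wt c}$ is an instance of the Fomin--Zelevinsky three-term identity with $u=c^k$, $v=\wt c^{\,l}$ is only correct for the red and green vertices (the finite middle part of the quiver). For the black vertices in the two semi-infinite parts, the neighbours in $\Gamma_{\wt c}$ carry minors $\De^{(s)}_{w_0(\varpi_i),\varpi_i}$ with \emph{different} values of $s$, and after using the gluing relations \eqref{eq-glueing} to push them onto a single copy of $G$, the resulting exchange relation is \emph{not} a three-term minor identity. The paper (Proposition~\ref{prop: one-step-mutations}) handles this by identifying these mutations with one-step mutations of a \emph{different} BFZ seed of $K[G^{w_0,w_0}]$ (the seeds attached to words of shape $(-i_1,\ldots,-i_N,i_1,\ldots,i_N)$ rather than the interleaved word), and in the exceptional case $G=SL(n)$, $c=c_{st}$ even this fails and a direct cubic minor identity is needed. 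Your proposal does not account for this.

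Second, your surjectivity argument assumes that the BFZ cluster structure on the double Bruhat cell already shows that $\pi_s^*(\C[G])$ lies in the cluster algebra. But BFZ only gives an upper cluster structure on $\C[G^{w_0,w_0}]$ with \emph{invertible} frozens; what is needed here is that $\C[G]$ itself (with non-invertible frozens) equals the cluster algebra, which is the recent result of Oya (Theorem~\ref{thm: Oya} in the paper). Without this, you only get the localization at the frozen minors inside the image, not all of $\pi_s^*(\C[G])$. The paper invokes Oya's theorem explicitly and combines it with the translation invariance of Proposition~\ref{prop translation} to obtain $\pi_s^*(\C[G])\subset A_{M,N}$ for all $s\in[M,N]$.
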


We refer to Section~\ref{sec: statement of main theorem} for a more precise description of this isomorphism.
For now, let us just note that all subalgebras $\pi^*_s(\C[G])$ are cluster subalgebras, whose cluster structure
coincides with the classical cluster algebra structure on $\C[G]$. 
More precisely, Berenstein, Fomin and Zelevinsky \cite{BFZ} have shown that the coordinate ring of the open double
Bruhat cell $G^{w_0,w_0}$ is an upper cluster algebra with invertible coefficients. In a recent paper of Oya \cite{O} it is proved that the same cluster algebra, but with invertibility of coefficients removed, is isomorphic to $\C[G]$ and is equal to its upper counterpart. (See also \cite[Theorem B1]{QY} for a similar result, but only for the upper cluster algebra.)
This result plays an important role in our proof
of Theorem~\ref{Thm2}. Note also that the cluster algebra $\C\otimes\AA$ 
is equal to its upper cluster algebra (Corollary \ref{cor:AU R(G,c)}).

\smallskip
Let us now point out the analogies between $(G,c)$-bands and the $(G,q)$-opers and $(G,q)$-Wronskians of \cite{KSZ, FKSZ, KZ}. Let $q\in\C^*$ be of infinite multiplicative order. Following \cite{FRS, SS},
these papers introduce the notion of a $q$-connection on a principal $G$-bundle over $\Pro^1(\C)$. A $q$-connection can be regarded, via a trivialization, as an equivalence class of rational functions $z\mapsto g(z)$ in $G(\C(z))$ for the $q$-gauge transformation relation:
\[
 g_1(z) \sim g_2(z) \mbox{ if and only if there exists $h(z) \in G(\C(z))$ such that } g_1(z) = h(qz)g_2(z)h(z)^{-1}.  
\]
Then a $(G,q)$-oper (with respect to a fixed Coxeter element $c$) is a $q$-connection having a representative $A(z)$ satisfying a condition of the form
\[
  A(z) \in U(c^{-1})(\C(z))\,\bar{c},
\]
see \cite[Theorem 8.2]{FKSZ} or \cite[Theorem 3.3]{KZ}.
Here, to make the analogy more transparent, we deliberatly omit additional diagonal parameters $\Lambda_i(z)\in\C[z]$ or $\phi_i(z)\in \C(z)$ encoding possible singularities of the $(G,q)$-oper.
Hence, assuming that $A(z)$ is well-defined on the discrete subset $q^\Z\subset\C$, its restriction to this subset is a sequence of elements of $G$ satisfying 
\[
A(q^s) \in U(c^{-1})\,\bar{c},\qquad (s\in\Z). 
\]
Therefore, if $b = (g(s))_{s\in\Z}$ is a $(G,c)$-band, then the sequence $a = (g(s)g^{-1}(s+1))_{s\in\Z}$ can be regarded as a discrete analogue of a $q$-oper. Similarly, comparing Eq.~(\ref{eq-glueing}) with \cite[Proposition 4.17]{KZ}, we see that the band $b$ itself can be viewed as a discrete analog of a $(G,q)$-Wronskian. 

\smallskip
There is another class of geometric objects related to $(G,c)$-bands. 
For $M<N\in\Z$ let us denote by $B(G,c,M,N)$ the space of \emph{finite} $(G,c)$-bands 
\begin{equation}
    \label{eq:fin bands intro}
b = (g(s))_{M\le s\le N} \mbox{\quad with \quad} g(s)g(s+1)^{-1} \in U(c^{-1})\,\bar{c} \mbox{\quad for\quad}M\le s < N.
\end{equation}
These spaces $B(G,c,M,N)$ are smooth affine varieties which form an inverse system, and we show that $B(G,c)$ can be obtained as limit of this system. 
%\begin{blue}
In other words, the spaces of finite bands endow $B(G,c)$ with the structure of a \textit{pro-variety}.
%\end{blue}
Let $R(G,c,M,N)$ be the ring of regular functions on $B(G,c,M,N)$.
We show that $R(G,c,M,N)$ can be naturally identified with the subring of $R(G,c)$ generated by the subalgebras $\pi_s^*(\C[G])$ with $M \leq s \leq N$.
In Section~\ref{sec-reduction-finite-bands},  Theorem~\ref{Thm2} is reduced to establishing that $R(G,c,M,N)$ is isomorphic to a suitable cluster subalgebra $\C\otimes \AA_{M,N}$ of $\C\otimes \AA$. 
The cluster algebra $\AA_{M,N}$ has $2n$ frozen variables, where $n$ is the rank of $G$. Let $\BB_{M,N}$ denote the
algebra obtained from $\AA_{M,N}$ by localizing at the product of all these frozen variables. While writing this paper we learned from Qin \cite{Q2} that 
$\C\otimes \BB_{M,N}$ is isomorphic to the cluster algebra structure on the coordinate ring of a certain decorated double Bott-Samelson cell, a new class of varieties generalizing double Bruhat cells introduced by Shen and Weng \cite{SW}, and shown by them to have a cluster structure. Thus, the open subspace of $B(G,c,M,N)$ defined by the non-vanishing of the frozen variables is isomorphic to a decorated double Bott-Samelson cell, that is, to a moduli space of configurations of points in $G/U$ with prescribed relative positions controlled by $c$. 

\smallskip
However, for our purposes, it is simpler and more natural to work with $(G,c)$-bands. For example, it is easy to deduce from Definition~\ref{def-band} that the action of $G$ on itself by right translations lifts to a free action of $G$ on $B(G,c)$ (see below, \S\ref{subsec-action}). This induces a linear action of $G$ on the algebra of regular functions $R(G,c)$. 
In the second part of the paper we study how this $G$-action allows to identify two interesting cluster subalgebras of
$R(G,c)$, which have already appeared in \cite{HL1, HL2, H} in connection with the representation theory of quantum affine algebras.

Our second main result is the following.
\begin{Thm}\label{Thm3}
\begin{itemize}
 \item[(i)] The subalgebra $R(G,c)^U$ of $U$-invariant functions in $R(G,c)$ has the structure of an upper cluster algebra with a distinguished initial cluster equal to 
 \[
 \{\De^{(s)}_{\varpi_i,\varpi_i} \mid (s\in \Z,\ i\in I)\}.
 \]
 \item[(ii)] The subalgebra $R(G,c)^G$ of $G$-invariant functions in $R(G,c)$ has the structure of a cluster algebra with various special initial 
 clusters, all consisting of cluster variables of the form 
 \[
\theta_{i,k}^{(s)}:\quad b=(g(s))_{s \in \Z} \mapsto \De_{\varpi_i,\varpi_i}\left(g(s)g(s+k)^{-1}\right), 
 \]
 for suitable choices of $i\in I$, $s\in \Z$ and $k\in \Z_{>0}$.
\end{itemize}
\end{Thm}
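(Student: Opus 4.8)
The plan is to deduce both statements by reducing to \emph{finite} bands, using the free right $G$-action to describe the relevant quotients geometrically, and then transporting the cluster structure through the isomorphism $\C\otimes\AA\cong R(G,c)$ of Theorem~\ref{Thm2}. I would first record the invariance of the candidate variables. Since the highest weight vector of $V(\varpi_i)$ is fixed by $U$, one has $\De_{\varpi_i,\varpi_i}(gu)=\De_{\varpi_i,\varpi_i}(g)$ for $u\in U$; writing $\De_{c^k(\varpi_i),\varpi_i}(g)=\De_{\varpi_i,\varpi_i}(\overline{c^k}^{\,-1}g)$ shows the same for $\De_{c^k(\varpi_i),\varpi_i}$, so every $\De^{(s)}_{c^k(\varpi_i),\varpi_i}$ lies in $R(G,c)^U$, and by the glueing relations~(\ref{eq-glueing}) with $v=e$ they all coincide with members of the proposed cluster $\{\De^{(s)}_{\varpi_i,\varpi_i}\}$. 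For (ii), $\theta^{(s)}_{i,k}$ depends only on the product $g(s)g(s+k)^{-1}$, which is unchanged under $b\mapsto(g(t)h)_t$, so $\theta^{(s)}_{i,k}\in R(G,c)^G$. Next, using a basepoint projection and the fact that a band is reconstructed from one value $g(s_0)$ together with the local data $a(s):=g(s)g(s+1)^{-1}\in\Ucc$, I would identify at the level of finite bands $B(G,c,M,N)\cong G\times\prod_{M\le s<N}\Ucc$, with $G$ acting only on the first factor. This gives $B(G,c,M,N)/U\cong(G/U)\times\prod_s\Ucc$ and $B(G,c,M,N)/G\cong\prod_s\Ucc$, and passing to the inverse limit yields the analogous descriptions for $B(G,c)$.

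For part (i), I would single out inside the initial seed of $\AA$ with quiver $\Gamma_{\wt c}$ the variables $\De^{(s)}_{c^k(\varpi_i),\wt{c}^l(\varpi_i)}$ with $l=0$; these form a ``boundary slice'' $\Gamma^U$ of the quiver, and by the above they generate $R(G,c)^U$ after localizing at the frozen variables. The task is then to show that $R(G,c)^U$ is the upper cluster algebra attached to $\Gamma^U$ with initial cluster $\{\De^{(s)}_{\varpi_i,\varpi_i}\}$. The key input is that $\C[G/U]=\C[G]^U$ carries an upper cluster algebra structure --- which follows from the double Bruhat cell description of Berenstein--Fomin--Zelevinsky~\cite{BFZ} together with Oya's theorem~\cite{O} --- and that each $\pi^*_s$ maps it compatibly into $R(G,c)^U$; the glueing relations~(\ref{eq-glueing}) then amalgamate these copies along the Coxeter reading of $c$ into a single seed on $\Gamma^U$. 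To conclude I would invoke that $R(G,c)$ coincides with its own upper cluster algebra (Corollary~\ref{cor:AU R(G,c)}) and a codimension-two / UFD (starfish) argument to show the upper cluster algebra on $\Gamma^U$ is saturated, i.e.\ equals the whole of $R(G,c)^U$ and not a proper subalgebra.

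For part (ii), the identification $B(G,c)/G\cong\prod_s\Ucc$ shows that $R(G,c)^G$ is the coordinate ring of the space of sequences of local connection data, and the telescoping identity $g(s)g(s+k)^{-1}=a(s)a(s+1)\cdots a(s+k-1)$ expresses $\theta^{(s)}_{i,k}=\De_{\varpi_i,\varpi_i}\!\left(a(s)\cdots a(s+k-1)\right)$ as a generalized minor of a product of $k$ consecutive local data. These functions satisfy two-term exchange relations of $T$-system type, which is exactly the cluster structure carried by the Grothendieck rings studied in \cite{HL1,HL2,H}. Identifying $R(G,c)^G$ with that Grothendieck ring, via Theorem~\ref{Thm2} restricted to $G$-invariants, transports the cluster structure and realizes the $\theta^{(s)}_{i,k}$ as its cluster variables; the ``various special initial clusters'' then correspond to the different admissible orderings compatible with $\wt c$, which produce mutation-equivalent seeds.

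The main obstacle, in both parts, is the \emph{saturation} step: proving that the (upper) cluster algebra built on the invariant slice is not merely contained in, but equal to, the full invariant subring. This forces one to combine the explicit product decomposition of the quotient with the equality of $R(G,c)$ with its upper cluster algebra, verifying that every invariant regular function is a Laurent polynomial in each invariant cluster. For part (i) there is the additional delicate point, entangled with saturation, of checking that the boundary slice $\Gamma^U$ is mutation-closed --- that mutable vertices at $l=0$ exchange only with $l=0$ variables and frozen boundary data --- so that the mutations never leave $R(G,c)^U$; establishing this closedness from the combinatorics of $\Gamma_{\wt c}$ is where I expect the real work to lie.
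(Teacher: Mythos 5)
Your preliminary reductions are sound and match the paper: the $U$-invariance of the minors, the $G$-invariance of the $\theta^{(s)}_{i,k}$, the decomposition $B(G,c,M,N)\cong G\times\prod_s \Ucc$ with $G$ acting on the first factor, and the resulting identification of $R(G,c)^G$ with a polynomial ring are exactly Lemma~\ref{Lem-6.1}, Lemma~\ref{lem: partial bands aff space} and Corollary~\ref{cor: inv function on bands}. But your proof of part (i) collapses at the step you yourself flag as the main obstacle, and the tools you propose cannot repair it. You claim that the copies $\pi_s^*(\C[G]^U)$ of $\C[G/U]$ "generate $R(G,c)^U$ after localizing at the frozen variables". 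This is false: $R(G,c)^U$ is strictly larger than the subalgebra generated by all $\pi_s^*(\C[G]^U)$. Already for $G=SL(2)$, $\pi_s^*(K[G]^U)$ is generated by the first-column entries $b_{s,1},b_{s+1,1}$ of the band matrix, whereas the $U$-invariant (indeed $G$-invariant) function $\theta^{(0)}_{1}=\De_{\{0,2\},\{1,2\}}=b_{0,1}b_{2,2}-b_{0,2}b_{2,1}$ satisfies $\theta^{(0)}_{1}\,b_{1,1}=b_{0,1}+b_{2,1}$ on the band space, and one checks it is \emph{not} a polynomial in the first-column entries; so the amalgam of copies of $\C[G/U]$ misses it. This is precisely why the paper needs Corollary~\ref{Cor-6.4} (the structure theorem $R(G,c)^U=\pi_0^*(K[G]^U)[\theta^{(s)}_i]$) and, as the technical heart of Section~\ref{sec_U_inv}, Proposition~\ref{Prop-6.5} with Lemma~\ref{Lem.6.9}: an explicit mutation sequence, controlled by a $P\times P^+$ bi-grading, exhibiting each $\theta^{(s)}_i$ as a cluster variable of the invariant seed. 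Nothing in your outline plays this role, and your fallback — Corollary~\ref{cor:AU R(G,c)} plus a UFD/starfish argument — does not substitute for it: membership in the upper cluster algebra of the \emph{big} seed $\Sigma$ says nothing about Laurentness with respect to the clusters of the smaller invariant seed. Two further inaccuracies: the $l=0$ slice of the initial seed $\Sigma$ only contains $\De^{(s)}_{\varpi_i,\varpi_i}$ with $s\ge m_i-1$; obtaining all $s\in\Z$ as cluster variables requires the infinitely iterated green-translation mutations of Proposition~\ref{prop translation}, as in Proposition~\ref{Prop-6.2} (the glueing relations~(\ref{eq-glueing}) only rewrite existing variables, they create no new ones). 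And the cluster structure on $\C[G]^U$ is not a formal consequence of \cite{BFZ} plus \cite{O}; the paper has to import it separately (Theorem~\ref{thm: cluster GU}, from \cite{Fe,Fr1}).

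For part (ii), the step "Theorem~\ref{Thm2} restricted to $G$-invariants" is not meaningful as stated: no $G$-action on $\C\otimes\AA$ has been constructed, and $R(G,c)^G$ is not a priori a cluster subalgebra of $R(G,c)$ — its seeds live on a different quiver $\Lambda$ which is not a subquiver of $\G_{\wt c}$. The paper's route is self-contained: it first \emph{proves} the $T$-system relations (Proposition~\ref{Prop.7.1}) from the Fomin--Zelevinsky minor identity — relations you merely assert — and then shows (Proposition~\ref{Prop.7.4}) by explicit translation mutations that every generator $\theta^{(s)}_i$ of $R(G,c)^G_N$ is a cluster variable of the finite seed $\Theta_N$, with the reverse inclusion again by the Starfish lemma. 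Your alternative of transporting the cluster structure from $K_0(\CC_\Z)$ could in principle be made rigorous (it is essentially the paper's Proposition~\ref{Prop.8.1} read backwards), but it still requires proving the $T$-system first, it needs the external categorification input \cite{KKOP2} that $K_0(\CC_\Z)$ is a cluster algebra on precisely the seed $\Theta$ (including the labelling $n(i,s)$), and it forfeits the paper's stated goal of keeping Sections~\ref{sec:SL(n) cst}--\ref{sec_G_inv} independent of quantum affine algebras.
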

For a more precise statement of Theorem~\ref{Thm3}~(i), see Theorem~\ref{Thm.6.11}.
The cluster algebra of (i) is isomorphic to the complexified Grothendieck ring of the category $O^+_\Z$ of representations of a Borel subalgebra of the quantum affine algebra $U_q(\widehat{\g})$ studied in \cite{HL2}. It is also isomorphic to the complexified Grothendieck ring of the subcategory $\CC^{\rm shift}_\Z$ of finite-dimensional objects of $\O^{\rm shift}_\Z$ studied in \cite{H}. Under these isomorphisms, the distinguished initial cluster coincides with the cluster of 
classes of positive prefundamental modules in $O^+_\Z$ or $\CC^{\rm shift}_\Z$.

For a more precise statement of Theorem~\ref{Thm3}~(ii), see Theorem~\ref{Thm.7.3}.
The cluster algebra of (ii) is isomorphic to the Grothendieck ring of a category $\CC_\Z$ of finite-dimensional modules over the (unshifted) quantum affine algebra $U_q(\widehat{\g})$. This category $\CC_\Z$ was introduced in \cite{HL}, and the cluster structure on its Grothendieck ring was further studied in many papers, in particular \cite{HL1}, \cite{Q}, \cite{KKOP1}, \cite{KKOP2}. Under this isomorphism, the $G$-invariant cluster variables $\theta_{i,k}^{(s)}$ of (ii) identify with classes of Kirillov-Reshetikhin modules. As in Theorem \ref{Thm2}, one can show that this cluster algebra is equal to its upper cluster algebra (see Remark \ref{rem: AU R(G,c)G}).

\smallskip
The paper is structured as follows. After setting up  our notation and terminology for algebraic groups and schemes in Section~\ref{sec_notation}, we present in detail in Section~\ref{sec:SL(n) cst} the simplest example of $G=SL(n)$ and $c=c_{st} = s_1s_2\cdots s_{n-1}$.
In Section~\ref{sec bands}, we define the schemes of bands $B(G,c)$ and finite bands $B(G,c,M,N)$, and establish their main properties. Section~\ref{sec_cluster_bands} is devoted to the statement and proof of the cluster algebra structure on $R(G,c)$. Similarly, Section~\ref{sec_U_inv} and Section~\ref{sec_G_inv} are devoted to the cluster algebra structures on $R(G,c)^U$ and $R(G,c)^G$. To conclude the paper, we provide in Section~\ref{sec_band_qaf} a dictionary between bands and representations of quantum affine algebras.
We emphasize that, although the main motivation for introducing $(G,c)$-bands comes from the representation theory of quantum affine algebras, all the material in Sections~\ref{sec:SL(n) cst}
to Section~\ref{sec_G_inv} is developed without any reference to this theory and can be read without prior knowledge of the vast literature on the subject. On the other hand, readers who are already familiar with representation theory of quantum affine algebras might start looking at Section~\ref{sec_band_qaf} to understand the intuitions behind our geometric construction.

\bigskip
{\bf Acknowledgements.} We are grateful to an anonymous referee for a careful reading of the manuscript and for suggesting numerous improvements in the text. L.F. thanks LMNO (Université de Caen Normandie) for an invitation in March 2024 during which this joint project was started. We are very grateful to C. Geiss, D. Gori, D.~Hernandez and H. Oya for many fruitful and inspiring discussions during the preparation of this work.
L.F. acknowledges support of the MUR Excellence Department Project 2023--2027 awarded to the Department of Mathematics, University of Rome Tor Vergata CUP E83C18000100006, and of the 
PRIN2022 CUP E53D23005550006.

\section{Setup}
\label{sec_notation}

\subsection{Algebraic groups}
\label{sec: alg gps}

From now on, we replace the base field $\C$ by an algebraically closed field $K$ of characteristic $0$.
Let $G$ be a simple, simply-laced, simply-connected algebraic group over $K$. Let $\g$ be the Lie algebra of $G$. We denote by $I$ the set of vertices of the Dynkin diagram of $\g$. We denote by $C=[c_{ij}]_{i,j\in I}$
the associated Cartan matrix.
That is, we have $c_{ii} = 2$, and for $i\not = j$ we put $c_{ij} = -1$ if $i$ and $j$
are connected by an edge of the Dynkin diagram, and $c_{ij} = 0$ otherwise.
Let $T$ denote a maximal torus of $G$, $B$ a Borel subgroup containing $T$, $U$ the unipotent radical of $B$.
Let $B^-$ be the opposite Borel subgroup of $B$ with respect to $T$, and $U^-$ its unipotent radical.
Let $W$ denote the Weyl group of $G$, with its Coxeter generators $s_i\ (i\in I)$, length function $l$, and longest element $w_0$. For $w\in W$, let $U(w) := U \cap (w^{-1}\,U^-w)$. We recall that $U(w)$ is a unipotent group of dimension $l(w)$. As an algebraic variety, it is hence isomorphic to an affine space $\A^{l(w)}$ of dimension $l(w)$ \cite[\S 2.4]{FZ}.

Let $P$ denote the weight lattice of $\g$, with its $\Z$-basis of fundamental weights $\{\varpi_i \mid i\in I\}$. Let $Q\subset P$ denote the root lattice, with its $\Z$-basis of simple roots $\{\a_i \mid i\in I\}$.
We have $\alpha_i = \sum_{j\in I} c_{ij}\varpi_j$.
For $u,v\in W$ and $i\in I$ we denote by $\De_{u(\varpi_i),v(\varpi_i)}$ the corresponding Fomin-Zelevinsky generalized minor, a regular function on $G$ which depends only on the weights $u(\varpi_i), v(\varpi_i) \in P$.

Let $c$ be a fixed Coxeter element of $W$. We denote by $m_i$ the minimal $k\in\Z_{\ge 0}$ such that $c^k(\varpi_i) = w_0(\varpi_i)$. We denote by $\wt{c}$ the Coxeter element $w_0c^{-1}w_0$.
Clearly $\wt{\wt{c}} = c$. 

Following \cite{FZ}, we will denote by $\overline{s_i} := \exp(-e_i)\exp(f_i)\exp(-e_i)$ a specific lift of the Coxeter generator $s_i$ of $W$ to the normalizer $N(T)$ of $T$ in $G$. The elements $\overline{s_i}$ satisfy the braid relations, so we can define more generally, using reduced expressions, a lift $\overline{w}$ of every $w\in W$, such that $\overline{w}_1\overline{w}_2 = \overline{w_1w_2}$ if 
$l(w_1w_2) = l(w_1) + l(w_2)$. Note however that $\overline{s_i}$ is not an involution. We only have $\overline{s_i}^4 = 1$. 
The elements $\overline{s_i}\ (i\in I)$ generate a finite covering $\overline{W}$ of $W$ in $N(T)$.

\subsection{Schemes}
We fix in this section our notation for schemes.
We refer to the book \cite{EH} for a comprehensive treatment of the subject.

Let $\Schk$ be the category of $K$-schemes, and let $X$ be a $K$-scheme. 
The functor $\Hom( - , X) : (\Schk)^{op} \lra \Set$ assigning to a $K$-scheme $Z$ the set of  $K$-schemes morphisms from $Z$ to $X$ and to a morphism of $K$-schemes $f: Z' \lra Z$ the function $\Hom(Z,X) \lra \Hom(Z', X)$ given by composition with $f$ is the \textit{functor of points} of $X$. 
An element of the set $X(Z):= \Hom(Z,X)$ is called a $Z$-point of $X$.
If $R$ is an associative, unitary and commutative $K$-algebra, we will use the shorthand notation 
$X(R)$ to denote the set $X(\Spec(R))$, and we will refer to an element of $X(R)$ as an $R$-point of $X$.
Recall that there is a natural bijection between the set $X(K)$ and the set of $K$-rational points of $X$, which assigns to an element $f$ of $X(K)$ the image of the unique point of $\Spec(K)$ under the morphism $f$. 
Through the paper, we consider these two sets as canonically identified through this bijection.

Assume that $Y$ is another $K$-scheme and let $f:X \lra Y$  be a morphism of $K$-schemes.
Then, the composition with $f$ determines a natural transformation between the functors of points of $X$ and $Y$.
By the Yoneda lemma, any natural transformation $\Hom(-,X) \lra \Hom(-,Y)$ arises in this way from a unique morphism of $K$-schemes $X \lra Y.$

The scheme $X$ is a \textit{variety} if it is reduced, separated, and of finite type over $\Spec(K)$. 
If $X$ is a variety, we use the notation $x \in X$ to indicate that $x$ is a $K$-point of $X.$ 
Recall that a morphism between two varieties is uniquely determined by the values it takes on $K$-points.

Recall that if $G$ is an algebraic group and $Z$ is a $K$-scheme, the set $G(Z)$ is actually a group.

\section{The case of $SL(n)$ and $c_{st}$}
\label{sec:SL(n) cst}

Before dealing with the general case, we explain our construction and main results in the simplest example of $G = SL(n)$ and
the standard Coxeter element $c_{st} = s_1\cdots s_n$. 
Indeed, in this case bands have a concrete and elementary 
description in terms of infinite matrices.
The results of this section are stated without proof since they are special cases of the general results
proved in the following sections of the paper (see in particular \S\ref{Sect-4.5}).

\subsection{$(SL(n), c_{st})$-bands and their cluster structure}
\label{sec:first decription}

Let $n \geq 2$ be an integer. We identify $SL(n,K) :=SL(n)(K)$ with the group of $n\times n$ matrices $g = [g_{ij}]$ over $K$ with determinant 1, and we choose for $B$ the Borel subgroup of upper triangular matrices.
Let $c=c_{st} := s_1s_2\cdots s_{n-1}$ be the standard Coxeter element. In this case, we have $\wt{c_{st}} = c_{st}$ and $m_i = n-i\ (1\le i \le n-1)$.
It is easy to check that the subgroup $U(c_{st}^{-1})$ consists of all matrices
of the form
\[
\pmatrix
{
 1 & a_1  &\cdots & a_{n-1}\cr
 0 & 1  & \cdots & 0\cr
 \vdots&\vdots&\ddots& \vdots\cr
 0 & 0 & \cdots & 1
},
\qquad (a_1,\ldots, a_{n-1} \in K).
\]
We also have 
\[
\overline{c_{st}} =
\pmatrix
{
 0 & 0  &\cdots & 0 &(-1)^{n-1}\cr
 1 & 0  & \cdots & 0 &0\cr
 0 & 1  & \cdots & 0& 0\cr 
 \vdots&\vdots&\ddots&\vdots & \vdots\cr
 0 & 0 & \cdots & 1& 0
}.
\]
It then follows that
\begin{equation}
    \label{eq: Steinbers SLn cst}
U(c_{st}^{-1}) \oc_{st}=\left\{
\pmatrix
{
 a_1 & a_2  &\cdots & a_{n-1} & (-1)^{n-1}\cr
  1 & 0 & \cdots & 0 & 0\cr
  0 & 1 & \cdots & 0 & 0 \cr
 \vdots&\vdots&\ddots& \vdots & \vdots\cr
 0 & 0 & \cdots & 1 & 0
},
\qquad (a_1,\ldots, a_{n-1} \in K)\right\}.
\end{equation}
 Moreover, for any associative, unitary and commutative $K$-algebra $R$, the set $(U(c_{st}^{-1})\oc_{st})(R)$ of $R$-points of $U(c_{st}^{-1})\oc_{st}$ is the set obtained by replacing $K$ with $R$ in Eq.~(\ref{eq: Steinbers SLn cst}).
Thus, a simple computation of linear algebra shows that two matrices $g(s) := [g^{(s)}_{ij}]$ and $g(s+1) := [g^{(s+1)}_{ij}]$ of $SL(n,R):=SL(n)(R)$ satisfy the condition $g(s)g(s+1)^{-1} \in (U(c_{st}^{-1})\,\overline{c_{st}})(R)$ if and only if
\begin{equation}
\label{eq:band cond SL}
    g^{(s)}_{ij} = g^{(s+1)}_{i-1,j},\qquad (1<i\le n,\ 1\le j\le n). 
\end{equation}
In other words, the last $n-1$ rows of $g(s)$ are equal to the first $n-1$ rows of $g(s+1).$
Thus Definition~\ref{def-band} translates in this case as:

\begin{Def}
\label{def: SL(n) cst bands}
    A $(SL(n), c_{st})$-band is a sequence of elements 
$$
g(s) \in SL(n,K) \quad (s\in\Z),
$$
satisfying the relations
\begin{equation}\label{glue-eq2}
g_{ij}^{(s)} = g_{i-1,j}^{(s+1)}, \quad (1< i \le n,\ 1\le j\le n,\ s\in \Z). 
\end{equation}
\end{Def} 
Explicitly, Eq.~(\ref{glue-eq2}) means that the first $n-1$ rows of $g(s+1)$ coincide with
the last $n-1$ rows of $g(s)$, for every $s\in \Z$.
Therefore, we can think of
an $(SL(n),c_{st})$-band as an $(\infty\times n)$-array 
\begin{equation}
    \label{eq:inf arrays1}
    \mathrm{B} = \left[b_{ij}\right], \quad (i\in \Z,\ 1\le j\le n,\ b_{ij}\in K)
\end{equation} 
such that every $n\times n$ submatrix
\begin{equation}
    \label{eq:inf arrays2}
     \mathrm{B}(s) = \left[b_{ij}\right], \quad (s\le i\le s+n-1,\ 1\le j\le n)
\end{equation}
  consisting of $n$ consecutive rows of $\mathrm{B}$ belongs to $SL(n,K)$. That is, such that 
$
 \det  \mathrm{B}(s) = 1
$
for every $s\in\Z$. Such an array $\mathrm{B}$ corresponds to the $(SL(n,K),c_{st})$-band $(\mathrm{B}(s))_{s\in \Z}$. This is the reason for the name \emph{band}.

\begin{example}
{\rm 
The sequence 
\[
 g(-s) := \left[
\begin{array}{cc}
s+2&s+3\\
s+1&s+2
\end{array}
\right],\ (s\ge 0),\quad
g(s) := \left[
\begin{array}{cc}
s&3s-1\\
s+1&3s+2
\end{array}
\right],\ (s > 0),
\]
is an $(SL(2),c_{st})$-band.
It can be represented by the infinite array
\[
  \mathrm{B}=
\pmatrix{
\vdots&\vdots\cr
4&5\cr
3&4\cr
{\red 2}&{\red 3}\cr
{\red 1}&{\red 2}\cr
2&5\cr
3&8\cr
4&11\cr
\vdots&\vdots
}, 
\]
where $\mathrm{B}(0)$ is marked in red colour.
}
\end{example}

\medskip

Let 
$K[X_{ij} \mid  i \in \Z, \ 1\le j \le n]$
be the polynomial ring in the variables $X_{ij}.$ For $s \in \Z$, we set
\begin{equation}
\label{eq:Y dets}
Y_{s,n} := \det [X_{ij} \mid  s\le i \le s+n-1, \ 1\le j \le n]. 
\end{equation}
We can consider the quotient ring
\begin{equation}
\label{eq:Rn SL(s)}
\R_n:=  K[X_{ij} \mid  i \in \Z, \ 1\le j \le n] / (Y_{s,n}-1 \mid s \in \Z), 
\end{equation}
and the associated infinite dimensional affine $K$-scheme
\begin{equation}\label{eq: inf bands SL(n)}
\B_n := \Spec(\R_n). 
\end{equation}
The previous discussion shows that the set of $(SL(n), c_{st})$-bands can be naturally identified with the set of $K$-rational points of $\B_n$. 
%The class of the element $X_{i,j}$ in the ring $\R_n$ is denoted by $x_{i,j}$. 

\medskip
In this particular case, Theorem~\ref{Thm2} can be stated as
Theorem \ref{thm: cluster structure SL cst bands} below.
We first explicitly describe an initial seed of the cluster structure in this case.
The quiver of this seed is the quiver $\Gamma_{c_{st}}$ of type $A_{n-1}$ of \cite[Section 3.4]{GHL}.
This is an infinite quiver with $n-1$ columns and vertex set  
\[
\VV := \{(i,r)\mid 1\le i \le n-1,\ r\in \Z,\ r\equiv i-1\ (2)\},
\]
where the first coordinate $i$ indicates the column on which the vertex $(i,r)$ sits.
This quiver $\G_{c_{st}}$ can be divided into three segments : a finite connected middle part containing green and red vertices, and two infinite upper and lower parts containing the remaining black vertices. 
The red vertices have labels of the form
\[
 (i, 1-i - 4k),\quad (i\in I,\ 0\le k< n-i),
\]
the green vertices have labels of the form
\[
 (i, 1-i - 4k-2),\quad (i\in I,\ 0\le k< n-i),
\]
the black vertices of the upper part have labels of the form
\[
 (i,r),\quad (i\in I,\ r\equiv i-1\ (2),\ r>1-i),  
\]
and the black vertices of the lower part have labels of the form
\[
 (i,r),\quad (i\in I,\ r\equiv i-1\ (2),\ r\le 1+3i-4n).
\]

For $(i,r)\in \VV$, set $r_i := (r-1+i)/2$, and define $\De_{(i,r)}\in \R_n$ as follows.
Denote by $\De^{(s)}_{u(\varpi_i),v(\varpi_i)}$
the regular function on $\B_n$ given by 
\[
\mathrm{B}\mapsto \De_{u(\varpi_i),v(\varpi_i)}(\mathrm{B}(s)),
\qquad (s\in\Z,\ 1\le i\le n-1,\ u,v \in W).
\]
Then
\begin{itemize}
 \item for a black vertex $(i,r)$ in the upper part of $\G_{c_{st}}$ we set 
$
\De_{(i,r)} := \Delta^{(r_i-1)}_{w_0(\varpi_i),\varpi_i}
$;
 \item for a black vertex $(i,r)$ in the lower part of $\G_{c_{st}}$ we set 
$
 \De_{(i,r)} :=\Delta^{(r_i+2(n-i)-1)}_{\varpi_i,\,w_0(\varpi_i)}
$;
 \item for a red vertex $(i,r)= (i, 1-i - 4k)$ we set $\De_{(i,r)} := \Delta^{(0)}_{c_{st}^{n-i-1-k}(\varpi_i),\, c_{st}^{\,k}(\varpi_i)}$;
 \item for a green vertex $(i,r) = (i, 1-i - 4k-2)$ we set $\De_{(i,r)} := \Delta^{(0)}_{c_{st}^{n-i-1-k}(\varpi_i),\, c_{st}^{\,\,k+1}(\varpi_i)}$.
\end{itemize}

\begin{Thm}\label{thm: cluster structure SL cst bands}
    The ring $\R_n$ has a cluster algebra structure with initial seed
    \[
    \left(\G_{c_{st}},\ \{\De_{(i,r)} \mid (i,r) \in \VV \}\right).
    \]
    Therefore, we have a ring isomorphism $K\otimes \AA \to \R_n$, where $\AA = \AA_{w_0}$ is 
    the cluster algebra defined in \cite{GHL} for $G = SL(n)$.
\end{Thm}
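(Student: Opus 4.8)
The plan is to deduce Theorem~\ref{thm: cluster structure SL cst bands} from Theorem~\ref{Thm2} by specializing to $G=SL(n)$ and $c=c_{st}$, so that the only genuine work is to make the general isomorphism completely explicit in this case. The first step is to identify the scheme $\B_n$ with $B(SL(n),c_{st})$. The computations of $U(c_{st}^{-1})$ and of the lift $\overline{c_{st}}$ recorded above show that the band condition $g(s)g(s+1)^{-1}\in U(c_{st}^{-1})\,\overline{c_{st}}$ is equivalent to the shift relations $g^{(s)}_{ij}=g^{(s+1)}_{i-1,j}$; eliminating the redundant overlapping variables identifies the $K$-points of $B(SL(n),c_{st})$ with the $\infty\times n$ arrays all of whose windows of $n$ consecutive rows have determinant $1$, that is, with the $K$-points of $\B_n$. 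To upgrade this to a scheme isomorphism $R(SL(n),c_{st})\cong\R_n$ I would invoke the first theorem stated in the introduction, which guarantees that $B(G,c)$ is integral with UFD coordinate ring; since $\R_n$ is generated by the same functions and cut out by the same determinantal relations $Y_{s,n}-1$, the two reduced integral schemes coincide.

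The second step is the combinatorial identification of the initial seed. Because $\wt{c_{st}}=c_{st}$, Theorem~\ref{Thm2} furnishes an isomorphism $K\otimes\AA\to R(SL(n),c_{st})=\R_n$ sending the initial cluster of $\AA$ with quiver $\G_{\wt{c_{st}}}=\G_{c_{st}}$ to regular functions of the form $\De^{(s)}_{c_{st}^{k}(\varpi_i),\,c_{st}^{l}(\varpi_i)}$. It then remains to verify, vertex by vertex, that these coincide with the functions $\De_{(i,r)}$ listed before the theorem. For the red and green vertices this is immediate: with $(i,r)=(i,1-i-4k)$, resp. $(i,r)=(i,1-i-4k-2)$, the prescribed minors already have $s=0$ and exponent pairs $(n-i-1-k,\,k)$, resp. $(n-i-1-k,\,k+1)$, which lie in the target form since $m_i=n-i$ and $\wt{c_{st}}=c_{st}$. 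For the black vertices one uses $m_i=n-i$ together with $w_0(\varpi_i)=c_{st}^{\,m_i}(\varpi_i)$ and the glueing formulas (\ref{eq-glueing}): these relate $\De^{(s)}_{c^{k}(\varpi_i),v(\varpi_i)}$ to $\De^{(s+1)}_{c^{k-1}(\varpi_i),v(\varpi_i)}$, and thus allow the two families $\De^{(r_i-1)}_{w_0(\varpi_i),\varpi_i}$ (upper part) and $\De^{(r_i+2(n-i)-1)}_{\varpi_i,\,w_0(\varpi_i)}$ (lower part) to be rewritten in the normalized shape produced by the isomorphism, the shifts $r_i=(r-1+i)/2$ and $2(n-i)-1=2m_i-1$ accounting precisely for the column height $r$ in $\G_{c_{st}}$.

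Once the seed is matched, the final assertion is automatic: since $\AA=\AA_{w_0}$ is by construction the cluster algebra on the quiver $\G_{c_{st}}$ and Theorem~\ref{Thm2} carries its initial seed to $\bigl(\G_{c_{st}},\{\De_{(i,r)}\mid (i,r)\in\VV\}\bigr)$, the composite $K\otimes\AA\to\R_n$ is the required ring isomorphism and transports the abstract cluster structure onto $\R_n$. Algebraic independence of the $\De_{(i,r)}$ and the validity of the exchange relations come for free along this transport.

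I expect the main obstacle to be exactly the explicit bookkeeping of the second step: keeping track of the three vertex types, the parity constraint $r\equiv i-1\ (2)$, and the precise $s$-shifts through the glueing relations, so that the labels agree with the precise form of the isomorphism described in Section~\ref{sec: statement of main theorem}. A secondary, more routine but still necessary point is verifying that the determinantal scheme $\B_n$ is reduced and integral and agrees with $B(SL(n),c_{st})$ \emph{scheme-theoretically}, not merely on $K$-points, since it is this identification that licenses transporting the cluster structure along $R(SL(n),c_{st})\cong\R_n$.
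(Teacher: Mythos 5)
Your overall route is exactly the paper's: identify $\B_n$ with $B(SL(n),c_{st})$, then specialize the general isomorphism $K\otimes\AA\to R(G,c)$ (Theorem~\ref{Thm2}, proved as Theorem~\ref{Thm4.4}) and observe that Definition~\ref{def-initial-cluster-variables}, with $\wt{c_{st}}=c_{st}$, $m_i=n-i$ and $\wt{\xi_i}=1-i$, literally reduces to the list of $\De_{(i,r)}$ in \S\ref{sec:first decription}; this is precisely how the paper disposes of the theorem (see the remark following Theorem~\ref{Thm4.4}). Your seed matching is correct, and in fact slightly over-engineered: the glueing formulas~(\ref{eq-glueing}) are not needed for the black vertices, since $c_{st}^{\,m_i}(\varpi_i)=w_0(\varpi_i)$ holds by the very definition of $m_i$, so the general black-vertex formulas are already in the shape listed in \S\ref{sec:first decription}.

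The one step where your argument as written does not go through is the upgrade of the $K$-point bijection $\B_n(K)\simeq B(SL(n),c_{st})(K)$ to an isomorphism of schemes. Your proposed justification --- both schemes are reduced and integral and ``cut out by the same determinantal relations'' --- is circular and imprecise: integrality (or even reducedness) of $\R_n$ is not available a priori (in the paper it is a \emph{consequence} of this identification combined with Corollary~\ref{cor: bands product}), and the two schemes do not sit in a common ambient space with a common set of variables, since $\R_n$ has one row of indeterminates per integer while $B(SL(n),c_{st})\subset G(\infty)$ carries $n^2$ coordinates per factor related by the band equations; so ``same relations'' has no literal meaning before the elimination of variables is carried out at the level of rings, which is the very content of the step. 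The paper's fix (Lemma~\ref{lem: SLn and general bands}) is both simpler and cleaner than what you propose: the row-overlap computation you performed for $K$-points works verbatim for $R$-points over an arbitrary $K$-algebra $R$, because the linear-algebra statement that $gh^{-1}\in \bigl(U(c_{st}^{-1})\overline{c_{st}}\bigr)(R)$ holds if and only if the last $n-1$ rows of $g$ coincide with the first $n-1$ rows of $h$ is valid over any commutative ring. This yields a natural isomorphism of functors of points, hence by Yoneda an isomorphism of schemes, with no reducedness or integrality input whatsoever. With that replacement your proof is complete and coincides with the paper's.
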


Figure~\ref{Fig2n} illustrates Theorem~\ref{thm: cluster structure SL cst bands} in the case of $G=SL(3)$.
More precisely, this figure displays a finite section of the doubly infinite quiver $\Gamma_{c_{st}}$ containing the middle part with red and green vertices. Each vertex $(i,r)$ of the left quiver is replaced in the right quiver by the corresponding initial cluster variable $\De_{(i,r)}$.
Here we have used the traditional notation $\De_{P,Q}$ to indicate the determinant of the band $B$ whose row indices belong to $P\subset \Z$ and column indices belong to $Q\subset\{1,2,3\}$.

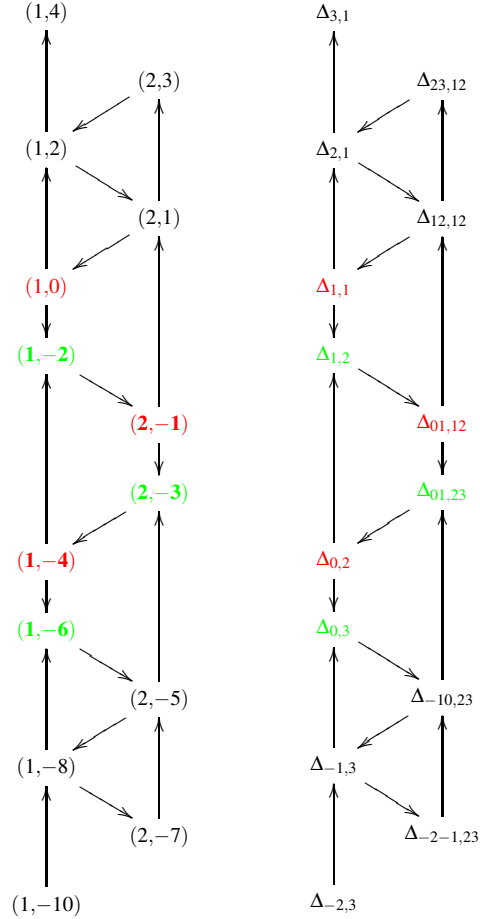
\begin{figure}[t!]
\[
\def\objectstyle{\scriptstyle}
\def\lablestyle{\scriptstyle}
\xymatrix@-1.0pc{
%\\
{(1,4)}%\ar[rd]
\\
&\ar[ld] {(2,3)} 
\\
{(1,2)}\ar[rd]\ar[uu]&& 
\\
&\ar[ld] (2,1) \ar[uu]
\\
\ar[uu]{\red\mathbf(1,0)}\ar[d]&&
\\
\mathbf{\green (1,-2)}\ar[rd]
\\
&\ar[uuu] \mathbf{\red (2,-1)}\ar[d] 
\\
& \ar[ld]\mathbf{\green (2,-3)}
\\
\ar[uuu]\mathbf{\red (1,-4)} \ar[d] 
\\
\mathbf{\green (1,-6)}\ar[rd]&&
\\
& \ar[uuu] {(2,-5)}\ar[ld] 
\\
\ar[uu]{(1,-8)} \ar[rd] && 
\\
& \ar[uu]{(2,-7)} 
\\
\ar[uu]{(1,-10)}
}
\qquad
\xymatrix@-1.0pc{
{\De_{3,1}}
\\
&\ar[ld] {\De_{23,12}} 
\\
{\De_{2,1}}\ar[rd]\ar[uu]&
\\
&\ar[ld] \De_{12,12} \ar[uu]
\\
\ar[uu]{\red\Delta_{1,1}}\ar[d]&
\\
{{\green \Delta_{1,2}}}\ar[rd]
\\
&\ar[uuu] {\red \De_{01,12}}\ar[d] 
\\
& \ar[ld]{\green \De_{01,23}}
\\
\ar[uuu]{\red \De_{0,2}} \ar[d] 
\\
{{\green \De_{0,3}}}\ar[rd]&
\\
& \ar[uuu] {\De_{-10,23}}\ar[ld] 
\\
\ar[uu]{\De_{-1,3}} \ar[rd] &
\\
& \ar[uu]{\De_{-2-1,23}} 
\\
\ar[uu]{ \De_{-2,3}}
}
\]
\caption{\label{Fig2n} {\it A finite segment of an initial seed of the cluster algebra structure on $\R_3$}}
\end{figure}

\subsection{Finite bands}
\label{subsect:reformulations}

\begin{figure}[t!]
\[
\def\objectstyle{\scriptstyle}
\def\lablestyle{\scriptstyle}
    \xymatrix@-1.0pc{
%\\
{\blue (1,4)}%\ar[rd]
\\
&\ar[ld] {\blue (2,3)} 
\\
{(1,2)}\ar[rd]\ar[uu]&& 
\\
&\ar[ld] (2,1) \ar[uu]
\\
\ar[uu]{\red\mathbf(1,0)}\ar[d]&&
\\
\mathbf{\green (1,-2)}\ar[rd]
\\
&\ar[uuu] \mathbf{\red (2,-1)}\ar[d] 
\\
& \ar[ld]\mathbf{\green (2,-3)}
\\
\ar[uuu]\mathbf{\red (1,-4)} \ar[d] 
\\
\mathbf{\green (1,-6)}\ar[rd]&&
\\
& \ar[uuu] {(2,-5)}\ar[ld] 
\\
\ar[uu]{(1,-8)} \ar[rd] && 
\\
& \ar[uu]{\blue (2,-7)} 
\\
\ar[uu]{\blue (1,-10)}
}
\quad
\xymatrix@-1.0pc{
{\blue \De^{(1)}_{w_0(\varpi_1), \varpi_1}}%\ar[rd]
\\
&\ar[ld] { \blue \De^{(1)}_{w_0(\varpi_2), \varpi_2}} 
\\
{\De^{(0)}_{w_0(\varpi_1), \varpi_1}}\ar[rd]\ar[uu]&
\\
&\ar[ld] { \De^{(0)}_{w_0(\varpi_2), \varpi_2}} \ar[uu]
\\
\ar[uu]{\red \De^{(0)}_{c_{st}(\varpi_1), \varpi_1 }}\ar[d]&
\\
{{\green  \De^{(0)}_{c_{st}(\varpi_1), c_{st} (\varpi_1)}}}\ar[rd]
\\
&\ar[uuu] {\red \De^{(0)}_{\varpi_2, \varpi_2}}\ar[d] 
\\
& \ar[ld]{{\green \De^{(0)}_{\varpi_2, c_{st}(\varpi_2)} }}
\\
\ar[uuu]{\red \De^{(0)}_{\varpi_1, c_{st} (\varpi_1)}} \ar[d] 
\\
{{\green \De^{(0)}_{\varpi_1, c_{st}^2 (\varpi_1)}}}\ar[rd]&
\\
& \ar[uuu] {\De^{(-1)}_{\varpi_2, w_0(\varpi_2)}}\ar[ld] 
\\
\ar[uu]{ \De^{(-1)}_{\varpi_1, w_0(\varpi_1)}} \ar[rd] &
\\
& \ar[uu]{\blue \De^{(-2)}_{\varpi_2, w_0(\varpi_2)}} 
\\
\ar[uu]{ \blue  \De^{(-2)}_{\varpi_1, w_0(\varpi_1)}}
}
\qquad
\xymatrix@-1.0pc{
%\\
{\blue (1,4)}%\ar[rd]
\\
&\ar[ld] {\blue (2,3)} 
\\
{(1,2)}\ar[rd]\ar[uu]&& 
\\
&\ar[ld] (2,1) \ar[uu]
\\
\ar[uu]{\red\mathbf(1,0)}\ar[d]&&
\\
\mathbf{\green (1,-2)}\ar[rd]
\\
&\ar[uuu] \mathbf{\red (2,-1)}\ar[d] 
\\
& \ar[ld]\mathbf{\green (2,-3)}
\\
\ar[uuu]\mathbf{\red (1,-4)} \ar[d] 
\\
\mathbf{\green (1,-6)}\ar[rd]&&
\\
& \ar[uuu] {\blue (2,-5)}  
\\
\ar[uu]{ \blue (1,-8)} && 
}
\quad
\xymatrix@-1.0pc{
{\blue \De^{(1)}_{w_0(\varpi_1), \varpi_1}}%\ar[rd]
\\
&\ar[ld] { \blue \De^{(1)}_{w_0(\varpi_2), \varpi_2}} 
\\
{\De^{(0)}_{w_0(\varpi_1), \varpi_1}}\ar[rd]\ar[uu]&
\\
&\ar[ld] { \De^{(0)}_{w_0(\varpi_2), \varpi_2}} \ar[uu]
\\
\ar[uu]{\red \De^{(0)}_{c_{st}(\varpi_1), \varpi_1 }}\ar[d]&
\\
{{\green  \De^{(0)}_{c_{st}(\varpi_1), c_{st} (\varpi_1)}}}\ar[rd]
\\
&\ar[uuu] {\red \De^{(0)}_{\varpi_2, \varpi_2}}\ar[d] 
\\
& \ar[ld]{{\green \De^{(0)}_{\varpi_2, c_{st}(\varpi_2)} }}
\\
\ar[uuu]{\red \De^{(0)}_{\varpi_1, c_{st} (\varpi_1)}} \ar[d] 
\\
{{\green \De^{(0)}_{\varpi_1, c_{st}^2 (\varpi_1)}}}\ar[rd]&
\\
& \ar[uuu] { \blue \De^{(-1)}_{\varpi_2, w_0(\varpi_2)}}
\\
\ar[uu]{ \blue \De^{(-1)}_{\varpi_1, w_0(\varpi_1)}}  &
}
\]
\caption{\label{Fig2} {\it The iced-quivers $\Gamma_{c_{st}, -2,1}$ and $\Gamma_{c_{st}, -1,1}$ for $n=3$}}
\end{figure}
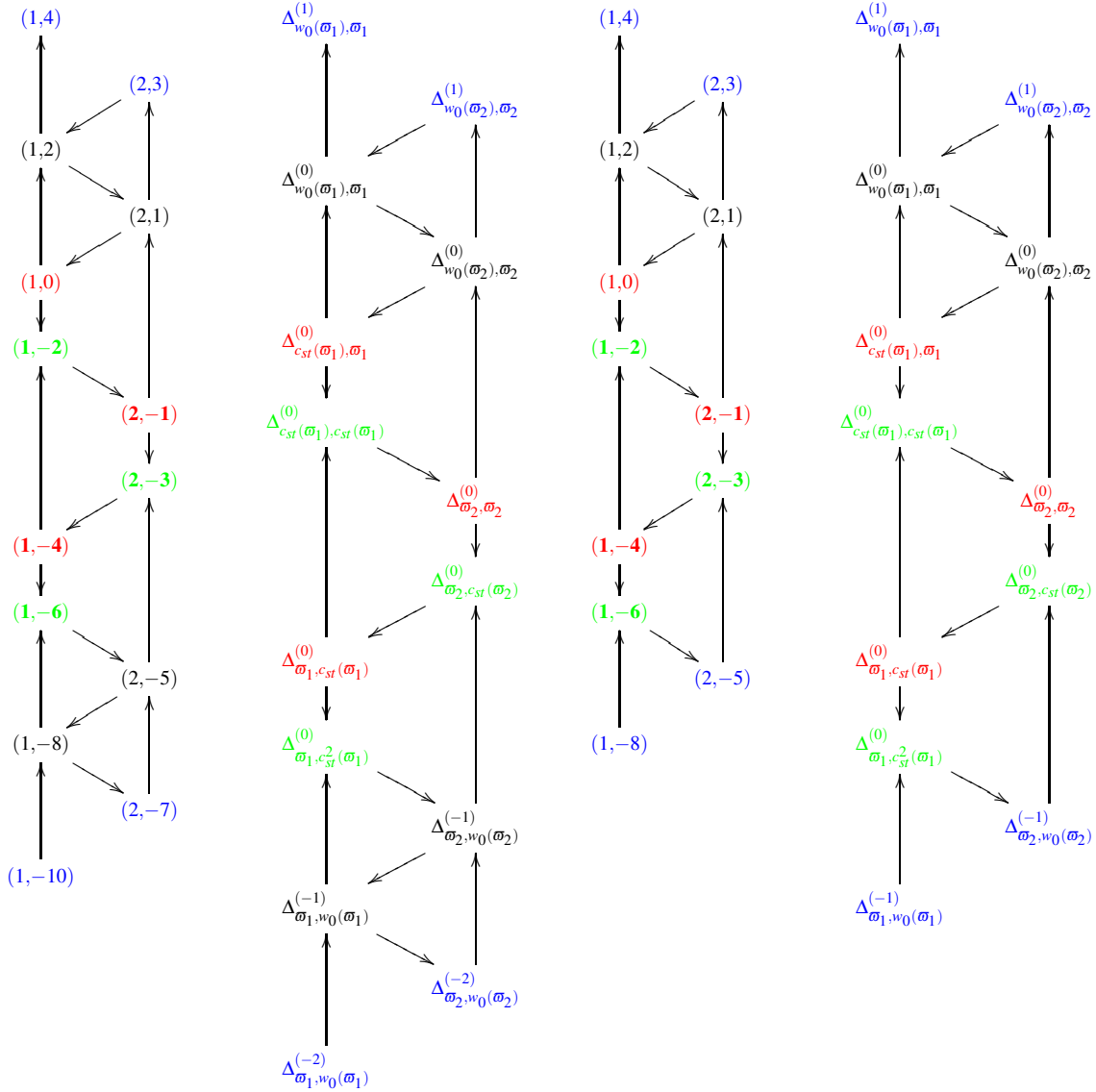

The schemes of bands admit some finite dimensional analogues whose points are described by Eq. (\ref{eq:fin bands intro}). 
In the case of $SL(n)$ and the Coxeter element $c_{st}$, the previous elementary description of bands can be extended to their finite analogues. 
In fact, a $(SL(n), c_{st}, M,N)$-band $(M\leq N)$ can be represented by  a $(N-M +n)\times n$-array 
\[
 \mathrm{B} = \left[b_{ij}\right], \quad (M \leq i \leq N+n-1,\ 1\le j\le n,\ b_{ij}\in K)
\] 
such that every $n\times n$ submatrix
\[
   \mathrm{B}(s) = \left[b_{ij}\right], \quad (M \leq s \leq N, \ s\le i\le s+n-1,\ 1\le j\le n)
\]
consisting of $n$ consecutive rows of $B$ belongs to $SL(n,K)$.  
Such an array $\mathrm{B}$ corresponds to the $(SL(n,K),c_{st}, M,N)$-band $(\mathrm{B}(s))_{M \le s \le N}$.

Recall the notation of Eq. (\ref{eq:Y dets}) and Eq. (\ref{eq:Rn SL(s)}).  
Let 
$$
\R_{n,M,N}:=  K[X_{ij} \mid  M \leq i \leq N+n-1, \ 1\le j \le n] / (Y_{s,n}-1 \mid M \leq s \leq N), \qquad (M \leq N)
$$
and 
$$
    \B_{n,M,M}:= \Spec \bigl( \R_{n,M,N} \bigr).
$$
The previous discussion implies that  $(SL(n), c_{st}, M,N)$-bands are the $K$-rational points of the scheme $\B_{n,M,M}$. 
The schemes of finite bands $\B_{n,M,M} \ (M \leq N)$ are smooth affine varieties.

 Notice that the natural map $\R_{n,M,N} \lra \R_n$ is injective.
 We denote its induced morphism of schemes  by $\pi_{M,N} : \B_n \lra \B_{n,M,N}$.
 At the level of $K$-points, the morphism $\pi_{M,N}$ sends a band $\mathrm{B}$ to the array obtained by forgetting the rows of $\mathrm{B}$ that do not belong to the interval $[M,N].$ 
The fact that the $K$-algebra $\R_n$ is the union of its subalgebras $\R_{n,M,N} \, (M\leq N)$ can be rephrased by saying that the scheme $\B_n$  is the limit of an inverse system of schemes involving the $\B_{n,M,N}$ with respect to which the $\pi_{M,N} \ (M \leq N)$ are  the canonical morphisms of the limit structure.  
The cluster structure of the ring $\R_n$ described by Theorem \ref{thm: cluster structure SL cst bands} is compatible with this limit structure. 
In order to explain this fact, we need to introduce some notation.

\smallskip
In \S\ref{sec:first decription}, the element  $\Delta_{(i,r)}$ of $\R_n$ is defined as $\Delta_{u(\varpi_i),v(\varpi_i)}^{(s)}$ for some $u,v \in W$  and $s \in \Z.$
Let $s_{(i,r)}$ denote this integer $s$.
For $M \leq 0 \leq N$, we set
$$
\VV_{M,N}:= \{ (i,r) \in \VV \mid M \leq s_{(i,r)} \leq N\},
$$
and we denote by $\Gamma_{c_{st},M,N}$ the full subquiver of $\Gamma_{c_{st}}$ supported on  $\VV_{M,N}$.
The quiver $\Gamma_{c_{st},M,N}$ is considered as an iced-quiver by declaring that its highest and lowest vertices in any column are frozen.
For instance, the leftmost side of Figure \ref{Fig2} displays the iced-quiver $\Gamma_{c_{st},-2,1}$ together with a second copy in which any vertex is replaced by its initial cluster variable.
Similarly, the rightmost side of  Figure \ref{Fig2} displays the iced-quiver $\Gamma_{c_{st},-1,1}$.
The frozen vertices are painted in blue.

If we denote by $\AA_{M,N}$ the cluster algebra of the quiver $\Gamma_{c_{st},M,N}$, with non-invertible frozen variables, then we have a natural inclusion  $\AA_{M,N} \subseteq \AA$.
Moreover, any element of $\AA$ belongs to $\AA_{M,N}$ for $-M,N$ sufficiently large.
Indeed, the cluster algebra $\AA$ is the colimit of the system of subalgebras $\AA_{M,N}$.

\medskip

Moreover, the isomorphism $K\otimes \AA \lra \R_n$ of Theorem~\ref{thm: cluster structure SL cst bands} induces isomorphisms 
\begin{equation}
\label{eq: clust partial bands SL}
K\otimes \AA_{M,N} \lra \R_{n,M,N} \qquad (M \leq 0 \leq N).
\end{equation}

The fact that the maps (\ref{eq: clust partial bands SL}) are isomorphisms will be the main step in the proof of Theorem~\ref{thm: cluster structure SL cst bands}, which then follows as an easy consequence of the properties of limits.
Proving that the maps $K\otimes \AA_{M,N} \lra \R_{n,M,N}$ are isomorphisms is actually simpler than proving directly that the global morphism $K\otimes \AA \lra \R_{n}$ is an isomorphism, because the algebras $\R_{n,M,N}$ are finitely generated.

\subsection{$U$-invariants}

\begin{figure}[t!]
\[
\xymatrix@-1.0pc{
{\De_{3}}\ar[rd]
\\
&\ar[ld] {\De_{23}} 
\\
{\De_{2}}\ar[rd]\ar[uu]&
\\
&\ar[ld] \De_{12} \ar[uu]
\\
\ar[uu]{\Delta_{1}}\ar[rd]&
\\
& \ar[uu] {\De_{01}}\ar[ld] 
\\
\ar[uu]{\De_{0}} \ar[rd] &
\\
& \ar[uu]{\De_{-10}} \ar[ld]
\\
\ar[uu]{ \De_{-1}}\ar[rd]
\\
& \ar[uu]{\De_{-2-1}} \ar[ld]
\\
\ar[uu]{ \De_{-2}}
}
\]
\caption{\label{Fig3a} {\it A finite segment of an initial seed of the cluster algebra structure on $\R_3^U$}}
\end{figure}
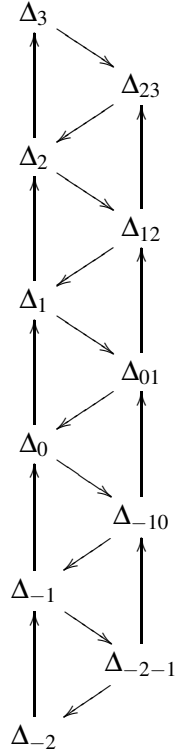

The group $G = SL(n)$ has a natural right action on the space of bands via matrix multiplication
\[
 \mathrm{B}\cdot g := \mathrm{B}g,\qquad (\mathrm{B} \in \B_n, \ g\in SL(n)).
\]
We can restrict this action to the subgroup $U$ and consider the subalgebra $\R_n^U$ of $U$-invariant
regular functions on $\B_n$.

For instance, for every subset $P$ of $\Z$ with cardinality $1\le k\le n$, the flag minor $\De_{P,[1,\,k]}$
is an element of $\R_n^U$. In particular, for every $i\in I$ and $s\in \Z$ the function 
$
\De_{[s+n-i,\,s+n-1],\,[1,\,i]} = \De^{(s)}_{w_0(\varpi_i), \varpi_i}
$
belongs to $\R_n^U$. 

The statement of Theorem~\ref{Thm3}~(i) takes the following form in the case of $(SL(n),c_{\rm st})$-bands.

\begin{Thm}\label{Thm.3.4}
The algebra $\R_n^U$ is a cluster algebra with initial seed
\[
 \left(\Gamma,\ \{\De_{[s+n-i,\,s+n-1],\,[1,\,i]} \mid 1\le i\le n-1,\ s\in\Z\}\right),
\]
where the quiver $\Gamma$ is obtained by taking the semi-infinite upper black part of the quiver $\Gamma_{c_{st}}$
and continuing it in the same way down to $-\infty$.
\end{Thm}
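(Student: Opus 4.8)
\section*{Proof proposal}

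The plan is to prove the statement by the same reduction to finite bands that underlies Theorem~\ref{thm: cluster structure SL cst bands}, while keeping track of the right $U$-action. I would \emph{not} try to realize $\R_n^U$ directly as a sub-cluster-algebra of $\R_n$: the homogeneous quiver $\Gamma$ is not a full subquiver of $\Gamma_{c_{st}}$, since the latter has a distinguished red--green middle part, whereas $\Gamma$ agrees only with the upper black part and is then prolonged indefinitely. The strategy has three ingredients: (a) determine $\R_n^U$ as a ring of flag minors; (b) identify, at the level of finite bands, the $U$-quotient with a configuration space of flags carrying a known cluster structure; (c) pass to the limit.

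First I would identify the invariant ring. Right multiplication by $g\in U$ sends the first $k$ columns of a band $\mathrm{B}$ to the first $k$ columns of $\mathrm{B}$ right-multiplied by the unitriangular block $g_{[1,k],[1,k]}$, so every flag minor $\De_{P,[1,k]}$ (with $P$ a finite set of rows and $[1,k]$ the first $k$ columns) is $U$-invariant. By the first fundamental theorem of invariant theory for this action --- equivalently, because the base affine space $SL(n)/U$ has its coordinate ring generated by flag minors (this is the building block attached to each projection $\pi_s$ via Oya's description \cite{O} of $\C[G]$) --- I would show that $\R_n^U$ is generated by the $\De_{P,[1,k]}$, the defining relations $Y_{s,n}=1$ becoming the normalisations $\De_{[s,s+n-1],[1,n]}=1$. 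In particular the consecutive flag minors $\De_{[a,a+i-1],[1,i]}=\De^{(a-n+i)}_{w_0(\varpi_i),\varpi_i}$, namely the proposed initial cluster, lie in $\R_n^U$.

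Next I would work with the finite bands. Writing $\R_{n,M,N}^U$ for the $U$-invariants of $\R_{n,M,N}$, the algebra $\R_n^U$ is the increasing union of the $\R_{n,M,N}^U$, and $\Gamma$ is the union of finite full subquivers $\Gamma_{M,N}$: grids of consecutive flag minors on the rows available in $\B_{n,M,N}$, with the top and bottom vertex of each column frozen. The exchange relations to be matched are the three-term Desnanot--Jacobi (Plücker) identities among flag minors, normalised by $\De_{[s,s+n-1],[1,n]}=1$; the simplest instance, already visible for $n=2$, is
\[
\De_{\{a+1\},\{1\}}\,\De_{\{a,a+2\},\{1,2\}} \;=\; \De_{\{a\},\{1\}}+\De_{\{a+2\},\{1\}},
\]
which is exactly the mutation producing a new, non-consecutive flag minor; iterating mutations from the consecutive flag minors should reproduce all the $\De_{P,[1,k]}$. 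To turn this into an identity of algebras I would identify $\B_{n,M,N}/U$ with a configuration space of (decorated) flags --- a decorated double Bott--Samelson cell in the sense of Shen--Weng \cite{SW}, compatibly with the Bott--Samelson description already recalled in the introduction --- whose coordinate ring is a cluster algebra with precisely the flag-minor seed (alternatively, import the Geiß--Leclerc--Schröer cluster structure on the relevant unipotent subgroup). This yields both inclusions at once and the coincidence with the upper cluster algebra, matching Theorem~\ref{Thm3}(i).

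Finally I would pass to the colimit over $M\to-\infty$, $N\to+\infty$: the finite seeds are compatible (each is obtained from a larger one by freezing and deleting boundary vertices) and their union is $(\Gamma,\{\De_{[s+n-i,s+n-1],[1,i]}\})$, so the colimit of the finite cluster algebras is the cluster algebra on that seed and equals $\bigcup_{M,N}\R_{n,M,N}^U=\R_n^U$, exactly as in the passage from $\R_{n,M,N}$ to $\R_n$. I expect the main obstacle to be the finite-level identification of $\R_{n,M,N}^U$ with the cluster algebra \emph{on the nose}: proving that the flag minors generate the invariants with no extra generators and that the Plücker relations exhaust all relations (so that the coordinate ring is exactly the cluster algebra and coincides with its upper cluster algebra), and arranging the boundary freezing so that the truncated quivers are genuinely the $U$-analogues of $\Gamma_{c_{st},M,N}$. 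The essential inputs for this step are the factoriality of $\R_n$ established earlier and the coprimality of the flag minors.
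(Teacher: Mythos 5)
Your route is genuinely different from the paper's, but it has a gap at its central step --- and it is the step you yourself flag as ``the main obstacle.'' The identification of $\B_{n,M,N}/U$ with a decorated double Bott--Samelson cell cannot give Theorem \ref{Thm.3.4} as stated: the Shen--Weng cells \cite{SW} are defined by non-vanishing conditions, and their coordinate rings carry cluster structures with \emph{invertible} frozen variables, so such an identification only sees the localization of $\R_{n,M,N}^U$ at the frozen flag minors (the paper makes exactly this point for the ambient ring: only the open locus of $B(G,c,M,N)$ where the frozens do not vanish is a decorated Bott--Samelson cell). Passing from the localized to the non-localized statement is the hard part --- it is the analogue of the difference between \cite{BFZ} on $K[G^{w_0,w_0}]$ and Oya's Theorem \ref{thm: Oya} on $K[G]$ --- and your proposal assumes it; the fallback you describe (flag minors generate, Pl\"ucker relations exhaust all relations, the ring equals its upper cluster algebra) is a restatement of what has to be proved, not an argument.

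Concretely, two ingredients are missing, and they are what Section \ref{sec_U_inv} of the paper supplies. First, to show that the cluster algebra exhausts $\R_{n,M,N}^U$ one must show that its generators are cluster variables: by Corollary \ref{Cor-6.4} and Lemma \ref{Lem3.5}, besides the flag minors of $\mathrm{B}(0)$ (covered by the classical type $A$ result on $K[SL(n)/U]$, cf. \cite[\S 6.5]{FWZ}) one needs the almost-consecutive maximal minors $\psi^{(s)}_{i,k}=\theta^{(s)}_{i,k}$, and proving that these arise from mutations is a real theorem (Proposition \ref{Prop-6.5}, via the explicit mutation sequence and bi-degree bookkeeping of Lemma \ref{Lem.6.9}; see also Proposition \ref{Prop.8.2}); your ``iterating mutations should reproduce all the $\De_{P,[1,k]}$'' is precisely this unproved point. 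Second, for the reverse inclusion via the Starfish lemma, the unique factorization hypothesis must hold for the \emph{invariant} ring $\R_{n,M,N}^U$, not for $\R_n$ or $\R_{n,M,N}$; the paper obtains it from \cite[Theorem 3.17]{VP} (using that $U$ is connected with trivial character group), whereas your stated ``essential inputs'' (factoriality of $\R_n$ and coprimality of flag minors) do not provide it. Note finally that the inclusion of the $\Gamma$-cluster algebra into $\R_n^U$, which you avoid on the grounds that $\Gamma$ is not a full subquiver of $\Gamma_{c_{st}}$, comes for free in the paper: by translation invariance (Proposition \ref{prop translation}) the quiver $\Gamma$ is an increasing union of sub-seeds of the mutated seeds $\tau_{\mathrm{green}}^{m}(\Sigma)$, so every cluster variable of the $\Gamma$-structure automatically lies in $\R_n^U$ (Proposition \ref{Prop-6.2}); this translation trick is what your plan misses.
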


Figure~\ref{Fig3a} displays a finite segment of the initial seed of Theorem~\ref{Thm.3.4} in the case of $G=SL(3)$.
In this picture, we use the shorthand notation $\De_{[s+n-i,\,s+n-1]}$ for the flag minor $\De_{[s+n-i,\,s+n-1],\,[1,\,i]}$. 

Note that the cluster sub-algebra of $\R_n^{U}$ supported on the finite sub-cluster with vertices
\[
\{\De_{[s+n-i,\,s+n-1],\,[1,\,i]} \mid 1\le i\le n-1,\ 1+i-n\le s\le 1\in\Z\}
\]
where $\De_{[1,\,i],\,[1,\,i]}$ and $\De_{[n-i+1,\, n],\,[1,\,i]}$ are frozen,
recovers the classical cluster structure on the basic affine space $SL(n)/U$ (see \cite[\S6.5]{FWZ}).

\subsection{$G$-invariants}

We now consider the subalgebra $\R^G$ of $SL(n)$-invariant functions on $\B_n$.

For instance, for every subset $P$ of $\Z$ with cardinality $n$, the maximal minor $\De_{P,[1,\,n]}$
is an element of $\R_n^G$. We will use the shorthand notation $\De_P := \De_{P,[1,\,n]}$. 
In particular the following functions
\[
 \psi^{(s)}_i := \Delta_{[s,s+n]\setminus \{s+i\}},\qquad (1\le i \le n-1,\ s\in \Z)
\]
belong to $\R^G$. More generally, the functions
\[
 \psi^{(s)}_{i,k} := \Delta_{[s,\,s+i-1]\sqcup [s+i+k,\,s+n+k-1]},\qquad (1\le i \le n-1,\ s\in \Z,\ k\ge 1)
\]
belong to $\R^G$.

On the other hand, the regular functions $\theta^{(s)}_{i,k}$ defined at the level of points by
\[
\theta^{(s)}_{i,k}(\mathrm{B}) := \De_{\varpi_i,\varpi_i}(\mathrm{B}(s)\mathrm{B}(s+k)^{-1}),\qquad \qquad (1\le i \le n-1,\ s\in \Z,\ k\ge 1) 
\]
are clearly $G$-invariant. We abbreviate $\theta^{(s)}_i := \theta^{(s)}_{i,1}$.

\begin{Lem}\label{Lem3.5}
For every $1\le i \le n-1$, $s\in \Z$, $k\ge 1$, we have 
\[
 \psi^{(s)}_{i,k} = \theta^{(s)}_{i,k}. 
\]
\end{Lem}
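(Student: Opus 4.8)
The plan is to express both $\psi^{(s)}_{i,k}$ and $\theta^{(s)}_{i,k}$ as determinants of explicit $n\times n$ matrices assembled from the rows of the array $\mathrm{B}$, and then to match them through a single block-triangular factorization. By definition $\psi^{(s)}_{i,k}=\De_{[s,\,s+i-1]\sqcup[s+i+k,\,s+n+k-1]}$ is the maximal minor of $\mathrm{B}$ on the row set $[s,\,s+i-1]\sqcup[s+i+k,\,s+n+k-1]$; let $N$ denote the corresponding $n\times n$ matrix. Since $\mathrm{B}(s)$ is built from rows $s,\dots,s+n-1$ and $\mathrm{B}(s+k)$ from rows $s+k,\dots,s+k+n-1$, the first block of $N$ is exactly the top $i$ rows of $A:=\mathrm{B}(s)$, and the second block is exactly the bottom $n-i$ rows of $C:=\mathrm{B}(s+k)$; thus $\psi^{(s)}_{i,k}=\det N$.

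For the other side, I would use that for $SL(n)$ with $B$ upper triangular the generalized minor $\De_{\varpi_i,\varpi_i}(g)$ equals the principal minor $\det\bigl(g_{[1,i],[1,i]}\bigr)$, being the matrix coefficient of the highest-weight vector $e_1\wedge\cdots\wedge e_i$ of $\wedge^i K^n$. Hence, setting $M:=AC^{-1}$, we have $\theta^{(s)}_{i,k}=\det\bigl(M_{[1,i],[1,i]}\bigr)$, and from $A=MC$ the $j$-th row of $A$ is $\sum_l M_{jl}\,c_l$, where $c_l$ denotes the $l$-th row of $C$.

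The decisive step is the factorization $N=PC$, where $P$ is the matrix whose first $i$ rows are the first $i$ rows of $M$ and whose last $n-i$ rows are the standard basis vectors $e_{i+1},\dots,e_n$: then $PC$ reproduces the top $i$ rows of $A$ and the bottom $n-i$ rows of $C$, which is exactly $N$. Since $C\in SL(n)$, we get $\det N=\det P$; and $P$ is block upper-triangular with diagonal blocks $M_{[1,i],[1,i]}$ and $I_{n-i}$, so $\det P=\det\bigl(M_{[1,i],[1,i]}\bigr)$. Chaining these equalities gives $\psi^{(s)}_{i,k}=\det N=\det\bigl(M_{[1,i],[1,i]}\bigr)=\theta^{(s)}_{i,k}$.

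I do not expect a genuine obstacle: the argument is formal and rests solely on $\det C=1$. The only points requiring care are recalling the identification of $\De_{\varpi_i,\varpi_i}$ with the principal $i\times i$ minor, and keeping the row indices straight so that the two blocks of $N$ really are the designated rows of $A$ and $C$.
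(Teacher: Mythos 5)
Your proof is correct, and it takes a genuinely different route from the paper's. The paper argues in two stages: for $k=1$ it writes $\mathrm{B}(s)\mathrm{B}(s+1)^{-1}$ explicitly as an element of $U(c_{st}^{-1})\,\overline{c}_{st}$, reads off $\theta^{(s)}_{i}(\mathrm{B})=(-1)^{i-1}a_i$ from that matrix, and identifies the $a_i$ with the minors $\psi^{(s)}_{i}$ by solving a linear system using $\det \mathrm{B}(s+1)=1$; for $k>1$ it then shows that the two families $\theta^{(s)}_{i,k}$ and $\psi^{(s)}_{i,k}$ satisfy one and the same system of functional relations (checked via short Pl\"ucker relations for the $\psi$'s, and deferred to Proposition~\ref{Prop.7.1} for the $\theta$'s), and concludes by induction on $k$, since these relations determine all values from the $k=1$ case. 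Your factorization $N=PC$, with $P$ block upper-triangular and $\det C=1$, treats all $k\ge 1$ uniformly in a single step and needs neither the Pl\"ucker relations nor the functional relations for the $\theta$'s; the only outside input is the identification of $\Delta_{\varpi_i,\varpi_i}$ with the principal $i\times i$ minor, which is the standard Fomin--Zelevinsky convention in type $A$ and consistent with the paper's usage (and, as in the paper, checking the identity on $K$-points of the spaces of finite bands suffices, since these are reduced varieties). What the paper's longer route buys is the functional relations themselves: they are not incidental to the lemma, but reappear as the exchange relations of the cluster structure on $\R_n^G$ in Theorem~\ref{Thm.3.5} and Section~\ref{sec_G_inv}, so the paper's proof simultaneously motivates that later development. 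As a proof of the lemma in isolation, yours is the more economical one.
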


\begin{proof}
Recall that $\mathrm{B}(s)\mathrm{B}(s+1)^{-1}\in U(c_{st}^{-1})\overline{c}_{st}$. Hence there 
exists $a_1,\ldots,a_{n-1}\in K$ such that 
\begin{equation}\label{eq.10}
\mathrm{B}(s)\mathrm{B}(s+1)^{-1} = 
\pmatrix
{
 a_1 & a_2  &\cdots & a_{n-1} &(-1)^{n-1}\cr
 1 & 0  & \cdots & 0 &0\cr
 0 & 1  & \cdots & 0& 0\cr 
 \vdots&\vdots&\ddots&\vdots & \vdots\cr
 0 & 0 & \cdots & 1& 0
}.
\end{equation}
Clearly we have $\theta^{(s)}_{i}(\mathrm{B}) = (-1)^{i-1}a_i\ (1\le i\le n-1)$.
On the other hand, Eq.(\ref{eq.10}) shows that the $a_i$'s are solutions of a linear system
whose coefficients are entries of the $(n+1)\times n$ submatrix of $\mathrm{B}$ taken on rows $\{s,s+1,\ldots,s+n\}$.
Taking into account the band condition $\det \mathrm{B}(s+1) = 1$, elementary linear algebra shows that 
$(-1)^{i-1}a_i= \Delta_{[s,s+n]\setminus \{s+i\}}(\mathrm{B}) = \psi^{(s)}_{i}(\mathrm{B})$. This proves the 
Lemma for $k=1$.

The proof for $k>1$ goes as follows. One shows that the functions $\theta^{(s)}_{i,k}$ and $\psi^{(s)}_{i,k}$
satisfy the same system of functional relations:
\[
\theta^{(s)}_{i,k}\theta^{(s+1)}_{i,k} = 
\theta^{(s)}_{i,k+1}\theta^{(s+1)}_{i,k-1} + 
\theta^{(s)}_{i+1,k}\theta^{(s+1)}_{i-1,k},\quad (k\ge 1,\ 1\le i \le n-1,\ s\in \Z),
\]
\[
\psi^{(s)}_{i,k}\psi^{(s+1)}_{i,k} = 
\psi^{(s)}_{i,k+1}\psi^{(s+1)}_{i,k-1} + 
\psi^{(s)}_{i+1,k}\psi^{(s+1)}_{i-1,k},\quad (k\ge 1,\ 1\le i \le n-1,\ s\in \Z),
\]
where we put $\theta^{(s)}_{0,k} = \theta^{(s)}_{n,k} = \theta^{(s)}_{i,0} = 
\psi^{(s)}_{0,k} = \psi^{(s)}_{n,k} = \psi^{(s)}_{i,0} = 1$.

The functional relations for the $\psi$'s can be checked by using the short Pl\"ucker relations satisfied by the maximal minors of $\mathrm{B}$. 
For instance, if $n=3$, $s=1$, and $k=1$, the relation
satisfied by the $\psi$'s can be written as
\[
\De_{134}\De_{245} = \De_{145}\De_{234} + \De_{124}\De_{345}, 
\]
a valid Pl\"ucker relation.
On the other hand 
\[
 \mathrm{B}(0)\mathrm{B}(2)^{-1} =
\pmatrix
{
 \theta_{1,1}^{(0)}(\mathrm{B}) & -\theta_{2,1}^{(0)}(\mathrm{B})  &1\cr
 1 & 0  &0\cr
 0 & 1&   0
 } 
\pmatrix
{
 \theta_{1,1}^{(1)}(\mathrm{B}) & -\theta_{2,1}^{(1)}(\mathrm{B})  &1\cr
 1 & 0  &0\cr
 0 & 1&   0
 }  
\]
and taking the entry $(1,1)$ of this matrix product on both sides we get
\[
\theta^{(0)}_{1,2}(\mathrm{B}) = \left(\theta^{(0)}_{1,1}\theta^{(1)}_{1,1} - \theta^{(0)}_{2,1}\right)(\mathrm{B}) 
\]
as claimed. The general proof of the functional relations for the $\theta$'s will be given
in Proposition~\ref{Prop.7.1}.

Clearly these relations allow to calculate all the $\theta^{(s)}_{i,k}$ as 
polynomials in the $\theta^{(s)}_{i,1}$, and similarly for the $\psi$'s. 
Hence the relations $\theta^{(s)}_{i,1}=\psi^{(s)}_{i,1}$ imply that $\theta^{(s)}_{i,k}=\psi^{(s)}_{i,k}$ 
for every $k>1$.
\cqfd
\end{proof}

\begin{figure}[t!]
\[
\xymatrix@-1.0pc{
{\De_{023}}\ar[d]& \ar[l] \De_{013}
\\
{\De_{034}}\ar[r] &  \De_{-103}\ar[u]\ar[d]
\\
{\De_{-134}}\ar[d]\ar[u]&\De_{-104}\ar[l]
\\
\De_{-145} \ar[r]&\De_{-2-14}\ar[u]\ar[d]
\\
{\De_{-245}}\ar[d]\ar[u]&\De_{-2-15}\ar[l]
\\
{}\save[]+<0cm,0ex>*{\vdots}\restore&{}\save[]+<0cm,0ex>*{\vdots}\ar[u]\restore  
}
\]
\caption{\label{Fig3b} {\it A finite segment of an initial seed of the cluster algebra structure on $\R_3^G$}}
\end{figure}
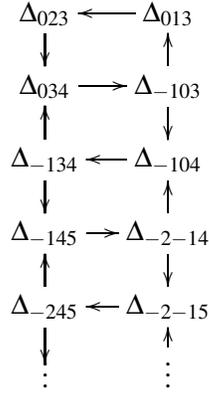

The statement of Theorem~\ref{Thm3}~(ii) takes the following form in the case of $(SL(n),c_{\rm st})$-bands.

\begin{Thm}\label{Thm.3.5}
The algebra $\R_n^G$ is a polynomial ring, namely
\[
 \R_n^G = K[\theta^{(s)}_i\mid 1\le i \le n-1,\ s\in \Z].
\]
It has a cluster algebra structure with a class of initial seeds whose cluster variables are all of the form 
$\theta^{(s)}_{i,k}$ for suitable integers $s,i,k$. One remarkable initial seed is
\[
 \left(\Lambda,\ \{\theta^{(s(i,k))}_{i,k} | 1\le i \le n-1,\ k>0 \}    \right)
\]
where $s(i,k) = 1 -\lfloor\frac{k+1}{2}\rfloor$ (\resp $s(i,k) = -\lfloor\frac{k}{2}\rfloor$) if $i$ is odd (\resp even),
and $\Lambda$ is the quiver with arrows
\begin{eqnarray*}
&&\theta^{(s(i,k))}_{i,k} \to \theta^{(s(i\pm 1,k))}_{i\pm 1,k} \mbox{ if $i+k$ is odd and $i\pm1 \in [1,n-1]$},\\ 
&&\theta^{(s(i,k))}_{i,k} \to \theta^{(s(i,k + 1))}_{i,k + 1} \mbox{ if $i+k$ is even},\\
&&\theta^{(s(i,k))}_{i,k} \to \theta^{(s(i,k - 1))}_{i,k - 1} \mbox{ if $i+k$ is even and $k>1$}.
\end{eqnarray*}
\end{Thm}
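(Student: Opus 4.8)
The plan is to prove Theorem~\ref{Thm.3.5} in three stages: first establish the polynomiality of $\R_n^G$, then verify the functional relations needed to organize the $\theta$'s into a cluster pattern, and finally check that the specific quiver $\Lambda$ together with the indicated initial cluster generates exactly $\R_n^G$ as a cluster algebra.

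First I would prove that $\R_n^G = K[\theta^{(s)}_i \mid 1\le i\le n-1,\ s\in\Z]$ is a polynomial ring. The free right action of $G=SL(n)$ on $\B_n$ (mentioned in \S\ref{subsec-action}) means the quotient is well-behaved, and the $G$-invariant functions are precisely those depending only on the relative positions $\mathrm{B}(s)\mathrm{B}(t)^{-1}$. By Lemma~\ref{Lem3.5} we may work with the maximal minors $\psi^{(s)}_i = \theta^{(s)}_i$, i.e. the functions $\De_{[s,s+n]\setminus\{s+i\}}$. The key point is to show these are algebraically independent and generate all $G$-invariants. One clean way is to exhibit an explicit slice: since the action is free, I would produce a section by normalizing each band so that, say, $\mathrm{B}(s)\mathrm{B}(s+1)^{-1}$ takes the companion-matrix form of Eq.~(\ref{eq.10}); the free parameters $a_i$ (equivalently the $\theta^{(s)}_i$ up to sign) then give a coordinate system on the quotient, showing $\R_n^G$ is the polynomial ring on the $\theta^{(s)}_i$. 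This reduces the invariant-theory statement to the observation that a band is determined up to right $G$-action by its sequence of consecutive ratios, each of which is pinned down by $n-1$ free entries.

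Second, with polynomiality in hand, I would establish that the larger family $\theta^{(s)}_{i,k}$ satisfies the Hirota-type exchange relations already written in the proof of Lemma~\ref{Lem3.5}, deferring the general case to Proposition~\ref{Prop.7.1} as the excerpt does. These relations are exactly of cluster-mutation shape, and they let me express every $\theta^{(s)}_{i,k}$ as a Laurent polynomial in the variables of the proposed initial seed. The substantive step here is to verify that the quiver $\Lambda$ is precisely the one whose mutations reproduce these relations: I would check that mutating at a vertex $\theta^{(s(i,k))}_{i,k}$ of $\Lambda$ produces the binomial exchange $\theta^{(s)}_{i,k}\theta^{(s+1)}_{i,k} = \theta^{(s)}_{i,k+1}\theta^{(s+1)}_{i,k-1} + \theta^{(s)}_{i+1,k}\theta^{(s+1)}_{i-1,k}$, matching the arrow prescription. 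This is a combinatorial bookkeeping task tracking the parities of $i+k$ and the shift function $s(i,k)=1-\lfloor(k+1)/2\rfloor$ or $-\lfloor k/2\rfloor$.

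Finally I would prove that the cluster algebra generated by $(\Lambda,\{\theta^{(s(i,k))}_{i,k}\})$ equals $\R_n^G$ on the nose. Containment of the cluster algebra in $\R_n^G$ follows from the first two steps, since all cluster variables produced by mutation are among the $G$-invariant functions $\theta^{(s)}_{i,k}$. For the reverse containment it suffices to reach every generator $\theta^{(s)}_{i,1}$ of the polynomial ring through a finite sequence of mutations from the initial seed, which the exchange relations make explicit. The main obstacle I anticipate is the bookkeeping needed to confirm that the infinite quiver $\Lambda$ is genuinely mutation-equivalent to seeds realizing all the $\theta^{(s)}_i$ simultaneously, and that no spurious cluster variables (functions outside $\R_n^G$ or denominators) appear; here I would lean on the Laurent phenomenon together with the polynomiality established in the first step to rule out denominators, concluding that the upper cluster algebra, the cluster algebra, and $\R_n^G$ all coincide, in parallel with Remark~\ref{rem: AU R(G,c)G}.
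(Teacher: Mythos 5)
You follow the same route as the paper: polynomiality of $\R_n^G$ via the observation that a band is determined, up to the free right $G$-action, by its sequence of consecutive ratios (this is Corollary~\ref{cor: inv function on bands} specialized to $SL(n)$; note the ratios $\mathrm{B}(s)\mathrm{B}(s+1)^{-1}$ are \emph{automatically} in companion form for any band, so the slice is cut by normalizing $\mathrm{B}(0)=e$ rather than the ratios); the $T$-system of Proposition~\ref{Prop.7.1} as the exchange relations attached to $\Lambda$ (observation (A) in the proof of Theorem~\ref{Thm.7.3}); translation mutation sequences to reach every generator $\theta^{(s)}_i$; and a Laurent-plus-UFD argument for the reverse inclusion. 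Within this shared architecture, however, your third stage has a genuine gap, in two related places.

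First, the justification ``all cluster variables produced by mutation are among the $G$-invariant functions $\theta^{(s)}_{i,k}$'' is false: only the variables reached by the special translation sequences $\tau_0$, $\tau_1$ have this form, while generic mutation sequences produce $G$-invariant functions that are not of this type (in the dictionary of Section~\ref{sec_band_qaf} they correspond to classes of simple modules which are not Kirillov--Reshetikhin). So the inclusion of the cluster algebra in $\R_n^G$ must rest entirely on the Starfish mechanism you invoke at the end. But the Starfish lemma \cite[Corollary 6.4.6]{FWZ} requires the ambient ring to be a \emph{finitely generated} UFD, and $\R_n^G$ is a polynomial ring in countably many variables, hence not Noetherian; it also requires an honest seed, i.e.\ the algebraic independence of your proposed initial cluster, which you never establish. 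The paper resolves both points at once by truncation (Proposition~\ref{Prop.7.4}): for each $N$ it introduces the finitely generated subalgebra $R(G,c)^G_N$ and the finite seed $\Theta_N$ obtained by cutting $\Theta$ at row $N$ and freezing the bottom row; the translation mutations then show the truncated cluster generates the fraction field of $R(G,c)^G_N$, whence algebraic independence follows from a transcendence-degree count; the Starfish lemma applies in this finitely generated setting, and finally $\R_n^G=\bigcup_N R(G,c)^G_N$ yields the theorem as a union of the truncated statements. Without this truncation-and-union device your final step does not close.
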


Figure~\ref{Fig3b} displays the top part of the initial seed of Theorem~\ref{Thm.3.5} in the case of $G=SL(3)$.
Using Lemma~\ref{Lem3.5}, we have written the initial cluster variables as maximal minors $\Delta_P$.

Note that, in contrast with the quivers $\Gamma_{\widetilde{c}}$ and $\Gamma$, the quiver $\Lambda$ is only half-infinite.

\begin{remark}
{\rm
In the case $n=2$, the cluster algebra $\R_2^G$ is a coefficient-free version of the cluster structure
on the homogeneous coordinate ring  of the infinite Grassmannian $\mathrm{Gr}(2,\pm\infty)$ 
studied by Grabowski and Gratz \cite{GG}, 
associated with a locally finite triangulation of the infinity-gon (see \cite[Theorem 3.16]{GG}). 
In our case, the coefficients $\De^{i,i+1}$ of \cite{GG} are equal to 1 because of the definition of
$\B_2$.

Let $X_2$ be the affine cone over the infinite Grassmannian $\mathrm{Gr}(2,\pm\infty)$ arising from the Pl{\"u}cker embedding, as considered in \cite[Appendix A]{GG}.
Geometrically, the previous remark reflects the fact that the scheme $\B_2//G= \Spec(\R_2^{G})$ can be realised as the closed subscheme of $X_2$ defined by the vanishing of the functions $\De^{i,i+1}-1 \ ( i \in \Z)$.

For general $n$, the cluster algebra $\R_n^G$ is a coefficient-free version of the cluster algebra
on the homogeneous coordinate ring of the infinite Grassmannian $\mathrm{Gr}(n,\pm\infty)$ recently studied in \cite{ACFGS} using additive categorification techniques.
The above geometric description of the scheme $\B_2 //G$ extends naturally to the case of a general $n$.
}
\end{remark}

\begin{remark}
{\rm
Clearly, $\R_n^G\subset \R_n^U$. So any element of $\R_n^G$ can be written as a Laurent polynomial 
in the initial cluster variables $\De_{[s,s+i-1],[1,i]} = \De^{(s)}_{\varpi_i,\varpi_i}$ of $\R_n^U$. 
For example, for $n=3$, an easy calculation shows that we can write $\theta^{(0)}_1 = \Delta_{023}$ as
\[
\Delta_{023} = \frac{\Delta_0}{\Delta_1} + \frac{\Delta_2}{\Delta_1}\frac{\Delta_{01}}{\Delta_{12}}
+ \frac{\Delta_{23}}{\Delta_{12}},
\]
that is,
\[
\theta^{(0)}_1 = \frac{\De_{\varpi_1,\varpi_1}^{(0)}}{\De_{\varpi_1,\varpi_1}^{(1)}} 
+ \frac{\De_{\varpi_1,\varpi_1}^{(2)}}{\De_{\varpi_1,\varpi_1}^{(1)}}
\frac{\De_{\varpi_2,\varpi_2}^{(0)}}{\De_{\varpi_2,\varpi_2}^{(1)}}
+ \frac{\De_{\varpi_2,\varpi_2}^{(2)}}{\De_{\varpi_2,\varpi_2}^{(1)}}.
\]
For an interpretation of such formulas in terms of representations of quantum affine algebras see 
Remark~\ref{Rem.8.4} below.
} 
\end{remark}

%%%%%%%%%%%%%%%%%%%%%%%%%%%%%%%%%%%%%%%%
\section{Bands}
\label{sec bands}

In this section we define bands for an arbitrary group $G$ and an arbitrary Coxeter element $c$. 
We first prepare some general results on limits of schemes over inverse systems.

\subsection{Limits and pro-varieties}
\label{sec:some limits}

We fix here our notation for limits of schemes and pro-objects. A reference on limits is the book \cite{Mac}. 

Let $A$ be a pre-ordered set and $X_\bullet$ be an inverse system of $K$-schemes over $A$. 
In other words, $X_\bullet$ is a collection of $K$-schemes $X_a \ (a \in A)$, and of morphisms of $K$-schemes $\pi_{b;\ a} : X_b \lra X_a \ (a \leq b)$. 
A \textit{limit} of $X_\bullet$ is a pair $(X, \pi)$ where $X$ is a $K$-scheme and $\pi=(\pi_a)_{a \in A}$ is a collection of morphisms $\pi_a : X \lra X_a \ (a \in A)$ such that
$$
\pi_{b; \ a} \circ \pi_b  =\pi_a \quad (a \leq b),
$$
and satisfying the following condition.  

\medskip

\noindent \textbf{Universal property of limits.}
\textit{For every $K$-scheme $Z$, and for every collection of morphisms $x_a: Z \lra X_a \ (a \in A)$ such that $\pi_{b; \ a} \circ x_b= x_a \ (a \leq b)$, there exists a unique morphism of $K$-schemes $x: Z \lra X$ satisfying the conditions $\pi_a \circ x= x_a \ (a \in A).$
    }

\medskip

The previous universal property can be rephrased by saying that the composition with the components of $\pi$ defines a natural bijection
\begin{equation}
\label{eq: functor of limit}
X(Z) \simeq \{ x= (x_a)_{a \in A} \mid  x_a \in X_a(Z),  \ {\rm{ and }} \   \pi_{b;\ a} \circ x_b = x_a \  {\rm{whenever}} \ a \leq b \}, \qquad (Z \in \Schk).
\end{equation}
In other words, the collection of morphisms $\pi=(\pi_a)_{a \in A}$ is a universal object for the functor $(\Schk)^{op} \lra \Set$ described by the right hand side of Eq. (\ref{eq: functor of limit}). 
If a limit of the inverse system $X_\bullet$ exists, then it is unique up to isomorphism. 
 Assume that the limit of the system $X_\bullet$ exists and denote it by $(X, \pi)$.
 We refer to the morphisms $\pi_a \ (a \in A)$ as the canonical morphisms of the limit structure of $X$. 
 Most of the times we will let the data of these morphisms implicit and we will just say that $X$ is the limit of the system $X_\bullet.$

\medskip

In this paper, a \textit{pro-variety} is a $K$-scheme which is the limit of an inverse system of algebraic varieties over a \textit{directed set},  that is, a pre-ordered set in which every finite subset has an upper bound. 

\begin{remark}
{\rm
    \label{rem: cofinal subset} A subset $A'$ of $A$ is \textit{cofinal} in $A$ if for every element $a \in A$ there exists $a' \in A'$ such that $a \leq a'$. 
    If $A'$ is a cofinal subset of $A$ and $A$ is a directed set, then $A'$ is also a directed set. 
    Moreover, we have that the inverse system $X_\bullet$ admits a limit over $A$ if and only if it admits a limit over $A'$, and in this case the two limits are canonically isomorphic.
    Indeed, the functor described by the right hand side of Eq. (\ref{eq: functor of limit}) is canonically isomorphic to the same functor obtained by replacing $A$ with $A'$, by means of the natural transformation that forgets the components (of points) not in $A'$.
    }
\end{remark}

Form now on, assume that the limit $X$ of the system $X_\bullet$ exists, and let $Y_\bullet$ be a second  inverse system of $K$-schemes over $A$ with limit $Y$.
 Let $\psi_\bullet: X_\bullet \lra Y_\bullet$ be a morphism of inverse systems. 
 In other words, $\psi_\bullet$ is a collection of morphisms of $K$-schemes $\psi_a :X_a \lra Y_a \ (a \in A)$ such that, for every $a \leq b$, the diagram 
\[\begin{tikzcd}
	{X_b} & {X_a} \\
	{Y_b} & {Y_a}
	\arrow[from=1-1, to=1-2]
	\arrow["{\psi_b}"', from=1-1, to=2-1]
	\arrow["{\psi_a}", from=1-2, to=2-2]
	\arrow[from=2-1, to=2-2]
\end{tikzcd}\]
is commutative. 
The horizontal maps of the square above are the ones defining the two inverse systems. 
The morphism of inverse systems $\psi_\bullet$ induces a morphism of schemes $\psi: X \lra Y$. 
At the level of the functors of points described through Eq. (\ref{eq: functor of limit}), $\psi$ is given as follows:
$$
\begin{array}{rcl}
   X(Z)  & \lra & Y(Z)  \\
    x=(x_a)_{a \in A} & \longmapsto & \psi(x)=(\psi_a \circ x_a)_{a \in A} 
\end{array} \qquad (Z \in \Schk).
$$

\subsection{The affine case and infinite products}
From now on, we assume that $A$ is a directed set and we take a closer look at the case where the schemes $X_a$ are affine. 
In particular, we assume that $X_a= \Spec(R_a) \ (a \in A)$ for some $K$-algebra $R_a$. 
The collection of homomorphisms 
$$
\pi_{b; \ a}^*: R_a \lra R_b \qquad (a \leq b)
$$
is a direct system  of $K$-algebras over $A$. 
Since the poset $A$ is a directed set, the colimit of this system exists in the category of $K$-algebras.
We denote this colimit by $R$, and we let $\pi_a^*: R_a \lra R \ (a \in A)$ be the canonical morphisms of the colimit structure of $R$.
 We refer to \cite[Chapter II, Ex. 14-23]{AM} for more details on the construction and the properties of colimits of rings over directed sets.
 
Then, we have that $X:= \Spec(R)$ is the limit of the inverse system $X_\bullet$, and the canonical morphism of the limit structure of $X$ are the morphisms $\pi_a : X \lra X_a  \ (a \in A)$ induced by the homomorphisms $\pi_a^*$ through the $\Spec$ functor.
We collect below some easy properties that will be of importance for the rest of the paper.

\begin{Lem}
    \label{lem: limits algebraic}
    Let $X_\bullet$ be an inverse system of affine $K$-schemes as above, and let $A' \subseteq A$ be a cofinal subset. The following hold.
    \begin{enumerate}
        \item If the rings $R_a \ (a \in A)$ are integral domains, then $R$ is an integral domain.
        \item If the homomorphisms $\pi_{b; \ a}^* \ (a \leq b)$ are injective, then the homomorphisms $\pi_a^* \ (a \in A)$ are injective.
        \item The ring $R$ is the union of the subalgebras $\pi_{a'}^*(R_{a'})$ for $a' \in A'.$
    \end{enumerate}
\end{Lem}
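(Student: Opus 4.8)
The plan is to prove the three statements by directly exploiting the description of $R$ as the colimit (directed union) of the $R_a$ along the injective (or arbitrary) transition maps $\pi_{b;\,a}^*$. Since $A$ is a directed set, every element of the colimit $R$ is represented by an element of some $R_a$, and two representatives $r_a \in R_a$, $r_b \in R_b$ give the same element of $R$ if and only if there is a common upper bound $d \geq a,b$ with $\pi_{d;\,a}^*(r_a) = \pi_{d;\,b}^*(r_b)$. This is the standard description of a filtered colimit of commutative rings (see \cite[Chapter II]{AM}), and all three claims follow from it.

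For part (3), the cofinality of $A'$ is exactly what is needed. Given any $r \in R$, write $r = \pi_a^*(r_a)$ for some $a \in A$ and $r_a \in R_a$; by cofinality choose $a' \in A'$ with $a \leq a'$, and then $r = \pi_{a'}^*\bigl(\pi_{a';\,a}^*(r_a)\bigr) \in \pi_{a'}^*(R_{a'})$, using the compatibility $\pi_a^* = \pi_{a'}^* \circ \pi_{a';\,a}^*$. Hence $R = \bigcup_{a' \in A'} \pi_{a'}^*(R_{a'})$. Taking $A' = A$ recovers the statement that $R$ is the union of all the $\pi_a^*(R_a)$.

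For part (1), suppose each $R_a$ is an integral domain and take $x,y \in R$ with $xy = 0$. Using the directedness of $A$, I would represent both $x$ and $y$ by elements $x_a, y_a$ of a \emph{single} common ring $R_a$ (pull back to a common upper bound of the two indices where they are initially defined). The relation $xy = 0$ in the colimit means $\pi_a^*(x_a y_a) = 0$, so by the colimit description there is some $b \geq a$ with $\pi_{b;\,a}^*(x_a y_a) = \pi_{b;\,a}^*(x_a)\,\pi_{b;\,a}^*(y_a) = 0$ in $R_b$. Since $R_b$ is a domain, one of the two factors vanishes in $R_b$, and applying $\pi_b^*$ shows $x = 0$ or $y = 0$ in $R$. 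For part (2), if all $\pi_{b;\,a}^*$ are injective and $\pi_a^*(r_a) = 0$, the colimit description gives $b \geq a$ with $\pi_{b;\,a}^*(r_a) = 0$; injectivity of $\pi_{b;\,a}^*$ forces $r_a = 0$, so $\pi_a^*$ is injective.

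None of these steps presents a serious obstacle; the only point requiring a little care is the repeated appeal to directedness to bring finitely many colimit representatives into a common ring $R_a$, and the correct handling of the equivalence relation defining the colimit (two representatives agree in $R$ precisely after pushing forward to a suitable larger index). The argument is essentially a transcription of the standard properties of filtered colimits of commutative rings, so I would keep it brief and cite \cite{AM} for the underlying colimit formalism.
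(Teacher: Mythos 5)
Your proof is correct and follows essentially the same route as the paper: all three parts rest on the standard description of a filtered colimit of rings (every element comes from some $R_a$, and it vanishes in $R$ iff it vanishes after pushing to some $b \geq a$), combined with directedness and cofinality exactly as in the paper's argument. The only cosmetic difference is that for part (1) the paper simply cites the corresponding exercise in Atiyah--Macdonald, whereas you write out that standard argument explicitly.
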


\begin{proof}
    Statement one is \cite[Chapter II, Ex. 22]{AM}.
    For statement 2, let $a \in A$ and $r_a \in R_a$ such that $\pi_a^*(r_a)=0$.
    By \cite[Chapter II, Ex. 15]{AM} there exists $b \in A$ such that $b \geq a$ and $\pi_{b; \ a}^*(r_a)=0.$
    Statement two follows.
    For statement three, let $r \in R$. 
    By \cite[Chapter II, Ex. 15]{AM} we have that $r=\pi_a^*(r_a)$ for some $a \in A$ and $r_a \in R_a$. 
    Let $a' \in A$ such that $a' \geq a$. 
    Then, we have that $r= \pi_{a'}^* ( \pi_{a'; \ a}^*(r_a))$, which completes the proof of the lemma.
    \cqfd
\end{proof}

Next, assume that $Y_\bullet$ is a second inverse system of affine $K$-schemes over $A$ with limit $Y$.
Let $\psi_\bullet: X_\bullet \lra Y_\bullet$ be a morphism of inverse systems.

\begin{Lem}
\label{lem: limit closed}
If the morphisms $\psi_a : X_a \lra Y_a \ (a \in A)$ are closed embeddings, then the morphism $\psi: X \lra Y $ induced by $\psi_\bullet$ is a closed embedding.
\end{Lem}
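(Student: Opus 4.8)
The plan is to work entirely at the level of rings, using the affine description established just before the statement. Since each $X_a = \Spec(R_a)$ and $Y_a = \Spec(S_a)$, the hypothesis that every $\psi_a$ is a closed embedding translates into the statement that each induced ring homomorphism $\psi_a^* : S_a \lra R_a$ is surjective. The limit $X = \Spec(R)$ and $Y = \Spec(S)$ are the spectra of the colimits $R = \colim_a R_a$ and $S = \colim_a S_a$ over the directed set $A$, and $\psi : X \lra Y$ corresponds to a ring homomorphism $\psi^* : S \lra R$. To show that $\psi$ is a closed embedding of affine schemes, it suffices to show that $\psi^*$ is surjective.

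First I would recall that $\psi_\bullet$ being a morphism of inverse systems means the squares relating $\psi_a$, $\psi_b$ and the system maps commute; dualizing, the maps $\psi_a^*$ assemble into a morphism of the two \emph{direct} systems $S_\bullet \lra R_\bullet$, so that $\psi^* = \colim_a \psi_a^*$. The key general fact is that a colimit of surjective homomorphisms over a directed set is surjective. Concretely, I would argue as follows: take any $r \in R$. By Lemma~\ref{lem: limits algebraic} (statement three, applied with $A' = A$), we have $r = \pi_a^*(r_a)$ for some $a \in A$ and $r_a \in R_a$, where here $\pi_a^* : R_a \lra R$ denotes the canonical map into the colimit $R$. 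Since $\psi_a^*$ is surjective, choose $s_a \in S_a$ with $\psi_a^*(s_a) = r_a$. Letting $\rho_a^* : S_a \lra S$ be the canonical map into the colimit $S$, set $s := \rho_a^*(s_a) \in S$.

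The final step is to check that $\psi^*(s) = r$, which follows from the compatibility of the colimit maps with the morphism of direct systems, i.e. the naturality square $\psi^* \circ \rho_a^* = \pi_a^* \circ \psi_a^*$. Evaluating at $s_a$ gives $\psi^*(s) = \psi^*(\rho_a^*(s_a)) = \pi_a^*(\psi_a^*(s_a)) = \pi_a^*(r_a) = r$, so $\psi^*$ is surjective and hence $\psi$ is a closed embedding. I do not expect a serious obstacle here: the only subtlety is making sure the commuting-square condition for a morphism of inverse systems dualizes cleanly to the naturality of the colimit maps, which is exactly the statement that $\psi^* = \colim_a \psi_a^*$ is well defined; this is the standard interplay between the universal property of the colimit and the compatibility of $\psi_\bullet$ with the system morphisms, and it is already implicit in the description of $\psi$ given on functors of points just above the lemma.
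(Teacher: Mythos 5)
Your proposal is correct and follows essentially the same route as the paper: reduce to showing that the pullback $\psi^* : S \lra R$ on the colimit rings is surjective, using that a closed embedding of affine schemes corresponds to a surjective ring homomorphism. The only difference is that the paper cites \cite[Chapter II, Ex. 18-19]{AM} for the fact that a directed colimit of surjections is surjective, whereas you unpack that exercise into the explicit element-chasing argument.
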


\begin{proof}
    Assume that $Y_a= \Spec(S_a) \ (a \in A)$ and let $S$ be the colimit of the direct system of algebras $S_\bullet.$ 
    By hypothesis, the pullback homomorphisms $\psi_a^* : S_a \lra R_a \ (a \in A)$ are surjective. 
   Then, \cite[Chapter II, Ex. 18-19]{AM} imply that the pullback homomorphism $\psi^*:S \lra R$ is surjective. Hence, $\psi$ is a closed embedding.
    \cqfd
\end{proof}

We conclude this section with few reminders on a prototypical example of pro-objects: infinite products of affine schemes.  
Let $J$ be an infinite set and $\P_f(J)$ be the poset of finite subsets of $J$ ordered by inclusion.
Let $R_j \ (j \in J)$ be a collection of $K$-algebras, and let $X_j:= \Spec(R_j) \ (j \in J)$ be the associated collection of affine $K$-schemes.
Let us consider the data
$$
X_S:= \prod_{j \in S}X_j, \qquad R_S:= \bigotimes_{j \in S}R_j \qquad(S \in \P_f(J)),
$$
so that $X_S= \Spec(R_S).$
To any $S \subseteq S' \subseteq J$ we can associate the natural map $\pi_{S'; \ S} : X_{S'} \lra X_S$ that forgets the components (of points) not in $S$.
The previous data is an inverse system of affine schemes over $\P_f(J)$. 
Its limit is an affine scheme that we denote by $X_J$ and that we call the \textit{product} of the $X_j$.
This terminology is justified by the fact that the functor of points of $X_J$ is the product of the functor of points of the $X_j$. 
More precisely, we have that 
\begin{equation}
    \label{eq: functor product}
    X_J(Z)= \{ x=(x_j)_{j \in J} \mid x_j \in X_j(Z)\} \qquad (Z \in \Schk).
\end{equation}
The natural morphisms $\pi_S: X_J \lra X_S \ (S \subseteq J)$ and $\pi_j : X_J \lra X_j \ (j \in J)$ of the limit structure of $X_J$ are the maps that forget the appropriate components of points.
Moreover, the pullback homomorphism $\pi^*_{S'; \ S} : R_S \lra R_{S'} \ (S \subseteq S')$ are the obvious maps. 
They send an element to itself tensored with the unities of the rings $R_j \ (j \in S' \setminus S)$.
The colimit $R$ of the direct system of algebras $R_\bullet$ is called the \textit{tensor product} of the algebras $R_j$.

Finally, assume that the schemes $X_j \ (j \in J)$ are varieties.
Then, the schemes $X_S \ (S \in \P_f(J))$ are also varieties.
Thus, $X_J$ is a pro-variety. 
Moreover, the homomorphisms  $\pi^*_{S'; \ S} \ (S \subseteq S')$ are injective. Therefore, because of Lemma \ref{lem: limits algebraic}, the algebra $R$ can be thought as the union of the finite tensor products of the algebras $R_j$.

\subsection{Finite $(G,c)$-bands}

Before giving the definition of the spaces of finite bands, we recall few geometric notions.
A morphism of $K$-schemes $\iota: X \lra Y$ is a \textit{monomorphism} if for every $K$-scheme $Z$ the function $X(Z) \lra Y(Z)$ sending a point $x$ to $\iota \circ x$ is injective. 
It is well known that closed embeddings are monomorphisms. 
Therefore, whenever a closed embedding $\iota:X \lra Y$ is given, the functor of points of $X$ can be naturally identified with a subfunctor of the one of $Y$ by means of $\iota$.

\medskip

Let us consider the varieties
$$
G(M,N):= \prod_{M \leq s \leq N} G \qquad (M,N \in \Z),
$$
and their subvarieties
$$
Z(G,c,M,N):= \prod_{M \leq s \leq N} \bigl( \Ucc \bigr). 
$$
We have that $G(M,N)$ (\resp $Z(G,c,M,N)$) is the product of $N-M+1$ copies of $G$ (\resp $\Ucc$) if $M \leq N$ and it is equal to $\Spec(K)$ otherwise. 
Assume from now on that $M \leq N$.
Let us consider the morphisms of algebraic varieties 
$$
\sigma_{M,N} : G(M,N) \lra G(M, N-1) 
$$
sending a point $g=(g(k))_{M \leq k \leq N}$ of $G(M,N)$ to the point $\sigma_{M,N}(g)$ defined by 
$$
\sigma_{M,N}(g)(s)= g(s) g(s+1)^{-1}, \qquad (M \leq s \leq N-1).
$$

\begin{Def}
    \label{def:finite bands}
    Let $M\leq N$. 
    The space of finite $(G,c,M,N)$-bands $B(G,c,M,N)$ is the schematic preimage of $Z(G,c,M,N-1)$ under the morphism $\sigma_{M,N}.$
\end{Def}
From its definition, we have that $B(G,c,M,N)$ is a closed subscheme of $G(M,N)$, and it is therefore an affine scheme. 
Its functor of points is naturally described as a subfunctor of the functor of points of $G(M,N)$ through the natural inclusion.
In particular, for any $K$-scheme $Z$ we have that the set  $B(G,c,M,N)(Z) $ is given by the following formula 
\begin{equation}
   \label{eq:points part bands}
\begin{array}{rrc}
    \bigl\{ g=\bigl (g(k) \bigr)_{M \leq k \leq N} \mid & g(k) \in G(Z), \ g(s)g(s+1)^{-1} \in (\Ucc)(Z) \\[0.5em]
     &  (M \leq k \leq N, \ M \leq s \leq N-1)& \hspace{-0.9em}\bigr\}.
\end{array}
\end{equation}

\begin{example}
    {\rm We consider two simple but instructive examples. Let $s \in \Z$.
    \begin{enumerate}
        \item  The space of finite bands $B(G,c,s,s)$ is isomorphic to $G$. 
        Indeed, we have that 
        $$
        Z(G,c,s,s-1)= \Spec(K)= G(s,s-1).
        $$
        \item   A $Z$-point $\bigl(g(s-1), \ g(s), \ g(s+1) \bigr)$ of $B(G,c,s-1, s+1)$ is completely determined by the the data of $g(s-1)g(s)^{-1}$, $g(s)$ and $g(s)g(s+1)^{-1}.$ 
       More precisely, the morphism 
       $$
       B(G,c,s-1,s+1) \lra \Ucc \times G \times U(c^{-1})\overline{c}
       $$
       sending a $Z$-point as above to the $Z$-point $\bigl( g(s-1)g(s)^{-1}, \ g(s), \ g(s)g(s+1)^{-1} \bigr)$ is an isomorphism.
        Indeed, its inverse is the morphism sending a $Z$-point 
        $$
        \bigl( x, g,y) \in (\Ucc)(Z) \times G(Z) \times (\Ucc)(Z)
        $$ to $(xg, \ g, \ y^{-1}g).$
    \end{enumerate}
    }
\end{example}

Generalising the previous examples, we prove below that the spaces of finite $(G,c)$-bands are isomorphic (as schemes) to the product of a copy of $G$ and a finite number of copies of $U(c^{-1}) \overline{c}$.

\medskip

First, observe that $B(G,c,M,N)$ and $B(G,c,M',N')$ are naturally isomorphic if $N-M=N'-M'$. 
Therefore, when showing geometric properties of the spaces of finite bands we can always reduce to the case where $M \leq 0 \leq N.$
Let us consider the varieties defined by the formula
$$
X_n:=
\left\{
\begin{array}{ll}
   G  & {\rm if} \qquad n=0,  \\[0.5em]
   \Ucc  &  {\rm if} \qquad n \in \Z \setminus \{0\},
\end{array} \right.
$$
and their products
$$
X(G,c,M,N):= \prod_{M \leq s \leq N} X_s \qquad(M \leq 0 \leq N).
$$
For a $K$-scheme $Z$ and a point $x=\bigl( x(s) \bigr)_{M \leq s \leq N} \in X(G,c,M,N)(Z)$, we define the following elements of $G(Z)$:
\begin{equation}
\label{eq: products of x}
    x(0\mid s):= \prod_{0< k \leq s} x(k), \qquad x(s \mid 0):= \prod_{s \leq k < 0} x(k) \qquad (M \leq s \leq N).
\end{equation}
We stress that the products in Eq. (\ref{eq: products of x}) are taken in increasing order. 
For example, we have that 
$$
x(0 \mid 3)= x(1)x(2)x(3), \qquad x(-2 \mid 0)= x(-2)x(-1), \quad x(4 \mid 0)= e, \qquad x(0 \mid -3)=e,
$$
where $e$ denotes the identity of the group $G(Z)$. 
Observe that the condition $x(0 \mid s) \neq e$ implies $s\geq 1$. 
Similarly, if $x(s \mid 0) \neq e$ then $s \leq -1$.
In particular, for every $s \in \Z$, at most one between $x(0 \mid s)$ and $x(s \mid 0)$ is not the identity.

Then, let 
$$ 
\tau_{M,N} : X(G,c,M,N) \lra B(G,c,M,N)
$$
be the morphism sending a $Z$-point $x$ of $X(G,c,M,N)$ to the point $\tau_{M,N}(x)$ of $B(G,c,M,N)$ defined by the formula:
\begin{equation}
    \label{eq: tau MN}
    \tau_{M,N}(x)(s):= x(s \mid 0) x(0 \mid s)^{-1} x(0).
\end{equation}

\begin{Lem}
    \label{lem: partial bands aff space}
    The morphism of $K$-schemes $\tau_{M,N} \ (M \leq 0 \leq N)$ is an isomorphism.
\end{Lem}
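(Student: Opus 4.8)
The plan is to construct an explicit inverse morphism to $\tau_{M,N}$ and verify that the two compositions are the identity, working entirely at the level of functors of points. Since both $X(G,c,M,N)$ and $B(G,c,M,N)$ are affine schemes of finite type, by Yoneda it suffices to exhibit natural transformations between their functors of points that are mutually inverse; concretely, for every $K$-scheme $Z$ I would produce a map $\rho_{M,N} : B(G,c,M,N)(Z) \lra X(G,c,M,N)(Z)$, natural in $Z$, and check $\rho_{M,N}\circ\tau_{M,N} = \id$ and $\tau_{M,N}\circ\rho_{M,N} = \id$ on $Z$-points.

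The candidate inverse is dictated by the band condition. Given a $Z$-point $g = (g(k))_{M\le k\le N}$ of $B(G,c,M,N)$, recall from Eq.~(\ref{eq:points part bands}) that $g(s)g(s+1)^{-1} \in (\Ucc)(Z)$. I would define $\rho_{M,N}(g) = x = (x(s))_{M\le s\le N}$ by setting $x(0) := g(0) \in G(Z)$, and for $s \neq 0$ extracting the band increments: put $x(k) := g(k)g(k-1)^{-1}$ for $0 < k \le N$ and $x(k) := g(k)g(k+1)^{-1}$ for $M \le k < 0$. One checks directly that each such $x(k)$ lies in $(\Ucc)(Z)$ — for the positive increments this is the inverse of a band increment, so one needs that $\Ucc$ is stable under inversion, or equivalently one rewrites the definition so that the element landing in $\Ucc$ is exactly a band increment; I would choose the indexing convention (matching that of Eq.~(\ref{eq: products of x})) that makes each $x(k)$ literally equal to one of the factors $g(s)g(s+1)^{-1}$ and hence manifestly a $Z$-point of $X_k$. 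This defines $\rho_{M,N}(g)$ as a genuine $Z$-point of $X(G,c,M,N)$, and naturality in $Z$ is immediate since all operations are group multiplications and inversions, which commute with the functoriality of $G$.

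The verification that the compositions are identities is a telescoping computation using the definition of $x(0\mid s)$ and $x(s\mid 0)$ in Eq.~(\ref{eq: products of x}). Substituting the formula~(\ref{eq: tau MN}) for $\tau_{M,N}(x)(s)$ and the reconstruction $\rho_{M,N}$, the increasing-order products $x(0\mid s) = x(1)\cdots x(s)$ collapse telescopically against the factors $g(k)g(k-1)^{-1}$, recovering $g(s)$ from $\tau_{M,N}(\rho_{M,N}(g))(s)$, and symmetrically for the negative range using $x(s\mid 0)$. The key bookkeeping point is that at most one of $x(0\mid s)$ and $x(s\mid 0)$ differs from the identity $e$ for each $s$, as observed just before the lemma, so the two telescopes never interfere and the final multiplication by $x(0) = g(0)$ produces exactly $g(s)$. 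The reverse composition $\rho_{M,N}\circ\tau_{M,N}$ is checked the same way: computing the increments of $\tau_{M,N}(x)$ recovers the original $x(k)$.

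The main obstacle I anticipate is purely notational rather than conceptual: getting the index conventions and the increasing-order products exactly right so that the telescoping is clean and each reconstructed $x(k)$ provably lands in the correct factor $X_k$ (i.e.\ in $\Ucc$ for $k\neq 0$ and in $G$ for $k=0$). In particular one must handle carefully the asymmetry between the positive range, where the product $x(0\mid s)$ appears inverted in~(\ref{eq: tau MN}), and the negative range, where $x(s\mid 0)$ appears directly; the two halves require slightly different manipulations even though they are formally symmetric. Once the conventions are fixed, both telescoping identities follow by a finite induction on $|s|$, and because everything is expressed through the group operations of $G$ on arbitrary $Z$-points, the resulting bijections are automatically natural, so Yoneda upgrades them to the desired isomorphism of schemes.
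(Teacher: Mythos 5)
Your proposal is correct and takes essentially the same route as the paper: the paper's proof simply exhibits the inverse on $Z$-points, namely $x(s) = g(s)g(s+1)^{-1}$ for $s<0$, $x(0)=g(0)$, and $x(s)=g(s-1)g(s)^{-1}$ for $s>0$, and concludes that $\tau_{M,N}$ is an isomorphism since it is bijective on $Z$-points for every $K$-scheme $Z$. One caveat: of the two options in your hedge, only the second works --- the set $U(c^{-1})\overline{c}$ is not a subgroup and is not stable under inversion, so your initially written formula $x(k)=g(k)g(k-1)^{-1}$ for $k>0$ must indeed be replaced by $x(k)=g(k-1)g(k)^{-1}$, which is exactly the paper's formula and makes the telescoping argument go through as you describe.
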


\begin{proof}
The function induced by the morphism $\tau_{M,N}$ on the $Z$-points $(Z \in \Schk)$ is bijective. 
Its inverse is the function sending a $Z$-point $g=(g(s))_{M \leq s \leq N}$ to the point $x$ defined by
$$
x(s)= \left\{
\begin{array}{ll}
 g(s)g(s+1)^{-1} &  {\rm if} \qquad s<0,\\
    g(0) & {\rm if} \qquad s=0,  \\
    g(s-1)g(s)^{-1} &  {\rm if} \qquad s>0.\\
\end{array}
\right.
$$
    \cqfd
\end{proof}

\begin{remark}
\label{rem: zero compo tau}
{\rm
Notice that the conditions satisfied by the components of points of  $B(G,c,M,N)$, viewed as a subscheme of $G(M,N)$, are invariant under translation (see Eq. (\ref{eq:points part bands})).
The morphisms $\tau_{M,N}$ break this symmetry by privileging the zero component. 
Indeed, the zero-component of a point $x$ of $X(G,c,M,N)$ is the same as the  one of $\tau_{M,N}(x)$.
Obviously, this choice of the zero component as privileged is arbitrary. Any other component could replace the zero-one by appropriately modifying the morphism $\tau_{M,N}$.
}

\end{remark}

\begin{Cor}
    \label{cor: geometric prop of finite bands}
Let $M \le N$. The scheme  $B(G,c,M,N)$ is an irreducible smooth affine variety of dimension $\dim(G) + (N-M) |I|$.
Its coordinate ring $R(G,c,M,N)$ is isomorphic to a polynomial ring  in $(N-M)  |I|$ variables with coefficients in $K[G]$. 
In particular, $R(G,c,M,N)$ is a unique factorization domain and the isomorphism class of the variety $B(G,c,M,N)$ does not depend on the Coxeter element $c$.
\end{Cor}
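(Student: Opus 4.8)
The plan is to deduce everything from Lemma~\ref{lem: partial bands aff space}, which identifies $B(G,c,M,N)$ with the explicit product variety $X(G,c,M,N) = \prod_{M\le s\le N} X_s$. First I would use the translation-invariance noted before the Corollary to reduce to the case $M\le 0\le N$, so that the isomorphism $\tau_{M,N}$ of Lemma~\ref{lem: partial bands aff space} applies directly. The key structural input is then that $X(G,c,M,N)$ is a product of one copy of $G$ (the factor $X_0$) and exactly $N-M$ copies of the variety $U(c^{-1})\overline{c}$ (the factors $X_s$ for $s\neq 0$ with $M\le s\le N$).

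The second step is to understand the geometry of the single factor $U(c^{-1})\overline{c}$. Right translation by the fixed element $\overline{c}$ is an isomorphism of varieties $U(c^{-1}) \to U(c^{-1})\overline{c}$, so it suffices to treat $U(c^{-1})$. Since $U(c^{-1}) = U\cap(c\,U^-c^{-1})$ is a closed subgroup of the unipotent group $U$, the exponential map shows it is isomorphic (as a variety) to an affine space $\A^d$ for $d = \dim U(c^{-1})$. The crucial dimension count is $d = |I|$: indeed $U(c^{-1}) = U\cap(c\,U^- c^{-1})$ consists of the root subgroups for positive roots $\alpha$ with $c^{-1}(\alpha)$ negative, and for a Coxeter element $c$ there are exactly $\ell(c) = |I|$ such roots, with all associated root subgroups one-dimensional (the group is simply-laced). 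Hence $U(c^{-1})\cong \A^{|I|}$ as a variety, independently of the choice of $c$.

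Combining these two steps gives an isomorphism of varieties
\[
B(G,c,M,N) \;\cong\; G \times \bigl(U(c^{-1})\overline{c}\bigr)^{N-M} \;\cong\; G \times \A^{(N-M)|I|},
\]
from which all the asserted properties follow mechanically. At the level of coordinate rings this reads $R(G,c,M,N)\cong K[G]\otimes_K K[\A^{(N-M)|I|}] = K[G][t_1,\dots,t_{(N-M)|I|}]$, a polynomial ring in $(N-M)|I|$ variables over $K[G]$. Irreducibility and smoothness are inherited from $G$, which is an irreducible smooth variety, since passing to a product with affine space preserves both; the dimension is $\dim G + (N-M)|I|$. The unique factorization property of $R(G,c,M,N)$ follows because $K[G]$ is a UFD (a polynomial ring over a UFD is again a UFD by Gauss's lemma); here one uses that the coordinate ring of a simply-connected semisimple group is factorial, a classical result. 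Finally, since neither the displayed isomorphism nor the resulting ring depends on $c$ beyond the number $N-M$ of extra factors, the isomorphism class of $B(G,c,M,N)$ is independent of $c$.

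I expect the only genuinely substantive point to be the dimension identity $\dim U(c^{-1}) = |I|$; everything else is formal. This hinges on the standard facts that $\dim\bigl(U\cap wU^-w^{-1}\bigr) = \ell(w)$ and that a Coxeter element has length $|I|$, so I would cite these rather than reprove them. The factoriality of $K[G]$ is the other place where I invoke an external result; with the base field of characteristic zero and $G$ simply-connected this is exactly the hypothesis under which $K[G]$ is known to be a UFD.
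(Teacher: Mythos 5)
Your proposal is correct and follows essentially the same route as the paper: reduce to $M\le 0\le N$, apply Lemma~\ref{lem: partial bands aff space} to identify $B(G,c,M,N)$ with $G\times\bigl(U(c^{-1})\overline{c}\bigr)^{N-M}$, note $U(c^{-1})\cong\A^{|I|}$, and conclude via factoriality of $K[G]$ for simply-connected $G$. The paper states this more tersely, leaving the dimension count and the polynomial-ring identification implicit, but the substance is identical.
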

\begin{proof}
We can assume without loss of generality that $M \leq 0 \leq N$. 
Since the variety $U(c^{-1})\oc$ is isomorphic to an affine space $\A^{|I|}$ of dimension $|I|$ (see \S\ref{sec: alg gps}), Lemma \ref{lem: partial bands aff space} implies that the scheme $B(G,c,M,N)$ is isomorphic to $G \times \A^{(M-N) |I|}$. The latter scheme is an irreducible smooth affine variety  of dimension $\dim(G) + (N-M) |I|$, and its coordinate ring is a polynomial ring in $(N-M)|I|$ variables with coefficients in $K[G].$
Moreover, the coordinate ring $K[G]$ of $G$ is a  unique factorization domain as $G$ is simply connected \cite[Corollary p.~303]{P}.
The ring $R(G,c,M,N )$ is then a unique factorization domain as it is isomorphic to a polynomial ring over $K[G]$.
\cqfd

\end{proof}

For any triple of integers $M \leq s \leq N$, let us consider the natural morphism 
\[
\pi_s: B(G,c,N,M) \lra G
\]
that projects a point of the space of $(G,c,M,N)$-bands to its $s$-component.
Notice that in the notation $\pi_s$ we drop the dependence on $N$ and $M$.
\begin{Lem}
    \label{lem: generators part bands}
   Let $M \le N$.
   The pullback homomorphisms $\pi_s^* : K[G] \lra R(G,c,M,N) \ (M \leq s \leq N)$ are injective.
   The $K$-algebra $R(G,c,M,N)$ is generated by its subalgebras $\pi_s^*(K[G])$.
\end{Lem}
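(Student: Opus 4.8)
The plan is to use the isomorphism $\tau_{M,N}$ established in Lemma~\ref{lem: partial bands aff space}, which identifies $B(G,c,M,N)$ with the product $X(G,c,M,N)$. Under this identification, Corollary~\ref{cor: geometric prop of finite bands} tells us that $R(G,c,M,N)$ is a polynomial ring in $(N-M)|I|$ variables over $K[G]$, where the $K[G]$ factor corresponds to the zero component and the polynomial variables correspond to the coordinates on the copies of $\Ucc$ for $s\neq 0$. First I would reduce to the case $M\le 0\le N$ using the translation isomorphism noted before Corollary~\ref{cor: geometric prop of finite bands}, so that $\tau_{M,N}$ is available.

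For the injectivity of $\pi_s^*$, I would argue that $\pi_s$ is a surjective (indeed dominant) morphism of varieties. Concretely, $\pi_s$ admits a section: given $g\in G$, one can produce a band whose $s$-component is $g$ by choosing all the other ``difference'' coordinates $x(k)\in\Ucc$ freely (for instance all equal to $\overline{c}$, or any fixed element of $U(c^{-1})\overline{c}$) and reconstructing the band via $\tau$. Since $\pi_s$ has a section, it is surjective on $K$-points, hence dominant, and therefore the pullback $\pi_s^*\colon K[G]\to R(G,c,M,N)$ is injective because a dominant morphism of irreducible varieties induces an injection of coordinate rings. (Alternatively, the composite $\pi_s\circ\tau_{M,N}$ is, up to the change of variables in Eq.~(\ref{eq: tau MN}), the projection onto a factor followed by multiplication maps, which is manifestly dominant.)

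For the generation statement, the key point is that the ``difference'' coordinate $x(s)$ attached to a nonzero index $s$ can be expressed in terms of the projections $\pi_t^*(K[G])$. Indeed, the inverse of $\tau_{M,N}$ in Lemma~\ref{lem: partial bands aff space} shows that $x(s) = g(s)g(s+1)^{-1}$ for $s<0$ and $x(s) = g(s-1)g(s)^{-1}$ for $s>0$, while $x(0)=g(0)$. Thus each generator $x(s)$ of the polynomial ring $R(G,c,M,N)$, as a function on the band, is obtained by multiplying matrix coefficients pulled back from two consecutive components $\pi_{s}$ and $\pi_{s+1}$ (or $\pi_{s-1}$ and $\pi_s$). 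Since the coordinate functions on $\Ucc$ and on $G$ are all polynomial expressions in matrix entries, and multiplication in $G$ is a morphism, every coordinate function on $B(G,c,M,N)$ lies in the subalgebra generated by the $\pi_s^*(K[G])$ for $M\le s\le N$.

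The step requiring the most care is the generation claim: one must check that the coordinate functions of $x(s)\in\Ucc$, which a priori parametrize a subgroup of $G$, are genuinely recovered as polynomials in the entries of $g(s)g(s+1)^{-1}$ pulled back through the two adjacent projections. This amounts to observing that the coordinate ring $K[\,\Ucc\,]$ is a quotient of $K[G]$ (since $\Ucc$ is a closed subvariety of $G$), so that the restriction of any regular function on $G$ to $\Ucc$ is computed by the composite morphism $b\mapsto g(s)g(s+1)^{-1}$, which factors through $\pi_s^*(K[G])$ and $\pi_{s+1}^*(K[G])$ via the multiplication and inversion morphisms of $G$. The remaining $K[G]$-factor corresponding to $x(0)=g(0)$ is visibly $\pi_0^*(K[G])$. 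Combining these, the $\pi_s^*(K[G])$ generate all of $R(G,c,M,N)$, which completes the proof.
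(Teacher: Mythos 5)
Your proof is correct. For the injectivity statement you follow essentially the same route as the paper: it also reduces to $M\le 0\le N$ and uses $\tau_{M,N}$ to build the section of $\pi_0$ sending $g\in G$ to the band whose zero component is $g$ and whose difference components all equal $\overline{c}$; the identity $\pi_0\circ\iota_0=\mathrm{id}_G$ then gives injectivity of $\pi_0^*$ at once (your detour through dominance is harmless but unnecessary, since a section already yields $\iota_0^*\circ\pi_0^*=\mathrm{id}$), and the case of general $s$ is handled by the same translation symmetry you invoke (Remark~\ref{rem: zero compo tau}). For the generation statement, however, your argument is genuinely different from the paper's. The paper simply observes that $B(G,c,M,N)$ is a closed subscheme of the affine variety $G(M,N)$, so the restriction homomorphism $K[G(M,N)]\to R(G,c,M,N)$ is surjective, and the tensor factors of $K[G(M,N)]\cong K[G]^{\otimes (N-M+1)}$ restrict precisely to the subalgebras $\pi_s^*(K[G])$; generation is then immediate. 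You instead work through the isomorphism $\tau_{M,N}$ of Lemma~\ref{lem: partial bands aff space}: the coordinate ring of the product $X(G,c,M,N)$ is generated by coordinates pulled back from the factors, the factor $K[\,\Ucc\,]$ is a quotient of $K[G]$ because $\Ucc$ is closed in $G$, and each difference coordinate $x(s)=g(s)g(s+1)^{-1}$ (resp.\ $g(s-1)g(s)^{-1}$) is recovered inside the subalgebra generated by two adjacent projections via the comultiplication and antipode of $K[G]$. Both arguments are valid; the paper's is shorter and purely formal (closed immersions have surjective pullbacks), while yours is more explicit and in fact proves the slightly finer statement that each difference coordinate already lies in the subalgebra generated by just two consecutive copies $\pi_s^*(K[G])$ and $\pi_{s+1}^*(K[G])$.
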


\begin{proof}
    Assume without loss of generality that $M\leq 0 \leq N$.
    Consider the morphism $G \lra X(G,c,M,N)$ sending a point $g$ of $G$ to the point of $X(G,c,M,N)$ whose zero component is $g$ and whose non-zero components are $\overline{c}$. 
    Let $\iota_0$ be composition of the previous morphism with $\tau_{M,N}$. 
    We clearly have that $\pi_0 \circ \iota_0$ is the identity of $G$.
    This implies that $\pi_0^*$ is injective. 
    The fact that the homomorphism $\pi_s^*$ is injective for a general $s$ can be deduced similarly (see Remark \ref{rem: zero compo tau}).
    Then, notice that the pullback homomorphism of regular functions under the natural inclusion $B(G,c,M,N) \lra G(M,N)$ is surjective.
    Indeed, this property holds since $B(G,c,M,N)$ is a closed subscheme of the affine variety $G(M,N)$.
    But the coordinate ring of $G(M,N)$ is the tensor product of $M-N+1$ copies of $K[G]$, and these copies pullback to the subalgebras $\pi_s^*(K[G]) \ (M \leq s \leq N)$.
    Thus, we deduce that $R(G,c,M,N)$ is generated by its subalgebras $\pi_s^*(K[G]).$
    \cqfd
\end{proof}

\subsection{$(G,c)$-bands}
\label{sec: Gc bands}
To define the space of $(G,c)$-bands $B(G,c)$ it is practical to introduce a natural inverse system of $K$-schemes involving the spaces of finite bands. 
The limit of this system will be the scheme $B(G,c)$.  

\medskip

Let $\P:= \{ (M,N) \in \Z^2 \mid M \le N\}$
endowed with the poset structure defined by $(M,N) \leq (M',N')$ if and only if $[M,N] \subseteq [M',N']$.
Notice that $\P$ is a directed set. 
It can be seen as a cofinal subset of the poset of finite subsets of $\Z$ by assigning to an element $(M,N)$ of $\P$ the finite set $[M,N] \cap \Z.$

\medskip

Consider the natural projection morphisms $ G(M',N') \lra G(M,N) \ \bigl( (M,N) \leq (M',N') \bigr)$. 
These maps form an inverse system of algebraic varieties over the poset $\P$. 
Since $\P$ is cofinal in the poset of finite subsets of $\Z$, the limit $G(\infty)$ of the varieties $G(M,N)$ is the product of countably many copies of $G$ (see Remark \ref{rem: cofinal subset}).
The projection $G(M',N') \lra G(M,N)$ induces by restriction a morphism
$$
\pi_{M',N'; \ M,N} : B(G, c, M',N') \lra B(G,c,M,N).
$$

The collection of morphisms $\pi_{M',N'; \ M,N} \ \bigl( (M,N) \leq (M',N') \bigr)$ turns the spaces of finite bands into an inverse system of affine $K$-schemes over $\P$.

\begin{Def}
   The limit of the inverse system of the schemes of finite bands is the space of $(G,c)$-bands, denoted by $B(G,c)$.  
  The scheme $B(G,c)$ is an affine pro-variety whose coordinate ring is denoted by $R(G,c)$.
\end{Def}

The collection of the natural inclusion morphisms  $B(G,c,M,N) \lra G(M,N) \  (M \leq N)$ is a morphism of inverse systems of $K$-schemes over $\P$. 
Therefore, we can consider the induced limit morphism $\iota : B(G,c) \lra G(\infty).$

\begin{Lem}
    \label{lem: bands closed in Ginf}
    The morphism $\iota$ is a closed embedding. 
\end{Lem}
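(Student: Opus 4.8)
The plan is to deduce the statement directly from Lemma \ref{lem: limit closed}, taking $A = \P$ as the indexing directed set, $X_\bullet$ the inverse system of finite band spaces $B(G,c,M,N)$, $Y_\bullet$ the inverse system of varieties $G(M,N)$, and $\psi_\bullet$ the collection of natural inclusions $B(G,c,M,N) \lra G(M,N)$. All the hypotheses of that lemma are already essentially in place in the surrounding discussion, so the proof amounts to matching the present data to its statement.

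First I would record that $\P$ is a directed set and that both systems consist of affine $K$-schemes: each $G(M,N)$ is a finite product of copies of the affine variety $G$, and each $B(G,c,M,N)$ is a closed subscheme of $G(M,N)$ by Definition \ref{def:finite bands}, hence affine. The two limits exist by construction, since $B(G,c)$ is defined to be the limit of the $B(G,c,M,N)$ and $G(\infty)$ is the limit of the $G(M,N)$. That $\psi_\bullet$ is a morphism of inverse systems, i.e.\ that the inclusion squares commute, is exactly the assertion recalled just before the statement of the lemma, and $\iota$ is by definition the limit morphism it induces.

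The one point genuinely requiring a word is that each component $\psi_{M,N}$ is a closed embedding, which is immediate from Definition \ref{def:finite bands}: $B(G,c,M,N)$ is the schematic preimage of the closed subvariety $Z(G,c,M,N)$ under $\sigma_{M,N}$, so its inclusion into $G(M,N)$ is a closed embedding. With this, Lemma \ref{lem: limit closed} applies verbatim and yields that $\iota : B(G,c) \lra G(\infty)$ is a closed embedding. Since the whole argument is simply an instance of the already-proved Lemma \ref{lem: limit closed}, there is no substantial obstacle; the only mildly delicate step is the observation that a schematic preimage of a closed subscheme is itself closed, and this is built into Definition \ref{def:finite bands}.
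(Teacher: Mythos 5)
Your proposal is correct and takes exactly the same route as the paper, whose proof is simply the observation that the statement follows directly from Lemma~\ref{lem: limit closed} applied to the inclusions $B(G,c,M,N) \to G(M,N)$. The hypothesis-checking you spell out (directedness of $\P$, affineness, commutativity of the squares, each component being a closed embedding via Definition~\ref{def:finite bands}) is precisely what the paper leaves implicit.
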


\begin{proof}
This follows directly from Lemma \ref{lem: limit closed}.
    \cqfd
\end{proof}

By Lemma \ref{lem: bands closed in Ginf}, we can identify the functor of points of $B(G,c)$ as a subfunctor of the one of $G(\infty)$ by means of the morphism $\iota$.
In particular, by means of this identification, we have that for any $K$-scheme $Z$:
\begin{equation}
    \label{eq: points bands}
       B(G,c)(Z) = \bigl\{ g=\bigl (g(s) \bigr)_{s \in \Z} \mid  g(s) \in G(Z), \ g(s)g(s+1)^{-1} \in (U(c^{-1}) \overline{c})(Z) \quad (s \in \Z) \bigr\}.
\end{equation}
We denote by $\pi_{M,N} : B(G, c) \lra B(G,c,N,M) \ ( M \leq N)$ the canonical morphisms of the limit structure of $B(G,c)$.
Moreover, as for finite bands, we  consider the projection maps $\pi_s : B(G,c) \lra G \ (s \in \Z)$ sending a point of $B(G,c)$ to its $s$-component.

\begin{Lem}
    \label{lem: generators bands}
    The following hold.
    \begin{enumerate}
        \item The pullback homomorphism $\pi_{M,N}^* : R(G,c,M,N) \lra R(G,c)$ is injective for every $(M,N) \in \P$. 
        \item The $K$-algebra $R(G,c)$ is the union of its subalgebras $\pi_{M,N}^*(R(G,c,M,N))$ for $(M,N) \in \P$ such that $M \leq 0 \leq N.$
        \item The pullback homomorphism $\pi_s^* : K[G] \lra R(G,c)$ is injective for every $s \in \Z$.
        \item The $K$-algebra $R(G,c)$ is generated by its subalgebras $\pi_s^*(K[G])$ for $s \in \Z$.
    \end{enumerate}
\end{Lem}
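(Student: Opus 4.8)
The plan is to derive all four statements formally from the colimit properties of Lemma~\ref{lem: limits algebraic}, fed by the finite-band facts of Lemma~\ref{lem: generators part bands}. Recall from \S\ref{sec: Gc bands} that, since $B(G,c)$ is the limit over the directed set $\P$ of the affine system of finite bands, its coordinate ring $R(G,c)$ is exactly the colimit of the direct system of algebras $R(G,c,M,N)$ with transition homomorphisms $\pi^*_{M',N';\,M,N}$. Thus the framework of Lemma~\ref{lem: limits algebraic} applies verbatim with $A=\P$, and the only input not already packaged in that lemma is the injectivity of the transition homomorphisms $\pi^*_{M',N';\,M,N}$.

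To get that injectivity I would, for fixed $(M,N)\le(M',N')$, construct a section of the scheme morphism $\pi_{M',N';\,M,N}$. Functorially on $Z$-points via Eq.~(\ref{eq:points part bands}), this section sends a band $(g(s))_{M\le s\le N}$ to the band on $[M',N']$ extended by $g(s):=\overline{c}^{\,-1}g(s-1)$ for $N<s\le N'$ and $g(s):=\overline{c}\,g(s+1)$ for $M'\le s<M$; since $e\in U(c^{-1})$, each newly created transition $g(s)g(s+1)^{-1}$ equals $\overline{c}\in\Ucc$, so the extended sequence is a finite band and the assignment, being built from group multiplication and fixed elements, is a natural transformation, hence (by Yoneda) a morphism of schemes. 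Composing it with $\pi_{M',N';\,M,N}$ gives the identity of $B(G,c,M,N)$, so $\pi_{M',N';\,M,N}$ is a split epimorphism and $\pi^*_{M',N';\,M,N}$ is split injective. Statement~(1) then follows from Lemma~\ref{lem: limits algebraic}(2).

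For statement~(2), I would note that $A':=\{(M,N)\in\P\mid M\le 0\le N\}$ is cofinal in $\P$, since for any $(M,N)\in\P$ the pair $(\min(M,0),\max(N,0))$ lies in $A'$ and dominates it; Lemma~\ref{lem: limits algebraic}(3) then gives precisely that $R(G,c)$ is the union of the $\pi^*_{M,N}(R(G,c,M,N))$ over $A'$. For statement~(3), fix $(M,N)$ with $M\le s\le N$; compatibility of the canonical morphisms of the limit structure yields $\pi_s=\pi_s^{\mathrm{fin}}\circ\pi_{M,N}$, where $\pi_s^{\mathrm{fin}}\colon B(G,c,M,N)\to G$ is the finite-band projection, hence $\pi_s^*=\pi_{M,N}^*\circ(\pi_s^{\mathrm{fin}})^*$. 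The first factor is injective by statement~(1) and the second by Lemma~\ref{lem: generators part bands}, so $\pi_s^*$ is injective.

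Finally, statement~(4) should combine (2) with Lemma~\ref{lem: generators part bands}: each $R(G,c,M,N)$ is generated by the subalgebras $(\pi_s^{\mathrm{fin}})^*(K[G])$ with $M\le s\le N$, so applying $\pi^*_{M,N}$ shows that $\pi^*_{M,N}(R(G,c,M,N))$ is generated inside $R(G,c)$ by the $\pi_s^*(K[G])=\pi_{M,N}^*\bigl((\pi_s^{\mathrm{fin}})^*(K[G])\bigr)$ for $M\le s\le N$; taking the union over $A'$ and invoking~(2) yields that $R(G,c)$ is generated by all $\pi_s^*(K[G])$, $s\in\Z$. The only genuinely geometric step is the construction of the section forcing injectivity of the transition maps, and I expect that to be the sole real obstacle; once it is in place, every remaining assertion is a formal consequence of the colimit formalism.
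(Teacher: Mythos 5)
Your proof is correct, and its overall skeleton --- realizing $R(G,c)$ as the colimit of the algebras $R(G,c,M,N)$ over the directed set $\P$ and feeding injectivity of the transition homomorphisms into Lemma~\ref{lem: limits algebraic} --- is exactly the paper's. The differences are local. For injectivity of $\pi^*_{M',N';\,M,N}$, the paper transports the transition morphism through the isomorphisms $\tau_{M,N}$ of Lemma~\ref{lem: partial bands aff space} and observes that it becomes the forgetful projection between the products $X(G,c,M',N')\to X(G,c,M,N)$, whose pullback is the visibly injective inclusion of a tensor factor; you instead build an explicit scheme-theoretic section of $\pi_{M',N';\,M,N}$ by extending a band with $\overline{c}$-transitions. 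These are two faces of the same idea (under $\tau$ your section is precisely the insertion of $\overline{c}$'s used for the section $\iota_0$ in the proof of Lemma~\ref{lem: generators part bands}), but your version works for arbitrary $(M,N)$ without the reduction to $M\le 0\le N$ and yields split injectivity directly. For statement (3) the paper is shorter: it notes $B(G,c,s,s)=G$ and $\pi_{s,s}=\pi_s$, so (3) is literally a special case of (1), whereas you factor $\pi_s$ through a finite-band projection and additionally invoke Lemma~\ref{lem: generators part bands}; both are fine. The genuine divergence is statement (4): the paper deduces it from the closed embedding of $B(G,c)$ into $G(\infty)$ (Lemma~\ref{lem: bands closed in Ginf}), using that pullback along a closed embedding is surjective and that the coordinate ring of $G(\infty)$ is a tensor product of copies of $K[G]$, while you obtain it purely algebraically from (2) combined with the generation statement of Lemma~\ref{lem: generators part bands}. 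Your route avoids Lemma~\ref{lem: bands closed in Ginf} altogether and is arguably the more economical one; the paper's has the merit of running in parallel with the finite-band argument it mirrors.
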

\begin{proof}
Let $(M,N) \leq (M',N')$ be elements of $\P$. 
We first show that the pullback homomorphism $\pi^*_{M',N'; \ M,N}$ is injective. 
Without loss of generality, we can assume that $M \leq 0 \leq N.$
Let 
$$
p_{M',N'; \ M,N} : X(G,c,M',N') \lra X(G,c,M,N)
$$
be the map that forgets the components (of points) not belonging to the interval $[M,N]$. 
The morphism $\pi_{M',N'; \ M,N}$ identifies with $p_{M',N'; \ M,N}$ through the isomorphisms $\tau_{M',N'}$ and $\tau_{M,N}$.
Since the pullback homomorphism $p^*_{M',N'; \ M,N}$ is obviously injective, it follows that $\pi^*_{M',N'; \ M,N}$ is also injective.
Then, statements 1 and 2 follow from Lemma \ref{lem: limits algebraic}.
Statement 3 is a consequence of statement 1. 
Indeed, recall that $B(G,c,s,s)=G$ and observe that the morphism $\pi_{s,s}: B(G,c) \lra B(G,c,s,s)=G$ is $\pi_s.$
Because of Lemma \ref{lem: bands closed in Ginf}, statement 4 can be proved as its analogue statement in Lemma \ref{lem: generators part bands}, using that the coordinate ring of $G(\infty)$ is the tensor product of countably many copies of $K[G].$
    \cqfd
\end{proof}

Let $\P_0$ be the subset of $\P$ defined by the formula.
\begin{equation}
    \label{eq: def P0}
    \P_0:=\{(M,N) \in \P \mid M \leq 0 \leq N\}.
\end{equation}
Note that $\P_0$ is a cofinal subset of the poset $\P$ and of the poset of finite subsets of $\Z$. 
Hence, taking limits of an inverse systems over $\P_0$, or over $\P$, or over the finite subsets of $\Z$ (whenever this makes sense) does not affect the limit (see Remark \ref{rem: cofinal subset}).

Observe that the varieties $X(G,c,M,N) \ ((M,N) \in \P_0)$ form an inverse system of $K$-schemes over $\P_0$ by means of the projection maps $p_{M',N'; \ M,N} \ ((M,N) \leq (M',N'))$ defined in the proof of Lemma \ref{lem: generators bands}. 
The limit of this inverse system is the product $X(G,c)$ of the varieties $X_n \ (n \in \Z)$. 
Moreover, as remarked during the proof of Lemma \ref{lem: generators bands}, the collection of morphisms $\tau_{M,N} : X(G,c,M,N) \lra B(G,c,M,N) \ ((M,N) \in \P_0)$ is an isomorphisms of inverse systems. 

\begin{Cor}
\label{cor: bands product}
    The morphism $\tau : X(G,c) \lra B(G,c)$ induced by the $\tau_{M,N} \ ((M,N) \in \P_0)$ is an isomorphism.
    In particular, the algebra $R(G,c)$ is isomorphic to a polynomial ring in countably many variables with coefficients in $K[G].$ Hence, the ring $R(G,c)$ is a unique factorization domain.
\end{Cor}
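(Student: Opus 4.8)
The plan is to reduce everything to the formal behaviour of (co)limits, since the substantive geometric content has already been extracted in Lemma~\ref{lem: partial bands aff space}. Because $\P_0$ is cofinal in $\P$ (Remark~\ref{rem: cofinal subset}), I may compute both $B(G,c)$ and $X(G,c)$ as limits over $\P_0$. Passing to coordinate rings, these become the colimits $R(G,c) = \colim R(G,c,M,N)$ and $K[X(G,c)] = \colim K[X(G,c,M,N)]$ over the directed set $\P_0$. As recorded just before the statement, the $\tau_{M,N}$ form an isomorphism of inverse systems; dually the pullbacks $\tau_{M,N}^*$ are isomorphisms commuting with the transition homomorphisms, so they assemble into an isomorphism of direct systems of $K$-algebras.

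The first step is then to check that an isomorphism of direct systems induces an isomorphism on colimits. I would exhibit the inverse directly: the family $(\tau_{M,N}^*)^{-1}$ is again a morphism of direct systems, and both composites with $\tau_\bullet^*$ are induced by the identity morphisms of systems, hence equal to the identity of the colimit by the universal property. Equivalently, one argues with functors of points: via the description~(\ref{eq: functor of limit}) of $Z$-points of a limit, $\tau$ sends a compatible family $(x_{M,N})$ to $(\tau_{M,N} \circ x_{M,N})$, a bijection for every $Z$ because each $\tau_{M,N}$ is an isomorphism, whence $\tau$ is an isomorphism by Yoneda. This yields the first assertion, namely $R(G,c) \cong K[X(G,c)]$.

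It remains to identify this ring and to deduce its factoriality. By Corollary~\ref{cor: geometric prop of finite bands}, each $R(G,c,M,N) \cong K[X(G,c,M,N)]$ is a polynomial ring in $(N-M)\,|I|$ variables with coefficients in $K[G]$, the extra variables coming from the factors $\Ucc \cong \A^{|I|}$ indexed by the nonzero $s \in [M,N]$. Taking the colimit over $\P_0$ is an increasing union of such rings, and exhibits $R(G,c)$ as a polynomial ring over $K[G]$ in countably many variables. Since $G$ is simply connected, $K[G]$ is a UFD; the conclusion then follows from the fact that a polynomial ring in an arbitrary set of variables over a UFD is again a UFD.

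I expect the only delicate point to be this last step, the passage from finitely to countably many variables: a colimit of UFDs need not be a UFD in general, so I would argue carefully that every element of $R(G,c)$ lies in one of the finitely generated polynomial subrings $R(G,c,M,N)$ (a UFD by Gauss's theorem), that the units of $R(G,c)$ are precisely the units of $K[G]$, and hence that irreducibility of a factor and uniqueness of factorization are preserved under the inclusions $R(G,c,M,N) \hookrightarrow R(G,c)$.
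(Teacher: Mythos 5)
Your proof is correct and follows essentially the same route as the paper: the paper likewise observes that $\tau$ is an isomorphism because it is a limit of isomorphisms, identifies $R(G,c)$ as a polynomial ring in countably many variables with coefficients in $K[G]$, and deduces unique factorization from that of $K[G]$ (which holds since $G$ is simply connected). The additional care you take with cofinality, the colimit formalism, and the passage of factoriality to infinitely many variables merely fills in steps the paper treats as standard.
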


\begin{proof}
    The fact that $\tau$ is an isomorphism is obvious as it is the limit of isomorphisms.
    Since the coordinate ring of $U(c^{-1}) \overline c$ is a polynomial ring in $|I|$ variables, it follows that $R(G,c)$ is a polynomial ring in countably many variables with coefficients in $K[G]$. 
    Since the ring $K[G]$ is a unique factorization domain, the analogue properties of $R(G,c)$ follow.
\end{proof}

\subsection{The case of $SL(n)$ and $c_{st}$ revisited}
\label{Sect-4.5}

In this section, $G$ denotes the algebraic group $SL(n) \ (n \in \N_{\geq 2})$ and we fix $c=\wt c= c_{st}=s_1 \cdots s_{n-1}.$
 We prove that the scheme $\B_n$ described in \S\ref{sec:SL(n) cst} can be naturally identified with the scheme of $(SL(n), c_{st})$-bands defined in \S\ref{sec: Gc bands}.
 For this, it is sufficient to extend the discussion of  \S\ref{sec:SL(n) cst} on $K$-rational points to  the level of the functor of points. 
 We start by recalling a well known fact about schemes. 
 
 Let $\Kalg$  be the category of associative, unitary and commutative $K$-algebras. 
A $K$-scheme $X$ can be seen as a functor $X : \Kalg \ra \Set$ by composing the functor of points of $X$ with the $\Spec$ functor.
This procedure gives a fully faithful embedding of the category of $K$-schemes in the category of functors from $\Kalg$ to $\Set$.  
 In particular, to show that two $K$-schemes $X$ and $Y$ are isomorphic, it is sufficient to exhibit a natural isomorphism between the functors $X: \Kalg \ra \Set$ and $Y : \Kalg \ra \Set.$

  Let $R$ be a $K$-algebra.  
Notice that the set $\B_n(R)$ consists of $(\infty \times n)$-arrays $\mathrm{B}$ as in Eq.~(\ref{eq:inf arrays1}) where the field $K$ is replaced by $R$, and satisfying the conditions of Eq.~(\ref{eq:inf arrays2}).
Therefore, the natural transformation $\iota_n $ defined by
$$
\begin{array}{rclc}
    \iota_n : \B_n(R)  & \lra &  \displaystyle G(\infty)(R) & (R \in \Kalg)  \\[0.5em]
      \mathrm{B} & \longmapsto & (\mathrm{B}(s))_{s \in \Z} 
\end{array}
$$
is well defined. Recall that we denote $G(\infty)= \prod_{s \in \Z}G$.

\begin{Lem}
    \label{lem: SLn and general bands}
    The image of  $\iota_n$ is $B(SL(n), c_{st})$. 
    The natural transformation $\bar \iota_n : \B_n \ra B(SL(n), c_{st})$ induced by $\iota_n$ is an isomorphism.
\end{Lem}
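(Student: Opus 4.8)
The plan is to establish the claimed isomorphism $\bar\iota_n\colon \B_n \lra B(SL(n),c_{st})$ by working entirely at the level of the functors $\Kalg \lra \Set$, as the excerpt recommends just before the statement. First I would verify that the natural transformation $\iota_n$ lands inside the subfunctor $B(SL(n),c_{st})$ of $G(\infty)$. Fix a $K$-algebra $R$ and an array $\mathrm{B}\in\B_n(R)$, i.e.\ an $(\infty\times n)$-array over $R$ all of whose $n$ consecutive-row blocks $\mathrm{B}(s)$ have determinant $1$. By Eq.~(\ref{eq: points bands}), membership of $(\mathrm{B}(s))_{s\in\Z}$ in $B(SL(n),c_{st})(R)$ amounts to the single condition $\mathrm{B}(s)\mathrm{B}(s+1)^{-1}\in (U(c_{st}^{-1})\overline{c_{st}})(R)$ for all $s$. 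The computation performed in \S\ref{sec:first decription} over $K$, identifying this membership with the shift relations $g_{ij}^{(s)}=g_{i-1,j}^{(s+1)}$ of Eq.~(\ref{glue-eq2}), is purely a matrix identity with entries in $R$ and so carries over verbatim; since consecutive blocks of $\mathrm{B}$ share exactly the rows forced by these shift relations, the condition holds automatically. This shows $\iota_n(\mathrm{B})\in B(SL(n),c_{st})(R)$, so $\iota_n$ corestricts to a natural transformation $\bar\iota_n$.

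Next I would show $\bar\iota_n$ is a natural isomorphism by exhibiting a two-sided inverse on $R$-points, naturally in $R$. An element of $B(SL(n),c_{st})(R)$ is a sequence $(g(s))_{s\in\Z}$ of matrices in $SL(n,R)$ with $g(s)g(s+1)^{-1}\in(U(c_{st}^{-1})\overline{c_{st}})(R)$, which by the same matrix computation means precisely that the last $n-1$ rows of $g(s)$ equal the first $n-1$ rows of $g(s+1)$. Under this overlap condition the rows of all the $g(s)$ assemble coherently into a single $(\infty\times n)$-array $\mathrm{B}$ over $R$: set $b_{ij}$ to be the matching entry read off from any block containing row $i$, the overlaps guaranteeing well-definedness. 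Each block $\mathrm{B}(s)$ then equals $g(s)\in SL(n,R)$, so $\det\mathrm{B}(s)=1$ and $\mathrm{B}\in\B_n(R)$. This assignment is manifestly inverse to $\iota_n$ and is compatible with $K$-algebra homomorphisms $R\to R'$, hence natural.

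The inverse just described is really just the statement that a sequence of overlapping blocks is the same datum as the array obtained by gluing them; the only thing to check carefully is that the two descriptions of $B(SL(n),c_{st})$ — via the explicit defining equations $Y_{s,n}-1$ of $\R_n$ on one side, and via the functor-of-points limit formula (\ref{eq: points bands}) on the other — match on the nose. I expect the main (mild) obstacle to be precisely this bookkeeping: one must confirm that the band condition $g(s)g(s+1)^{-1}\in U(c_{st}^{-1})\overline{c_{st}}$ is equivalent to the \emph{conjunction} of the row-overlap relations (\ref{glue-eq2}) together with $\det g(s)=1$ for all $s$, with no extra or missing relations, so that the closed subscheme of $G(\infty)$ cut out by (\ref{eq: points bands}) coincides scheme-theoretically with $\Spec(\R_n)$. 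Because the excerpt already records the explicit forms of $U(c_{st}^{-1})$ and $\overline{c_{st}}$ and the resulting reformulation of Definition~\ref{def-band} as Definition~\ref{def: SL(n) cst bands}, this verification is routine. Finally, invoking the fully faithful embedding of $K$-schemes into functors $\Kalg\to\Set$ recalled just above the statement, the natural isomorphism of functors $\bar\iota_n$ upgrades to an isomorphism of $K$-schemes $\B_n \xrightarrow{\sim} B(SL(n),c_{st})$, completing the proof.
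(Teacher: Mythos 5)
Your proposal is correct and follows essentially the same route as the paper: the key step in both is observing that the explicit description of $(U(c_{st}^{-1})\overline{c_{st}})(R)$ turns the band condition $g(s)g(s+1)^{-1}\in (U(c_{st}^{-1})\overline{c_{st}})(R)$ into the row-overlap relations over an arbitrary $K$-algebra $R$, giving a bijection $\B_n(R)\to B(SL(n),c_{st})(R)$ natural in $R$, which upgrades to an isomorphism of schemes by the fully faithful embedding into functors on $\Kalg$. Your explicit gluing inverse is just the paper's ``surjective plus obviously injective'' argument phrased constructively, and your worry about scheme-theoretic matching is already resolved by that same fully faithful embedding.
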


\begin{proof}
    let $R$ be a $K$-algebra. Recall that, by \S \ref{sec:first decription}, two matrices $g,h \in SL(n,R)$ satisfy the property that $gh^{-1}$ belongs to $(U(c_{st}^{-1}) \oc_{st})(R)$ if and only if the last $n-1$ rows of $g$ coincide with the first $n-1$ rows of $h$.
This implies that the image of the function $\iota_n : \B_n(R) \lra G(\infty)(R)$ is the subset $B(SL(n), s_{st})(R)$ of $G(\infty)(R)$.
Since $\iota_n$ is obviously injective, the statement follows.
\cqfd
\end{proof}

\begin{remark}
{\rm
    The previous discussion can be easily adapted for finite $(SL_n, c_{st})$-bands.
    }
\end{remark}

\subsection{Actions of $G$ and $T$ on $B(G,c)$}
\label{subsec-action}
We now go back to the general case.
The scheme of $(G,c)$-bands carries two relevant actions described below. 
Namely, we will describe a right action of the group $G$ and a left action of the torus~$T$
on $B(G,c)$. 

\medskip

The action of $G$ on $G(\infty)$ by right diagonal multiplication stabilises the closed subscheme $B(G,c)$. 
Recalling the description of the functor of points of the scheme  $B(G,c)$ (Eq.~(\ref{eq: points bands})), this action is given by the formula:
$$
g' \cdot g= \bigl( g'(s) g \bigr)_{s \in \Z} \qquad \bigl( g'=\bigl(g'(s) \bigr)_{s \in \Z} \in B(G,c)(Z), \ g \in G(Z), \ Z \in \Schk \bigr).
$$
This action corresponds, through the isomorphism $\tau$ of Corollary \ref{cor: bands product}, to the action of $G$ on $X(G,c)$ induced by the right multiplication of $G$ on $X_0$.
Recall that $X_0$ is a copy of $G$. 
This last observation has the following remarkable obvious consequence.

\begin{Cor}
    \label{cor: inv function on bands}
    The algebra $R(G,c)^G$ of $G$-invariant functions on $B(G,c)$ is isomorphic to a polynomial ring in countably many variables with coefficients in $K$.
\end{Cor}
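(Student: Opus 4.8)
The plan is to transport the computation through the isomorphism $\tau : X(G,c) \to B(G,c)$ of Corollary~\ref{cor: bands product} and then read off the invariants from the explicit product decomposition $X(G,c) = \prod_{n\in\Z} X_n$. The crucial input, recorded in the discussion preceding that corollary, is that $\tau$ is $G$-equivariant for the right action, where $G$ acts on $X(G,c)$ \emph{solely} through right multiplication on the distinguished factor $X_0 \cong G$ and \emph{trivially} on every factor $X_n$ with $n \neq 0$. So it suffices to describe the $G$-invariant regular functions on $X(G,c) \cong G \times V$, where I set $V := \prod_{n\neq 0} X_n$ and $G$ acts by right translation on the $G$-factor alone.

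First I would isolate the two factors. Each $X_n$ with $n\neq 0$ equals $\Ucc \cong \A^{|I|}$, so $V$ is a product of countably many affine spaces and $K[V]$ is a polynomial ring in countably many variables over $K$ (coordinates on the factors $X_n$, $n\neq 0$). The remaining factor $X_0 \cong G$ carries the right regular action, which is simply transitive; hence its only invariant functions are the constants. Indeed, if $f\in K[G]$ satisfies $(g\cdot f)(x) = f(xg) = f(x)$ for all $g$, then setting $x = e$ gives $f(g) = f(e)$, so $f\in K$ and $K[G]^G = K$.

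To conclude I would combine these two observations, taking care that forming invariants is compatible with the infinite product and with the colimit defining $R(G,c)$. The clean way is to descend to finite bands: arguing exactly as in the full case, the $G$-action preserves each $B(G,c,M,N)$ and corresponds through $\tau_{M,N}$ to right multiplication on the $X_0$-factor only, so by Corollary~\ref{cor: geometric prop of finite bands} we get a $G$-isomorphism $R(G,c,M,N) \cong K[G]\otimes K[\A^{(N-M)|I|}]$ with $G$ acting only on the $K[G]$-tensorand; therefore $R(G,c,M,N)^G = K[\A^{(N-M)|I|}]$, a polynomial ring over $K$. Since $R(G,c) = \colim_{(M,N)\in\P_0} R(G,c,M,N)$ along $G$-equivariant injections, and invariants (an equalizer, hence a finite limit) commute with directed colimits of $K$-modules, we obtain $R(G,c)^G = \colim_{(M,N)\in\P_0} R(G,c,M,N)^G = K[V]$, a polynomial ring in countably many variables over $K$. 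The one genuine point requiring attention—the main obstacle—is exactly this last compatibility: one must not naively split invariants off an infinite product, but instead reduce to the finitely generated pieces $R(G,c,M,N)$ and pass to the colimit, using that the $G$-action respects the filtration and that the transition maps are equivariant inclusions.
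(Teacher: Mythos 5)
Your proof is correct and follows essentially the same route as the paper, which derives the corollary directly from the observation that, through the isomorphism $\tau$ of Corollary~\ref{cor: bands product}, the $G$-action on $B(G,c)$ becomes right translation on the single factor $X_0\cong G$, whose only invariant functions are constants. The paper leaves the passage to invariants of the infinite product implicit (calling the consequence ``obvious''), so your careful reduction to the finitely generated rings $R(G,c,M,N)$ and the compatibility of invariants with the directed colimit is a legitimate filling-in of detail rather than a different argument.
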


Notice that we have similar right actions of $G$ on any space of finite bands, with respect to which the morphisms $\pi_{M,N} \ ( (M,N) \in \P)$ and $\pi_{M',N'; \ M,N} \ ( (M,N) \leq (M',N'))$ are equivariant.

\medskip

We now switch to the description of the $T$-action. Let $Z$ be a $K$-scheme and $t$ be a $Z$-point of $T$. Consider the element 
$$
t(s):= (\oc)^{-s} t (\oc)^s \in T(Z) \qquad (s \in \Z).
$$
Then, the torus $T$ acts on the left on the scheme $B(G,c)$ by means of the formula
$$
t \cdot g:= \bigl(t(s) g(s)\bigr)_{s \in \Z}, \qquad (t \in T(Z), \ g \in B(G,c)(Z), \ Z \in \Schk).
$$
Notice that this action of $T$ commutes with the previously defined right $G$-action.

\begin{remark}
\label{rem: bands group}
    {\rm  
    Observe that the algebraic group $U(c^{-1})$ is canonically identified with the variety $U(c^{-1}) \oc$  by means of the right multiplication by $\oc$.
This observation and Corollary \ref{cor: bands product} allow to identify the scheme of $(G,c)$-bands with the product of a copy of $G$ and countably many copies of $U(c^{-1})$.  
This identification gives to $B(G,c)$ the structure of an infinite dimensional affine group scheme.
Moreover, since the group $U(c^{-1})$ is stable under the conjugation action of $T$ on $G$, the scheme $B(G,c)$ also inherits an action of the product of countably many copies of $T$.
   Thus one could consider more general algebraic structures on $B(G,c)$ than the one described in this section. We focused on the description of these $G$ and $T$-actions  because they are related to the cluster structure of $B(G,c)$, and because of this they are connected with the representation theory of shifted quantum affine algebras. 
    }
\end{remark}

\begin{example}
    \rm{
Consider the case of $G=SL(n)$ and  ${c} =c_{st}.$
In this example, we use the explicit description of the set $B(SL(n), c_{st})(K)$  given in \S\ref{sec:SL(n) cst} in terms of $(\infty \times n)$-arrays with entries in $K$.

Let $\mathrm{B} \in B(SL(n), c_{st})(K)$ and $g \in SL(n,K)$. 
The band $\mathrm{B} \cdot g$ is obtained by standard matrix multiplication of $B$ and $g$.

Similarly, for $t = \mathrm{diag}(t_i)\in T(K)$, the maximal torus of diagonal matrices in $SL(n,K)$, the band 
$t\cdot \mathrm{B}$ is obtained by matrix multiplication $\overline{t} \mathrm{B}$, where $\overline{t}$ is the infinite periodic diagonal matrix
\[
\overline{t} = \mathrm{diag}(\overline{t}_s\mid s\in \Z) \quad\mathrm{ with }\quad \overline{t}_s = t_{(s\ \mod\ n)}.
\]
    }
\end{example}

\section{Cluster structure}
\label{sec_cluster_bands}

In this section we fix $G$ and a Coxeter element $c$. Recall that we denote by $\wt c$ the Coxeter element
\[
 \wt c := w_0 c^{-1}w_0.
\]
Note that for every $i\in I$ the integer $m_i$ defined in \S\ref{sec_notation}
takes the same value for $c$ and $\wt{c}$. 
Let 
$$
\NC:= \{ (i,k) \ \mid \ i \in I, \ 1 \leq k \leq m_i \}.
$$
Write $ c=s_{i_1}\cdots s_{i_n}$. Let $Q$ denote the corresponding orientation of the Dynkin diagram of $G$. 
It is defined as follows. For a quiver $Q'$ with vertex set $I$, and a vertex $i\in I$, we denote by $s_i(Q')$ the quiver obtained from
$Q'$ by changing the orientation of all arrows incident to $i$.
Then $Q$ is the quiver uniquely determined by the fact that $i_1$ is a source of $Q$, $i_2$ is a source of the quiver $s_{i_1}(Q)$,  and similarly for $2\le k \le n$, $i_k$ is a source of $s_{i_{k-1}}\cdots s_{i_1}(Q)$.

\medskip

Let $\pi_s: B(G,c) \lra G \ (s \in \Z)$ be the natural projection on the $s$-component.
In what follows, we will frequently use the following shorthand notation for certain elements of the ring $R(G,c)$:
\[
\Delta^{(s)}_{u\varpi_i,v\varpi_i} := \pi_s^*\left(\Delta_{u(\varpi_i),\,v(\varpi_i)}\right),\qquad (i \in I, \ u,v\in W,\ s\in \Z).
\] 

\subsection{Statement of the theorem}
\label{sec: statement of main theorem}

 We define the integers $\xi_i\ (i\in I)$ inductively by setting $\xi_{i_1} = 0$ and $\xi_j = \xi_i - 1$ whenever there is an arrow $i \to j$ in $Q$. We also define integers $\wt{\xi_i}\ (i\in I)$ associated in the same way with $\wt{c}$.

In \cite[{\S}3.4]{GHL} an infinite quiver $\G_{\wt{c}}$ is associated with the datum $(G, \wt{c})$. 
This quiver $\G_{\wt{c}}$ can be divided into three segments : a finite connected middle part containing green and red vertices, and two infinite upper and lower parts containing the remaining black vertices. 
The columns of $\G_{\wt{c}}$ are in one-to-one correspondence with $I$. The labels $(i,r)$ of the vertices of $\G_{\wt{c}}$ are a priori defined only up to a uniform shift of the second component $r$. 
So to fix this we adopt the same convention as in \cite[{\S}4.2]{GHL} and we denote by $(\nu(i_n),0)$ the highest red vertex in column $\nu(i_n)$, where  $\nu$ is the involution of $I$ defined by $w_0(\varpi_i) = -\varpi_{\nu(i)}$.
Then the red vertices have labels of the form
\[
 (i, \wt{\xi_i} - 4k),\quad (i\in I,\ 0\le k< m_i),
\]
the green vertices have labels of the form
\[
 (i, \wt{\xi_i} - 4k-2),\quad (i\in I,\ 0\le k< m_i),
\]
the black vertices of the upper part have labels of the form
\[
 (i,r),\quad (i\in I,\ r\equiv \wt{\xi_i}\ (2),\ r>\wt{\xi_i}),  
\]
and the black vertices of the lower part have labels of the form
\[
 (i,r),\quad (i\in I,\ r\equiv \wt{\xi_i}\ (2),\ r\le\wt{\xi_i} - 4m_i).
\]
We denote by $\VV$ the vertex set of $\G_{\wt{c}}$.
We now attach to each $(i,r)\in\VV$ an element of $R(G, c)$. 

\begin{Def}[Initial cluster variables]\label{def-initial-cluster-variables}
For $(i,r)\in \VV$, set $r_i := (r-\wt {\xi_i})/2$. An element of $R(G, c)$ is associated with $(i,r)$ as follows:
\begin{itemize}
 \item to a red vertex $(i, \wt{\xi_i} - 4k)$ we attach the element $\Delta^{(0)}_{c^{m_i-1-k}(\varpi_i),\, \wt{c}^{\,k}(\varpi_i)}$;
 \item to a green vertex $(i, \wt{\xi_i} - 4k-2)$ we attach the element $\Delta^{(0)}_{c^{m_i-1-k}(\varpi_i),\, \wt{c}^{\,\,k+1}(\varpi_i)}$;
 \item to a black vertex $(i,r)$ in the upper part of $\G_{\wt{c}}$ we attach the element 
 \[
\Delta^{(r_i-1)}_{c^{m_i}(\varpi_i),\varpi_i} = \Delta^{(r_i-1)}_{w_0(\varpi_i),\varpi_i};
\]
 \item to a black vertex $(i,r)$ in the lower part of $\G_{\wt{c}}$ we attach the element 
\[
 \Delta^{\left(r_i+2m_i-1\right)}_{\varpi_i,\,\wt{c}^{\,m_i}(\varpi_i)} = \Delta^{(r_i+2m_i-1)}_{\varpi_i,\,w_0(\varpi_i)}.
\] 
\end{itemize}
\end{Def}

\begin{figure}[t!]
\[
\def\objectstyle{\scriptstyle}
\def\lablestyle{\scriptstyle}
\xymatrix@-1.0pc{
&&
\\
%&&
%\\
&{}\save[]+<0cm,1.5ex>*{\vdots}\restore&{}\save[]+<0cm,1.5ex>*{\vdots}\restore  
&{}\save[]+<0cm,1.5ex>*{\vdots}\restore
\\
&{(1,3)}\ar[rd]\ar[u]&
&\ar[ld] (3,3) \ar[u]
\\
&&\ar[ld] (2,2) \ar[rd]\ar[uu]&&
\\
&\ar[uu]{(1,1)}\ar[rd]&
&\ar[ld] (3,1) \ar[uu]
\\
&&\ar[uu] \mathbf{\red(2,0)}\ar[d] &&
\\
&&\ar[ld]\ar[rd] \mathbf{\green(2,-2)} &&
\\
&\ar[uuu]\mathbf{\red(1,-1)}\ar[d]  && \mathbf{\red(3,-1)}\ar[uuu]\ar[d]
\\
&\mathbf{\green(1,-3)} \ar[rd] &&\ar[ld] \mathbf{\green(3,-3)}
\\
&& \ar[d]\ar[uuu]\mathbf{\red(2,-4)} &&
\\
&&\ar[ld] \mathbf{\green(2,-6)} \ar[rd]&&
\\
&\ar[uuu]\mathbf{\red(1,-5)}\ar[d]  && \mathbf{\red(3,-5)}\ar[d]\ar[uuu]
\\
&\mathbf{\green(1,-7)} \ar[rd] &&\ar[ld] \mathbf{\green(3,-7)}
\\
&& \ar[uuu](2,-8) \ar[ld]\ar[rd]&&
\\
&\ar[uu](1,-9) &&\ar[uu] (3,-9) 
\\
&{}\save[]+<0cm,0ex>*{\vdots}\ar[u]\restore&{}\save[]+<0cm,0ex>*{\vdots}\ar[uu]\restore  
&{}\save[]+<0cm,0ex>*{\vdots}\ar[u]\restore
\\
}
\qquad
\def\objectstyle{\scriptscriptstyle}
\def\lablestyle{\scriptscriptstyle}
\xymatrix@-1.0pc{
&\De^{(1)}_{c^2\varpi_1,\,\varpi_1}\ar[rd]%\ar[u]
&
&\ar[ld] \De^{(1)}_{c^2\varpi_3,\,\varpi_3} %\ar[u]
\\
&&\ar[ld] \De^{(0)}_{c^2\varpi_2,\,\varpi_2} \ar[rd]%\ar[uu]
&&
\\
&\ar[uu] \De^{(0)}_{c^2\varpi_1,\,\varpi_1} \ar[rd]&
&\ar[ld] \De^{(0)}_{c^2\varpi_3,\,\varpi_3} \ar[uu]
\\
&&\ar[uu] {\red \De^{(0)}_{c\varpi_2,\,\varpi_2}}\ar[d] &&
\\
&&\ar[ld]\ar[rd] {\green \De^{(0)}_{c\varpi_2,\,\wt{c}\varpi_2}} &&
\\
&\ar[uuu]{\red \De^{(0)}_{c\varpi_1,\,\varpi_1}}\ar[d]  && {\red \De^{(0)}_{c\varpi_3,\,\varpi_3}}\ar[uuu]\ar[d]
\\
&{\green \De^{(0)}_{c\varpi_1,\,\wt{c}\varpi_1}} \ar[rd] &&\ar[ld] {\green \De^{(0)}_{c\varpi_3,\,\wt{c}\varpi_3}}
\\
&& \ar[d]\ar[uuu]{\red \De^{(0)}_{\varpi_2,\,\wt{c}\varpi_2}} &&
\\
&&\ar[ld] {\green \De^{(0)}_{\varpi_2,\,\wt{c}^2\varpi_2}} \ar[rd]&&
\\
&\ar[uuu]{\red \De^{(0)}_{\varpi_1,\,\wt{c}\varpi_1}}\ar[d]  && {\red \De^{(0)}_{\varpi_3,\,\wt{c}\varpi_3}}\ar[d]\ar[uuu]
\\
&{\green \De^{(0)}_{\varpi_1,\,\wt{c}^2\varpi_1}} \ar[rd] &&\ar[ld] {\green \De^{(0)}_{\varpi_3,\,\wt{c}^2\varpi_3}}
\\
&& \ar[uuu]\De^{(-1)}_{\varpi_2,\,\wt{c}^2\varpi_2} \ar[ld]\ar[rd]
&&
\\
&\ar[uu] \De^{(-1)}_{\varpi_1,\,\wt{c}^2\varpi_1} &&\ar[uu] \De^{(-1)}_{\varpi_3,\,\wt{c}^2\varpi_3} 
\\
}
\]
\caption{\label{Fig3} {\it The quiver $\G_{\wt{c}}$ for $\wt{c} = s_2s_1s_3$ in type $A_3$, and the corresponding initial seed of $R(G, \wt c)$.}}
\end{figure}
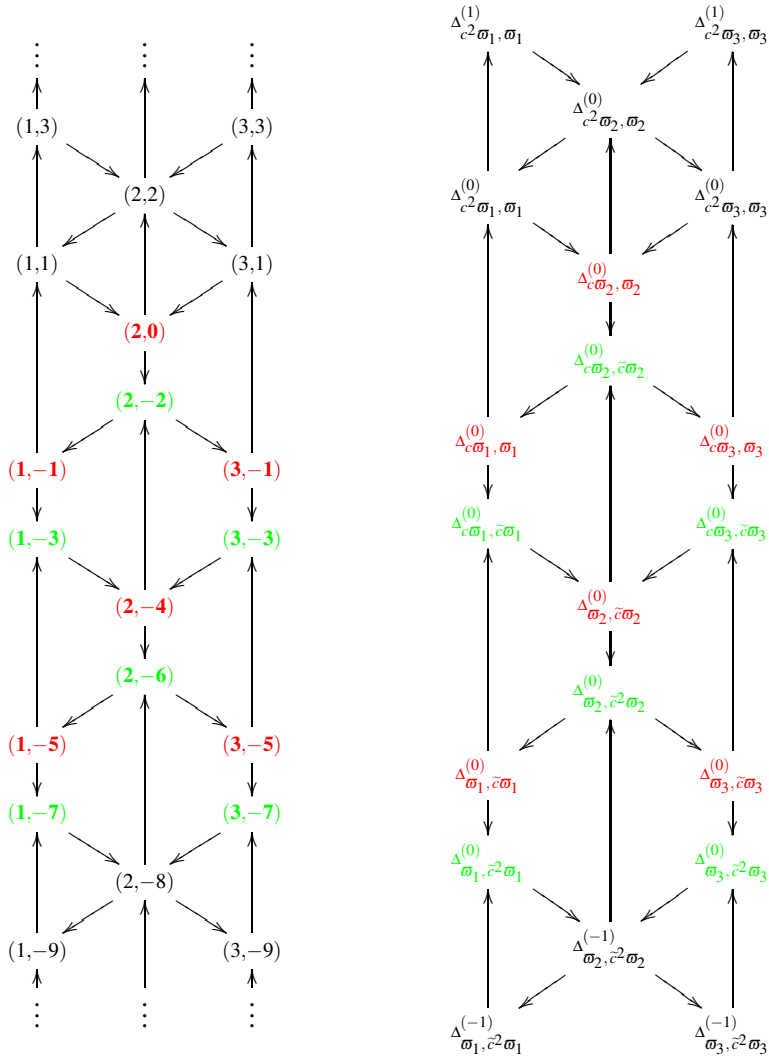 

\begin{example}
{\rm
Let $G$ be of type $A_3$, and $\wt{c} = s_2s_1s_3$. Then $c = s_1s_3s_2$, and $m_1 = m_2 = m_3 = 2$. 
We also have 
$
\wt{\xi_1} = -1,\ \wt{\xi_2} = 0,\ \wt{\xi_3} = -1. 
$
The quiver $\G_{\wt{c}}$ is displayed 
in Figure~\ref{Fig3}, together with a second copy in which vertices have been replaced by the corresponding elements of $R(G, c)$.
}
\end{example}

We can now state one of the main theorems of this paper.
Let $\AA$ denote the cluster algebra with initial seed $(\G_{\wt{c}}, \{z_{(i,r)}\mid (i,r)\in \VV\})$.
For $(i,r)\in\VV$, let $\De_{(i,r)}$ be the corresponding element of $R(G, c)$ introduced in Definition~\ref{def-initial-cluster-variables}.
\begin{Thm}\label{Thm4.4}
The assignment 
\[
z_{(i,r)}\mapsto \De_{(i,r)},\qquad ((i,r)\in\VV)
\]
extends to a unique algebra isomorphism from
$K\otimes \AA$ to $R(G, c)$.
\end{Thm}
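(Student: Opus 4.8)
The plan is to deduce the global isomorphism from a family of finite-dimensional isomorphisms by passing to a colimit, exactly as in the $SL(n)$ discussion of \S\ref{subsect:reformulations}. First I would record two compatible colimit presentations. On the cluster side, for $(M,N)\in\P_0$ let $\AA_{M,N}$ be the cluster algebra of the finite iced subquiver $\G_{\wt c,M,N}$ of $\G_{\wt c}$ obtained by keeping the vertices $(i,r)$ whose attached function $\De_{(i,r)}$ of Definition~\ref{def-initial-cluster-variables} factors through $\pi_s$ with $M\le s\le N$, the extremal vertices in each column being frozen; writing $\VV_{M,N}$ for this vertex set, these form a directed system with $\AA=\colim_{\P_0}\AA_{M,N}$. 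On the geometric side, Lemma~\ref{lem: generators bands} gives $R(G,c)=\bigcup_{\P_0}\pi_{M,N}^*\bigl(R(G,c,M,N)\bigr)$, a colimit over the same cofinal poset $\P_0$. Since each $\De_{(i,r)}$ with $(i,r)\in\VV_{M,N}$ is by construction a generalized minor pulled back through $\pi_{M,N}$, the assignment $z_{(i,r)}\mapsto\De_{(i,r)}$ is defined window by window, and it suffices to produce isomorphisms
\[
K\otimes\AA_{M,N}\ \lra\ R(G,c,M,N),\qquad (M,N)\in\P_0,
\]
compatible with the two systems of structure maps; the theorem then follows by taking the colimit.

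The core is this finite-level statement, which I would prove by induction on $N-M$. The base case $N-M=0$ is the identification $B(G,c,s,s)\cong G$ together with Oya's theorem \cite{O}: the coordinate ring $K[G]$, carrying the cluster structure of the open double Bruhat cell $G^{w_0,w_0}$ of \cite{BFZ} but with the frozen variables left non-invertible, is a cluster algebra equal to its upper cluster algebra, and its distinguished seed is exactly $\AA_{s,s}$ under $z_{(i,r)}\mapsto\De_{(i,r)}$. For the inductive step I would use the product decomposition of Corollary~\ref{cor: bands product}: enlarging the window by one glues an extra copy of $\Ucc\cong\A^{|I|}$, so $R(G,c,M,N)$ is obtained from $R(G,c,M,N-1)$ by adjoining $|I|$ polynomial generators subject only to the band condition. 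On the cluster side this corresponds to amalgamating one further slab of the $G^{w_0,w_0}$-quiver along a frozen column, the identification of the glued frozen variables being precisely the glueing relations of Eq.~(\ref{eq-glueing}); this is the combinatorial heart, and it is exactly the amalgamation underlying the double Bott--Samelson description of \cite{SW,Q2}.

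To upgrade the resulting comparison to an isomorphism I would run the standard upper-bound argument. One first checks that $z_{(i,r)}\mapsto\De_{(i,r)}$ is well defined, i.e. that every cluster variable of $\AA_{M,N}$ maps to a regular function on $B(G,c,M,N)$; this gives an inclusion $K\otimes\AA_{M,N}\hookrightarrow R(G,c,M,N)$, while the Laurent phenomenon places the image inside the corresponding upper cluster algebra. Because $B(G,c,M,N)$ is a smooth affine variety whose coordinate ring is a UFD (Corollary~\ref{cor: geometric prop of finite bands}), a codimension-two regularity argument identifies $R(G,c,M,N)$ with that upper cluster algebra; since the amalgamation step, anchored at Oya's equality, keeps the lower and upper cluster algebras equal at each stage, all three rings coincide and the finite map is an isomorphism. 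One then verifies that these isomorphisms commute with the inclusions $\AA_{M,N}\subseteq\AA_{M',N'}$ and $\pi_{M,N}^*R(G,c,M,N)\subseteq\pi_{M',N'}^*R(G,c,M',N')$ — which holds since both sides are built by the same one-slab amalgamation — and passes to the colimit to obtain $K\otimes\AA\to R(G,c)$.

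The main obstacle I anticipate is the inductive amalgamation at finite level: one must match the finite band quiver $\G_{\wt c,M,N}$, vertex by vertex and arrow by arrow (including the precise role of the green and red middle part versus the black tails of Definition~\ref{def-initial-cluster-variables}), with the iterated amalgamation of double Bruhat cell seeds, and simultaneously control the de-localization from the Bott--Samelson cell $\BB_{M,N}$ of \cite{Q2}, where the frozen variables are inverted, to the genuine polynomial ring $R(G,c,M,N)$. The base-case input of Oya is exactly what makes this de-localization possible; propagating it through the amalgamation while preserving the equality with the upper cluster algebra at each step is the delicate point.
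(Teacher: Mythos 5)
Your global skeleton (reduce to windows $(M,N)\in\P_0$, anchor at Oya's theorem for $K[G]$ at a single position, compare with the upper cluster algebra via a Starfish-type argument, pass to the colimit) is the same as the paper's. But the inductive step your finite-level proof rests on has a genuine gap, and it sits exactly where the paper does something you do not have. When the window grows from $[M,N-1]$ to $[M,N]$, the quiver $\G_{\wt c,M,N}$ acquires only $|I|$ new vertices (one per column), whereas a BFZ seed for $\pi_N^*(K[G])$ has $\dim G$ vertices; so the copy of the $G^{w_0,w_0}$-seed ``at position $N$'' is \emph{not} a sub-seed of $\Sigma_{M,N}$, and your picture of ``amalgamating one further slab of the $G^{w_0,w_0}$-quiver along a frozen column'' is false at the level of initial seeds. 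That copy only becomes visible after performing a specific sequence of mutations, and this is the paper's key mechanism: the translation-invariance property (Proposition~\ref{prop translation}), which says that mutating at all red (resp.\ green) vertices returns the same labelled quiver with every $\De^{(s)}_{c^k(\varpi_i),\,\wt{c}^{\,l}(\varpi_i)}$ replaced by $\De^{(s+1)}_{c^k(\varpi_i),\,\wt{c}^{\,l}(\varpi_i)}$ (resp.\ $\De^{(s-1)}_{c^k(\varpi_i),\,\wt{c}^{\,l}(\varpi_i)}$), proved from the gluing relations of Proposition~\ref{lem: glueing formulas} together with the Fomin--Zelevinsky minor identity \cite{FZ}. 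Iterating this, combined with Proposition~\ref{Prop:seed BFZ} and Theorem~\ref{thm: Oya}, is what puts every $\pi_s^*(K[G])$, $M\le s\le N$, inside the image of the cluster algebra and hence yields $R(G,c,M,N)\subseteq A_{M,N}$ via Lemma~\ref{lem: generators part bands}. Your proposal has no substitute for this; likewise your claim that the amalgamation ``keeps the lower and upper cluster algebras equal at each stage'' is unsupported --- Oya's equality is a theorem about $K[G]$, and there is no general stability of $\AA=\mathcal{U}$ under unfreezing vertices and adjoining new frozen ones.

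Two further steps are asserted where proofs are required. First, well-definedness of $z_{(i,r)}\mapsto\De_{(i,r)}$ is not the statement that cluster variables map to regular functions; it is the algebraic independence of the $\De_{(i,r)}$, $(i,r)\in\VV_{M,N}$, needed so that mutations make sense in the fraction field. The paper proves this (Proposition~\ref{prop: variables free}) using the BFZ identification at $s=0$, translation invariance, and a transcendence-degree count against $\dim B(G,c,M,N)$. Second, the Starfish lemma does not ask that \emph{all} cluster variables be regular (assuming that would be circular); it asks that the one-step mutations of $\Sigma_{M,N}$ lie in $R(G,c,M,N)$ and be irreducible there. This is Proposition~\ref{prop: one-step-mutations}, whose proof at black vertices is genuinely nontrivial (it uses BFZ seeds attached to other double reduced words, and in the case $G=SL(n)$, $c=c_{st}$ an explicit determinantal identity, Lemma~\ref{lem-4.14}), with irreducibility imported from \cite{GLS}. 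Without these inputs your induction cannot run, and even its conclusion is stated too strongly: the Starfish argument gives only the inclusion of the upper cluster algebra into $R(G,c,M,N)$, the reverse inclusion being precisely the translation-invariance--plus--Oya direction you are missing.
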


\begin{remark}
{\rm
The reader can easily verify that in the case of $G=SL(n)$ and the standard Coxeter element $c_{st}$, Theorem \ref{Thm4.4} recovers Theorem \ref{thm: cluster structure SL cst bands} through the isomorphism $\bar\iota_n$ of Lemma \ref{lem: SLn and general bands}. 
}
\end{remark}
The rest of this section will be devoted to the proof of Theorem~\ref{Thm4.4}.
We will denote by
\[
 \Sigma := (\G_{\wt{c}}, \{\De_{(i,r)}\mid (i,r)\in \VV\}).
\]
our candidate initial seed of $R(G, c)$.

\medskip

A key ingredient of the proof of Theorem \ref{Thm4.4} is provided by the gluing relations of Proposition~\ref{lem: glueing formulas}. 
These relations imply that our candidate initial seed satisfies the translation invariance property described by Proposition \ref{prop translation}.
In order to prove Proposition~\ref{lem: glueing formulas}, we need to introduce certain elements of the Weyl group of $G$ parametrised by the set $\NC$.

\subsection{The elements $w_{i,k}$}
\label{subsubsec:w_{i,k}}
We need some preliminary results concerning the action of the Coxeter element $ c$ on the weight lattice $P$. 
To do this, we find it convenient to use the categorical interpretation of this
action coming from quiver representations.

\medskip
 
Let $\GG_{Q}$ be the Auslander-Reiten quiver of the quiver $Q$ associated with $c$ \cite{ARS}. 
By definition, the vertices of $\GG_{Q}$ are the isomorphism classes of indecomposable representations of $Q$, which are in bijection with positive roots by taking dimension vectors.
For instance, the simple representation $S_i$ of $Q$ supported on vertex $i\in I$ has dimension vector $\a_i$, the injective hull $I_i$ of $S_i$ has dimension vector $\varpi_i - c(\varpi_i)$, and the projective cover $P_i$ of $S_i$ has dimension vector $\varpi_i - c^{-1}(\varpi_i)$.
The vertices of $\GG_{Q}$ are displayed on $n$ rows corresponding to the vertices of $Q$.
The rightmost vertex on row $i$ is $I_i$, and the leftmost vertex is $P_{\nu(i)}$, where we recall that $\nu$ is the involution of $I$ defined by $w_0(\varpi_i) = -\varpi_{\nu(i)}$. 
The full subquiver of $\GG_{Q}$ supported on the indecomposable injectives is isomorphic to $Q^{\mathrm{op}}$, the quiver  with orientation opposite to $Q$. The Auslander-Reiten translation $\tau$ maps a non-projective indecomposable $x$ to the indecomposable $y$ placed immediately to its left. 
If $x$ has dimension vector $\beta$, then $\tau(x)$ has dimension vector $c(\beta)$.

\begin{example}\label{Example.5.5}
{\rm
Let $G$ be of type $D_5$. Take $ c = s_2s_4s_1s_3s_5$, so that $\wt c=s_4s_3s_1s_5s_2$. The Coxeter element $c$  corresponds to the quiver $Q$:
\[
\def\objectstyle{\scriptstyle}
\def\lablestyle{\scriptstyle}
\xymatrix@-0.8pc{
&&&\ar[ld]4
\\
1&\ar[l]2\ar[r]&3\ar[rd]
\\
&&&5
}
\]
The Auslander-Reiten quiver $\GG_{Q}$ of $Q$ is:

\[
\def\objectstyle{\scriptstyle}
\xymatrix@-1.4pc @l{
&&x(5)\ar[ldd]&&x(10)\ar[ldd]&&x(15)\ar[ldd]\\
x(2)&&\ar[ld]x(7)&&\ar[ld]x(12) &&\ar[ld]x(17)&&x(20)\ar[ld]\\
 &\ar[lu]\ar[ld]x(4)&&\ar[lu]\ar[luu]\ar[ld]x(9)&&\ar[lu]\ar[luu]\ar[ld]x(14)&&\ar[lu]\ar[luu]\ar[ld]x(19)\\
x(1)&&\ar[lu]\ar[ld]x(6)&&\ar[lu]\ar[ld]x(11)&&\ar[lu]\ar[ld]x(16)\\
&x(3)\ar[lu]&&\ar[lu]x(8)&&\ar[lu]x(13)&&\ar[lu]x(18)
}
\]
We have, for instance, $\tau(x(1)) = x(6)$, $\tau(x(2))=x(7)$, $\tau(x(5)) = x(10)$. The injective indecomposable representations are $x(1), \ldots , x(5)$, with dimension vectors (expressed as positive roots):
\[
\dim(x(1))=\alpha_2,\quad 
\dim(x(2))=\alpha_4,\quad
\dim(x(3))=\alpha_1+\alpha_2,\ 
\]
\[
\dim(x(4))=\alpha_2+\alpha_3+\alpha_4,\quad
\dim(x(5))=\alpha_2+\alpha_3+\alpha_4+\alpha_5, 
\]
The dimension vectors of all other vertices can be obtained from these initial ones by a \emph{knitting algorithm}, for instance:
\[
 \dim(x(6)) = \dim(x(3)) + \dim(x(4)) - \dim(x(1)) = \alpha_1+\alpha_2+\alpha_3+\alpha_4,
\]
\[
 \dim(x(7)) =  \dim(x(4)) - \dim(x(2)) = \alpha_2+\alpha_3,
\]
\[
 \dim(x(8)) =  \dim(x(6)) - \dim(x(3)) = \alpha_3+\alpha_4,
\]
\[
 \dim(x(9)) = \dim(x(5)) + \dim(x(6)) +\dim(x(7)) - \dim(x(4)) = \alpha_1+2\alpha_2+2\alpha_3+\alpha_4+\alpha_5.
\]
The relation $\tau(x(1)) = x(6)$ translates into $c(\alpha_2) = \alpha_1+\alpha_2+\alpha_3+\alpha_4$.
}
\end{example}

We fix an \emph{adapted} numbering of the vertices of $\GG_{Q}$. 
In other words, the chosen numbering satisfies the condition that for every arrow $x(i) \to x(j)$, we have $i>j$.
With this choice is associated the sequence $\bi := (i_1, \ldots, i_N)$, where $i_k$ is the row number of $x(k)$ in $\GG_{Q}$. 
Then $s_{i_1}\ldots s_{i_N}$ is a reduced expression for $w_0$. 

\begin{Lem}
\label{Lem3.10}
   Let $i \in I$. We have $m_i = \sharp\{j\in [1,N] \mid i_j = i\}$. In other words, $m_i$ is equal to the number of vertices on row $i$ of $\GG_{Q}$. 
\end{Lem}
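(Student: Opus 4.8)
The plan is to reduce the statement to a single identity in the root system. First, note that by the very definition of the adapted numbering, $i_j$ is the row of the indecomposable $x(j)$, so $\{j\in[1,N]\mid i_j=i\}$ is exactly the set of (indices of) vertices lying on row $i$ of $\GG_{Q}$. Hence $\#\{j\mid i_j=i\}$ equals the number $r_i$ of vertices on row $i$, and it suffices to prove $r_i=m_i$.

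Next I would exploit the description of the rows as $\tau$-orbits. Row $i$ runs from its rightmost vertex $I_i$ to its leftmost vertex $P_{\nu(i)}$, the translation $\tau$ sending each vertex to its left neighbour, so its vertices are $\tau^0 I_i,\dots,\tau^{r_i-1}I_i$ with $\tau^{r_i-1}I_i=P_{\nu(i)}$. Put $\beta_i:=\dim I_i=\varpi_i-c(\varpi_i)$. Since $\tau$ acts as $c$ on dimension vectors, applying the rule $\dim\tau(x)=c(\dim x)$ along the row gives $c^{\,r_i-1}(\beta_i)=\dim P_{\nu(i)}=\varpi_{\nu(i)}-c^{-1}(\varpi_{\nu(i)})$, whence
\[
c^{\,r_i}(\beta_i)=c(\varpi_{\nu(i)})-\varpi_{\nu(i)}=-\bigl(\varpi_{\nu(i)}-c(\varpi_{\nu(i)})\bigr)=-\dim I_{\nu(i)}.
\]
On the other hand, from $c^{m_i}(\varpi_i)=w_0(\varpi_i)=-\varpi_{\nu(i)}$ a direct computation yields
\[
c^{m_i}(\beta_i)=c^{m_i}(\varpi_i)-c^{m_i+1}(\varpi_i)=-\varpi_{\nu(i)}+c(\varpi_{\nu(i)})=-\dim I_{\nu(i)}.
\]
Thus $c^{\,r_i}(\beta_i)=c^{m_i}(\beta_i)$, so the two exponents agree up to the period of the $c$-action.

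The final step is to turn this congruence into an equality. Here I would invoke the classical theorem of Kostant that a Coxeter element $c$ has order equal to the Coxeter number $h$ and acts freely on the set of roots, all its orbits having cardinality $h$. Freeness applied to the root $-\dim I_{\nu(i)}$ gives $r_i\equiv m_i \pmod h$. It then remains to check that $r_i,m_i\in\{1,\dots,h-1\}$, which forces $r_i=m_i$. For $m_i$: since $\varpi_i\neq w_0(\varpi_i)$ we get $m_i\geq 1$, and minimality of $m_i$ together with $c^h=\mathrm{id}$ forbids $m_i\geq h$, so $m_i\leq h-1$. For $r_i$: the roots $\beta_i,c(\beta_i),\dots,c^{\,r_i-1}(\beta_i)$ are the $r_i$ distinct positive dimension vectors of the modules on row $i$, while $c^{\,r_i}(\beta_i)=-\dim I_{\nu(i)}$ is negative; as the $c$-orbit of $\beta_i$ has exactly $h$ elements and contains a negative root, $1\leq r_i\leq h-1$.

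I expect the only genuinely non-formal ingredient to be the freeness of the $c$-action on roots and the resulting order computation; the rest is bookkeeping with the dimension-vector dictionary of $\GG_{Q}$ already recorded above. The one point needing a little care is the identity $c^{\,r_i-1}(\beta_i)=\dim P_{\nu(i)}$: it uses that $\tau^k I_i$ is a genuine nonzero module (hence non-projective for $k<r_i-1$) at each intermediate step, so that $\dim\tau(x)=c(\dim x)$ may legitimately be applied $r_i-1$ times in succession. Once this is in place, comparing $c^{\,r_i}(\beta_i)$ with $c^{m_i}(\beta_i)$ is the whole idea, and it neatly avoids having to prove separately that every intermediate difference $c^{k}(\varpi_i)-c^{k+1}(\varpi_i)$ is a positive root.
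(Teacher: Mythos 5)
Your proof is correct, and its skeleton is the same as the paper's: both arguments walk along row $i$ from $I_i$ to $P_{\nu(i)}$ using $\dim\tau(x)=c(\dim x)$, so that the dimension vectors are $c^{k}(\varpi_i)-c^{k+1}(\varpi_i)$, and both compare the left end of the row with the expression $\dim P_{\nu(i)}=c^{m_i-1}(\varpi_i)-c^{m_i}(\varpi_i)$ obtained from $c^{m_i}(\varpi_i)=w_0(\varpi_i)=-\varpi_{\nu(i)}$. Where you diverge is in the concluding step. The paper simply says ``by minimality of $m_i$'' ; spelled out, that route uses two facts: injectivity of $1-c$ on the weight lattice (a Coxeter element has no eigenvalue $1$), which upgrades the equality of differences to $c^{n_i}(\varpi_i)=w_0(\varpi_i)$ and hence $m_i\le n_i$, and positivity of dimension vectors, which rules out $m_i<n_i$ (the $(m_i+1)$-st vertex from the right would have the negative dimension vector $-\dim I_{\nu(i)}$). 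You instead push the computation one step further to get $c^{r_i}(\beta_i)=c^{m_i}(\beta_i)=-\dim I_{\nu(i)}$, and then invoke Kostant's theorem that $c$ has order $h$ and acts freely on roots with all orbits of size $h$, so $r_i\equiv m_i \pmod h$, finishing with the bounds $1\le r_i,m_i\le h-1$. Your bounds are justified correctly (distinctness of the modules on the row plus negativity of $c^{r_i}(\beta_i)$ for $r_i$; minimality plus $c^h=\mathrm{id}$ for $m_i$), and your care about intermediate vertices being non-projective is legitimate and covered by the paper's standing description of $\GG_Q$. The trade-off: the paper's finish is shorter but terse (its two hidden ingredients are elementary), while yours is fully explicit and makes the congruence-forcing mechanism transparent, at the cost of importing a heavier classical result (Kostant's orbit theorem) than is strictly needed.
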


\begin{proof}
Let $n_i$ denote the number of vertices on row $i$ of $\GG_{Q}$.
The list of dimension vectors on this row, read from right to left, is :
\[
\dim(I_i) = \varpi_i - c(\varpi_i),\ c(\varpi_i) - c^2(\varpi_i),\ \ldots, \ c^{n_i-1}(\varpi_i) - c^{n_i}(\varpi_i),  
\]
where the last one is equal to 
\[
\dim(P_{\nu(i)}) = \varpi_{\nu(i)}-c^{-1}(\varpi_{\nu(i)}) = -c^{m_i}(\varpi_i)+ c^{m_i-1}(\varpi_i). 
\]
By minimality of $m_i$, it follows that $m_i = n_i$.
    \cqfd
\end{proof}

\begin{remark}\label{rem-3.11}
{\rm
It easy to see using Lemma~\ref{Lem3.10} that if $m_i = 1$ for some $i$, then the group $G$
is of type $A_n$, and either $c = c_{st} = s_1\cdots s_n$ and $i=n$, or $c = c_{st}^{-1}$
and $i=1$. Indeed if $m_i = 1$ then we have that $I_i = P_{\nu(i)}$ is a 
projective-injective indecomposable representation of $Q$. This implies that $i$ is a sink of $Q$,
$\nu(i)$ is a source of $Q$, and there is an oriented path from $\nu(i)$ to $i$ in $Q$. Moreover
there can only be one outgoing arrow at vertex $\nu(i)$, otherwise $P_{\nu(i)}$ would not have a simple socle. Similarly, there can only be one ingoing arrow at $i$, otherwise $I_i$ would not have a simple head.
Hence $i$ and $\nu(i)$ are two terminal vertices of $Q$. 
Finally, there can not be a branching node on the path from $\nu(i)$ to $i$ since again this would imply that $I_i = P_{\nu(i)}$ has not a simple socle or a simple head. So $Q$ is the equi-oriented quiver $\nu(i)\to \cdots \to i$. 
This remark will be used in \S\ref{subsec-one-step-mutations} below.
}
\end{remark}

We can always assume that $\{i_1,\ldots, i_n\} = \{1,\ldots,n\}$, that is, that the adapted numbering starts with the $n$ rightmost vertices on each row, corresponding to the $n$ indecomposable injectives.
In this case, we also have that $c = s_{i_1}\cdots s_{i_n}$. In the sequel, we will always assume that this additional property is fulfilled.

Let $(i,k) \in \NC$. By Lemma~\ref{Lem3.10}, we can associate with $(i,k)$ a unique vertex $x(a)$ of $\GG_Q$, namely, the $k$-th vertex on row $i$, starting from the right. We shall then denote $a(i,k) := a$. In other words, $i_{a(i,k)}$ is the $k$-th element equal to $i$ in the list $\bi$.  Let $w_{i,k}$ be the element of $W$ defined by the formula
$$
w_{i,k}= \left\{\begin{array}{ll}
    s_{i_{n+1}}\cdots s_{i_{a(i,k)}} &  \mbox{if} \quad k \geq 2, \\
    e & \mbox{if} \quad k=1.
\end{array} \right.
$$

\begin{example}
{\rm
We continue the previous example. The numbering of $\GG_{Q}$ is adapted, and we have: 
\[
 \bi= (2,4,1,3,5,2,4,1,3,5,2,4,1,3,5,2,4,1,3,4).
\]
Also $s_{i_1}s_{i_2}s_{i_3}s_{i_4}s_{i_5} = s_2s_4s_1s_3s_5 = c$.
We have $m_1 = 4,\ m_2 = 4,\ m_3 = 4,\ m_4 = 5,\ m_5 = 3$, and
\begin{eqnarray*}
&& w_{1,2} = s_2s_4s_1,\ w_{1,3} = s_2s_4s_1s_3s_5s_2s_4s_1,\ w_{1,4} = s_2s_4s_1s_3s_5s_2s_4s_1s_3s_5s_2s_4s_1,  
\\[2mm]
&& w_{2,2} = s_2,\ w_{2,3} = s_2s_4s_1s_3s_5s_2,\ w_{2,4} = s_2s_4s_1s_3s_5s_2s_4s_1s_3s_5s_2,  
\\[2mm]
&& w_{3,2} = s_2s_4s_1s_3,\ w_{3,3} = s_2s_4s_1s_3s_5s_2s_4s_1s_3,\ w_{3,4} = s_2s_4s_1s_3s_5s_2s_4s_1s_3s_5s_2s_4s_1s_3,  
\\[2mm]
&& w_{4,2} = s_2s_4,\ w_{4,3} = s_2s_4s_1s_3s_5s_2s_4,\ w_{4,4} = s_2s_4s_1s_3s_5s_2s_4s_1s_3s_5s_2s_4,\
\\
&&\qquad w_{4,5} = s_2s_4s_1s_3s_5s_2s_4s_1s_3s_5s_2s_4s_1s_3s_4,
\\[2mm]
&& w_{5,2} = s_2s_4s_1s_3s_5,\ w_{5,3} = s_2s_4s_1s_3s_5s_2s_4s_1s_3s_5.  
\end{eqnarray*}
}
\end{example}

\begin{Lem} 
\label{Lem2-4}
The equalities
\[
w_{i,k}(\varpi_i) = c^{k-1}(\varpi_i)\quad \mbox{and}\quad \ell(c w_{i,k}) = \ell(c) + \ell(w_{i,k})
\]
hold for any $(i,k) \in \NC.$
\end{Lem}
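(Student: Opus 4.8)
The plan is to derive both equalities from one combinatorial input, namely that $s_{i_1}\cdots s_{i_N}$ is a reduced expression for $w_0$, together with the description of the dimension vectors along a row of $\GG_Q$ already obtained in the proof of Lemma~\ref{Lem3.10}. The length statement is the easy half. Since $s_{i_1}\cdots s_{i_N}$ is reduced, every consecutive subword $s_{i_p}\cdots s_{i_q}$ is again reduced: if it had a shorter expression, substituting it back would shorten the whole word. Writing $a:=a(i,k)$, I apply this to the three relevant subwords: $c=s_{i_1}\cdots s_{i_n}$ has length $n$, $w_{i,k}=s_{i_{n+1}}\cdots s_{i_a}$ has length $a-n$, and $c\,w_{i,k}=s_{i_1}\cdots s_{i_a}$ has length $a$. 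Hence $\ell(c)+\ell(w_{i,k})=n+(a-n)=a=\ell(c\,w_{i,k})$, which is the claim; the case $k=1$, where $w_{i,1}=e$, is trivial.

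For the weight statement, set $w^{(m)}:=s_{i_1}\cdots s_{i_m}$, so that $w^{(0)}=e$, $w^{(n)}=c$ and $w^{(N)}=w_0$. The key observation is that $w^{(m)}(\varpi_i)$ is constant in $m$ except exactly at the positions $m=a(i,1)<a(i,2)<\cdots$ where $i_m=i$, and at such a step it drops by $w^{(m-1)}(\alpha_i)$. Now $w^{(m-1)}(\alpha_i)=w^{(m-1)}(\alpha_{i_m})$ is precisely the $m$-th inversion root $\beta_m$ of the reduced word, and under the standard dictionary between the reduced word $\bi$ and the Auslander--Reiten quiver (the knitting illustrated in Example~\ref{Example.5.5}) one has $\beta_m=\dim x(m)$. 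Combined with the fact recorded in the proof of Lemma~\ref{Lem3.10} that the dimension vectors along row $i$, read from the right, are $\dim x(a(i,k))=c^{k-1}(\varpi_i)-c^{k}(\varpi_i)$, this yields $w^{(a(i,k)-1)}(\alpha_i)=c^{k-1}(\varpi_i)-c^{k}(\varpi_i)$.

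With this in hand I prove $w^{(a(i,k))}(\varpi_i)=c^{k}(\varpi_i)$ by induction on $k$, with base case $k=0$ (where $a(i,0):=0$) reading $\varpi_i=\varpi_i$. For the inductive step, no index $i$ occurs strictly between $a(i,k-1)$ and $a(i,k)$, so $w^{(a(i,k)-1)}(\varpi_i)=w^{(a(i,k-1))}(\varpi_i)=c^{k-1}(\varpi_i)$ by the inductive hypothesis, and therefore
\[
w^{(a(i,k))}(\varpi_i)=w^{(a(i,k)-1)}(\varpi_i)-w^{(a(i,k)-1)}(\alpha_i)=c^{k-1}(\varpi_i)-\bigl(c^{k-1}(\varpi_i)-c^{k}(\varpi_i)\bigr)=c^{k}(\varpi_i).
\]
Finally, for $k\ge 2$ one has $c\,w_{i,k}=w^{(a(i,k))}$, whence $c\,w_{i,k}(\varpi_i)=c^{k}(\varpi_i)$ and so $w_{i,k}(\varpi_i)=c^{k-1}(\varpi_i)$ upon applying $c^{-1}$; the case $k=1$ is immediate from $w_{i,1}=e$.

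The only genuinely non-formal ingredient, and the step I expect to require the most care, is the identification $\dim x(m)=\beta_m=w^{(m-1)}(\alpha_{i_m})$ of the Auslander--Reiten dimension vectors with the inversion roots of the reduced word $\bi$. This is classical (it is exactly what the knitting algorithm computes, cf.\ Example~\ref{Example.5.5}), but since the whole argument rests on matching the single row of $\GG_Q$ through $x(a(i,k))$ with the subword $w^{(a(i,k)-1)}$, I would state it carefully, either citing \cite{ARS} or giving the short knitting induction, before running the telescoping above.
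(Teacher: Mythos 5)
Your proof is correct and follows essentially the same route as the paper's: the paper likewise gets the length identity from the fact that $c\,w_{i,k}=s_{i_1}\cdots s_{i_{a(i,k)}}$ is a prefix of the reduced word $\bi$ of $w_0$, and proves the weight identity by induction on $k$ along row $i$ of $\GG_Q$, using the same formula $\dim x(j)=s_{i_1}\cdots s_{i_{j-1}}(\alpha_{i_j})$ (your inversion-root dictionary) together with the Auslander--Reiten translation giving $\dim x(a(i,k))=c^{k-1}(\varpi_i)-c^{k}(\varpi_i)$. The only difference is presentational: the paper invokes that dictionary as a ``well-known formula'' without further comment, whereas you isolate it as the one ingredient deserving a citation or a short knitting induction.
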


\begin{proof}
We first prove that, for $1\le j\le N$, if $x(j)$ is the $k$-th vertex on row~$i_j$ starting from the right,
we have 
\begin{equation}\label{Eq-15}
s_{i_1}s_{i_2}\cdots s_{i_{j-1}}(\varpi_{i_j}) = c^{k-1}(\varpi_{i_j}),
\qquad
s_{i_1}s_{i_2}\cdots s_{i_j}(\varpi_{i_j}) = c^k(\varpi_{i_j}).
\end{equation}
This can be deduced by induction on $k$ using the the well-known 
formula 
\[
\dim(x(j)) = s_{i_1}\cdots s_{i_{j-1}}(\a_{i_j}).
\]
Indeed, if $k=1$ then  $x(j)$ is equal to the injective indecomposable $I_{i_j}$.
So
\[
\dim x(j) = \varpi_{i_j} - c(\varpi_{i_j}) = s_{i_1}\cdots s_{i_{j-1}}(\a_{i_j}) 
%= s_{i_1}\cdots s_{i_{j-1}}(\varpi_{i_j} - s_{i_j}(\varpi_{i_j}))
= s_{i_1}\cdots s_{i_{j-1}}(\varpi_{i_j}) - s_{i_1}\cdots s_{i_{j-1}}s_{i_j}(\varpi_{i_j}).
\]
On the other hand, if $k=1$ then $j$ is the smallest index $p$ such that $i_p = i_j$, so 
$s_{i_1}\cdots s_{i_{j-1}}(\varpi_{i_j}) = \varpi_{i_j}$ and therefore 
$s_{i_1}\cdots s_{i_j}(\varpi_{i_j}) = c(\varpi_{i_j})$. So Eq.~(\ref{Eq-15}) holds for $k=1$.
Suppose that $k\ge 2$, and let $l$ be such that $x(l)$ is the $(k-1)$th vertex on row $i_j$ 
starting from the right. By induction on $k$ we may assume that 
$s_{i_1}s_{i_2}\cdots s_{i_l}(\varpi_{i_j}) = c^{k-1}(\varpi_{i_j})$. 
Since for every $l<s<j$ we have $i_s \not = i_j$, we deduce that 
$s_{i_1}s_{i_2}\cdots s_{i_{j-1}}(\varpi_{i_j}) = c^{k-1}(\varpi_{i_j})$. 
Now we have
\[
 \dim x(j) = c^{k-1}(\dim(I_{i_j})) = c^{k-1}\varpi_{i_j} - c^{k}\varpi_{i_j}, 
\]
and also
\[
 \dim x(j) = s_{i_1}\cdots s_{i_{j-1}}(\a_{i_j}) = s_{i_1}s_{i_2}\cdots s_{i_{j-1}}(\varpi_{i_j})
 - s_{i_1}s_{i_2}\cdots s_{i_{j}}(\varpi_{i_j}),
\]
so we get that $c^{k}\varpi_{i_j} = s_{i_1}s_{i_2}\cdots s_{i_{j}}(\varpi_{i_j})$. Thus 
Eq.~(\ref{Eq-15}) holds for every $k$.

%this is clearly true for $k=1$, and we can deduce it by induction for $k>1$ by using the well-known 
%formula $\dim(x(j)) = s_{i_1}\cdots s_{i_{j-1}}(\a_{i_j})$.
Hence, for $2\le k \le m_{i_j}$, we have $c^k(\varpi_{i_j}) = c w_{i_j,k}(\varpi_{i_j})$, and therefore,
$w_{i_j,k}(\varpi_i) = c^{k-1}(\varpi_{i_j})$.
Moreover, since $c w_{i_j,k} = s_{i_1}s_{i_2}\cdots s_{i_j}$ consists of the first $j$ letters of the reduced word  $\bi$ of $w_0$, we have $\ell(c w_{i_j,k}) = j = \ell(c) + \ell(w_{i_j,k})$.
Finally, the statement is obviously true if $k=1$.
\cqfd
\end{proof}

Note that the elements $w_{i,k}$ depend on the choice of an adapted numbering of $\GG_Q$, but it follows from Lemma~\ref{Lem2-4} that the weights $w_{i,k}(\varpi_i)$ are independent of this choice.

\subsection{Gluing formulas}

We start with a simple lemma.

\begin{Lem}
    \label{lem: week order}
    Let $(i,k) \in \NC$. The group $w_{i,k}^{-1} \bigl( U^- \cap c^{-1} U c \bigr) w_{i,k}$ is contained in $U^-.$
\end{Lem}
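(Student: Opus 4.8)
The plan is to translate the statement into root combinatorics and then feed in the length identity of Lemma~\ref{Lem2-4}. Write $\Phi = \Phi^+ \sqcup \Phi^-$ for the root system of $G$ relative to $T$, with positivity determined by $B$, and for $\alpha\in\Phi$ let $U_\alpha$ denote the corresponding root subgroup. Both $U^-$ and $c^{-1}Uc$ are unipotent subgroups normalised by $T$ (recall $\overline{c}$ normalises $T$), so each is the product of the root subgroups it contains: $U^- = \prod_{\alpha\in\Phi^-}U_\alpha$ and $c^{-1}Uc = \prod_{\alpha\in c^{-1}(\Phi^+)}U_\alpha$. Their intersection is again $T$-stable, hence $U^-\cap c^{-1}Uc = \prod_{\alpha\in S}U_\alpha$ with $S = \Phi^-\cap c^{-1}(\Phi^+)$. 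Since conjugation by a representative of $w_{i,k}^{-1}$ sends $U_\alpha$ to $U_{w_{i,k}^{-1}(\alpha)}$, and this is independent of the chosen representative, it suffices to prove that $w_{i,k}^{-1}(S)\subseteq\Phi^-$.

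Next I would rewrite $S$ in terms of the inversion set of $c$. For $\alpha\in\Phi$ one has $\alpha\in S$ iff $\alpha<0$ and $c(\alpha)>0$; setting $\gamma=-\alpha$ this reads $\gamma>0$ and $c(\gamma)<0$, i.e. $\gamma\in N(c)$, where $N(w):=\{\gamma\in\Phi^+ : w(\gamma)\in\Phi^-\}$ is the inversion set, of cardinality $\ell(w)$. Thus $S=-N(c)$, so $w_{i,k}^{-1}(S)=-\,w_{i,k}^{-1}\bigl(N(c)\bigr)$, and the desired inclusion $w_{i,k}^{-1}(S)\subseteq\Phi^-$ is equivalent to
\[
 w_{i,k}^{-1}\bigl(N(c)\bigr)\subseteq\Phi^+.
\]

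To establish this I would invoke Lemma~\ref{Lem2-4}, which gives $\ell(c\,w_{i,k})=\ell(c)+\ell(w_{i,k})$, together with the standard fact that length-additivity forces disjointness of inversion sets: whenever $\ell(uv)=\ell(u)+\ell(v)$ one has $v^{-1}\bigl(N(u)\bigr)\subseteq\Phi^+$, equivalently $N(u)\cap N(v^{-1})=\emptyset$ (were some $\gamma\in N(u)$ sent to a negative root by $v^{-1}$, a comparison of the cardinalities of $N(uv)$, $N(u)$ and $N(v)$ via the usual cocycle relation for inversion sets would force $\ell(uv)<\ell(u)+\ell(v)$). Applying this with $u=c$ and $v=w_{i,k}$ yields exactly $w_{i,k}^{-1}\bigl(N(c)\bigr)\subseteq\Phi^+$, which completes the proof.

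The argument is essentially bookkeeping, and the only point requiring genuine care is the consistent tracking of sign conventions in passing between $U^-\cap c^{-1}Uc$, the root set $S$, and the inversion set $N(c)$; once $S=-N(c)$ is identified the conclusion is immediate from Lemma~\ref{Lem2-4}. I expect no serious obstacle. Note in particular that only the \emph{length} statement of Lemma~\ref{Lem2-4} is used here, not the explicit identification $w_{i,k}(\varpi_i)=c^{k-1}(\varpi_i)$. One should also record the (standard) fact that a $T$-stable unipotent subgroup is the product, in any fixed order, of the root subgroups it contains, since this is what legitimises the reduction to the root set $S$.
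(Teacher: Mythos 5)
Your proposal is correct and follows essentially the same route as the paper's proof: both reduce the statement, via $T$-stability of $U^-\cap c^{-1}Uc$, to the root-set inclusion $w_{i,k}^{-1}\bigl(\Phi^-\cap c^{-1}\Phi^+\bigr)\subseteq\Phi^-$, and both deduce it from the length-additivity $\ell(c\,w_{i,k})=\ell(c)+\ell(w_{i,k})$ of Lemma~\ref{Lem2-4} together with the standard weak-order/inversion-set fact (which the paper cites as well known and you re-derive via the cocycle relation). The only differences are cosmetic: the paper argues on the Lie algebra of the connected group rather than via the product of root subgroups, and phrases the length identity for $w_{i,k}^{-1}c^{-1}$ instead of $c\,w_{i,k}$.
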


\begin{proof}
Let $\Phi^+$  (\resp $\Phi^-$) be the set of positive (\resp negative) roots determined by the Borel subgroup $B$.
The group $U^- \cap c^{-1} U c$ is a $T$-stable subgroup of $U^-$. 
Its Lie algebra is the direct sum  of the root spaces corresponding to the set of roots $\Phi^- \cap c^{-1} \Phi^+$. 
As the group $U^- \cap c^{-1} U c$ is connected, it is sufficient to prove that 
\begin{equation}
    \label{eq: inversions}
    w_{i,k}^{-1} \bigl( \Phi^- \cap c^{-1} \Phi^+ \bigr) \subseteq \Phi^-.
\end{equation}
Lemma \ref{Lem2-4} implies that $\ell(w_{i,k}^{-1}c^{-1})= \ell(w_{i,k}^{-1})+ \ell(c^{-1})$, 
 which is equivalent to the fact that a reduced expression of $w_{i,k}^{-1}c^{-1}$ can be obtained by concatenating a reduced expression of $w_{i,k}^{-1}$ with one of $c^{-1}$. By \cite[\S 5.6, Exercise 1]{Hum}, this
implies that if a root $\alpha \in \Phi^+$ satisfies $c^{-1} (\alpha) \in \Phi^-$, then $w_{i,k}^{-1}c^{-1} (\alpha) $ belongs to $\Phi^-.$
In other words, we have proved that 
$$
w_{i,k}^{-1}  c^{-1} \bigl( \Phi^+ \cap c \Phi^- \bigr) \subseteq \Phi^-,
$$
which is a reformulation of Eq. (\ref{eq: inversions}).
    \cqfd
\end{proof}

\begin{Prop}
    \label{lem: glueing formulas}
    The following formulas hold in the ring $R(G, c)$:
    \begin{equation}
    \label{glue-eq}
 \Delta^{(s)}_{c^k(\varpi_i),\,w(\varpi_i)} = \Delta^{(s+1)}_{c^{k-1}(\varpi_i),\,w(\varpi_i)},
\quad (i\in I,\ 1\le k\le m_i,\ w\in W,\ s\in\Z).
\end{equation}
\end{Prop}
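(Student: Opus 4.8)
The plan is to check the identity on points and then transfer it to $R(G,c)$ by an injectivity argument. Both functions $\Delta^{(s)}_{c^k\varpi_i,\,w\varpi_i}$ and $\Delta^{(s+1)}_{c^{k-1}\varpi_i,\,w\varpi_i}$ factor through the canonical projection $\pi_{s,s+1}\colon B(G,c)\to B(G,c,s,s+1)$, whose pullback $\pi_{s,s+1}^*$ is injective by Lemma~\ref{lem: generators bands}(1); since $B(G,c,s,s+1)$ is a variety over the algebraically closed field $K$, it suffices to verify the equality on $K$-points, that is, to prove
\[
\Delta_{c^k\varpi_i,\,w\varpi_i}\bigl(g(s)\bigr)=\Delta_{c^{k-1}\varpi_i,\,w\varpi_i}\bigl(g(s+1)\bigr)
\]
for every $(G,c)$-band $b=(g(t))_{t\in\Z}$. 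Fixing such a band, I would write $g(s)=u\,\overline{c}\,g(s+1)$ with $u\in U(c^{-1})=U\cap c\,U^-c^{-1}$, which is allowed since $g(s)g(s+1)^{-1}\in U(c^{-1})\overline{c}$.

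Next I would rewrite each minor via the standard identity $\Delta_{x\varpi_i,\,y\varpi_i}(g)=\Delta_{\varpi_i,\varpi_i}(\overline{x}^{-1}g\,\overline{y})$, combined with the left $U^-$- and right $U$-invariance of $\Delta_{\varpi_i,\varpi_i}$. The point is to trade the powers of $c$ for the elements $w_{i,k}$ of \S\ref{subsubsec:w_{i,k}}. By Lemma~\ref{Lem2-4} one has $c^{k-1}\varpi_i=w_{i,k}\varpi_i$ and $c^{k}\varpi_i=(cw_{i,k})\varpi_i$, and the length-additivity $\ell(cw_{i,k})=\ell(c)+\ell(w_{i,k})$ yields the multiplicativity of canonical lifts $\overline{cw_{i,k}}=\overline{c}\,\overline{w_{i,k}}$. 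Because a generalized minor depends only on the weights of its indices, this gives
\[
\Delta_{c^{k-1}\varpi_i,\,w\varpi_i}\bigl(g(s+1)\bigr)=\Delta_{\varpi_i,\varpi_i}\bigl(\overline{w_{i,k}}^{-1}g(s+1)\,\overline{w}\bigr),
\]
while substituting $g(s)=u\overline{c}\,g(s+1)$ produces
\[
\Delta_{c^{k}\varpi_i,\,w\varpi_i}\bigl(g(s)\bigr)=\Delta_{\varpi_i,\varpi_i}\bigl(\overline{w_{i,k}}^{-1}\,\overline{c}^{-1}u\,\overline{c}\,g(s+1)\,\overline{w}\bigr).
\]
The use of the same canonical lifts $\overline{w_{i,k}}$ and $\overline{w}$ on both sides is what makes the torus normalisations cancel automatically.

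It then remains to compare the two right-hand sides, and this is where the decisive input enters. Setting $v:=\overline{c}^{-1}u\,\overline{c}$, the membership $u\in U\cap c\,U^-c^{-1}$ gives $v\in U^-\cap c^{-1}Uc$, which is exactly the subgroup handled by Lemma~\ref{lem: week order}; hence $\overline{w_{i,k}}^{-1}v\,\overline{w_{i,k}}\in U^-$. Factoring
\[
\overline{w_{i,k}}^{-1}\,\overline{c}^{-1}u\,\overline{c}\,g(s+1)\,\overline{w}=\bigl(\overline{w_{i,k}}^{-1}v\,\overline{w_{i,k}}\bigr)\,\overline{w_{i,k}}^{-1}g(s+1)\,\overline{w},
\]
the leftmost factor lies in $U^-$, so left $U^-$-invariance of $\Delta_{\varpi_i,\varpi_i}$ equates the two expressions and proves the claim; the case $k=1$ is the direct specialisation $w_{i,1}=e$. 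I expect the only genuinely delicate point to be the bookkeeping that lets $\overline{c}$ be extracted from the lift of $c^k\varpi_i$ without error terms (guaranteed by the length-additivity of Lemma~\ref{Lem2-4}) together with checking that the conjugate $\overline{c}^{-1}u\,\overline{c}$ really lands in $U^-\cap c^{-1}Uc$, so that Lemma~\ref{lem: week order} applies; the remaining manipulations are routine properties of generalized minors.
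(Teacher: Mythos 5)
Your proposal is correct and takes essentially the same route as the paper's own proof: reduction to $K$-points of the finite band variety $B(G,c,s,s+1)$, the substitution $g(s)=u\,\overline{c}\,g(s+1)$, Lemma~\ref{Lem2-4} to trade powers of $c$ for the lifts $\overline{w_{i,k}}$, and then Lemma~\ref{lem: week order} together with left $U^-$-invariance of the minor to conclude. The only cosmetic difference is that you normalize both weight indices to $\varpi_i$ (carrying $\overline{w}$ on the right), whereas the paper keeps the right index $w(\varpi_i)$ fixed and manipulates only the left one.
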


\begin{proof}
Fix a choice of $i,k,w$ and $s$ as in the statement of the Proposition.
Equation~\ref{glue-eq}
can be interpreted as a relation between functions on the space of finite bands $B(G,c,s,s+1)$. 
Since $B(G,c,s,s+1)$ is a variety  (see Corollary \ref{cor: geometric prop of finite bands}), it is sufficient to prove that for any closed point $g=(g(s), g(s+1))$ of  $B(G,c,s,s+1)$, then 
$$
\Delta_{c^k(\varpi_i),\,w(\varpi_i)}\bigl(g(s)\bigr) = \Delta_{c^{k-1}(\varpi_i),\,w(\varpi_i)}\bigr(g(s+1) \bigr).
$$
By the definition of the spaces of finite bands, there exists $x \in U(c^{-1}) \oc $ such that $g(s)=x g(s+1)$. 
This remark together with Lemma \ref{Lem2-4} and the properties of generalised minors \cite[Definition 1.4, Proposition 2.1]{FZ} allow to compute that
\begin{equation}
\label{eq:glu 1}
\Delta_{c^k(\varpi_i),\,w(\varpi_i)}\bigl(g(s)\bigr)  =  \Delta_{\varpi_i,\,w(\varpi_i)}\bigl(\overline{w_{i,k}}^{-1}\oc^{-1} x g(s+1)\bigr).
\end{equation}
But, by the definition of $U(c^{-1})$, we have that $\oc^{-1} x$ belongs to $U^- \cap c^{-1} U c.$
Therefore, Lemma \ref{lem: week order} implies that 
\begin{equation}
    \label{eq: glu 2}
\Delta_{\varpi_i,\,w(\varpi_i)}\bigl(\overline{w_{i,k}}^{-1}\oc^{-1} x g(s+1)\bigr)=
\Delta_{\varpi_i,\,w(\varpi_i)}\bigl(u\overline{w_{i,k}}^{-1}g(s+1)\bigr)
\end{equation}
for some $u \in U^-.$ 
Using that the generalised minor $\Delta_{\varpi_i,\,w(\varpi_i)}$ is invariant under left multiplication by $U^-$ and Lemma \ref{Lem2-4}, we deduce that
\begin{equation}
    \label{eq: glu 3}\Delta_{\varpi_i,\,w(\varpi_i)}\bigl(u\overline{w_{i,k}}^{-1}g(s+1)\bigr)= \Delta_{c^{k-1}(\varpi_i),\,w(\varpi_i)}\bigl(g(s+1)\bigr).
\end{equation}
Putting Equations (\ref{eq:glu 1}), (\ref{eq: glu 2}) and (\ref{eq: glu 3}) together, the statement follows.
    \cqfd
\end{proof}

\begin{remark}
\rm{
    In this paper, we will be frequently dealing with the problem of proving that two elements $\varphi$ and $\psi$ of $R(G,c)$ are equal.     
    We claim that if the equality $\varphi(b)= \psi(b)$ holds for any $K$-rational point $b$ of $B(G,c)$, then we have that $\varphi=\psi$ as elements of $R(G,c)$. 
    
    Indeed, since $\varphi$ and $\psi$ belong to $R(G,c,-n,n)$ for some sufficiently large $n \in \Z_{\ge 0}$, we can interpret the equality $\varphi=\psi$ as an equality of regular functions on $B(G,c,-n,n)$. 
    Since $B(G,c,-n,n)$ is a variety, the desired equality can be verified on $K$-rational points of $B(G,c,-n,n)$. 
    The claim follows since, by the description of $B(G,c)$ given in Corollary \ref{cor: bands product}, we have that for every $(M,N) \in \P$ the projection $\pi_{M,N}$ surjects the $K$-rational points of $B(G,c)$ on the $K$-rational points of $B(G,c,M,N)$. This is actually the argument used in the proof of Proposition~\ref{lem: glueing formulas}.
    In the sequel we will freely use this method without further comment. 
}    
\end{remark}

\subsection{Translation invariance}

There are no arrows between red (\resp green) vertices of $\G_{\wt{c}}$. Therefore the mutations at these vertices pairwise commute, and we can consider their product $\tau_{\mathrm{red}}$ (\resp $\tau_{\mathrm{green}})$ in an arbitrary  order. It was explained in \cite{GHL} that the quivers $\tau_{\mathrm{red}}(\G_{\wt{c}}))$ (\resp $\tau_{\mathrm{green}}(\G_{\wt{c}}))$ are isomorphic to $\G_{\wt{c}}$. More precisely, they are obtained by translating one step above (\resp one step below) the finite middle part of $\G_{\wt{c}}$ consisting of red and green vertices. We will now check that a similar property is satisfied by our set of candidate initial cluster variables $\{\De_{(i,r)}\mid (i,r)\in \VV\}\subset R(G, c)$. 

Using Fomin-Zelevinsky mutation formulas, 
for every $(i,r)\in \VV$ we can calculate the element $\tau_{\mathrm{red}}(\Delta_{(i,r)})$ (\resp $\tau_{\mathrm{green}}(\Delta_{(i,r)})$), 
which is a priori an element of the fraction field of $R(G, c)$, well-defined since $\De_{(i,r)}$ is non-zero. 
The following proposition shows that it belongs to $R(G, c)$. 
Moreover, if we write 
\[
\begin{array}{l}
\tau_{\mathrm{red}}\left(\Sigma\right) := \left(\tau_{\mathrm{red}}(\G_{\wt{c}}),
\ \{\tau_{\mathrm{red}}
 (\Delta_{(i,r)}) \mid (i,r)\in \VV \}\right),\\[0.3em]
\tau_{\mathrm{green}}\left(\Sigma\right) := \left(\tau_{\mathrm{green}}(\G_{\wt{c}}),\ \{\tau_{\mathrm{green}}
(\Delta_{(i,r)}) \mid (i,r)\in \VV \}\right),
\end{array}
\]
and regard these as quivers with vertices labeled by elements of $R(G, \wt c)$, they satisfy the following translation invariance property.
Remember from Definition~\ref{def-initial-cluster-variables} that every $\De_{(i,r)}$ is of the form 
$\Delta^{(s)}_{c^k(\varpi_i),\,\wt{c}^{\,\, l}(\varpi_i)}$ 
for appropriately chosen integers $s, k, l$.

\begin{Prop}\label{prop translation}
The labeled quiver $\tau_{\mathrm{red}}\left(\Sigma\right)$ (\resp $\tau_{\mathrm{green}}\left(\Sigma\right)$) can be obtained from $\Sigma$ by keeping the same quiver $\G_{\wt{c}}$, and replacing at each vertex the corresponding generalized minor $\Delta^{(s)}_{c^k(\varpi_i),\,\wt{c}^{\,\, l}(\varpi_i)}$ by $\Delta^{(s+1)}_{c^k(\varpi_i),\,\wt{c}^{\,\, l}(\varpi_i)}$ 
(\resp $\Delta^{(s-1)}_{c^k(\varpi_i),\,\wt{c}^{\,\, l}(\varpi_i)}$).
\end{Prop}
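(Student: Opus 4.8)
The plan is to separate the statement into a combinatorial part about the underlying quiver and an analytic part about the regular functions, and to treat the latter differently at the mutated and unmutated vertices. The fact that $\tau_{\mathrm{red}}(\G_{\wt c})$ (resp. $\tau_{\mathrm{green}}(\G_{\wt c})$) is again $\G_{\wt c}$, translated one step up (resp. down), is exactly what is recalled from \cite{GHL}, so I only have to follow the functions attached to the vertices under this translation. Since $\G_{\wt c}$ has no arrows between red vertices, the mutations composing $\tau_{\mathrm{red}}$ commute and act independently: each red variable is modified by a single Fomin--Zelevinsky exchange, while every non-red variable is untouched. Realizing the translation as the one-vertex column shift $(i,r)\mapsto (i,r-2)$, it then suffices to check, for every $(i,r)\in\VV$, that the function placed at $(i,r)$ after $\tau_{\mathrm{red}}$ coincides with the function placed at $(i,r-2)$ in $\Sigma$ after raising its superscript by one.

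For the unmutated (black and green) vertices this will follow directly from the gluing formulas of Proposition~\ref{lem: glueing formulas}. At a black vertex the attached minor is $\De^{(s)}_{w_0(\varpi_i),\varpi_i}$ or $\De^{(s)}_{\varpi_i,w_0(\varpi_i)}$ with $s$ increasing by one per step up the column, so raising the superscript and shifting the position produce literally the same element. At a green vertex $(i,\wt{\xi_i}-4k-2)$ the required equality reads $\De^{(0)}_{c^{m_i-1-k}(\varpi_i),\,\wt c^{\,k+1}(\varpi_i)}=\De^{(1)}_{c^{m_i-2-k}(\varpi_i),\,\wt c^{\,k+1}(\varpi_i)}$, which is a single instance of Eq.~(\ref{glue-eq}); the boundary green vertices adjacent to the lower black part are handled the same way and are in fact trivial.

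The substantive point is at the red vertices. For a red vertex $(i,\wt{\xi_i}-4k)$ I would read off its incoming and outgoing neighbours from the explicit shape of $\G_{\wt c}$, substitute the minors of Definition~\ref{def-initial-cluster-variables}, and use Proposition~\ref{lem: glueing formulas} to bring every minor occurring in the Fomin--Zelevinsky exchange relation to one common superscript $s$, so that the entire relation lives inside a single copy $\pi_s^*(K[G])\subset R(G,c)$. There the exchange relation becomes a three-term identity among generalized minors $\De_{u(\varpi_i),v(\varpi_i)}$ of $G$, which I would verify using the classical Fomin--Zelevinsky generalized-minor relations (together with Lemma~\ref{Lem2-4}); reading the outcome back through Eq.~(\ref{glue-eq}) identifies it with $\De^{(0)}_{c^{m_i-k}(\varpi_i),\,\wt c^{\,k+1}(\varpi_i)}$, that is, with the value of the green vertex one step below, its superscript raised by one, exactly as required.

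The hard part is this last identity. The difficulty is twofold: first, organizing the finitely many local configurations of neighbours around a red vertex, which depend on the Dynkin neighbours $j\sim i$ and on whether the vertex is interior to, or on the boundary of, the middle part (on the boundary a neighbour may be a black vertex carrying a nonzero superscript that must first be normalized by gluing); and second, matching the two monomials of the Fomin--Zelevinsky exchange relation with the two products produced by the cluster mutation, including the multiplicities $-c_{ji}$ prescribed by the Cartan matrix (all equal to $0$ or $1$ in the simply-laced case). Once the red case is settled, the statement for $\tau_{\mathrm{green}}$ is obtained symmetrically by exchanging the roles of $c$ and $\wt c$ (using $\wt{\wt c}=c$) and the involution $\nu$, and yields the downward translation together with the superscript lowered by one.
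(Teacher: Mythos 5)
Your treatment of $\tau_{\mathrm{red}}$ is essentially the paper's own proof: you use the quiver isomorphism from \cite{GHL}, observe that the mutations at red vertices commute, reduce each exchange relation to a single copy of $\pi_s^*(K[G])$ (in fact no normalization is needed here, since every red vertex and all of its neighbours already carry superscript $0$), verify the resulting three-term identity by \cite[Theorem 1.17]{FZ} together with Lemma~\ref{Lem2-4}, and use Proposition~\ref{lem: glueing formulas} to recognize the mutated variable as the green label one step below with its superscript raised by one. The bookkeeping at the unmutated black and green vertices via the gluing formulas is also the same as in the paper.

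The gap is in your last step, where you claim that $\tau_{\mathrm{green}}$ follows ``symmetrically by exchanging the roles of $c$ and $\wt c$''. No such symmetry is established, and the asymmetry is structural: the gluing relations of Proposition~\ref{lem: glueing formulas} trade the \emph{left} weight index $c^k(\varpi_i)$ against the superscript, and there is no companion relation shifting the right index $\wt c^{\,l}(\varpi_i)$; likewise no isomorphism between $B(G,c)$ and $B(G,\wt c)$ intertwining the two seeds is constructed anywhere, and the obvious candidates fail (reversing $s$ turns the condition $g(s)g(s+1)^{-1}\in U(c^{-1})\bar c$ into a condition on left quotients, which is not a band condition; composing reversal with inverse, transpose and $\overline{w_0}$-twists produces sequences whose consecutive quotients lie in sets such as $\bigl(U^-\cap \wt c\,U\,\wt c^{-1}\bigr)\overline{\wt c}$ or $U(c)\,\bar{c}^{\,T}$, i.e.\ bands of the wrong type or for the Coxeter element $c^{-1}$ rather than $\wt c$). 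So the green case cannot be outsourced to the red case by symmetry. Fortunately the correct argument fits inside the framework you already set up: since every red and green label has the form $\De^{(0)}_{c^k(\varpi_i),\,\wt c^{\,l}(\varpi_i)}$ with $k<m_i$, Proposition~\ref{lem: glueing formulas} lets you rewrite all of them as $\De^{(-1)}_{c^{k+1}(\varpi_i),\,\wt c^{\,l}(\varpi_i)}$ (their relevant black neighbours already carry superscript $-1$), after which every exchange relation of $\tau_{\mathrm{green}}$ becomes an instance of \cite[Theorem 1.17]{FZ} inside $\pi_{-1}^*(K[G])$; this is exactly how the paper completes the proof.
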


\begin{proof}
Let us first consider $\tau_{\mathrm{red}}\left(\Sigma\right)$. 
By Definition~\ref{def-initial-cluster-variables}, the $\De_{(i,r)}$ corresponding to red vertices of $\G_{\wt{c}}$ are all of the form $\Delta^{(0)}_{c^{m_i-1-k}(\varpi_i),\, \wt{c}^{\,k}(\varpi_i)}$. 
Therefore, they are all pullback of generalized minors under the morphism $\pi_0: B(G,c) \lra G$. The same is true for all the $\De_{(i,r)}$ corresponding to a vertex connected to a red vertex by an arrow (these vertices are either green vertices, or black vertices from the lowest layer of the upper infinite part). It follows that all exchange relations at red vertices can be regarded as algebraic identities satisfied by generalized minors over $G$. As explained in \cite[\S10]{GHL},
it turns out that all these minor identities are special instances of \cite[Theorem 1.17]{FZ}. Indeed the right hand side 
of the exchange relation for $\Delta^{(0)}_{c^{m_i-1-k}(\varpi_i),\, \wt{c}^{\,k}(\varpi_i)}$, which can be read from the quiver $\G_{\wt{c}}$, is equal to
\begin{equation}\label{equa20}
\Delta^{(0)}_{c^{m_i-k}(\varpi_i),\, \wt{c}^{\,k}(\varpi_i)} \Delta^{(0)}_{c^{m_i-1-k}(\varpi_i),\, \wt{c}^{\,k+1}(\varpi_i)}
\ +\ 
\prod_j \Delta^{(0)}_{c^{m_i-1-k+a_{ij}}(\varpi_i),\, \wt{c}^{\,k+b_{ij}}(\varpi_i)}, 
\end{equation}
where the products runs over vertices $j\in I$ connected to $i$ in the Dynkin diagram, and 
\begin{equation}\label{Eq.25}
 a_{ij} := \delta(\xi_j > \xi_i),\qquad b_{ij} := \delta(\wt{\xi_j} > \wt{\xi_i}).
\end{equation}
(For two integers $p,q$, we define $\delta(p>q):=1$ if $p$ is bigger than $q$ and $\delta(p>q):=0$ otherwise.)
Let us define
\[
 u := c^{m_i-1-k}x,\qquad v:= \wt{c}^{\,k}y,  
\]
where $x$ (\resp $y$) denotes the product of all simple reflections strictly to the left of $s_i$ in $c$ (\resp in $\wt{c}$).
Then, applying \cite[Theorem 1.17]{FZ} with these values of $u$, $v$ and $i$ we get that the polynomial function (\ref{equa20}) is equal to 
\[
\Delta^{(0)}_{c^{m_i-1-k}(\varpi_i),\, \wt{c}^{\,k}(\varpi_i)}\Delta^{(0)}_{c^{m_i-k}(\varpi_i),\, \wt{c}^{\,k+1}(\varpi_i)}. 
\]
It follows that the sequence of mutations $\tau_{\mathrm{red}}$ replaces the red generalized minors
$\Delta^{(0)}_{c^{m_i-1-k}(\varpi_i),\, \wt{c}^{\,k}(\varpi_i)}$ by
\[
\Delta^{(0)}_{c^{m_i-k}(\varpi_i),\, \wt{c}^{\,k+1}(\varpi_i)} = \Delta^{(1)}_{c^{m_i-k-1}(\varpi_i),\, \wt{c}^{\,k+1}(\varpi_i)}, 
\]
where the equality follows from the gluing formulas of Proposition~\ref{lem: glueing formulas}. Let us change their color and turn them green. 

On the other hand we can also use Proposition~\ref{lem: glueing formulas} to rewrite all green generalized minors
\[
\Delta^{(0)}_{c^{m_i-1-k}(\varpi_i),\, \wt{c}^{\,\,k+1}(\varpi_i)}\ \ \mbox{such that}\ \ m_i-1-k \ge 1
\]
(that is, all except the lowest green minor in each column) as
\[
\Delta^{(0)}_{c^{m_i-1-k}(\varpi_i),\, \wt{c}^{\,\,k+1}(\varpi_i)} = \Delta^{(1)}_{c^{m_i-2-k}(\varpi_i),\, \wt{c}^{\,\,k+1}(\varpi_i)}. 
\]
Let us turn these generalized minors red, and let us turn black the lowest green minors of the form $\Delta^{(0)}_{\varpi_i,\, \wt{c}^{\,\,m_i}(\varpi_i)}$.
Finally the black generalized minors lying on the lowest layer of the upper infinite part are of the form 
$\Delta^{(0)}_{c^{m_i}(\varpi_i),\, \varpi_i}$, and we can rewrite them as
\[
\Delta^{(0)}_{c^{m_i}(\varpi_i),\, \varpi_i} = \Delta^{(1)}_{c^{m_i-1}(\varpi_i),\, \varpi_i}, 
\]
and turn them red. Taking into account the mutation $\tau_{\mathrm{red}}(\G_{\wt{c}})$ of the quiver $\G_{\wt{c}}$
described in \cite{GHL}, it is then immediate to check that $\tau_{\mathrm{red}}\left(\Sigma\right)$ can alternatively be described as being obtained from $\Sigma$ by replacing at every vertex of the quiver $\G_{\wt{c}}$ the generalized minor $\Delta^{(s)}_{c^k(\varpi_i),\,\wt{c}^{\,\, l}(\varpi_i)}$ by $\Delta^{(s+1)}_{c^k(\varpi_i),\,\wt{c}^{\,\, l}(\varpi_i)}$ without changing the colour of the vertex. 

The reasoning for $\tau_{\mathrm{green}}(\Sigma)$ is similar, but the steps should be reversed.
First note that the $\Delta_{(i,r)}$ sitting at a green or a red vertex of $\G_{\wt{c}}$ are all of the form 
$\Delta^{(0)}_{c^k(\varpi_i),\,\wt{c}^{\,\, l}(\varpi_i)}$ with $k<m_i$. Therefore, by Proposition~\ref{lem: glueing formulas} we can rewrite all of them as
\[
\Delta^{(0)}_{c^k(\varpi_i),\,\wt{c}^{\,\, l}(\varpi_i)} = \Delta^{(-1)}_{c^{k+1}(\varpi_i),\,\wt{c}^{\,\, l}(\varpi_i)}. 
\]
It follows that all the exchange relations of $\tau_{\mathrm{green}}$ can be regarded as the pullback under the morphism $\pi_{-1}$ of algebraic identities satisfied by generalized minors over $G$. We can then check in the same manner that these exchange relations are instances of 
\cite[Theorem 1.17]{FZ}, and finish the proof as above.
\cqfd
\end{proof}

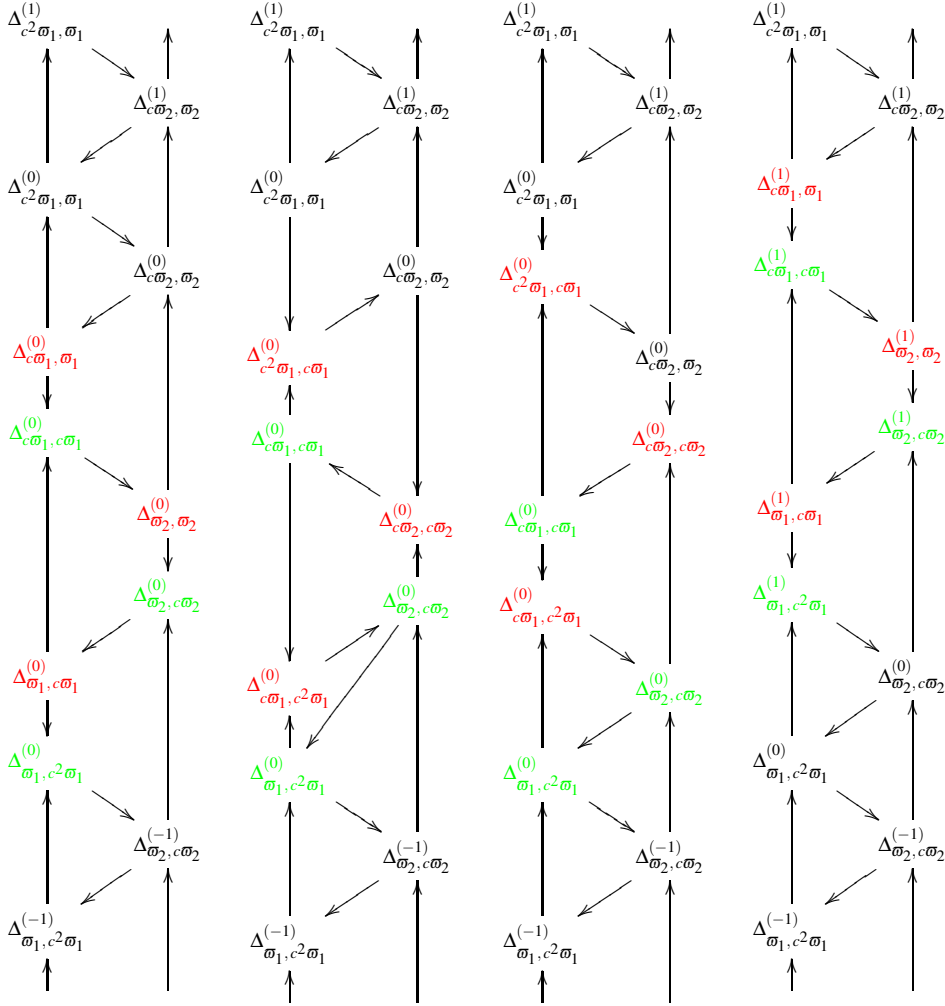
\begin{figure}[t!]
\[
\def\objectstyle{\scriptstyle}
\def\lablestyle{\scriptstyle}
\xymatrix@-1.0pc{
\De^{(1)}_{c^2\varpi_1,\,\varpi_1}\ar[rd]&
\\
&\ar[ld] \De^{(1)}_{c\varpi_2,\,\varpi_2} \ar[u]
\\
\De^{(0)}_{c^2\varpi_1,\,\varpi_1}\ar[rd]\ar[uu]&
\\
&\ar[ld] \De^{(0)}_{c\varpi_2,\,\varpi_2} \ar[uu]
\\
\ar[uu]{{\red \De^{(0)}_{c\varpi_1,\,\varpi_1}}}\ar[d]&
\\
{\green \De^{(0)}_{c\varpi_1,\,c\varpi_1}}\ar[rd]
\\
&\ar[uuu] {\red \De^{(0)}_{\varpi_2,\,\varpi_2}}\ar[d] 
\\
& \ar[ld]{\green \De^{(0)}_{\varpi_2,\,c\varpi_2}}
\\
\ar[uuu]{\red \De^{(0)}_{\varpi_1,\,c\varpi_1}} \ar[d] 
\\
{\green \De^{(0)}_{\varpi_1,\,c^2\varpi_1}}\ar[rd]
\\
& \ar[uuu]\De^{(-1)}_{\varpi_2,\,c\varpi_2}\ar[ld] 
\\
\ar[uu] \De^{(-1)}_{\varpi_1,\,c^2\varpi_1} 
\\
\ar[u] & \ar[uu]
}
\quad
\xymatrix@-1.0pc{
\De^{(1)}_{c^2\varpi_1,\,\varpi_1}\ar[rd]&
\\
&\ar[ld] \De^{(1)}_{c\varpi_2,\,\varpi_2} \ar[u]
\\
\ar[dd]\De^{(0)}_{c^2\varpi_1,\,\varpi_1}\ar[uu]&
\\
&\De^{(0)}_{c\varpi_2,\,\varpi_2} \ar[ddd]\ar[uu]
\\
{{\red \De^{(0)}_{c^2\varpi_1,\,c\varpi_1}}}\ar[ru]&
\\
\ar[u]{\green \De^{(0)}_{c\varpi_1,\,c\varpi_1}}\ar[ddd]
\\
& \ar[lu]{\red \De^{(0)}_{c\varpi_2,\,c\varpi_2}}
\\
& \ar[ldd]{\green \De^{(0)}_{\varpi_2,\,c\varpi_2}}\ar[u]
\\
{\red \De^{(0)}_{c\varpi_1,\,c^2\varpi_1}}\ar[ru] 
\\
\ar[u]{\green \De^{(0)}_{\varpi_1,\,c^2\varpi_1}}\ar[rd]
\\
& \ar[uuu]\De^{(-1)}_{\varpi_2,\,c\varpi_2}\ar[ld] 
\\
\ar[uu] \De^{(-1)}_{\varpi_1,\,c^2\varpi_1} 
\\
\ar[u] & \ar[uu]
}
\quad
\xymatrix@-1.0pc{
\De^{(1)}_{c^2\varpi_1,\,\varpi_1}\ar[rd]&
\\
&\ar[ld] \De^{(1)}_{c\varpi_2,\,\varpi_2} \ar[u]
\\
\ar[uu]{{ \De^{(0)}_{c^2\varpi_1,\,\varpi_1}}}\ar[d]&
\\
{\red \De^{(0)}_{c^2\varpi_1,\,c\varpi_1}}\ar[rd]
\\
&\ar[uuu] { \De^{(0)}_{c\varpi_2,\,\varpi_2}}\ar[d] 
\\
& \ar[ld]{\red \De^{(0)}_{c\varpi_2,\,c\varpi_2}}
\\
\ar[uuu]{\green \De^{(0)}_{c\varpi_1,\,c\varpi_1}} \ar[d] 
\\
{\red \De^{(0)}_{c\varpi_1,\,c^2\varpi_1}}\ar[rd]
\\
& \ar[uuu]{\green \De^{(0)}_{\varpi_2,\,c\varpi_2}}\ar[ld] 
\\
\ar[uu] {\green\De^{(0)}_{\varpi_1,\,c^2\varpi_1}}\ar[rd] 
\\
& \ar[uu]\De^{(-1)}_{\varpi_2,\,c\varpi_2}\ar[ld] 
\\
\ar[uu] \De^{(-1)}_{\varpi_1,\,c^2\varpi_1} 
\\
\ar[u] & \ar[uu]
}
\quad
\xymatrix@-1.0pc{
\De^{(1)}_{c^2\varpi_1,\,\varpi_1}\ar[rd]&
\\
&\ar[ld] \De^{(1)}_{c\varpi_2,\,\varpi_2} \ar[u]
\\
\ar[uu]{{\red \De^{(1)}_{c\varpi_1,\,\varpi_1}}}\ar[d]&
\\
{\green \De^{(1)}_{c\varpi_1,\,c\varpi_1}}\ar[rd]
\\
&\ar[uuu] {\red \De^{(1)}_{\varpi_2,\,\varpi_2}}\ar[d] 
\\
& \ar[ld]{\green \De^{(1)}_{\varpi_2,\,c\varpi_2}}
\\
\ar[uuu]{\red \De^{(1)}_{\varpi_1,\,c\varpi_1}} \ar[d] 
\\
{\green \De^{(1)}_{\varpi_1,\,c^2\varpi_1}}\ar[rd]
\\
& \ar[uuu]{ \De^{(0)}_{\varpi_2,\,c\varpi_2}}\ar[ld] 
\\
\ar[uu] {\De^{(0)}_{\varpi_1,\,c^2\varpi_1}}\ar[rd] 
\\
& \ar[uu]\De^{(-1)}_{\varpi_2,\,c\varpi_2}\ar[ld] 
\\
\ar[uu] \De^{(-1)}_{\varpi_1,\,c^2\varpi_1} 
\\
\ar[u] & \ar[uu]
}
\]
\caption{\label{Fig4}{\it The sequence of mutations $\tau_{\mathrm{red}}$ in type $A_2$.}}
\end{figure}

\begin{example}
{\rm
We illustrate the proof of Proposition~\ref{prop translation} in the case when
$G$ is of type $A_2$, and $c = s_1s_2 = \wt{c}$. In Figure~\ref{Fig4}, the first quiver is the labelled quiver
$\Sigma$. The second quiver is obtained from $\Sigma$ by mutation at the three red vertices. The three new red minors are calculated using the generalized minor identity \cite[Theorem 1.17]{FZ}.
The third quiver is obtained from the second by a mere rearranging of the vertices aimed at showing that its shape is obtained by just translating $\G_{\wt{c}}$ one step up. Finally the fourth quiver is obtained by rewriting certain minors of the third one using the gluing relations provided by Proposition~\ref{lem: glueing formulas}, and changing their colours in the way explained in the proof of Proposition~\ref{prop translation}. Clearly, the fourth quiver can be regarded as the first one in which every minor
$\Delta^{(s)}_{c^k(\varpi_i),\,{c}^{\,\, l}(\varpi_i)}$ has simply been replaced by $\Delta^{(s+1)}_{c^k(\varpi_i),\,{c}^{\,\, l}(\varpi_i)}$.
}
\end{example}

In order to iteratively apply Proposition \ref{prop translation}, it is convenient to change the colours of the vertices of the quiver $\tau_{\rm{red}}(\Gamma_{\wt c})$ and $\tau_{\rm{green}}(\Gamma_{\wt c})$ as explained in its proof. 
More precisely, a vertex of $\tau_{\rm{red}}(\Gamma_{\wt c})$ is red (\resp green) if in the quiver $\Gamma_{\wt c}$ it lies immediately above a red (\resp green) vertex, and it is black otherwise. 
Similarly, a  vertex of $\tau_{\rm{green}}(\Gamma_{\wt c})$ is red (\resp green) if in the quiver $\Gamma_{\wt c}$ it lies immediately below a red (\resp green) vertex, and it is black otherwise. 
We do an obviously analogue change of colours when applying $\tau_{\rm red}$ or $\tau_{\rm green}$ multiple times.

\subsection{Well-definedness of the initial seed}

Let  $K(G, c)$ be the field of fractions of the integral domain $R( G, c)$. 
We verify in this section that the elements $\De_{(i,r)} \ ( (i,r) \in \VV)$ freely generate $K(G, c)$ over the base field $K$. 
In other words, $\Sigma$ is a well-defined seed of the field $K(G, c)$.
The proof of this fact and of Theorem \ref{Thm4.4} requires  reduction to finite bands, for which we need to introduce some notation.

\medskip

Let $(i,r) \in \VV$.
In Definition \ref{def-initial-cluster-variables}, the element  $\Delta_{(i,r)}$ of $R(G,c)$ is defined as $\Delta_{u(\varpi_i),v(\varpi_i)}^{(s)}$ for some $u,v \in W$  and $s \in \Z.$
Let $s_{(i,r)}$ denote this integer $s$.
For $(M,N) \in \P$ we set 
$$
\VV_{M,N}:= \{ (i,r) \in \VV \mid M \leq s_{(i,r)} \leq N\},
$$
and we denote by $\Gamma_{ \wt c, M,N}$ the full subquiver of $\Gamma_{ \wt c}$ consisting of vertices $(i,r)$ belonging to $\VV_{M,N}$. 
This is considered as an iced-quiver by declaring that the highest and lowest vertices of any column of  $\Gamma_{ \wt c, M,N}$ are frozen, and by appropriately removing arrows between frozen vertices. 

\begin{example}
{\rm
Let, $G= SL(3)$ and $c=c_{st}= s_1s_2 = \wt{c}$.
The leftmost (rep. rightmost) part of Figure~\ref{Fig2} displays the iced-quiver $\Gamma_{\wt c, -2,1}$ (\resp $\Gamma_{\wt c, -1,1}$).
}     
\end{example}

The following proposition is a particular instance of a classical result of Berenstein, Fomin and Zelevinsky \cite{BFZ}.

%\begin{blue}
\begin{Prop}
\label{Prop:seed BFZ}
 For $(i,r) \in \VV_{0,0}$, let $\wt \De_{(i,r)} \in K[G]$ be the only Fomin-Zelevinsky generalized minor such that $\De_{(i,r)}= \pi_0^* \big( \wt \De_{(i,r)} \bigr).$
The datum
\[
\left(\Gamma_{ \wt{c}, 0,0},\ \{ \wt \Delta_{(i,r)}\mid (i,r)\in \VV_{0,0}\}\right)  
\]
is an initial seed for the cluster algebra structure on the coordinate ring of the open double Bruhat cell of the group $G$
described in \cite{BFZ}.
\end{Prop}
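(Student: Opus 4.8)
The plan is to exhibit $\left(\Gamma_{\wt c,0,0},\ \{\wt\De_{(i,r)}\mid (i,r)\in\VV_{0,0}\}\right)$ as one of the standard initial seeds that Berenstein, Fomin and Zelevinsky attach to a double reduced word of $(w_0,w_0)$, and then to quote their theorem identifying the coordinate ring of $G^{w_0,w_0}$ (with the boundary minors inverted) with the associated upper cluster algebra. Recall that to a double reduced word $\bi$ of a pair $(u,v)\in W\times W$ — a word in the letters $\{\pm 1,\dots,\pm n\}$ whose negative part spells a reduced word for $u$ and whose positive part spells a reduced word for $v$ — \cite{BFZ} associate a seed whose cluster and frozen variables are generalized minors $\De(k;\bi)$, the left and right weights of $\De(k;\bi)$ being governed respectively by the $u$- and the $v$-subwords of $\bi$ lying on the relevant side of position $k$, and whose exchange matrix is read off from $\bi$ by an explicit combinatorial rule. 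It therefore suffices to produce a single such word $\bi_0$ for $(w_0,w_0)$ realizing our seed.

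To build $\bi_0$ I would use the adapted numbering of $\GG_Q$ from \S\ref{subsubsec:w_{i,k}}: let $\bi=(i_1,\dots,i_N)$ be the reduced word of $w_0$ with $c=s_{i_1}\cdots s_{i_n}$, and let $\bi'$ be the analogous reduced word attached to $\wt c$. Since reducedness of a double word only concerns the two strands separately, any interleaving of the negatively-signed copy of $\bi$ with the positively-signed copy of $\bi'$ is automatically a double reduced word for $(w_0,w_0)$; I would choose the interleaving dictated, column by column, by the alternating red/green pattern of the middle part of $\G_{\wt c}$. The key point is then a pure weight computation: applying Lemma~\ref{Lem2-4} to $c$, and the same lemma with $c$ replaced by $\wt c$, the left weights of the minors $\De(k;\bi_0)$ run precisely through the $c^{a}(\varpi_i)$ and their right weights through the $\wt c^{\,b}(\varpi_i)$, for $0\le a,b\le m_i$.

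With $\bi_0$ in hand the verification splits into two checks. For the variables, comparing $\De(k;\bi_0)$ with Definition~\ref{def-initial-cluster-variables} and using $c^{m_i}(\varpi_i)=\wt c^{\,m_i}(\varpi_i)=w_0(\varpi_i)$, the minors of the BFZ seed are exactly the elements $\wt\De_{(i,r)}$, $(i,r)\in\VV_{0,0}$; in particular the frozen variables coincide with the boundary minors $\De_{w_0\varpi_i,\varpi_i}$ and $\De_{\varpi_i,w_0\varpi_i}$, and the numerology ($2N-n$ mutable and $2n$ frozen variables, matching $\dim G^{w_0,w_0}=2N+n$) is consistent. For the quiver, I would check that the BFZ exchange matrix of $\bi_0$ agrees arrow-for-arrow with the iced quiver $\Gamma_{\wt c,0,0}$, after deleting arrows between frozen vertices.

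The main obstacle is this last quiver comparison. Turning the combinatorial recipe for the exchange matrix of $\bi_0$ into the explicit arrows and orientations of the middle part of $\G_{\wt c}$ is precisely the bookkeeping already performed in \cite[\S10]{GHL}, which I would invoke; alternatively, since the exchange matrix is determined by the local data around each mutable vertex, and this data encodes exactly the three-term minor identities of \cite[Theorem 1.17]{FZ} already exploited in the proof of Proposition~\ref{prop translation}, one can match the two quivers vertex by vertex. Once both the variables and the quiver are seen to coincide, $\left(\Gamma_{\wt c,0,0},\{\wt\De_{(i,r)}\}\right)$ is literally the BFZ seed $\Sigma(\bi_0)$, which is the assertion of the proposition.
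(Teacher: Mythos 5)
Your overall strategy coincides with the paper's: realize the candidate seed as the Berenstein-Fomin-Zelevinsky seed of a suitably chosen double reduced word for $(w_0,w_0)$, identify its cluster variables by a weight computation based on Lemma~\ref{Lem2-4}, and then match the quivers. But your choice of double word is wrong, and the weight computation you claim would fail. In the BFZ conventions, the right weight of the chamber minor at position $k$ is $v_{>k}(\varpi_{|a_k|})$, where $v_{>k}$ is the product of the \emph{positive} letters to the right of position $k$ taken in \emph{decreasing} order of position. So the right weights are prefix products of the \emph{reversed} positive subword, not of the positive subword itself. With your choice of positive strand $\bi'$ adapted to $\wt c$, the reversed word $(\bi')^{\mathrm{rev}}$ is adapted to $\wt{\wt c}=c$, so Lemma~\ref{Lem2-4} yields right weights of the form $c^{\,b}(\varpi_i)$, not $\wt c^{\,b}(\varpi_i)$. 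The resulting BFZ seed would consist of minors $\Delta_{c^{a}(\varpi_i),\,c^{\,b}(\varpi_i)}$, which are not the elements $\wt\De_{(i,r)}$ of Definition~\ref{def-initial-cluster-variables}. No choice of interleaving can repair this, because the elements $v_{>k}$ depend only on the positive subword.

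The correct choice, which is the paper's, is to use the \emph{same} $c$-adapted word $\bi=(i_1,\dots,i_N)$ for both strands, interleaved as $\ba=(-n,\dots,-1,i_1,-i_1,i_2,-i_2,\dots,i_N,-i_N)$. Then the left weights are prefix products of $\bi$, hence of the form $c^{a}(\varpi_i)$, while the right weights are prefix products of the reversed word $\wt\bi=(i_N,\dots,i_1)$; since the Auslander-Reiten quivers $\GG_Q$ and $\GG_{\wt Q}$ are mirror images of one another, $\wt\bi$ is adapted to $\wt c$, and Lemma~\ref{Lem2-4} applied to $\wt c$ gives right weights $\wt c^{\,b}(\varpi_i)$, exactly as needed. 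This mirror-image fact is the ingredient your argument is missing, and it is what makes $\wt c$ appear at all. As for the quiver comparison, you leave it at essentially the same level of detail as the paper (which leaves it as an exercise, noting that all arrows come out with reversed orientation, a global change that does not affect the cluster algebra); but it can only be attempted once the correct double word is fixed, since the exchange matrix also depends on the interleaving.
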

%\end{blue}

\begin{proof}
In \cite[\S2]{BFZ}, a cluster algebra structure on the coordinate ring of an arbitrary double Bruhat cell $G^{u,v}$ of $G$ is
obtained. Various initial seeds of this structure parametrized by pairs of reduced expressions of $u$ and $v$ are explicitly described. Here we take $u=v=w_0$, and we choose a reduced expression $\bi = (i_1,\ldots, i_N)$ for $w_0$ coming from an adapted numbering of the Auslander-Reiten quiver $\GG_Q$ of the quiver $Q$ attached to the Coxeter element $c$, as defined in \S\ref{subsubsec:w_{i,k}}.

Let us consider the initial seed of $K[G^{w_0,w_0}]$ associated in \cite{BFZ} with the sequence:
\[
 \ba = (a_j\mid j\in -[1,n]\cup [1,2N]) := (-n,-n+1,\ldots,-1,i_1,-i_1,i_2,-i_2,\ldots,i_N,-i_N).
\]
To this datum, \cite[\S2.3]{BFZ} attaches two sequences of elements of $W$:
\[
u_{\le j} := \prod_{l=1;\ a_l<0}^j s_{|a_l|},\qquad
v_{> j} := \prod_{l=2N;\ a_l>0}^{j+1} s_{a_l},\qquad
(j\in -[1,n]\cup [1,2N]).
\]
It is easy to check that our special choice for $\ba$ implies that $u_{\le 1} = e$, $u_{\le 2N} = w_0$, and
\[
u_{\le 2j} = u_{\le 2j+1} = s_{i_1}\cdots s_{i_j},\qquad (1\le j < N). 
\]
Similarly, we have: 
\[
v_{> 2j-1} = v_{> 2j} = s_{i_N} s_{i_{N-1}}\cdots s_{i_{j+1}}, \qquad (1\le j \le N). 
\]
It follows from Eq.(\ref{Eq-15}) in the proof of Lemma~\ref{Lem2-4} that for $1\le j \le N$ there hold
\[
u_{\le 2j-1}(\varpi_{|a_{2j-1}|})  = s_{i_1}s_{i_2}\cdots s_{i_{j-1}}(\varpi_{i_j}) = c^{k-1}(\varpi_{i_j}),
\qquad 
u_{\le 2j}(\varpi_{|a_{2j}|})  = s_{i_1}s_{i_2}\cdots s_{i_j}(\varpi_{i_j}) = c^k(\varpi_{i_j}),
\]
where $k$ is the number of indices $p\in [1, j]$ such that $i_p = i_j$.
Similarly, taking into account that the reversed sequence $\wt{\bi} = (i_N,i_{N-1},\ldots, i_1)$ is a reduced word 
for $w_0$ adapted to $\wt{c}$, we obtain that 
\[
v_{> 2j-1}(\varpi_{|a_{2j-1}|}) = v_{> 2j}(\varpi_{|a_{2j}|}) = s_{i_N} s_{i_{N-1}}\cdots s_{i_{j+1}}(\varpi_{i_j})
= \wt{c}^{\,\,m_{i_j}-k} (\varpi_{i_j}).
\]
(The fact that $\wt{\bi}$ is adapted to $\wt{c}$ follows from the fact that the Auslander-Reiten quivers $\GG_Q$
and $\GG_{\wt{Q}}$ are mirror images of each other via a left-right symmetry.)
It follows that the cluster variables of the initial seed of $K[G^{w_0,w_0}]$ associated with $\ba$ are the generalized minors
\[
 \Delta_{c^{k-1}(\varpi_{i_j}),\ \wt{c}^{\,\,m_{i_j}-k}(\varpi_{i_j})}, 
 \qquad
 \Delta_{c^{k}(\varpi_{i_j}),\ \wt{c}^{\,\,m_{i_j}-k} (\varpi_{i_j})},
 \qquad
 (1\le j \le N),
\]
together with the variables 
\[
\De_{\varpi_i,w_0(\varpi_i)},\qquad (i\in I)
\]
corresponding to the indices $j\in -[1,n]$.
Comparing with Definition~\ref{def-initial-cluster-variables}, these cluster variables are precisely the elements of the
set $\{ \wt \Delta_{(i,r)}\mid (i,r)\in \VV_{0,0}\}$, as required.

Finally, we leave it as an exercise to the reader to check that the quiver $\wt{\G}(\ba)$ of \cite[Definition 2.2]{BFZ} 
corresponding to the initial datum $\ba$ coincides with $\Gamma_{ \wt{c}, 0,0}$, except that all arrows have opposite orientations. Since a global change of orientation does not affect the definition of the cluster algebra, this finishes the proof of the proposition.
\cqfd

\end{proof}

The cluster structure of \cite{BFZ} on the coordinate ring $K[G^{w_0,w_0}]$ of the open double Bruhat cell $G^{w_0,w_0}$ has invertible frozen variables:
\begin{equation}\label{eq-22}
 \De_{\varpi_i,w_0(\varpi_i)},\ \De_{w_0(\varpi_i), \varpi_i},\qquad (i\in I).
\end{equation}
In fact $G^{w_0,w_0}$ is precisely the open subset of $G$ defined by the non-vanishing of these frozen variables. Since by construction the ring $R(G,c)$ is obtained by gluing copies of $K[G]$ 
(not of $K[G^{w_0,w_0}]$) we will need the following adaptation of the Berenstein-Fomin-Zelevinsky theorem to $K[G]$, recently proved by Oya \cite{O}.

\begin{Thm}[\bf Oya]
\label{thm: Oya}
The ring $K[G]$ has a cluster algebra structure given by the same initial seeds as $K[G^{w_0,w_0}]$, 
except that the frozen variables of Eq.~(\ref{eq-22}) are no longer invertible. Moreover this cluster algebra coincides with its upper cluster algebra.
\end{Thm}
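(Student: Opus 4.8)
The statement to prove is Theorem~\ref{thm: Oya}, attributed to Oya. Write $\Sigma_0$ for the seed of Proposition~\ref{Prop:seed BFZ} regarded inside the fraction field of $K[G]$ but with the $2n$ frozen minors of Eq.~(\ref{eq-22}) \emph{not} inverted, and let $\AA$ and $\mathcal{U}$ be the resulting cluster and upper cluster algebras (polynomial, not Laurent, in the frozens). Since $\AA\subseteq\mathcal{U}$ always, the whole statement follows from the chain $K[G]\subseteq\AA\subseteq\mathcal{U}\subseteq K[G]$, which also yields the ``moreover'' clause. The plan is to prove the two outer inclusions separately, after a geometric reduction. Denote the frozen minors by $y_1,\dots,y_{2n}$ and set $P:=\prod_j y_j$. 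The key geometric facts are that $G^{w_0,w_0}$ is exactly the non-vanishing locus of $P$, that its complement $G\setminus G^{w_0,w_0}=\bigcup_j\{y_j=0\}$ is a union of exactly $2n$ irreducible divisors (closures of the codimension-one double Bruhat cells), and that $K[G]$ is a UFD in which each $y_j$ is prime (here one uses that $G$ is simply connected). Consequently $K[G][P^{-1}]=K[G^{w_0,w_0}]$, and by normality of $G$ (algebraic Hartogs) $K[G]=K[G^{w_0,w_0}]\cap\bigcap_j\mathcal{O}_{v_j}$, where $v_j$ is the divisorial valuation along $\{y_j=0\}$.

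For $\mathcal{U}\subseteq K[G]$: localizing at $P$ and combining Proposition~\ref{Prop:seed BFZ} with the Berenstein--Fomin--Zelevinsky identification $\mathcal{U}[P^{-1}]=K[G^{w_0,w_0}]$ \cite{BFZ} gives $\mathcal{U}\subseteq\mathcal{U}[P^{-1}]\subseteq K[G^{w_0,w_0}]$. By the geometric reduction it then suffices to check $v_j(u)\ge 0$ for every $u\in\mathcal{U}$ and every $j$. Expanding $u$ in the initial cluster---where, by the definition of the non-inverted upper cluster algebra, the frozens occur with non-negative exponents while the mutable variables occur with arbitrary integer exponents---reduces this to showing $v_j(x)=0$ for every mutable initial cluster variable $x$. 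Each such $x$ is a generalized minor $\Delta_{c^{k-1}(\varpi_i),\,\wt{c}^{\,m_i-k}(\varpi_i)}$ or $\Delta_{c^{k}(\varpi_i),\,\wt{c}^{\,m_i-k}(\varpi_i)}$; these do not vanish identically on any boundary divisor $\{y_j=0\}$ (equivalently, they are coprime to each $y_j$ in the UFD $K[G]$), so $y_j\nmid x$ and $v_j(x)=0$. Since moreover $v_j(y_{j'})=\delta_{j,j'}$, every monomial in the expansion has non-negative $v_j$, hence $v_j(u)\ge 0$ and $u\in K[G]$.

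For $K[G]\subseteq\AA$: I would first show $K[G]\subseteq\mathcal{U}$ by the upper-bound (``starfish'') mechanism. A function $f\in K[G]$ lies in $K[G^{w_0,w_0}]=\mathcal{U}[P^{-1}]$, hence is a Laurent polynomial in the variables of the initial seed and of each one-step mutation; since $v_j(f)\ge 0$ the frozens occur polynomially, so $f$ lies in each of these finitely many Laurent rings. Because consecutive cluster variables are coprime and the exchange matrix has full rank, the upper-bound theorem of \cite{BFZ} identifies the intersection of these rings with $\mathcal{U}$, giving $f\in\mathcal{U}$. Finally, to upgrade $\mathcal{U}$ to $\AA$ and obtain the equality $\AA=\mathcal{U}$, I would invoke that the seed $\Sigma_0$ admits a reddening sequence (it is injective-reachable), a property of the exchange matrix that is insensitive to inverting the frozens and that forces a cluster algebra to coincide with its upper cluster algebra. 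Combining the three inclusions yields $\AA=\mathcal{U}=K[G]$.

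The main obstacle is the passage from invertible to non-invertible frozen variables. Once $P$ is inverted everything takes place in the already-understood ring $K[G^{w_0,w_0}]$; the real content is the control of behaviour along the $2n$ boundary divisors, i.e.\ the valuation estimates above together with the coprimality and non-vanishing of the generalized minors that feed them. The second genuinely non-formal point is $\AA=\mathcal{U}$: this does not descend automatically from the inverted case, and establishing the required reddening sequence (or, alternatively, local acyclicity of $\Sigma_0$) is where I expect the technical heart of the argument to lie.
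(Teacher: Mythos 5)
You should first be aware that the paper does not prove Theorem~\ref{thm: Oya} at all: it is quoted from Oya's preprint \cite{O} and used as a black box (in the proofs of Theorem~\ref{Thm4.10} and Theorem~\ref{Thm.6.11}), so your attempt has to stand on its own rather than be measured against an internal argument. Your architecture is sensible: the chain $K[G]\subseteq\AA\subseteq\mathcal{U}\subseteq K[G]$, the identification of $G\setminus G^{w_0,w_0}$ with the union of the $2n$ prime divisors $D_j=\{y_j=0\}$, and the Hartogs-type description $K[G]=K[G^{w_0,w_0}]\cap\bigcap_j\mathcal{O}_{v_j}$. Moreover the inclusion $\mathcal{U}\subseteq K[G]$ is essentially correct as you present it, since it only uses the inequality $v_j(\sum_m t_m)\ge\min_m v_j(t_m)$, the primality of the frozen minors and their coprimality with the mutable ones in the UFD $K[G]$, and normality of $G$.

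The genuine gap is in the direction $K[G]\subseteq\mathcal{U}$, at the sentence ``since $v_j(f)\ge 0$ the frozens occur polynomially''. Write $f=\sum_{b}y_j^{b}g_b$ with $g_b$ a Laurent polynomial in the cluster variables other than $y_j$, and let $m$ be the smallest $b$ with $g_b\neq 0$. The valuation inequality only gives $v_j(f)\ge m$, which is the wrong direction; to conclude $m=v_j(f)\ge 0$ you need $v_j(g_m)=0$, i.e.\ that no nonzero Laurent polynomial in the remaining $\dim G-1$ cluster variables vanishes identically on the $(\dim G-1)$-dimensional divisor $D_j$ -- equivalently, that these variables restrict to algebraically independent functions on $D_j$. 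This is not automatic: $D_j$ is entirely contained in the complement of $G^{w_0,w_0}$, where the cluster variables cease to be a coordinate system, so the dimension count alone proves nothing. If such a relation $R$ existed, then $R/y_j$ would be regular on $G$ with a negative power of $y_j$ in its initial-cluster expansion, i.e.\ exactly the counterexample your step must rule out; ruling it out (for this seed and for each one-step mutation of it) is precisely the non-formal content of Oya's theorem, which is handled in \cite{O} and \cite{QY} by controlling each frozen divisor through a seed adapted to it (``optimized'' seeds / partial compactification techniques), not through the single BFZ seed. So as written, the hard inclusion assumes its own crux. Separately, your final step that a reddening sequence forces $\AA=\mathcal{U}$ irrespective of whether the frozen variables are inverted is not a citable theorem in this generality -- you flag this yourself, but it means that even granting the valuation step, the equality $\AA=\mathcal{U}$ (and hence $K[G]\subseteq\AA$) remains unproven in your proposal.
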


By a slight abuse of notation, we identify from now on the integral domain $R(G,c, M,N)$ with its isomorphic image under the homomorphism $\pi_{M,N}^*$ in the ring $R(G,c)$ (Lemma \ref{lem: generators bands}).
We set 
\begin{equation}
    \label{eq:truncated seed}
    \Sigma_{M,N}:= ( \Gamma_{ \wt c , M, N}, \ \{ (\Delta_{(i,r)} \mid (i,r) \in \VV_{M,N} \}).
\end{equation}
Note that $\De_{(i,r)}$ belongs to $R(G,c, M,N)$ whenever $M \leq s_{(i,r)} \leq N$.
We consider $\Sigma_{M,N}$ as a labeling of the quiver $\Gamma_{ \wt c , M, N}$ with elements of the ring $R(G,c , M,N).$

We denote by  $K(G,c, M,N)$ the field of fractions of the integral domain $R(G, c, M,N)$, regarded as a subfield of 
$K(G,c)$.
The pullback of rational functions under the morphism $\pi_{M,N}$ canonically identifies the field of rational functions of the space of finite bands $B(G, c, M, N)$ with $K(G, c, M, N)$. 
Notice that $K(G, c, s, s) \ (s \in \Z)$ is the field of fractions of the subalgebra $\pi_s^*(K[G])$ of $R(G,c)$.

\begin{Lem}
    \label{lem: gen rational functions}
Let $(M,N) \in \P$.    The following statements hold.
    \begin{enumerate}
        \item The equality $K(G, c)= \bigcup_{n \in \N} K(G, c , - n, +n)$ holds.
        \item The field $K(G, c, M,N)$ is generated by the subfields $\{ K(G, c, s,s) \, \mid \, s \in [M,N] \cap \Z\}.$
        \item The field $K(G, c)$ is generated by the subfields $\{ K(G, c,s,s) \, \mid \, s \in \Z\}.$
    \end{enumerate}
\end{Lem}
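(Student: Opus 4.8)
The plan is to deduce all three statements from Lemma~\ref{lem: generators bands} and Lemma~\ref{lem: generators part bands}, together with the elementary principle that forming the field of fractions commutes with directed unions of integral domains. Recall that by definition each $K(G,c,M,N)$ is the field of fractions of $R(G,c,M,N)$, regarded as a subfield of $K(G,c)$, and that $K(G,c,s,s)$ is the field of fractions of $\pi_s^*(K[G])$.

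For statement~(1), I would first observe that the pairs $(-n,+n)$ with $n\in\N$ form a cofinal subset of $\P_0$: any $(M,N)\in\P_0$ satisfies $(M,N)\le(-n,+n)$ for $n=\max(-M,N)\in\N$. By Lemma~\ref{lem: generators bands}(2), the ring $R(G,c)$ is the union of the subalgebras $R(G,c,M,N)$ over $\P_0$, and hence, by cofinality, the union of the increasing chain of integral domains $R(G,c,-n,+n)$, $n\in\N$. I would then invoke the general fact that if a domain is the directed union of subdomains $R_n$, its field of fractions is the union of the subfields $\mathrm{Frac}(R_n)$: any quotient $a/b$ with $a,b\in R(G,c)$ and $b\neq 0$ has $a,b\in R(G,c,-n,+n)$ for some $n$ by directedness, whence $a/b\in K(G,c,-n,+n)$. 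This yields statement~(1).

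For statement~(2), let $L$ be the subfield of $K(G,c,M,N)$ generated by the subfields $K(G,c,s,s)$, $s\in[M,N]\cap\Z$. By Lemma~\ref{lem: generators part bands}, the algebra $R(G,c,M,N)$ is generated by its subalgebras $\pi_s^*(K[G])$ for $M\le s\le N$; since each such subalgebra is contained in $K(G,c,s,s)\subseteq L$ and $L$ is a field containing $K$, the subalgebra they generate lies in $L$, so $R(G,c,M,N)\subseteq L$. As $L$ is a field, it then contains the field of fractions $K(G,c,M,N)$, and the reverse inclusion is clear, giving $L=K(G,c,M,N)$.

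Statement~(3) follows by combining the first two: any $x\in K(G,c)$ lies in some $K(G,c,-n,+n)$ by~(1), which by~(2) is generated by the subfields $K(G,c,s,s)$ with $s\in[-n,n]$; hence $x$ lies in the subfield of $K(G,c)$ generated by all $K(G,c,s,s)$, $s\in\Z$. Alternatively, statement~(3) may be proved directly by repeating the argument of~(2) with Lemma~\ref{lem: generators bands}(4) in place of Lemma~\ref{lem: generators part bands}, since that statement already asserts that $R(G,c)$ is generated by the subalgebras $\pi_s^*(K[G])$, $s\in\Z$. None of these steps presents a genuine difficulty; the only point requiring a little care is the interchange of fields of fractions with the directed union in~(1), which rests on the directedness of the chain $R(G,c,-n,+n)$ secured by Lemma~\ref{lem: generators bands}(2).
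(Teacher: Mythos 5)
Your proposal is correct and follows essentially the same route as the paper: statement (1) via cofinality of the chain $(-n,n)$ together with the fact that $R(G,c)$ is the directed union of the $R(G,c,M,N)$, and statements (2) and (3) via the generation results of Lemma~\ref{lem: generators part bands} and Lemma~\ref{lem: generators bands}(4), passing to fields of fractions. The only difference is that you spell out the (easy) interchange of fraction fields with directed unions, which the paper's proof leaves implicit.
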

\begin{proof}
Notice that the set $\{(-n,n) \mid n \in \N\}$ is cofinal in $\P$. Therefore, statement 1 can be easily deduced from Lemma \ref{lem: limits algebraic}.
   Statement 2 follows from Lemma \ref{lem: generators part bands}.
   Similarly, statement 3 follows from statement 4 of Lemma \ref{lem: generators bands}.
    \cqfd
\end{proof}
\begin{Prop}
    \label{prop: variables free}
Let $(M,N) \in \P_0$. The following statements hold.
\begin{enumerate}
    \item The set $\{ \De_{(i,r)}  \mid  (i,r) \in \VV_{M,N} \}$ freely generates the field $K(G, c, M, N)$ over $K.$
    \item The set $\{ \De_{(i,r)}  \mid  (i,r) \in \VV \}$ freely generates the field $K(G, c)$ over $K$.
\end{enumerate}
\end{Prop}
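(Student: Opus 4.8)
The plan is to deduce statement (2) from statement (1) by a direct-limit argument, and to prove (1) by matching a transcendence-degree count against a generation statement. First I would record that, by Corollary~\ref{cor: geometric prop of finite bands}, the field $K(G,c,M,N)$ has transcendence degree $\dim G + (N-M)|I|$ over $K$. I would then count $\VV_{M,N}$ using the explicit values of $s_{(i,r)}$ read off from Definition~\ref{def-initial-cluster-variables}: the vertices with $s_{(i,r)}=0$ are the whole middle block together with the lowest black vertex in each column, giving exactly $\dim G$ vertices (matching the Berenstein--Fomin--Zelevinsky count), while for $s>0$ the only vertices are the $|I|$ black upper vertices $\De^{(s)}_{w_0\varpi_i,\varpi_i}$, and for $s<0$ only the $|I|$ black lower vertices $\De^{(s)}_{\varpi_i,w_0\varpi_i}$. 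Hence $|\VV_{M,N}|=\dim G + (N-M)|I|$, which equals the transcendence degree. Consequently, for (1) it suffices to prove that the $\De_{(i,r)}$ with $(i,r)\in\VV_{M,N}$ \emph{generate} $K(G,c,M,N)$: a generating family of a finitely generated field extension whose cardinality equals the transcendence degree is automatically algebraically independent and hence a free (purely transcendental) generating family.

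To prove generation I would induct on the window size $N-M$ inside $\P_0$, the base case $(0,0)$ being exactly Proposition~\ref{Prop:seed BFZ}: the cluster of any seed freely generates the ambient field, and the ambient field of the cluster structure on the dense open cell $G^{w_0,w_0}$ is $K(G)=K(G,c,0,0)$, so the $\De_{(i,r)}$ with $(i,r)\in\VV_{0,0}$ generate $K(G,c,0,0)$. For the inductive step I would invoke Lemma~\ref{lem: gen rational functions}(2), which reduces generation of $K(G,c,M,N)$ to showing that each subfield $K(G,c,s,s)$ lies in the field generated by the $\De_{(i,r)}$. The \emph{upward} step $N\to N+1$ is clean: the translated BFZ seed shows that $K(G,c,N+1,N+1)=\pi_{N+1}^*(K(G))$ is generated by the minors $\De^{(N+1)}_{c^a\varpi_i,\wt c^{\,b}\varpi_i}$ with $a+b\in\{m_i-1,m_i\}$; every such generator with $a\le m_i-1$ is rewritten by the gluing formulas of Proposition~\ref{lem: glueing formulas} as the level-$N$ minor $\De^{(N)}_{c^{a+1}\varpi_i,\wt c^{\,b}\varpi_i}\in K(G,c,M,N)$, while the unique remaining generator per column, with $a=m_i$, is precisely the new variable $\De^{(N+1)}_{w_0\varpi_i,\varpi_i}$. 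Thus $K(G,c,N+1,N+1)$ lies in $K(G,c,M,N)$ together with the new variables, and the induction advances.

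The main obstacle is the \emph{downward} step $M\to M-1$. Here the new variables $\De^{(M-1)}_{\varpi_i,w_0\varpi_i}$ have first index $\varpi_i=c^0$, so the gluing formulas now leave \emph{two} generators per column undetermined at level $M-1$, namely $\De^{(M-1)}_{\varpi_i,\wt c^{\,m_i-1}\varpi_i}$ and $\De^{(M-1)}_{\varpi_i,\wt c^{\,m_i}\varpi_i}=\De^{(M-1)}_{\varpi_i,w_0\varpi_i}$, only the second being a cluster variable; this asymmetry is intrinsic, since the band relation $g(M-1)g(M)^{-1}\in U(c^{-1})\oc$ produces gluing on the \emph{first} index only. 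To close the gap I would argue directly on the band step: writing a point of $B(G,c,M-1,M)$ as $g(M-1)=u\,\oc\,g(M)$ with $u\in U(c^{-1})$, every level-$(M-1)$ minor becomes a regular function of $g(M)$ and of the $|I|$ coordinates of $u$, so it suffices to recover $u$ rationally from $g(M)$ and the $|I|$ new values $\De_{\varpi_i,w_0\varpi_i}(u\,\oc\,g(M))$. The technical heart, which I expect to be the hardest point, is the claim that for generic $g(M)$ these $|I|$ minors form a \emph{triangular coordinate system} on $U(c^{-1})$: ordering the root coordinates of $U(c^{-1})=U\cap c\,U^-c^{-1}$ compatibly with the reduced word attached to $c$, the minor indexed by $i$, after the deeper coordinates are fixed, is affine in the next coordinate with a generically invertible leading coefficient lying in $K(G,c,M,M)$. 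I would establish this triangularity from the highest-weight structure of the fundamental representations $V(\varpi_i)$ (the left index $\varpi_i$ forces left $U^-$-invariance and controls the dependence on $u$), exactly generalizing the explicit linear system solved for $SL(n)$ in the proof of Lemma~\ref{Lem3.5}. Triangularity makes the map birational, so $u$, and therefore all of $K(G,c,M-1,M-1)$, lies in $K(G,c,M,M)\bigl(\{\De^{(M-1)}_{\varpi_i,w_0\varpi_i}\}_{i\in I}\bigr)$; combined with Lemma~\ref{lem: gen rational functions}(2) for the window $[M-1,N]$ this finishes the downward step, and hence generation in~(1).

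Finally, I would deduce (2) from (1) using that $\{(-n,n)\mid n\in\N\}$ is cofinal in $\P_0$. By Lemma~\ref{lem: gen rational functions}(1) we have $K(G,c)=\bigcup_n K(G,c,-n,n)$, and correspondingly $\VV=\bigcup_n\VV_{-n,n}$. Generation is then immediate, since any element of $K(G,c)$ lies in some $K(G,c,-n,n)$, which by (1) is generated by the finitely many $\De_{(i,r)}$ with $(i,r)\in\VV_{-n,n}$. Algebraic independence of the whole infinite family follows because every finite subset is contained in some $\VV_{-n,n}$ and is therefore algebraically independent by (1). Being algebraically independent and generating, the family $\{\De_{(i,r)}\mid (i,r)\in\VV\}$ freely generates $K(G,c)$ over $K$, which is statement~(2).
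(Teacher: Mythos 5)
Your overall architecture --- reducing algebraic independence to a transcendence-degree count, proving field generation, and passing to the limit for statement (2) --- coincides with the paper's, and your count of $\VV_{M,N}$, your base case via Proposition~\ref{Prop:seed BFZ}, and your colimit argument for (2) are all correct. Your upward step is also correct, and it is a genuinely more elementary route than the paper's for that direction: you observe that the gluing formulas of Proposition~\ref{lem: glueing formulas} alone push every level-$(N+1)$ BFZ generator with first index $c^a\varpi_i$, $a\le m_i-1$, down to level $N$, leaving only the new cluster variable $\De^{(N+1)}_{w_0\varpi_i,\varpi_i}$ in each column, whereas the paper reaches level $N+1$ by cluster mutations.

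The downward step, however, is a genuine gap, and you have correctly located it without closing it. Your induction hinges on the claim that, for generic $g(M)$, the $|I|$ functions $u\mapsto\De_{\varpi_i,w_0(\varpi_i)}(u\,\oc\,g(M))$ form a triangular, hence birational, coordinate system on $U(c^{-1})$. You verify nothing beyond the $SL(n)$ prototype and only state that you \emph{would} establish the general triangularity from highest-weight theory; that is a statement of intent, not a proof, and for general $G$ and general Coxeter element $c$ it requires a real structural argument (an ordering of the roots of $U(c^{-1})$, vanishing of the unwanted coefficients, generic invertibility of the leading ones), i.e., work comparable in substance to what it is meant to replace. The paper avoids this entirely with a tool you already have but never invoke: the translation invariance of Proposition~\ref{prop translation}. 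Since in $\Gamma_{\wt{c}}$ there is no arrow between a mutable vertex of $\Gamma_{\wt{c},M,N}$ and any vertex outside $\VV_{M,N}$, mutation of the truncated seed $\Sigma_{M,N}$ agrees with the restriction of the corresponding mutation of $\Sigma$; iterating $\tau_{\mathrm{green}}$ (respectively $\tau_{\mathrm{red}}$) therefore produces, level by level, the pullback of the full BFZ seed at every $s\in[M,N]$, and since mutation formulas are rational expressions in the current cluster, all of these elements lie in the subfield $F_{M,N}$ generated by the initial cluster variables. This handles the downward direction symmetrically with the upward one, and together with Lemma~\ref{lem: gen rational functions} it finishes generation. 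Replacing your unproven triangularity claim by this mutation argument would make your proof complete.
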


\begin{proof}
    The second assertion follows from the first one and Lemma \ref{lem: gen rational functions}. 
    For the first statement, let $F_{M,N}$ be the subfield of $K(G, c,M,N)$ generated by the elements of the set $ \{\De_{(i,r)} \mid (i,r) \in \VV_{M,N} \}$ over $K$.
    We first prove that $F_{M,N}=K(G, c, M, N)$.

     \smallskip
     
   Proposition \ref{Prop:seed BFZ} implies that the set $\{ \De_{(i,r)}  \mid (i,r) \in \VV_{0,0}\}$ generates the field $K(G, c, 0,0)$ over~$K$. Therefore, the field $K(G, c, 0,0) $ is contained in $F_{M,N}.$

  Let $(i,r)$ be a mutable vertex of $\Gamma_{ \wt c, M,N}$. By applying Fomin-Zelevinsky mutation formulas to $\Sigma_{M,N}$, we obtain a labeling of the iced-quiver $\mu_{(i,r)}( \Gamma_{ \wt c, M,N})$ that we denote, with slight abuse of notation, by $\mu_{(i,r)}( \Sigma_{M,N})$.
  Note that in the quiver $\Gamma_{ \wt c}$ there is no arrow between $(i,r)$ and any vertex $(j,t)$ satisfying $s_{(j,t)} < M$ or $s_{(j,t)} > N$.
  Thus, the labeled quiver $\mu_{(i,r)}( \Sigma_{M,N})$ can be obtained by forgetting the vertices belonging to the set $\VV \setminus \VV_{M,N}$ from the labeled quiver $\mu_{(i,r)}( \Sigma)$ and appropriately freezing vertices.
Then, Proposition \ref{Prop:seed BFZ} and iteration of Proposition \ref{prop translation}  imply that the field $K(G, c, s,s)$ is contained in $F_{M,N}$ for every $s\in[M,N]$.
  By Lemma \ref{lem: gen rational functions}, we deduce that $F_{M,N}= K(G,c, M, N).$

  \smallskip
  
  For a field extension $F$ of $K$ we denote by $\trdeg_K(F)$ the transcendence degree of $F$ over $K$.
   Corollary \ref{cor: geometric prop of finite bands} and the well known Noether normalization lemma imply that 
  $$
  \trdeg_K\bigl(K(G, c, M, N)\bigr)= \dim(G)+ |I|(N-M).
  $$
 Moreover, Proposition \ref{Prop:seed BFZ} implies that
 $$
 | \VV_{0,0} |= \trdeg_K\bigl ( K(G, c, 0, 0) \bigr)= \dim(G).
 $$
  It follows that 
  $$
  | \VV_{M,N} |= \dim(G)+ |I|(N-M).
  $$
Hence, the cardinality of the set $\{ \De_{(i,r)} \ \mid \ (i,r) \in \VV_{M,N} \}$ equals the transcendence degree of the field it generates (over $K$). 
By well known properties of the transcendence degree, it follows that the elements of the set  $\{ \De_{(i,r)} \ \mid \ (i,r) \in \VV_{M,N} \}$ are algebraically independent over $K$.
  \cqfd
\end{proof}

\subsection{Reduction to finite bands}
\label{sec-reduction-finite-bands}

For $(M,N) \in \P_0$, we denote by $\AA_{M,N}$ the cluster algebra with non-invertible coefficients (frozen variables) and initial seed $(\G_{\wt{c}, M,N}, \{z_{(i,r)}\mid (i,r)\in \VV_{M,N}\})$.
Remember that the frozen vertices of the iced-quiver $\G_{\wt{c}, M,N}$ are the highest and lowest vertices of all columns. 
Let $(j,t) \in \VV$ be a vertex satisfying $s_{(j,t)} < M$ or $s_{(j,t)} > N$. 
In the quiver $\G_{\wt c}$, there is no arrow between $(j,t)$ and any vertex which is a mutable vertex of $\G_{\wt c, M, N}$. 
Hence, the cluster algebra $\AA_{M,N}$ is naturally identified with the sub-algebra of $\AA$ generated (over $\Z$) by the cluster variables of the form $\mu(z_{(i,r)})$, where $(i,r)$ belongs to $\VV_{M,N}$ and $\mu$ is a sequence of %one-step 
mutations at mutable vertices of the iced-quiver $\G_{c, M, N}.$

\begin{Thm}\label{Thm4.10}
    Let $(M,N) \in \P_0$.
The assignment 
\[
z_{(i,r)}\mapsto \De_{(i,r)},\qquad ((i,r)\in\VV_{M,N})
\]
extends to a unique algebra isomorphism from
$K\otimes \AA_{ M, N}$ to $R(G, c, M, N)$.
\end{Thm}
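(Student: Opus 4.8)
The plan is to upgrade the field isomorphism of Proposition~\ref{prop: variables free} to an algebra isomorphism by trapping $R(G,c,M,N)$ between $K\otimes\AA_{M,N}$ and its upper cluster algebra and then collapsing the sandwich with Oya's theorem. By Proposition~\ref{prop: variables free}(1) the elements $\De_{(i,r)}\ ((i,r)\in\VV_{M,N})$ are algebraically independent and generate $K(G,c,M,N)$, so $z_{(i,r)}\mapsto\De_{(i,r)}$ extends to a unique field isomorphism
\[
\psi : \mathrm{Frac}(K\otimes\AA_{M,N}) \longrightarrow K(G,c,M,N),
\]
which already yields uniqueness and injectivity. What remains is to identify the image of $K\otimes\AA_{M,N}$ with $R(G,c,M,N)$.

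Next I would prove the inclusion of the \emph{upper} cluster algebra into $R(G,c,M,N)$, via the codimension-one criterion. By definition an element of the upper cluster algebra is a Laurent polynomial in the initial cluster $\{\De_{(i,r)}\}$ and in each one-step mutated cluster. The initial variables are generalized minors, hence regular; and each one-step mutation $\De'_{(i,r)}$ is again regular, because translating the seed by the sequences $\tau_{\mathrm{red}}/\tau_{\mathrm{green}}$ of Proposition~\ref{prop translation} to the level of the slab containing $(i,r)$ turns its exchange relation into a Berenstein--Fomin--Zelevinsky minor identity (Proposition~\ref{Prop:seed BFZ}), whose solution lies in some $\pi_s^*(K[G])$ by Theorem~\ref{thm: Oya}. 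A rational function that is regular off $\bigcup_{(i,r)}\{\De_{(i,r)}=0\}$ and has no pole along any of these prime divisors is regular on all of the smooth variety $B(G,c,M,N)$, since $R(G,c,M,N)$ is factorial by Corollary~\ref{cor: geometric prop of finite bands}. Thus $\psi$ maps the upper cluster algebra, and a fortiori $K\otimes\AA_{M,N}$, into $R(G,c,M,N)$.

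For the reverse inclusion $R(G,c,M,N)\subseteq\psi(K\otimes\AA_{M,N})$ I would invoke Lemma~\ref{lem: generators part bands}: the ring $R(G,c,M,N)$ is generated by the subalgebras $\pi_s^*(K[G])$ for $M\le s\le N$, so it is enough to produce each $\pi_s^*(K[G])$ inside the image. Applying $|s|$ steps of $\tau_{\mathrm{red}}$ or $\tau_{\mathrm{green}}$ (Proposition~\ref{prop translation}) to $\Sigma_{M,N}$ carries the red and green vertices to the level $s$, where Proposition~\ref{Prop:seed BFZ} together with translation invariance shows that they carry exactly the initial Berenstein--Fomin--Zelevinsky seed of $\pi_s^*(K[G])$; by Theorem~\ref{thm: Oya} the whole coordinate ring $\pi_s^*(K[G])$ is then generated by the cluster variables reachable through further mutations among these vertices. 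Combining the two inclusions gives $R(G,c,M,N)\subseteq\psi(K\otimes\AA_{M,N})\subseteq R(G,c,M,N)$, whence $\psi$ is the desired isomorphism and, incidentally, $K\otimes\AA_{M,N}$ coincides with its upper cluster algebra.

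The step I expect to be the main obstacle is the compatibility between consecutive slabs. The subquivers carrying the slab-$s$ copies of the $K[G]$ seed are not isolated in $\Gamma_{\wt c,M,N}$: neighbouring slabs share their extremal vertices, which the gluing formulas of Proposition~\ref{lem: glueing formulas} identify. One must check that every arrow joining two different slabs is incident only to such shared, slab-frozen vertices, so that the $K[G]$-mutations performed inside one slab are genuine mutations of $\AA_{M,N}$ and do not disturb the neighbours; only then does Oya's identification of $K[G]$ with its cluster algebra propagate through the amalgamation. Should this bookkeeping prove awkward, an alternative is induction on $N-M$, the base case $N=M$ being Proposition~\ref{Prop:seed BFZ} and Theorem~\ref{thm: Oya} applied to $B(G,c,M,M)\cong G$, and the inductive step consisting in attaching one further slab along its frozen boundary.
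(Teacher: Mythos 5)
Your overall strategy coincides with the paper's: uniqueness and injectivity from Proposition~\ref{prop: variables free}; the inclusion $R(G,c,M,N)\subseteq\psi(K\otimes\AA_{M,N})$ from Lemma~\ref{lem: generators part bands}, Proposition~\ref{Prop:seed BFZ}, Theorem~\ref{thm: Oya} and iterated Proposition~\ref{prop translation}; and the reverse inclusion by sandwiching $\AA_{M,N}$ inside its upper cluster algebra and running a Starfish-type factoriality argument (this is exactly how the paper invokes \cite[Corollary 6.4.6]{FWZ}, cf.\ Remark~\ref{rem:starfish lemma}). However, your justification of the crucial input --- regularity of the one-step mutations of $\Sigma_{M,N}$ --- has a genuine gap at the \emph{black} vertices. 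Translating by $\tau_{\mathrm{red}}/\tau_{\mathrm{green}}$ cannot ``turn the exchange relation at a black vertex into a BFZ minor identity'': the translations are themselves composites of mutations, so they change the labels at the neighbours of that vertex, and under the isomorphism of labelled quivers furnished by Proposition~\ref{prop translation} black vertices correspond to black vertices, never to red or green ones. The exchange relation of $\Sigma$ at a black vertex is genuinely not an instance of \cite[Theorem 1.17]{FZ}; this is precisely why the paper needs Proposition~\ref{prop: one-step-mutations}. There, after rewriting the neighbouring labels with the gluing formulas of Proposition~\ref{lem: glueing formulas} so that they all lie in a single $\pi_{s'}^*(K[G])$, the relation is identified with an exchange relation of a \emph{different} BFZ seed of $K[G]$ (the seeds attached to the words $\bb$ and $\bc$, with negative and positive letters grouped, not the interleaved word $\ba$ of Proposition~\ref{Prop:seed BFZ}), and the mutated variable is then a cluster variable of $K[G]$ --- in general \emph{not} a generalized minor, so Oya's theorem alone does not name it. Worse, this rewriting requires $m_i\ge 2$ for all $i$: in the excluded case $G=SL(n)$, $c=c_{st}^{\pm1}$ (precisely the running example of Section~\ref{sec:SL(n) cst}, see Remark~\ref{rem-3.11}) the neighbours of a black vertex cannot be confined to one copy of $K[G]$, and the paper must instead prove the cubic determinantal identity of Lemma~\ref{lem-4.14}. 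Your proposal addresses none of these points.

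Two further remarks. First, your ``codimension-one criterion'' also needs the initial and one-step-mutated variables to be irreducible and pairwise coprime in $R(G,c,M,N)$: without coprimality of $\De_{(i,r)}$ and $\De'_{(i,r)}$, being a Laurent polynomial in the adjacent cluster does not exclude a pole along $\{\De_{(i,r)}=0\}$. The paper verifies this using \cite[Theorem 1.3]{GLS} together with the equality of unit groups $R(G,c,M,N)^\times=K^\times$. Second, the ``main obstacle'' you anticipate is not the real one: the copies of the BFZ seed of $\pi_s^*(K[G])$ for the various $s$ are not disjoint subquivers of $\Gamma_{\wt c,M,N}$ glued along shared frozen vertices; they arise as sub-seeds of the successively translated seeds $\tau_{\mathrm{red}}^k(\Sigma_{M,N})$, i.e.\ only after mutations have been performed, and the paper checks separately (in the proof of Proposition~\ref{prop: variables free}) that these mutations stay inside $\VV_{M,N}$. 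So the inter-slab bookkeeping you worry about is moot, while the step your argument leaves open --- the black-vertex mutations --- is where the actual work of the paper lies.
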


Let us check immediately that Theorem \ref{Thm4.4} is an easy consequence of Theorem \ref{Thm4.10}.

\medskip\noindent
\emph{Proof of Theorem \ref{Thm4.4}.---}
Proposition \ref{prop: variables free} implies that the homomorphism of $K$-algebras 
\[
\phi : K \otimes \AA \lra K(G, c)
\]
defined by the assignment of Theorem \ref{Thm4.4} is well defined.
Moreover, the restriction $\phi_{M,N} \ (M \leq 0 \leq N)$ of $\phi$ to the subalgebra $K \otimes \AA_{ M, N}$ of $K \otimes \AA$ is the homomorphism defined by the assignment of Theorem \ref{Thm4.10}.
Since any sequence of mutable vertices of the quiver $\G_{\wt c}$ is also a sequence of mutable vertices of a truncated quiver $\Gamma_{\wt c, -n,n}$ for $n$ sufficiently large, and since tensoring with $K$ commutes with colimits, we have that 
$$
K \otimes \AA= \bigcup_{n \in \N} K \otimes \AA_{- n, n}. 
$$
Then, Theorem \ref{Thm4.10} and the previous discussion imply that $\phi$ is injective and its image is the subalgebra  
$$
\bigcup_{n \in \N} R(G, c, -n, n)
$$
of $K(G, c)$.
But the latter subalgebra is $R(G, c)$ because of Lemma \ref{lem: limits algebraic} and the fact that the set $\{(-n,n) \mid n \in \N\}$ is cofinal in $\P.$
    \cqfd

The rest of this section is devoted to the proof of Theorem \ref{Thm4.10}.

\subsection{One step mutation of initial cluster variables}
\label{subsec-one-step-mutations}

The proof of Theorem \ref{Thm4.10} will follow from the Starfish lemma. 
To apply this lemma we will need the following proposition.

\begin{Prop}\label{prop: one-step-mutations}
For every $(i,r)\in\VV$, let $\Delta_{(i,r)}^*\in K(G,c)$ denote the rational function obtained
from $\De_{(i,r)}$ by a single mutation of the seed $\Sigma$ at vertex $(i,r)$. 
Then we have $\De_{(i,r)}^* \in R(G,c)$. 
\end{Prop}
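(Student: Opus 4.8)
The plan is to prove regularity by reducing to a finite band and then verifying each exchange relation as a pulled-back generalized minor identity. Fix $(i,r)\in\VV$. The one-step mutation relation has the form $\De_{(i,r)}\,\De_{(i,r)}^{*}=P^{+}+P^{-}$, where $P^{+}$ and $P^{-}$ are the two monomials in the neighbouring initial cluster variables read off from the quiver $\G_{\wt{c}}$. Only finitely many variables occur here, so all of them lie in $R(G,c,M,N)$ for some $(M,N)\in\P$ with $M\le 0\le N$; by Lemma~\ref{lem: generators bands} we may argue inside $R(G,c,M,N)$, which is a unique factorization domain by Corollary~\ref{cor: geometric prop of finite bands}. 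Since $\De_{(i,r)}$ is an element of this ring, it suffices to show that $\De_{(i,r)}$ divides $P^{+}+P^{-}$, equivalently that $\De_{(i,r)}^{*}$ is a regular function.

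For a red or a green vertex this is already contained in the proof of Proposition~\ref{prop translation}: there the exchange relation is exhibited as the pullback under $\pi_{0}$ (\resp $\pi_{-1}$) of an identity of generalized minors on $G$ which is an instance of \cite[Theorem 1.17]{FZ}, and the new variable $\De_{(i,r)}^{*}$ is the pullback of a single generalized minor. Hence $\De_{(i,r)}^{*}\in\pi_{0}^{*}(K[G])\subseteq R(G,c)$ (\resp $\pi_{-1}^{*}(K[G])$), with nothing further to prove.

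The remaining, and genuinely harder, case is that of a black vertex, where the neighbouring variables are distributed over finitely many \emph{consecutive} fibres $\pi_{s-1},\pi_{s},\pi_{s+1}$ rather than a single one, so the relation is not directly a pullback from $G$. Here the plan is to imitate the proof of the gluing formulas (Proposition~\ref{lem: glueing formulas}). Interpreting the relation on the variety $B(G,c,M,N)$, it is enough to verify it at closed band points, where $g(t)=x_{t}\,\oc\,g(t+1)$ with $x_{t}\in U(c^{-1})\oc$ and $\oc^{-1}x_{t}\in U^{-}\cap c^{-1}Uc$. Iterating this substitution, and using Lemma~\ref{Lem2-4} together with Lemma~\ref{lem: week order} exactly as in Proposition~\ref{lem: glueing formulas} to translate the left weight by a suitable $\overline{w_{i,k}}$ and discard a left factor in $U^{-}$, every minor taken in fibres $s\pm1$ can be rewritten as a generalized minor evaluated at $g(s)$. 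This turns the whole relation into the $\pi_{s}$-pullback of an identity of generalized minors on $G$, again an instance of \cite[Theorem 1.17]{FZ}; as in the red/green case one reads off that $\De_{(i,r)}^{*}$ equals the pullback of a single generalized minor of the form $\De^{(s')}_{u(\varpi_{i'}),\varpi_{i'}}$ (for an upper black vertex) or $\De^{(s')}_{\varpi_{i'},u(\varpi_{i'})}$ (for a lower one), which lies in $R(G,c)$.

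The main obstacle is the bookkeeping of this last reduction: for each black vertex one must check that the $U^{-}$-factors and the twists by $\overline{w_{i,k}}$ produced when moving across fibres match exactly the exchange monomials prescribed by $\G_{\wt{c}}$, so that the outcome is precisely the asserted Fomin--Zelevinsky three-term identity. Should a clean identification of $\De_{(i,r)}^{*}$ prove awkward for some vertices, regularity can instead be deduced purely from factoriality: the variable $\De_{(i,r)}$, being the pullback of an irreducible generalized minor under the smooth surjection onto a single fibre, is prime in $R(G,c,M,N)$, so $\De_{(i,r)}\mid P^{+}+P^{-}$ will follow once one checks that $P^{+}+P^{-}$ vanishes on the irreducible hypersurface $\{\De_{(i,r)}=0\}$, which again reduces to the substitution $g(t)=x_{t}\,\oc\,g(t+1)$ and the same generalized minor manipulations.
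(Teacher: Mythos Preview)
Your treatment of red and green vertices is fine and matches the paper. The black vertex case, however, has a genuine gap.

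You claim that after using the gluing relations to push all neighbouring minors to a single fibre $\pi_s$, the resulting identity on $G$ is ``again an instance of \cite[Theorem 1.17]{FZ}'', and hence that $\De_{(i,r)}^*$ is the pullback of a single generalized minor. Neither statement is correct. Take a lower black vertex, so that $\De_{(i,r)}=\De^{(s)}_{\varpi_i,\,w_0(\varpi_i)}$. After gluing to fibre $s-1$, every minor appearing in the exchange relation has right weight $w_0(\varpi_j)$ for some $j$; in particular the right weight is constant along each column. But the three-term identity \cite[Theorem 1.17]{FZ} requires the two factors on the left-hand side to have right weights $v(\varpi_i)$ and $vs_i(\varpi_i)$, which are distinct. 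So the exchange relation at a black vertex is \emph{not} an instance of that identity, and the paper explicitly notes that the resulting $\De_{(i,r)}^*$ is in general not a generalized minor.

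What the paper does instead is identify the pulled-back relation with a one-step mutation of a \emph{different} BFZ initial seed of $K[G^{w_0,w_0}]$, namely the one attached to the word $\bb=(-n,\ldots,-1,-i_1,\ldots,-i_N,i_1,\ldots,i_N)$ (or its companion $\bc$ for upper black vertices), and then invokes Oya's Theorem~\ref{thm: Oya} that $K[G]$ itself is the cluster algebra of these seeds. This is how regularity is obtained. Your gluing reduction is the right first move, but you need this extra input; the Fomin--Zelevinsky minor identity does not suffice.

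There is also a case you have not addressed: when $G$ is of type $A_n$ and $c=c_{st}$ or $c_{st}^{-1}$, one has $m_i=1$ for some $i$, and then the gluing relations do not allow you to bring all neighbours to a single fibre (the rewriting $\De^{(s)}_{\varpi_i,\cdot}=\De^{(s-1)}_{c(\varpi_i),\cdot}$ already uses up the whole range $1\le k\le m_i$). The paper treats this case separately via a cubic determinantal identity (Lemma~\ref{lem-4.14}).

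Your fallback via factoriality and vanishing on $\{\De_{(i,r)}=0\}$ is not an independent argument: checking that $P^++P^-$ vanishes on that hypersurface is exactly the content of the exchange identity, so you are back to needing the minor computation.
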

\begin{proof}
If $(i,r)$ is a red or a green vertex, then it follows from Proposition~\ref{prop translation} that $\Delta_{(i,r)}^*$   
is of the form $\Delta^{(0)}_{c^{m_i-k}(\varpi_i),\, \wt{c}^{\,k+1}(\varpi_i)}$ or
$\Delta^{(-1)}_{c^{m_i-k}(\varpi_i),\, \wt{c}^{\,k+1}(\varpi_i)}$ for some $k$, so it clearly belongs to $R(G,c)$.

So we are reduced to the case when $(i,r)$ is a black vertex.
Let us exclude for the moment the special case when $G$ is of type $A_n$ and $c=c_{st} = s_1\ldots s_n$ or $c=c_{st}^{-1}$. By Remark~\ref{rem-3.11}, this implies that $m_i \ge 2$ for every $i$.

Let us assume that $(i,r)$ is a black vertex of the lower semi-infinite part of the quiver $\G_{\wt{c}}$.
In that case we have 
$\De_{(i,r)} = \De_{\varpi_i,\,\wt{c}^{m_i}(\varpi_i)}^{(s)}$
for some $s<0$. The neighbours of $\De_{(i,r)}$ in the quiver $\G_{\wt{c}}$ are of two types:
\begin{itemize}
 \item vertices of an outgoing arrow: 
 \begin{itemize}
 \item $\De_{\varpi_i,\,\wt{c}^{m_i}(\varpi_i)}^{(s+1)}$;
 \item $\De_{\varpi_j,\,\wt{c}^{m_j}(\varpi_j)}^{(s)}$ if there is an arrow $i\to j$ in $\wt{Q}$;
 \item $\De_{\varpi_j,\,\wt{c}^{m_j}(\varpi_j)}^{(s-1)}$ if there is an arrow $j\to i$ in $\wt{Q}$;
 \end{itemize}
 \item vertices of an ingoing arrow: 
 \begin{itemize}
 \item $\De_{\varpi_i,\,\wt{c}^{m_i}(\varpi_i)}^{(s-1)}$;
 \item $\De_{\varpi_j,\,\wt{c}^{m_j}(\varpi_j)}^{(s)}$ if there is an arrow $j\to i$ in $\wt{Q}$;
 \item $\De_{\varpi_j,\,\wt{c}^{m_j}(\varpi_j)}^{(s+1)}$ if there is an arrow $i\to j$ in $\wt{Q}$.
 \end{itemize}
\end{itemize}
Using the gluing relations of Proposition~\ref{lem: glueing formulas}, we can rewrite 
$\De_{(i,r)} = \De_{c(\varpi_i),\,\wt{c}^{m_i}(\varpi_i)}^{(s-1)}$, and its neighbours:
\begin{itemize}
 \item vertices of an outgoing arrow: 
 \begin{itemize}
 \item $\De_{c^2(\varpi_i),\,\wt{c}^{m_i}(\varpi_i)}^{(s-1)}$;
 \item $\De_{c(\varpi_j),\,\wt{c}^{m_j}(\varpi_j)}^{(s-1)}$ if there is an arrow $i\to j$ in $\wt{Q}$;
 \item $\De_{\varpi_j,\,\wt{c}^{m_j}(\varpi_j)}^{(s-1)}$ if there is an arrow $j\to i$ in $\wt{Q}$;
 \end{itemize}
 \item vertices of an ingoing arrow: 
 \begin{itemize}
 \item $\De_{\varpi_i,\,\wt{c}^{m_i}(\varpi_i)}^{(s-1)}$;
 \item $\De_{c(\varpi_j),\,\wt{c}^{m_j}(\varpi_j)}^{(s-1)}$ if there is an arrow $j\to i$ in $\wt{Q}$;
 \item $\De_{c^2(\varpi_j),\,\wt{c}^{m_j}(\varpi_j)}^{(s-1)}$ if there is an arrow $i\to j$ in $\wt{Q}$;
 \end{itemize}
\end{itemize}
so that the exchange relation at $\De_{(i,r)}$ can be expressed using only pullbacks of functions on the group $G$ under the morphism $\pi_{s-1}$.
(Note that here we have used that $m_i>1$, and also $m_j>1$ for all vertices $j$ of $\wt{Q}$ connected to $i$.)

What we have to show is that the rational function
\[
\De_{(i,r)}^*=
 \frac{\displaystyle\De_{c^2(\varpi_i),\,\wt{c}^{m_i}(\varpi_i)}^{(s-1)}
 \prod_{i\to j}\De_{c(\varpi_j),\,\wt{c}^{m_j}(\varpi_j)}^{(s-1)}
 \prod_{j\to i}\De_{\varpi_j,\,\wt{c}^{m_j}(\varpi_j)}^{(s-1)}
 +
 \De_{\varpi_i,\,\wt{c}^{m_i}(\varpi_i)}^{(s-1)}
 \prod_{j\to i}\De_{c(\varpi_j),\,\wt{c}^{m_j}(\varpi_j)}^{(s-1)}
 \prod_{i\to j} \De_{c^2(\varpi_j),\,\wt{c}^{m_j}(\varpi_j)}^{(s-1)}
 }
 {\De_{c(\varpi_i),\,\wt{c}^{m_i}(\varpi_i)}^{(s-1)}}
\]
is regular, or equivalently that the unique rational function on $G$ of which it is the pullback under the morphism $\pi_{s-1}$ is regular.
To prove this, we will show that this function on $G$ is the result of a one step mutation of a cluster variable of one of the standard initial seeds of \cite{BFZ} and \cite{O} on $K[G]$. 

Let $\bi = (i_1,\ldots, i_N)$ be as in the proof of Proposition~\ref{Prop:seed BFZ}, and put 
\[
 \bb = (b_j\mid j\in -[1,n]\cup [1,2N]) := (-n,-n+1,\ldots,-1,-i_1,\ldots,-i_N,i_1,\ldots,i_N).
\]
Then it is easy to check from \cite[\S 2.3]{BFZ} that the generalized minors of the initial seed corresponding 
to $\bb$ are:
\[
 \De_{c^k(\varpi_i),\,\wt{c}^{m_i}(\varpi_i)},\qquad \De_{c^{m_i}(\varpi_i),\,\wt{c}^{l}(\varpi_i)},
 \qquad (1\le i \le n,\ 0\le k \le m_i,\ 0\le l < m_i).
\]
In particular this initial seed contains all the generalized minors whose pullback occurs in the expression
of $\De_{(i,r)}^*$. 
Moreover, one can also check using \cite[Definition 2.2]{BFZ} that the neighbours of the cluster variable
$\De_{c(\varpi_i),\,\wt{c}^{m_i}(\varpi_i)}$ in this seed are exactly the generalized minors whose pull-back 
give the neighbours of $\De_{(i,r)}$ in $\G_{\wt{c}}$ described above (with all arrows going in the opposite direction). 
So we have proved that $\De_{(i,r)}^*$ is the pullback of a cluster variable of $K[G]$, hence it is
regular. Note however that in general this mutated cluster variable is not a generalized minor.

The case when $(i,r)$ is a black vertex in the upper semi-infinite part of $\G_{\wt{c}}$ is dealt with in the same
manner. In that case we have $\De_{(i,r)} = \De_{c^{m_i}(\varpi_i),\,\varpi_i}^{(s)}$ for some $s\ge 0$.
We can check in the same way that $\De_{(i,r)}^*$ is the pullback of a 
rational function on the group $G$ under the morphism $\pi_{s+1}.$
Then replacing $\bb$ by
\[
  \bc = (c_j\mid j\in -[1,n]\cup [1,2N]) := (-n,-n+1,\ldots,-1,i_1,\ldots,i_N,-i_1,\ldots,-i_N),
\]
we can argue similarly that this rational function on $G$ is the result of a one step mutation of the 
standard initial seed of \cite{BFZ} associated with $\bc$.

Let us now deal with the special case when $G$ is of type $A_n$ and $c = c_{st}$. In this situation we can use the concrete description of the space of bands given in Section~\ref{sec:SL(n) cst}, and deduce our claim from the following cubic identity satisfied by minors of a matrix.

\begin{Lem}\label{lem-4.14}
Let $k\ge 2$, and let $A:=[a_{ij}]$ be a $(k+1)\times k$-matrix with entries in a commutative ring. 
For $P\subset [1,k+1]$ and $Q\subset [1,k]$, we denote by $A_{P,Q}$ the submatrix of $A$ with row indices in $P$ and column indices in $Q$.
Moreover, if $|P|=|Q|$ we put $\De_{P,Q} = \det A_{P,Q}$. Then the following identity holds:
\[
\De_{[2,k],[1,k-1]}
\left(\De_{[1,k-1],[1,k-1]} \De_{[3,k+1],[1,k]\setminus \{k-1\}}
- \De_{[1,k-1],[1,k]\setminus \{k-1\}}\De_{[3,k+1],[1,k-1]}  
      \right)      
\]
\[
= \ \De_{[2,k-1],[1,k-2]}\De_{[3,k+1],[1,k-1]}\De_{[1,k],[1,k]}
+\De_{[3,k],[1,k-2]}\De_{[1,k-1],[1,k-1]}\De_{[2,k+1],[1,k]}. 
\]
\end{Lem}

\medskip\noindent
\emph{Proof of Lemma \ref{lem-4.14}.---}
This identity can be proved by a suitable application of Turnbull's identity (see \cite[Proposition 1.2.2]{L}).
Alternatively, one can first check by hand the identities for $k = 2$
\[
 a_{21}(a_{11}a_{32}-a_{12}a_{31}) = 
 a_{31}
 \left|
 \begin{array}{cc}
 a_{11}&a_{12}\\
 a_{21}&a_{22}
 \end{array}
 \right|
 + 
a_{11}
 \left|
 \begin{array}{cc}
 a_{21}&a_{22}\\
 a_{31}&a_{32}
 \end{array}
 \right|, 
\]
and $k=3$
\[
\left|
 \begin{array}{cc}
 a_{21}&a_{22}\\
 a_{31}&a_{32}
 \end{array}
 \right| 
 \left(
\left|
 \begin{array}{cc}
 a_{11}&a_{12}\\
 a_{21}&a_{22}
 \end{array}
 \right| 
 \left|
 \begin{array}{cc}
 a_{31}&a_{33}\\
 a_{41}&a_{43}
 \end{array}
 \right| 
 -
\left|
 \begin{array}{cc}
 a_{11}&a_{13}\\
 a_{21}&a_{23}
 \end{array}
 \right| 
 \left|
 \begin{array}{cc}
 a_{31}&a_{32}\\
 a_{41}&a_{42}
 \end{array}
 \right|  
 \right)
\]
\[
=
a_{21}
\left|
 \begin{array}{cc}
 a_{31}&a_{32}\\
 a_{41}&a_{42}
 \end{array}
 \right|  
 \left|
 \begin{array}{ccc}
 a_{11}&a_{12}&a_{13}\\
 a_{21}&a_{22}&a_{23}\\
 a_{31}&a_{32}&a_{33}
 \end{array}
 \right|  
+
a_{31}
\left|
 \begin{array}{cc}
 a_{11}&a_{12}\\
 a_{21}&a_{22}
 \end{array}
 \right|  
 \left|
 \begin{array}{ccc}
 a_{21}&a_{22}&a_{23}\\
 a_{31}&a_{32}&a_{33}\\
 a_{41}&a_{42}&a_{43}
 \end{array}
 \right|.  
\]
Then, one can readily deduce the general case $k>3$ using Muir's law of extensible minors (see \cite[\S2.3.3]{KL}). Indeed, one can notice that every submatrix $A_{P,Q}$ involved in the general 
identity contains the common $(k-3)\times (k-3)$-submatrix $A_{[3,k-1],[1,k-3]}$. It follows that the general identity is nothing but the extension of the $k=3$ case, where we use $A_{[3,k-1],[1,k-3]}$ as pivot of the extension.
\cqfd 

Let now $\mathrm{B} = [b_{kl}]$ be an $\infty\times n$-matrix representing an $(SL(n), c_{st})$-band as in Section~\ref{sec:SL(n) cst}, and let $i\in I =\{1,\ldots,n-1\}$.
If we apply Lemma~\ref{lem-4.14} to the $(i+2)\times(i+1)$-submatrix 
\[
A = \mathrm{B}_{[s+n-i,\,s+n+1],\,[1,\,i+1]} %= [b_{kl}\mid s+n-i\le k\le s+n+1,\ 1\le l\le i+1]
\]
of $\mathrm{B}$, we get the identity
\[
 \De^{(s+1)}_{w_0(\varpi_{i}),\,\varpi_{i}}
 \left(\De^{(s)}_{w_0(\varpi_{i}),\,\varpi_{i}}\De^{(s+2)}_{w_0(\varpi_{i}),\,s_{i}(\varpi_{i})}
 -
 \De^{(s)}_{w_0(\varpi_{i}),\,s_{i}(\varpi_{i})}\De^{(s+2)}_{w_0(\varpi_{i}),\,\varpi_{i}} 
 \right)
\]
\[
=\ \De^{(s)}_{w_0(\varpi_{i-1}),\,\varpi_{i-1}}\De^{(s+2)}_{w_0(\varpi_{i}),\,\varpi_{i}}
\De^{(s+1)}_{w_0(\varpi_{i+1}),\,\varpi_{i+1}}
+
\De^{(s+1)}_{w_0(\varpi_{i-1}),\,\varpi_{i-1}}\De^{(s)}_{w_0(\varpi_{i}),\,\varpi_{i}}
\De^{(s+2)}_{w_0(\varpi_{i+1}),\,\varpi_{i+1}},
\]
where we use the convention that $\Delta^{(s)}_{u(\varpi_k),v(\varpi_k)} = 1$ if $k=0$ or $k=n$.
The right hand side of this equation is precisely the right hand side of the exchange relation
of the black cluster variable $\De_{(i,r)}= \De^{(s+1)}_{w_0(\varpi_{i}),\,\varpi_{i}}$, hence
reading the left hand side we see that
\[
 \De_{(i,r)}^* = \De^{(s)}_{w_0(\varpi_{i}),\,\varpi_{i}}\De^{(s+2)}_{w_0(\varpi_{i}),\,s_{i}(\varpi_{i})}
 -
 \De^{(s)}_{w_0(\varpi_{i}),\,s_{i}(\varpi_{i})}\De^{(s+2)}_{w_0(\varpi_{i}),\,\varpi_{i}},
\]
which is a regular function. This proves our claim in the case when $(i,r)$ is a black vertex of the upper semi-infinite part of $\G_{c_{st}}$. 

The case when $(i,r)$ is in the lower semi-infinite part can be dealt with similarly, if we replace the identity of Lemma~\ref{lem-4.14} by a similar one in which each left-justified minor $\De_{P,[1,m]}$ is replaced by its right-justified companion $\De_{P,[1+n-m,n]}$. 

The case when $c= c_{st}^{-1}$ is similar and we omit it.  
This finishes the proof of Proposition~\ref{prop: one-step-mutations}. \cqfd
\end{proof}

\subsection{Proof of Theorem \ref{Thm4.10}}

For the proof of Theorem  \ref{Thm4.10} we will need a special version of the well known Starfish lemma, for which we refer the reader to \cite[Corollary 6.4.6]{FWZ}. 

\begin{remark}
\label{rem:starfish lemma}
\rm{
    Let $\R$ and $( \wt x, \wt B)$ be as in the notation of \cite[Corollary 6.4.6]{FWZ}. 
    For the proof of Corollary 6.4.6 given in \cite{FWZ} to be valid, one has to assume that the invertible elements of the ring $\R$ are constant. 
    Indeed, the authors assume in the proof that two non coprime irreducible elements of  $\R$ differ by multiplication by an element of $K^\times.$
    In general, two non coprime irreducible elements of the unique factorization domain $\R$ differ by multiplication by an element of~$\R^\times$.
    Moreover, under the necessary assumption that $\R^\times= K^\times$, the proof of \cite[Corollary 6.4.6]{FWZ} also implies that the upper cluster algebra $\mathcal{U}( \wt x, \wt B)$ of the seed $( \wt x, \wt B)$ is contained in the ring~$\R$.
    }
\end{remark}

\medskip\noindent
\emph{Proof of Theorem \ref{Thm4.10}.---}
Proposition \ref{prop: variables free} implies that the homomorphism of $K$-algebras $\phi_{M,N} : K \otimes \AA_{M,N} \lra K(G, c, M,N)$ defined by the assignment of Theorem \ref{Thm4.10} is well defined and  injective. 
Therefore, we only have to prove that the image $A_{M,N}$ of the homomorphism $\phi_{M,N}$ equals $R(G, c , M,N).$

We first prove that $R(G,  c, M,N)$ is contained in $A_{M,N}.$
By Lemma \ref{lem: generators part bands}, we have that the algebra $R(G, c , M,N)$ is generated by its subalgebras $\pi_s^*(K[G]) \ (M \leq s \leq N)$.
Therefore, it is sufficient to prove that for every integer $s \in [M,N]$, the ring $\pi_s^*(K[G])$ is contained in $A_{M,N}$. 
If $s=0$, the inclusion $\pi_s^*(K[G]) \subseteq A_{M,N}$ is a direct consequence of Theorem \ref{thm: Oya} and Proposition \ref{Prop:seed BFZ}.
For a general integer $s \in [M,N]$, the inclusion $\pi_s^*(K[G]) \subseteq A_{M,N}$ follows from Theorem \ref{thm: Oya}, Proposition \ref{Prop:seed BFZ}, and appropriate iteration of Proposition \ref{prop translation}.

Next, we prove that $A_{M,N}$ is contained in $ R(G, c, M,N)$. 
Let $\mathcal{U}_{M, N}$ be the upper cluster algebra of $\AA_{M,N}$, and $U_{M,N}$ be the image of the homomorphism $K \otimes \mathcal{U}_{ M, N} \lra K(G,  c, M,N)$ induced by the assignment of Theorem \ref{Thm4.10}. 
Note that, by the Laurent phenomenon, the algebra $A_{M,N}$ is contained in $U_{M,N}$.
Therefore, to complete the proof of the theorem, it is sufficient to show that $U_{M,N}$ is contained $ R(G,c, M,N)$. 
The aforementioned inclusion will follow from application of \cite[Corollary 6.4.6]{FWZ}. 
Thus, we now check that the conditions required to apply \cite[Corollary 6.4.6]{FWZ} are fulfilled and that $R(G, c, M,N)^\times = K^\times$ (see Remark \ref{rem:starfish lemma}).

The fact that $R(G, c, M,N)^\times=K^\times$ is a consequence of the fact that $R(G, c, M,N)$ is isomorphic to a polynomial ring with coefficients in $K[G]$ (Corollary \ref{cor: geometric prop of finite bands}) and that $K[G]^\times=K^\times$ since $G$ is semisimple.

The algebra $R(G, c, M,N)$ if clearly finitely generated, and it is a unique factorization domain by Corollary \ref{cor: geometric prop of finite bands}.
The fact that the cluster variables of the seed $\Sigma_{ M,N}$ belong to $R(G, c, M,N)$ is obvious. 
Moreover, Proposition \ref{prop: one-step-mutations} guarantees that the cluster variables obtained by a single mutation of the seed $\Sigma_{M,N}$ belong to $R(G, c, M,N)$.
Note that, we have already proved that $R(G, c, M, N)$ is contained in $A_{M,N}$.
Moreover, notice that $A_{M,N}^\times = K^\times$ because of \cite[Theorem 1.3]{GLS}.
Then, let $x$ be a cluster variable of the seed $\Sigma_{ M,N}$ or of a seed obtained by a single mutation of  $\Sigma_{ M,N}$.
Using that $x$ is an irreducible element of $A_{M,N}$ because of \cite[Theorem 1.3]{GLS} and the fact that $R(G,c,M,N)^\times=A_{M,N}^\times$, it follows that
$x$ is also irreducible in the ring $R(G, c, M, N)$.
Then, \cite[Corollary 6.4.6]{FWZ} implies that $U_{M,N}$ is contained in $R(G,c, M,N)$. 
To sum up, we have proved that $A_{M,N}= U_{M,N}= R(G, c, M,N)$.
\cqfd

The proof of Theorem \ref{Thm4.10} also shows that the following corollary holds.

\begin{Cor} 
\label{cor:A equals U finite}
    Let $(M,N) \in \P_0$. The cluster algebra $K \otimes\AA_{M,N}$ is equal to its upper cluster algebra $K \otimes \mathcal{U}_{M,N}$.
\end{Cor}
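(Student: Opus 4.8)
The plan is to extract the corollary directly from the proof of Theorem~\ref{Thm4.10}, where all the substantive equalities were already established; what remains is essentially a bookkeeping argument about injectivity. First I would record the one inclusion that holds for free: for any cluster algebra one always has $K\otimes\AA_{M,N}\subseteq K\otimes\mathcal{U}_{M,N}$, both living inside a common ambient field of rational functions in the initial cluster variables. So it suffices to prove the reverse inclusion $K\otimes\mathcal{U}_{M,N}\subseteq K\otimes\AA_{M,N}$.

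Next I would set up the field-theoretic comparison. By Proposition~\ref{prop: variables free}, the initial cluster variables $\{\De_{(i,r)}\mid (i,r)\in\VV_{M,N}\}$ are algebraically independent over $K$ and freely generate $K(G,c,M,N)$. Hence the assignment $z_{(i,r)}\mapsto\De_{(i,r)}$ extends to an isomorphism of the ambient field of $\AA_{M,N}$ onto $K(G,c,M,N)$, and in particular the homomorphism $\phi_{M,N}$ restricts to an \emph{injection} on the upper cluster algebra $K\otimes\mathcal{U}_{M,N}$. Writing $A_{M,N}$ and $U_{M,N}$ for the images of $K\otimes\AA_{M,N}$ and $K\otimes\mathcal{U}_{M,N}$ under $\phi_{M,N}$, this injectivity reduces the corollary to the single identity $A_{M,N}=U_{M,N}$ of subalgebras of $K(G,c,M,N)$.

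That identity is exactly what the proof of Theorem~\ref{Thm4.10} produced. There one shows, via the Laurent phenomenon, that $A_{M,N}\subseteq U_{M,N}$, and via the Starfish lemma (\cite[Corollary~6.4.6]{FWZ}, in the form discussed in Remark~\ref{rem:starfish lemma}) together with the regularity of one-step mutations established in Proposition~\ref{prop: one-step-mutations}, that $U_{M,N}\subseteq R(G,c,M,N)$. Combined with the inclusion $R(G,c,M,N)\subseteq A_{M,N}$ already proved there, this yields the chain $R(G,c,M,N)\subseteq A_{M,N}\subseteq U_{M,N}\subseteq R(G,c,M,N)$, forcing $A_{M,N}=U_{M,N}=R(G,c,M,N)$. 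Pulling the equality $A_{M,N}=U_{M,N}$ back through the injective map $\phi_{M,N}$ gives $K\otimes\AA_{M,N}=K\otimes\mathcal{U}_{M,N}$.

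Since the hard analytic work is entirely reused from Theorem~\ref{Thm4.10}, I do not expect a genuine obstacle; the one point that deserves care is the injectivity of $\phi_{M,N}$ on the \emph{whole} upper cluster algebra rather than merely on the cluster algebra itself. This is precisely what the free-generation statement of Proposition~\ref{prop: variables free} guarantees: it identifies the ambient field of $\mathcal{U}_{M,N}$ with $K(G,c,M,N)$, so that no nonzero element of $K\otimes\mathcal{U}_{M,N}$ can map to zero, and the comparison of images becomes legitimate.
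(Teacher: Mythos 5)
Your proposal is correct and is essentially identical to the paper's own argument: the paper derives this corollary directly from the proof of Theorem~\ref{Thm4.10}, which establishes the chain $R(G,c,M,N)\subseteq A_{M,N}\subseteq U_{M,N}\subseteq R(G,c,M,N)$ and hence $A_{M,N}=U_{M,N}$, with injectivity of $\phi_{M,N}$ (via Proposition~\ref{prop: variables free}) allowing the equality to be pulled back to $K\otimes\AA_{M,N}=K\otimes\mathcal{U}_{M,N}$. Your explicit attention to injectivity on the whole upper cluster algebra is a point the paper leaves implicit, but it is the same reasoning.
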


The previous result can be readily generalised to the infinite rank cluster algebra $\AA$. 

\begin{Cor}
\label{cor:AU R(G,c)}
    Let $\mathcal{U}$ be the upper cluster algebra with initial seed $\Sigma$. 
    Then $K \otimes\AA= K \otimes \mathcal{U}$.
\end{Cor}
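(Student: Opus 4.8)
The inclusion $K \otimes \AA \subseteq K \otimes \mathcal{U}$ holds automatically by the Laurent phenomenon, so the plan is to establish the reverse inclusion $K \otimes \mathcal{U} \subseteq K \otimes \AA$. Recall from the proof of Theorem~\ref{Thm4.4} that $K \otimes \AA = \bigcup_{n \in \N} K \otimes \AA_{-n,n}$, and from Corollary~\ref{cor:A equals U finite} that $K \otimes \AA_{-n,n} = K \otimes \mathcal{U}_{-n,n}$. It therefore suffices to show that every $f \in K \otimes \mathcal{U}$ lies in $K \otimes \mathcal{U}_{-n',n'}$ for some $n'$.

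First I would fix $f \in K \otimes \mathcal{U}$ and expand it as a Laurent polynomial in the initial cluster $\{\De_{(i,r)} \mid (i,r) \in \VV\}$. Since such an expansion is a finite sum of monomials, only finitely many variables $\De_{(i,r)}$ occur, all with labels in some $\VV_{-n,n}$. I would then enlarge the window, choosing $n' \ge n$ large enough that every vertex of $\VV_{-n,n}$ is a \emph{mutable} (interior) vertex of the iced quiver $\G_{\wt c, -n',n'}$; this is possible because the frozen vertices of $\G_{\wt c, -n',n'}$ are the top and bottom of each column, which recede to infinity as $n' \to \infty$.

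The core of the argument is to check that $f \in K\otimes\mathcal{U}_{-n',n'}$, i.e.\ that for every sequence $\mu$ of mutations at mutable vertices of $\G_{\wt c, -n',n'}$, the element $f$ is a Laurent polynomial in the interior cluster variables of $\mu(\Sigma_{-n',n'})$ and a \emph{polynomial} in its frozen variables. Here I would use the property, already established in \S\ref{sec-reduction-finite-bands}, that in $\G_{\wt c}$ there is no arrow between a mutable vertex of $\G_{\wt c, -n',n'}$ and any vertex outside $\VV_{-n',n'}$. Consequently $\mu$ is equally a mutation sequence for the infinite seed $\Sigma$, it alters only the cluster variables attached to mutable vertices of the window, and all exchange relations involved use only variables indexed by $\VV_{-n',n'}$. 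Since $f \in \mathcal{U}$ is Laurent in the infinite cluster $\mu(\Sigma)$, and since re-expanding $f$ from the $\Sigma$-cluster into the $\mu(\Sigma)$-cluster never introduces the far-away variables (absent from the $\Sigma$-expansion of $f$ and untouched by $\mu$), the $\mu(\Sigma)$-expansion of $f$ involves only window variables. Moreover the frozen (boundary) window variables, being untouched by $\mu$ and entering the exchange relations only through numerators, can occur only with nonnegative exponents. This yields exactly the Laurent-polynomial condition defining $\mathcal{U}_{-n',n'}$.

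I expect the main obstacle to be precisely this last point: reconciling the infinite-rank upper cluster algebra, in which every vertex is mutable, with the finite-rank $\mathcal{U}_{-n',n'}$, in which the boundary vertices are frozen and hence forbidden from appearing in denominators. The device of enlarging the window until the support of $f$ lies strictly in the interior is what makes the frozen-variable condition automatic. Once $f \in K \otimes \mathcal{U}_{-n',n'} = K \otimes \AA_{-n',n'} \subseteq K \otimes \AA$ is established, the proof is complete.
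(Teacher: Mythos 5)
Your reduction — showing that every $f\in K\otimes\mathcal{U}$ lies in some $K\otimes\mathcal{U}_{-n',n'}$ and then invoking Corollary \ref{cor:A equals U finite} — is a sensible plan, and the window-enlargement device is fine: the frozen vertices of $\G_{\wt c,-n',n'}$ do recede as $n'$ grows. The first half of your core step (the $\mu(\Sigma)$-expansion of $f$ involves only window variables) is asserted rather than proved, but it is fixable by a standard argument: $f$ lies in the subfield generated by the window variables of $\mu(\Sigma)$ (mutations at mutable window vertices are birational changes of coordinates involving only window variables, by the no-arrow property), and a Laurent polynomial in algebraically independent variables that lies in the subfield generated by a subfamily of them is a Laurent polynomial in that subfamily.

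The genuine gap is your final step: the claim that the frozen boundary variables ``can occur only with nonnegative exponents'' because they are ``untouched by $\mu$ and enter the exchange relations only through numerators.'' This is not an argument. To re-expand $f$ in the cluster of $\mu(\Sigma_{-n',n'})$ one must substitute for the initial variables their expressions in the mutated ones; those expressions again carry the frozen variables inside exchange binomials in their numerators, and since the initial expansion of $f$ will in general contain negative powers of mutable variables, these binomials — hence the frozen variables — land in denominators. Whether all such occurrences cancel to leave an expansion that is polynomial in the frozen variables is exactly the delicate point, and the results in the literature that settle this kind of freezing comparison (in particular \cite[Proposition 2.22]{Q2} and \cite[Lemma 4.3]{Q2}, which give $\mathcal{U}_{M,N}\subseteq\mathcal{U}_{M',N'}$ and $\mathcal{U}=\bigcup_{N}\mathcal{U}_{-N,N}$) require the seeds to have full rank. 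This is precisely why the paper's proof devotes its second half to proving that every exchange matrix $B_{M,N}$ has maximal rank (starting from \cite[Proposition 2.6]{BFZ} via Proposition \ref{Prop:seed BFZ}, then an explicit block-triangular induction), before citing Qin and concluding with Corollary \ref{cor:A equals U finite} as you do. Your argument uses no property of $\G_{\wt c}$ beyond the no-arrow property, so if it were valid it would prove a freezing statement for arbitrary infinite quivers that is not available without rank hypotheses; to repair it you must either prove the nonnegativity of the frozen exponents (essentially reproving Qin's Lemma 4.3, where full rank will be needed), or establish the maximal-rank claim and cite \cite{Q2} as the paper does.
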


For the sake of clarity, we specify that the upper cluster algebra  $\mathcal{U}$ is defined as the intersection, over all the seeds $\Sigma'$ mutation equivalent to $\Sigma$, of the Laurent polynomial rings in the cluster variables of the seed $\Sigma$.
This follows the standard definition of upper cluster algebras of finite rank.

\begin{proof}
We claim that for every $(M,N) \in \P_0$, the seed $\Sigma_{M,N}$ is of maximal rank. 
In other words:  the exchange matrix $B_{M,N}$ of $\Sigma_{M,N}$ is of maximal rank.

Let us start by proving the statement assuming the claim.
The claim and \cite[Proposition 2.22]{Q2} imply that the algebra $\mathcal{U}_{M,N}$ is contained in $\mathcal{U}_{M',N'}$ whenever $(M,N) \leq (M',N')$.
Moreover, by \cite[Lemma 4.3]{Q2}, we have that all these upper cluster algebras are contained in $\mathcal{U}$, and
$$
    \mathcal{U}= \bigcup_{N \in \Z_{\ge 0}} \mathcal{U}_{-N,N}.
$$
Using that the tensor product with $K$ preserves colimits, we deduce that
\begin{equation}
    \label{eq:upper limit R(G,c)}
   K \otimes  \mathcal{U}= \bigcup_{N \in \Z_{\ge 0}} K \otimes \mathcal{U}_{-N,N}.
\end{equation}
But notice that we also have that 
$$
K \otimes \AA= \bigcup_{N \in \Z_{\ge 0}} K \otimes \mathcal{A}_{M,N}.
$$
Hence, Corollary \ref{cor:A equals U finite} allows to conclude.

We now prove the claim. 
The seed $\Sigma_{0,0}$ is of maximal rank because of Proposition \ref{Prop:seed BFZ} and the fact that the seeds of the cluster structure of the coordinate rings of double Bruhat cells are of maximal rank by \cite[Proposition 2.6]{BFZ}.
Next, assume that $N >M$. Choose a total order $\leq$ on $\VV$ such that the inequality $(i,r) < (j,t)$ holds whenever $s_{(i,r)} > s_{(j,t)}$.
Then, arranging the rows and the columns of $B_{M,N}$ according to this order we get a block  decomposition
$$
B_{M,N}=\pmatrix{
B_{M,N-1} & D_{M,N} \cr
0 & C_{M,N}
},
$$
for some matrices $D_{M,N}$ and $C_{M,N}$ with integer coefficients. 
Notice that the matrix $C_{M,N}$ is the $n \times n$ matrix that accounts for the arrows between the lowest frozen vertices of the quiver $\Gamma_{\wt c, M,N}$ and the mutable vertices immediately above them.
It is immediate to verify that the matrix $C_{M,N}$ is invertible.
Thus, arguing by induction, we deduce that for every $N \in \N$ the claim holds  for the seed $\Sigma_{0,N}$.
The general statement for the seed $\Sigma_{M,N}$ can then be proved in an analogous way by induction on $M$.
\qed
\end{proof}

\section{$U$-invariants}
\label{sec_U_inv}

Recall the right $G$-action on $B(G,c)$ introduced in \S\ref{subsec-action}, and the induced left $G$-action on $R(G,c)$.
In this section we consider the restriction of these actions to the unipotent subgroup $U$, and we
study the subalgebra $R(G,c)^U$ of $U$-invariant functions.

\subsection{Preliminaries}

We start with the following easy lemma.

\begin{Lem}\label{Lem-6.1}
 For every $s\in\Z$, $i\in I$, $w\in W$, the regular function $\De^{(s)}_{w(\varpi_i),\varpi_i}$ is $U$-invariant. 
\end{Lem}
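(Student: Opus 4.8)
The plan is to reduce the statement to a standard right-invariance property of generalized minors on $G$. Recall from \S\ref{subsec-action} that the right $G$-action on $B(G,c)$ is given on points by $b\cdot g = (g(t)g)_{t\in\Z}$, so that for every $s\in\Z$ the projection satisfies $\pi_s(b\cdot g) = \pi_s(b)\,g$; in other words $\pi_s$ is $G$-equivariant for the right translation action on $G$. Consequently, for $u\in U$ and a band $b = (g(t))_{t\in\Z}$ we have
\[
\De^{(s)}_{w(\varpi_i),\varpi_i}(b\cdot u) = \Delta_{w(\varpi_i),\varpi_i}\bigl(\pi_s(b)\,u\bigr) = \Delta_{w(\varpi_i),\varpi_i}\bigl(g(s)\,u\bigr).
\]
Using the method explained in the remark after Proposition~\ref{lem: glueing formulas} (checking equalities of regular functions on $K$-points, which then propagate along the $\pi_{M,N}$), it suffices to prove the pointwise identity $\Delta_{w(\varpi_i),\varpi_i}(g\,u) = \Delta_{w(\varpi_i),\varpi_i}(g)$ for all $g\in G$ and $u\in U$; that is, to show that the generalized minor $\Delta_{w(\varpi_i),\varpi_i}$ is invariant under right translation by $U$.

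For this I would invoke two standard facts about Fomin--Zelevinsky minors. First, the identity $\Delta_{w(\varpi_i),\varpi_i}(g) = \Delta_{\varpi_i,\varpi_i}(\overline{w}^{-1}g)$, which is the same left-multiplication rule already used in the proof of Proposition~\ref{lem: glueing formulas}. Second, the fundamental minor $\Delta_{\varpi_i,\varpi_i}$ is invariant under right multiplication by $U$: realizing it as the matrix coefficient $g\mapsto\langle v_i^+,\,g\,v_i^+\rangle$ attached to a highest weight vector $v_i^+$ of the fundamental module $V(\varpi_i)$, one has $u\,v_i^+ = v_i^+$ for $u\in U$ because $v_i^+$ is annihilated by the raising operators, whence $\Delta_{\varpi_i,\varpi_i}(g\,u) = \Delta_{\varpi_i,\varpi_i}(g)$. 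Combining the two gives
\[
\Delta_{w(\varpi_i),\varpi_i}(g\,u) = \Delta_{\varpi_i,\varpi_i}(\overline{w}^{-1}g\,u) = \Delta_{\varpi_i,\varpi_i}(\overline{w}^{-1}g) = \Delta_{w(\varpi_i),\varpi_i}(g),
\]
which is exactly what is needed.

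There is essentially no genuine obstacle here: the only point requiring care is the bookkeeping of conventions, namely that the second index of the minor being the dominant weight $\varpi_i$ (rather than an arbitrary $v(\varpi_i)$) is precisely what forces right $U$-invariance, whereas a nontrivial second index would only yield invariance under right translation by the conjugate subgroup $\overline{v}\,U\,\overline{v}^{-1}$. Since in our situation the second index is $\varpi_i$, i.e. $v = e$, this conjugate is $U$ itself, so the argument closes cleanly.
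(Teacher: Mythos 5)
Your proposal is correct and follows essentially the same route as the paper: the paper's proof also reduces immediately, via the definition of the right $G$-action on $B(G,c)$, to the right $U$-invariance of the generalized minor $\Delta_{w(\varpi_i),\varpi_i}$ on $G$, which it cites as holding by definition of generalized minors. Your only addition is to unpack that definitional fact explicitly (via $\Delta_{w(\varpi_i),\varpi_i}(g)=\Delta_{\varpi_i,\varpi_i}(\overline{w}^{-1}g)$ and the highest-weight matrix coefficient realization of $\Delta_{\varpi_i,\varpi_i}$), which is a harmless elaboration of the same argument.
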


\begin{proof}
By definition of the generalized minors, for every $i\in I$ and $w\in W$ we have
\[
\De_{w(\varpi_i),\varpi_i}(gu) = \De_{w(\varpi_i),\varpi_i}(g),\qquad (g\in G,\ u\in U). 
\]
It then follows from the definition of the $G$-action on $B(G,c)$ that 
$\De^{(s)}_{w(\varpi_i),\varpi_i}$ is $U$-invariant in $R(G,c)$. 
\cqfd
\end{proof}

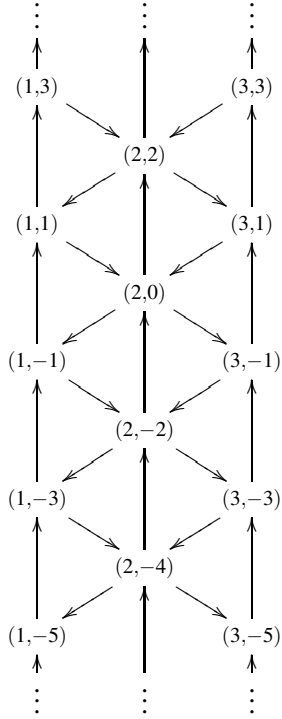
\begin{figure}[t]
\[
\def\objectstyle{\scriptstyle}
\def\lablestyle{\scriptstyle}
\xymatrix@-1.0pc{
&{}\save[]+<0cm,1.5ex>*{\vdots}\restore&{}\save[]+<0cm,1.5ex>*{\vdots}\restore  
&{}\save[]+<0cm,1.5ex>*{\vdots}\restore
\\
&{(1,3)}\ar[rd]\ar[u]&
&\ar[ld] (3,3) \ar[u]
\\
&&\ar[ld] (2,2) \ar[rd]\ar[uu]&&
\\
&\ar[uu]{(1,1)}\ar[rd]&
&\ar[ld] (3,1) \ar[uu]
\\
&&\ar[uu]\ar[ld] (2,0) \ar[rd]&&
\\
&\ar[uu](1,-1) \ar[rd] &&\ar[ld] (3,-1)\ar[uu]
\\
&&\ar[ld] \ar[uu](2,-2) \ar[rd]&&
\\
&\ar[uu](1,-3) \ar[rd] &&\ar[ld] (3,-3)\ar[uu]
\\
&&\ar[ld] \ar[uu](2,-4) \ar[rd]&&
\\
&\ar[uu](1,-5) &&\ar[uu] (3,-5) 
\\
&{}\save[]+<0cm,0ex>*{\vdots}\ar[u]\restore&{}\save[]+<0cm,0ex>*{\vdots}\ar[uu]\restore  
&{}\save[]+<0cm,0ex>*{\vdots}\ar[u]\restore
\\
}
\]
\caption{\label{Fig5} {\it The quiver $\G$ in type $A_3$.}}
\end{figure}

We will now introduce a second infinite rank cluster algebra $\B$. The defining quiver of $\B$
is the quiver $\Gamma$ of \cite[\S2.3]{GHL}. Its vertex set $\VV$ is identical to the vertex set 
of the defining quiver $\Gamma_{\widetilde{c}}$ of the cluster algebra $\AA$. Its arrow set 
is defined as follows. If $(i,r)$ and $(j,s)$ are elements of $\VV$, there is an arrow from 
$(i,r)$ to $(j,s)$ in $\Gamma$ if and only if $c_{ij}\not = 0$, and $s = r + c_{ij}$.
Here, the integers $c_{ij}$ are the entries of the Cartan matrix of $G$, see \S\ref{sec_notation}.
For instance, the quiver $\Gamma$ in type $A_3$ is displayed in Figure~\ref{Fig5}.

Note that $\Gamma$ 
can be seen as the limit of the sequence of quivers obtained from $\Gamma_{\widetilde{c}}$ by translating
lower and lower the middle part consisting of green and red vertices. In the limit this middle part
and the lower black infinite part disappear and we are left with the entirely black quiver $\Gamma$.
This remark leads to the following proposition.

\begin{Prop}\label{Prop-6.2}
Let $x_{(i,r)}$ denote the initial cluster variable of the cluster algebra $\B$ coresponding to $(i,r)\in\VV$. 
The assignment 
\[
 x_{(i,r)} \mapsto \De^{(r_i-1)}_{w_0(\varpi_i),\,\varpi_i}
\]
extends to an injective algebra homomorphism $F$ from $K\otimes \B$ to $R(G,c)^U$.
\end{Prop}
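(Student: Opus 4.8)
The plan is to construct $F$ first as an embedding of fields and then to check that it restricts to the cluster algebra $\B$ with image in $R(G,c)^U$. Write $y_{(i,r)} := \De^{(r_i-1)}_{w_0(\varpi_i),\,\varpi_i}$ for the proposed image of the initial cluster variable $x_{(i,r)}$. Relabelling each vertex $(i,r)\in\VV$ by the integer $s=r_i-1$, the assignment $(i,r)\mapsto(i,s)$ is a bijection from $\VV$ onto $I\times\Z$, so the family $\{y_{(i,r)}\}$ is exactly $\{\pi_s^*(\De_{w_0(\varpi_i),\,\varpi_i})\mid i\in I,\ s\in\Z\}$. The first goal is to show that these elements are algebraically independent over $K$; this yields a field embedding $K(\B)=K(x_{(i,r)})\hookrightarrow K(G,c)$ sending $x_{(i,r)}\mapsto y_{(i,r)}$, and injectivity of $F$ will then be automatic.

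For the algebraic independence I would realise any finite subfamily inside a single cluster of $\AA$. By Proposition~\ref{prop translation}, the seed $\tau_{\mathrm{green}}^{\,m}(\Sigma)$ is obtained from $\Sigma$ by shifting every superscript by $-m$; in particular its cluster contains all the upper-black minors $\De^{(s)}_{w_0(\varpi_i),\,\varpi_i}$ with $s\ge -m$. Given a finite $S\subseteq\VV$, I choose $m$ so large that every superscript $r_i-1$ occurring for $(i,r)\in S$ is $\ge -m$; then $\{y_{(i,r)}\mid (i,r)\in S\}$ lies in the cluster of the seed $\tau_{\mathrm{green}}^{\,m}(\Sigma)$ of $\AA$, hence is algebraically independent (the variables of any cluster are algebraically independent over $K$, a standard property of cluster algebras; compare Proposition~\ref{prop: variables free}). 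Letting $S$ exhaust $\VV$ gives the desired independence and the field embedding.

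Next I would show that $F$ maps every cluster variable of $\B$ into $R(G,c)$. A cluster variable of $\B$ is produced from the initial seed $(\G,\{x_{(i,r)}\})$ by a finite mutation sequence $\mu$, which involves only a finite region of the locally finite quiver $\G$. The key observation is that $\G$ agrees with the upper black part of $\G_{\wt c}$: in both there is an arrow $(i,r)\to(j,t)$ exactly when $c_{ij}\ne 0$ and $t=r+c_{ij}$, and this rule is invariant under the shift $r\mapsto r+2m$. Matching the vertex $(i,r)$ of $\G$ with the vertex $(i,r+2m)$ of $\G_{\wt c}$, the superscript bookkeeping of Proposition~\ref{prop translation} shows that the label of $\tau_{\mathrm{green}}^{\,m}(\Sigma)$ at $(i,r+2m)$ is precisely $y_{(i,r)}$, and that for $m$ large the relevant finite region of $\G$ is carried isomorphically onto a region of the mutable (frozen-free) upper black part of $\tau_{\mathrm{green}}^{\,m}(\G_{\wt c})$. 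For such $m$, performing $\mu$ on $\B$ becomes identical, variable by variable, to performing the corresponding mutation sequence on the seed $\tau_{\mathrm{green}}^{\,m}(\Sigma)$ of $\AA$. Since every cluster variable of $\AA$ lies in $R(G,c)$ by Theorem~\ref{Thm4.4}, the image under $F$ of every cluster variable of $\B$ lies in $R(G,c)$.

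Finally, $F(\B)$ is contained in the subfield of $K(G,c)$ generated by the $y_{(i,r)}$, each of which is $U$-invariant by Lemma~\ref{Lem-6.1}; hence $F(\B)\subseteq K(G,c)^U$. Combined with $F(\B)\subseteq R(G,c)$ this gives $F(\B)\subseteq R(G,c)\cap K(G,c)^U=R(G,c)^U$, the last equality holding because a regular function that agrees with a $U$-invariant rational function on the dense open set where the latter is defined is itself $U$-invariant. Thus $F$ restricts to an injective $K$-algebra homomorphism $K\otimes\B\to R(G,c)^U$, as required, and it is the unique one extending the given assignment since $\B$ is generated by its cluster variables and the initial ones determine the field map. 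I expect the main obstacle to be the bookkeeping in the third paragraph: pinning down the shift $r\mapsto r+2m$ that simultaneously matches $\G$ with the upper part of $\tau_{\mathrm{green}}^{\,m}(\G_{\wt c})$ and matches the labels $y_{(i,r)}$, so that a mutation computed in $\B$ coincides with one computed in $\AA$. The algebraic independence is then just the degenerate, mutation-free case of the same comparison.
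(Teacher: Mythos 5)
Your proposal is correct and takes essentially the same route as the paper: both arguments iterate $\tau_{\mathrm{green}}$ (Proposition~\ref{prop translation}) to realize the minors $\De^{(s)}_{w_0(\varpi_i),\,\varpi_i}$ for arbitrarily negative $s$ inside seeds of $K\otimes\AA\simeq R(G,c)$, identify $\G$ with the upper black part of (translates of) $\G_{\wt{c}}$ so that finite mutation sequences of $\B$ match mutation sequences of $\AA$, and combine Lemma~\ref{Lem-6.1} with regularity to conclude that the image lies in $R(G,c)^U$. The paper packages this as an increasing chain of cluster subalgebras $R_{>M}\subset R(G,c)^U$ whose union is $F(\B)$, whereas you organize it as a field embedding plus locality of mutations, but the mathematical content is the same.
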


\begin{proof}
Recall the initial seed $\Sigma$ of the cluster algebra structure of $R(G,c)$ given in Theorem~\ref{Thm4.4}.
Let $\Si_{>0}$ be the sub-seed of $\Sigma$ supported on the black vertices of the upper semi-infinite part of~$\Gamma_{\widetilde{c}}$. Here we assume that the lowest cluster variables  $\Delta^{(0)}_{w_0(\varpi_i),\,\varpi_i}$ in each column of $\Si_{>0}$ are frozen.
Consider the sub-cluster algebra $R_{>0}$ of $R(G,c)$ with initial seed $\Si_{>0}$.
By Lemma~\ref{Lem-6.1}, every cluster variable $\Delta^{(s)}_{w_0(\varpi_i),\,\varpi_i}\ (s\ge 0)$ of $\Si_{>0}$ belongs to $R(G,c)^U$, hence $R_{>0}$ is contained in $R(G,c)^U$. 

Now using the translation invariance property of Proposition~\ref{prop translation}, by mutating $\Si$ at every green
vertex we will get a new seed $\tau_{\mathrm{green}}(\Si)$ of $R(G,c)$ with an isomorphic quiver whose upper semi-infinite black part consists of the 
cluster variables $\Delta^{(s)}_{w_0(\varpi_i),\,\varpi_i}\ (s\ge -1)$. Let us call $\Si_{>-1}$ the sub-seed of 
$\tau_{\mathrm{green}}(\Si)$ supported on the sub-cluster 
$\{\Delta^{(s)}_{w_0(\varpi_i),\,\varpi_i}\mid s\ge -1\}$, 
where the $\Delta^{(-1)}_{w_0(\varpi_i),\,\varpi_i}$ are frozen. Let $R_{>-1}$ be the sub-cluster algebra
of $R(G,c)$ with initial seed $\Si_{>-1}$. Then we have 
\[
R_{>0} \subset R_{>-1} \subset R(G,c)^U.
\]
Iterating this procedure, we will get a sequence $R_{>M}\ (M\le 0)$ of cluster subalgebras
\[
R_{>0} \subset R_{>-1} \subset\cdots \subset  R_{>M}\subset \cdots \ R(G,c)^U, 
\]
where $R_{>M}$ has a seed $\Si_{>M}$ whose cluster is $\{\Delta^{(s)}_{w_0(\varpi_i),\,\varpi_i} \mid s\ge M\}$, and whose quiver is isomorphic to the upper semi-infinite black part of $\Gamma_{\widetilde{c}}$.

Therefore the union $\bigcup_{M\le 0} R_{>M}$ will be a cluster subalgebra of $R(G,c)^U$, with initial seed 
\begin{equation}\label{Eq.25p}
\left(\Gamma, \{\Delta^{(s)}_{w_0(\varpi_i),\,\varpi_i}\mid s\in \Z\}\right).
\end{equation}
This cluster subalgebra of $R(G,c)^U$ is nothing but the image of the cluster algebra $\B$ under the unique algebra homomorphism $F$ extending the assignment 
$x_{(i,r)} \mapsto \De^{(r_i-1)}_{w_0(\varpi_i),\,\varpi_i}$.
\cqfd
\end{proof}

Recall the isomorphism $\tau : X(G,c) \to B(G,c)$ of Corollary~\ref{cor: bands product}.
The action of $G$ on $B(G,c)$ corresponds through this isomorphism to the action of $G$ on
$X(G, c)$ induced by right multiplication of $G$ on $X_0$ (the action of $G$ on all factors $X_n$
with $n \not = 0$ is trivial). 
Recall that $X_0=G$, while $X_n = U(c^{-1})\bar{c}$ for $n \not = 0$.

\begin{Lem}
Let $\theta_i \ (i\in I)$ denote the restriction of the generalized minor $\De_{\varpi_i,\varpi_i}$
to $U(c^{-1})\bar{c}$.
The coordinate ring of $U(c^{-1})\bar{c}$ is the polynomial ring in the variables $\theta_i \ (i\in I)$. 
\end{Lem}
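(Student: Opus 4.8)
The plan is to show that the morphism of affine varieties
$\Theta\colon \Ucc \lra \A^{|I|}$, $g\mapsto (\theta_i(g))_{i\in I}$, is an
\emph{isomorphism}. Note first that $U(c^{-1})$ is a unipotent group of dimension
$\ell(c)=|I|$, so $\Ucc$ is isomorphic to $\A^{|I|}$ and its coordinate ring is a
polynomial ring in $|I|$ variables (this is exactly the fact already used in the proof of
Corollary~\ref{cor: bands product}). Hence the whole content of the lemma is that the $|I|$
functions $\theta_i$ form a system of polynomial coordinates, i.e. that $\Theta$ is an
isomorphism. The key simplification is to move the computation off the coset: conjugation by
the lift $\oc$ carries root subgroups $U_\alpha$ to $U_{c\alpha}$, so
$\oc^{-1}U(c^{-1})\oc = U\cap \oc^{-1}(c\,U^-c^{-1})\oc = U^-\cap c^{-1}Uc =: U^-(c)$,
and therefore $\Ucc = \oc\,U^-(c)$. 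Writing $g=\oc v$ with $v\in U^-(c)$ and using the
defining covariance of generalized minors, $\theta_i(\oc v)=\De_{c^{-1}(\varpi_i),\,\varpi_i}(v)$.
Thus $\Theta$ is identified with the map $U^-(c)\lra \A^{|I|}$,
$v\mapsto\bigl(\De_{c^{-1}(\varpi_i),\,\varpi_i}(v)\bigr)_{i\in I}$ on the unipotent group
$U^-(c)$, which again has dimension $|I|$.

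Next I would parametrize $U^-(c)$ by root subgroups. Fixing the reduced word
$c=s_{i_1}\cdots s_{i_{|I|}}$, the group $U^-(c)=\prod_k U_{-\beta_k}$ ranges over the
roots $\beta_k=s_{i_1}\cdots s_{i_{k-1}}(\alpha_{i_k})$, giving coordinates
$(t_1,\dots,t_{|I|})$ on $U^-(c)\cong\A^{|I|}$. To evaluate $\De_{c^{-1}(\varpi_i),\,\varpi_i}$
I would realize it as the matrix coefficient $v\mapsto \langle \xi_i,\,v\cdot v_i^+\rangle$ in
the fundamental module $V(\varpi_i)$, where $v_i^+$ is a highest weight vector and $\xi_i$ is
the extremal covector of weight $-c^{-1}(\varpi_i)$. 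Since $v\in U^-$, we have
$v\cdot v_i^+=v_i^+ + (\text{lower weight terms})$, and $\De_{c^{-1}(\varpi_i),\,\varpi_i}(v)$
extracts the component of weight $c^{-1}(\varpi_i)=\varpi_i-(\varpi_i-c^{-1}\varpi_i)$, the
deficit $\varpi_i-c^{-1}\varpi_i=\dim P_i$ being a nonnegative combination of simple roots.
The point to establish is that, with respect to a suitable total order on $I$ matching the
order of the reduced word, each $\theta_i$ depends on a \emph{distinguished} parameter with a
nonzero constant coefficient, while all remaining dependence is through parameters occurring
earlier in the word. This yields a triangular (unipotent) form
$\theta_i = c_i\,t_{\sigma(i)} + (\text{polynomial in the preceding }t)$ with $c_i\in K^\times$.

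Granting this triangular structure, $\Theta$ is a polynomial automorphism of $\A^{|I|}$: it is
invertible by solving for the $t$'s one at a time, so
$K[\Ucc]=K[\theta_i\mid i\in I]$ is the polynomial ring, which is the assertion. The main
obstacle is precisely the triangularity/leading-coefficient verification of the previous
paragraph, namely controlling which monomials in the $t_k$ contribute to the weight-$c^{-1}(\varpi_i)$
component of $v\cdot v_i^+$ and checking that the distinguished term survives with a nonzero
scalar; the sanity check in type $A$ with $c=c_{st}$ (where $\theta_i=\pm t_{\sigma(i)}$ with no
lower-order terms, as in Lemma~\ref{Lem3.5}) indicates the expected shape. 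An alternative to a
direct computation would be to invoke the Berenstein--Fomin--Zelevinsky description of minor
coordinates on unipotent cells (as used in Proposition~\ref{Prop:seed BFZ}) to identify
$\bigl(\De_{c^{-1}(\varpi_i),\,\varpi_i}\bigr)_i$ as a coordinate system on $U^-(c)$, but the
triangularity argument is the most self-contained route and is the heart of the proof.
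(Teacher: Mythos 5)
Your reductions are fine (conjugation gives $\oc^{-1}U(c^{-1})\oc=U^-\cap c^{-1}Uc$, and, up to a harmless sign coming from the fact that $\oc^{-1}$ and $\overline{c^{-1}}$ differ by an element of $T$ of order at most two, $\theta_i(\oc v)=\pm\De_{c^{-1}(\varpi_i),\varpi_i}(v)$), but as submitted this is a plan, not a proof. The single statement that carries all of the mathematical content --- that in root-subgroup coordinates $(t_k)$ on $U^-\cap c^{-1}Uc$ each $\theta_i$ has the form $c_i\,t_{\sigma(i)}+(\text{polynomial in earlier parameters})$ with $c_i\in K^\times$ --- is announced as ``the point to establish'' and then deferred; you yourself call it ``the main obstacle'' and ``the heart of the proof.'' A type-$A$ sanity check and a pointer to \cite{BFZ} do not close it. Note that this missing step is exactly the content of the paper's own (one-line) proof, which simply invokes the well-known fact that for a Coxeter element $c$ the functions $u\mapsto\De_{\varpi_i,c(\varpi_i)}(u)=\De_{\varpi_i,\varpi_i}(u\oc)$ are polynomial coordinates on $U(c^{-1})$; what you set out to do is reprove that fact from scratch, and you stop just before doing so.

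For what it is worth, your strategy can be completed, and the Auslander--Reiten description in \S\ref{subsubsec:w_{i,k}} makes it short. The inversion set of $c$ is precisely $\{\gamma_j:=\varpi_j-c^{-1}(\varpi_j)\mid j\in I\}$ (the dimension vectors of the indecomposable projectives $P_j$): each $\gamma_j$ is a positive root with $c(\gamma_j)=-(\varpi_j-c(\varpi_j))<0$, they are pairwise distinct, and there are $|I|=\ell(c)$ of them. Hence the root subgroups of $U^-\cap c^{-1}Uc$ are naturally indexed by $I$, with parameter $t_j$ attached to $U_{-\gamma_j}$, and your ``distinguished parameter'' for $\theta_i$ is $t_i$ itself. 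Its coefficient is $\langle\xi_i,e_{-\gamma_i}v_i^+\rangle$, which is nonzero by $\mathfrak{sl}_2$-theory for the root $\gamma_i$: the weight $c^{-1}(\varpi_i)=\varpi_i-\gamma_i$ is extremal, so its weight space is a line lying in a nontrivial $\gamma_i$-string through the highest weight. Finally, any other monomial $\prod_j t_j^{m_j}$ contributing to $\theta_i$ satisfies $\sum_j m_j\gamma_j=\gamma_i$ nontrivially, which forces at least two indices to occur (a reduced root system contains no root of the form $\gamma_i/m$ with $m\ge 2$), so every $\gamma_j$ involved is strictly smaller than $\gamma_i$ in the componentwise order; any linear extension of that order yields your triangularity. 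With that paragraph supplied, your argument becomes a correct, self-contained alternative to the paper's citation; without it, there is a genuine gap.
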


\begin{proof}
Let $w\in W$. It is well-known that the coordinate ring of the unipotent subgroup $U(w)$ has the structure
of a cluster algebra with frozen variables the regular functions $u\mapsto \De_{\varpi_i,w^{-1}\varpi_i}(u)$ \cite{GLS1}.
When $w$ is a Coxeter element, this cluster structure is trivial in the sense that all cluster variables
are frozen. In other words, the coordinate ring of the unipotent subgroup $U(c^{-1})$ is the polynomial ring in the regular functions $u \mapsto \De_{\varpi_i,c\varpi_i}(u) = \De_{\varpi_i,\varpi_i}(u\bar{c})$.
The claim follows immediately. \cqfd
\end{proof}

For $i\in I$ and $s\in\Z$, define the function $\theta_i^{(s)}\in R(G,c)$ by 
\[
 \theta_i^{(s)}\left((g(t))_{t\in\Z}\right) = \theta_i(g(s)g(s+1)^{-1}).
\]
Then clearly $\theta_i^{(s)}\in R(G,c)^G \subset R(G,c)^U$. Moreover the previous discussion shows that
\begin{Cor}\label{Cor-6.4}
The invariant ring $R(G,c)^U$ is the polynomial ring in the variables 
\[
\theta_i^{(s)}, \quad (i\in I,\ s\in\Z),
\]
with coefficients in $\pi_0^*(K[G]^U)$. \cqfd
\end{Cor}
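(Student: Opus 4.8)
The plan is to read the statement directly off the product decomposition of Corollary~\ref{cor: bands product}. Through the isomorphism $\tau\colon X(G,c)\to B(G,c)$ we identify
\[
R(G,c)\ \cong\ K[X_0]\otimes\bigotimes_{n\neq 0}K[X_n],
\]
where $X_0=G$ and $X_n=\Ucc$ for $n\neq 0$. First I would identify $K[X_0]$ with $\pi_0^*(K[G])$, and, using the Lemma preceding the statement, identify each factor $K[X_n]$ ($n\neq 0$) with the polynomial ring in the restrictions $\theta_i$ of the generalized minors $\De_{\varpi_i,\varpi_i}$ to $\Ucc$. Tracking these coordinates through the inverse of $\tau$ (Lemma~\ref{lem: partial bands aff space}), one checks that the function $\theta_i^{(s)}$, which evaluates $\theta_i$ on $g(s)g(s+1)^{-1}$, coincides with the $\theta_i$-coordinate on the factor $X_s$ when $s<0$ and on the factor $X_{s+1}$ when $s\geq 0$. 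Thus as $s$ ranges over $\Z$ the factor index ranges bijectively over $\Z\setminus\{0\}$, so the $\theta_i^{(s)}$ ($i\in I$, $s\in\Z$) are precisely the polynomial generators of $\bigotimes_{n\neq 0}K[X_n]$, and
\[
R(G,c)\ \cong\ \pi_0^*(K[G])\otimes K[\theta_i^{(s)}\mid i\in I,\ s\in\Z].
\]

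Next I would take $U$-invariants of this tensor product. By the description recalled just before the Lemma, the right $G$-action, and hence its restriction to $U$, acts only on the factor $X_0$ by right translation and trivially on every $X_n$ with $n\neq 0$; this is consistent with the $G$-invariance of the $\theta_i^{(s)}$ already noted. Since $U$ acts trivially on the polynomial factor $K[\theta_i^{(s)}\mid i\in I,\ s\in\Z]$, which is free over $K$, the elementary argument of expanding an invariant element in the monomial $K$-basis of this factor and using linear independence yields
\[
R(G,c)^U\ =\ \bigl(\pi_0^*(K[G])\bigr)^U\otimes K[\theta_i^{(s)}\mid i\in I,\ s\in\Z].
\]
Finally, because $\pi_0^*$ is injective (Lemma~\ref{lem: generators bands}) and intertwines the induced $U$-action on $R(G,c)$ with right translation by $U$ on $K[G]$, one has $\bigl(\pi_0^*(K[G])\bigr)^U=\pi_0^*(K[G]^U)$. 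Substituting this gives exactly the assertion that $R(G,c)^U$ is the polynomial ring in the variables $\theta_i^{(s)}$ with coefficients in $\pi_0^*(K[G]^U)$.

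The only step requiring genuine care is the index bookkeeping that matches each $\theta_i^{(s)}$ with the correct product coordinate $X_n$ (including the shift $s\mapsto s+1$ for $s\geq 0$ and the identity $s\mapsto s$ for $s<0$); everything else is formal, since the invariant-theoretic computation reduces to taking invariants in a single tensor factor on which the group acts, the other factors carrying the trivial action. This is why, as indicated in the text, the corollary follows at once from the preceding discussion.
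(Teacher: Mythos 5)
Your proposal is correct and is essentially the paper's own argument: the authors also deduce the corollary directly from the product decomposition $\tau\colon X(G,c)\to B(G,c)$ of Corollary~\ref{cor: bands product}, the observation that the right $G$-action (hence the $U$-action) lives only on the factor $X_0=G$, and the lemma identifying $K[U(c^{-1})\bar{c}]$ as the polynomial ring in the $\theta_i$. Your explicit index bookkeeping (factor $X_s$ for $s<0$, factor $X_{s+1}$ for $s\geq 0$) and the tensor-factor invariance argument are exactly the details the paper leaves implicit in the phrase ``the previous discussion shows that.''
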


The next proposition plays an important role in the sequel.

\begin{Prop}\label{Prop-6.5}
For every $i\in I$ and $s\in\Z$, the function $\theta_i^{(s)}$ is a scalar multiple of a cluster variable of the 
cluster algebra $F(\B)\subset R(G,c)^U$.
\end{Prop}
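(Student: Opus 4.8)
The plan is to realize $\theta_i^{(s)}$, up to a nonzero scalar, as the image under $F$ of a cluster variable of $\B$, by exhibiting it as the outcome of a short sequence of mutations starting from the initial seed $\left(\Gamma,\ \{\De^{(t)}_{w_0(\varpi_j),\,\varpi_j}\}\right)$ of $F(\B)$. Since by Corollary~\ref{Cor-6.4} the $\theta_i^{(s)}$ are polynomial generators of $R(G,c)^U$ over $\pi_0^*(K[G]^U)$, and in particular irreducible, it is enough to identify each of them (up to scalar) with one cluster variable. First I would use translation invariance to cut down the number of cases: the shift $t\mapsto t\pm 1$ is realised inside $F(\B)$ by the mutation sequences $\tau_{\mathrm{red}}$, $\tau_{\mathrm{green}}$ of Proposition~\ref{prop translation}, which translate the quiver $\Gamma$ and send $\De^{(t)}_{\,\cdot}$ to $\De^{(t\pm 1)}_{\,\cdot}$. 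Hence it suffices to prove the statement for one convenient value of $s$ for each $i\in I$.

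Next I would rewrite $\theta_i^{(s)}$ in terms of generalized minors. Writing $g(s)g(s+1)^{-1}=u\,\oc$ with $u\in U(c^{-1})$ and using that the lift $\oc$ carries the highest weight vector $v_{\varpi_i}$ of the fundamental module $V(\varpi_i)$ to a scalar multiple of the extremal vector of weight $c(\varpi_i)$, one identifies $\theta_i^{(s)}$ with a generalized minor of the form $\De_{\varpi_i,\,c(\varpi_i)}(u)$; the eventual scalar is a sign, arising from the non-involutive lifts $\overline{w}$ (recall $\overline{s_i}^{\,4}=1$) once the identity is expressed through the cluster variables. The heart of the argument is then to match an exchange relation, at a suitable vertex of a seed mutation-equivalent to $\Sigma$, with a Fomin–Zelevinsky generalized-minor identity. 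Concretely, I would first mutate at the black vertex sitting just above the relevant column-$i$ vertex to produce an intermediate cluster variable, exactly as in Proposition~\ref{prop: one-step-mutations}, and then mutate once more; after using the gluing relations of Proposition~\ref{lem: glueing formulas} to bring all the minors involved to a single index, the two monomials on the right-hand side of the resulting exchange relation match the products of neighbours prescribed by $\Gamma$, and the mutated variable equals $\pm\theta_i^{(s)}$. In the minuscule case $G=SL(n)$, $c=c_{st}$, this is completely explicit: it is the Plücker/determinantal computation already recorded in Lemma~\ref{Lem3.5} and Lemma~\ref{lem-4.14}, where $\theta_i^{(s)}$ appears as the single mutation of a flag minor, with scalar $\pm 1$. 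For general simply-laced $G$ the governing identity is an instance of \cite[Theorem 1.17]{FZ}, valid in all types, just as in the proofs of Proposition~\ref{prop translation} and Proposition~\ref{prop: one-step-mutations}.

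The main obstacle is the general, non-minuscule type. There $\theta_i^{(s)}=\De_{\varpi_i,\varpi_i}\!\left(g(s)g(s+1)^{-1}\right)$ is \emph{not} a bare product of extremal-weight minors, so one cannot simply expand it over a weight basis; instead one must pin down the correct seed and mutation path in $\Gamma$ so that the controlling relation is an honest two-term cluster exchange relation, and then track carefully the sign produced by $\oc$ in order to certify the precise scalar. Verifying that the chosen vertex has exactly the predicted in- and out-neighbours in the mutated quiver, and that the \cite[Theorem 1.17]{FZ} identity specialises, after the gluing relations are applied, to $\theta_i^{(s)}$ times a cluster variable, is the technical crux. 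Once this is established, Proposition~\ref{Prop-6.2} guarantees that the resulting element indeed lies in $F(\B)\subset R(G,c)^U$, completing the proof.
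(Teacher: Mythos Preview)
Your proposal has a genuine gap. You suggest that $\theta_i^{(s)}$ can be reached from the initial seed of $F(\B)$ by a short sequence of one or two mutations, with the governing exchange relation being an instance of \cite[Theorem 1.17]{FZ} after applying the gluing relations. This cannot work as stated. The Fomin--Zelevinsky identity relates generalized minors on a single copy of $G$, and after pulling back by some $\pi_t^*$ every such minor has nonzero right $T$-degree $v(\varpi_i)$. But $\theta_i^{(s)}$ is $G$-invariant, hence of right degree $0$; it is not a generalized minor on any single copy of $G$, and no two-term identity of the form \cite[Theorem 1.17]{FZ} can produce it directly. Your appeal to Lemma~\ref{Lem3.5} is also misplaced: that lemma expresses $\theta_i^{(s)}$ as a maximal minor of the band matrix, which is a $G$-invariant and not one of the flag minors $\De^{(t)}_{w_0(\varpi_j),\varpi_j}$ populating the initial seed of $F(\B)$.

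The paper's proof is organized around a completely different idea. It never attempts to match a specific exchange relation against $\theta_i^{(0)}$. Instead it introduces the $P\times P^+$-bigrading on $R(G,c)^U$, observes that $F(\B)$ is a graded cluster algebra, and proves (Lemma~\ref{Lem.6.7}) that any homogeneous element of $F(\B^{(0)})$ of bi-degree $(\varpi_i-c\varpi_i,0)$ is automatically a scalar multiple of $\theta_i^{(0)}$, simply because the right-degree-$0$ part of $R(G,c,0,1)^U$ is the polynomial ring $K[\theta_j^{(0)}\mid j\in I]$ and the left degree then pins down the variable. The work is then to exhibit, for each $i$, a cluster variable of that bi-degree. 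This requires a long explicit mutation sequence $M$ containing $\sum_{j\in I} m_j(m_j+1)/2$ mutations, not one or two; Lemma~\ref{Lem.6.9} tracks the bi-degrees inductively through $M$ and shows that the variable replacing $\De^{(0)}_{c^{m_i}(\varpi_i),\varpi_i}$ ends with the desired bi-degree. Your reduction to a single $s$ via translation invariance is correct and is also used in the paper, but the core mechanism you propose for the remaining step does not go through.
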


The proof of Proposition~\ref{Prop-6.5} will require some preparation.

First we note that restricting the action of $G$ on $R(G,c)$ to the maximal torus $T$, we get a $P$-grading
of the algebra $R(G,c)$, namely
\begin{equation}\label{Eq_25}
R(G,c) = \bigoplus_{\mu\in P} R(G,c)_\mu,
\end{equation}
where $R(G,c)_\mu$ denotes the weight space of weight $\mu$.
The regular functions $\Delta^{(s)}_{u(\varpi_i),v(\varpi_i)}$ and $\theta^{(s)}_i$ are homogeneous of degree
\[
\deg\left(\Delta^{(s)}_{u(\varpi_i),v(\varpi_i)}\right) = v(\varpi_i),
\quad
\deg\left(\theta^{(s)}_i\right) = 0.
\]
Next, noting that the right action of $T$ on $G$ induces a grading of $K[G]^U$ by the positive cone $P^+$ of
dominant weights, we have
\[
 K[G]^U = \bigoplus_{\la\in P^+} \left(K[G]^U\right)_\la,
\]
and taking into account Corollary~\ref{Cor-6.4}, we get that the subalgebra $R(G,c)^U$ is $P^+$-graded
\[
 R(G,c)^U = \bigoplus_{\la\in P^+} \left(R(G,c)^U\right)_\la,
\]
with 
\begin{equation}\label{Eq.27}
\left(R(G,c)^U\right)_0 = K[\theta^{(s)}_i\mid i\in I,\ s\in \Z] = R(G,c)^G.
\end{equation}

Recall from \S\ref{subsec-action} that we also have a left action of $T$ on $B(G,c)$ commuting with the right action.
This allows us to refine the $P$-grading of Eq.~(\ref{Eq_25}) into a $P\times P$-grading:
\[
R(G,c)_\mu = \bigoplus_{\nu\in P} {}_\nu\, R(G,c)_\mu, 
\]
The regular functions $\Delta^{(s)}_{u(\varpi_i),v(\varpi_i)}$ and $\theta^{(s)}_i$ are also homogeneous
for this left degree:
\begin{equation}\label{Eq.28}
\ldeg\left(\Delta^{(s)}_{u(\varpi_i),v(\varpi_i)}\right) = c^su(\varpi_i),
\quad
\ldeg\left(\theta^{(s)}_i\right) = c^s(\varpi_i) - c^{s+1}(\varpi_i).  
\end{equation}

We now remark that the subalgebra $F(\B)$ of $R(G,c)^U$ is a $P\times P^+$-graded cluster algebra, 
in the sense of \cite{GL}.
Indeed, the initial seed of Eq.~(\ref{Eq.25p}) consists of homogeneous cluster variables.
Moreover, it is easy to check that for every vertex $v$ of the quiver $\Gamma$, the sum of the right degrees of the cluster variables sitting at the targets of the arrows going out of $v$ is equal to the sum of the right degrees of the cluster variables sitting at the sources of the arrows going into $v$.
The same property also holds for left degrees. This can be checked by using the interpretation of the weights
$c^k(\varpi_i)$ in terms of Auslander-Reiten theory explained in \S\ref{subsubsec:w_{i,k}}, since it then amounts
to additivity of dimension vectors for almost split sequences. (We leave this verification to the reader, but see 
Example~\ref{Ex.6.6} below.) 

As a final preparation, we note that, because of the translation invariance property of Proposition~\ref{prop translation}, it is enough to prove Proposition~\ref{Prop-6.5} in the case $s=0$. To do this we can work in 
a cluster subalgebra of $F(\B)$ of finite cluster rank. Namely let $\Gamma^{(0)}$ be the finite full subquiver
of $\Gamma$ supported on vertices $(i,r)\in \VV$ with $-m_i \le r_i-1 \le 1$. As usual, we regard the extremal vertices 
in each column of $\Gamma^{(0)}$ as frozen. Let $\B^{(0)}$ be the cluster subalgebra of $\B$ supported on $\Gamma^{(0)}$,
and $F(\B^{(0)})$ be its isomorphic image in $R(G,c)^U$. 

\begin{example}\label{Ex.6.6}
{\rm
In type $A_3$ for $c = s_1s_3s_2$ the initial seed of $F(\B^{(0)})$ and its bi-grading are displayed
in Figure~\ref{Fig6}. The homogeneity of left gradings at vertex $(c\varpi_2,\varpi_2)$ for instance amounts to the equality
\[
c^2(\varpi_2) + c(\varpi_1) + c(\varpi_3) = \varpi_2 + c^2(\varpi_1) + c^2(\varpi_3),  
\]
which is equivalent to
\[
(\varpi_2-c(\varpi_2)) + (c(\varpi_2)-c^2(\varpi_2)) = (c(\varpi_1)-c^2(\varpi_1)) + (c(\varpi_3)-c^2(\varpi_3)), 
\]
which in turn amounts to the additivity of dimension vectors for the almost split sequence 
\[
\tau(I_2) \to \tau(I_1) \oplus \tau(I_3) \to I_2 
\]
in the category of representations of the quiver $1 \to 2 \leftarrow 3$ associated with $c$. \cqfd
}
\end{example}

\begin{figure}[t]
\[
\def\objectstyle{\scriptstyle}
\def\lablestyle{\scriptstyle}
\xymatrix@-1.0pc{
&&\ar[ld] {\blue \De^{(1)}_{w_0(\varpi_2),\varpi_2}} \ar[rd]&&
\\
&{\blue \De^{(1)}_{w_0(\varpi_1),\varpi_1}}\ar[rd]&
&\ar[ld] {\blue \De^{(1)}_{w_0(\varpi_3),\varpi_3}}
\\
&&\ar[uu]\ar[ld] \De^{(0)}_{w_0(\varpi_2),\varpi_2} \ar[rd]&&
\\
&\ar[uu]\De^{(0)}_{w_0(\varpi_1),\varpi_1} \ar[rd] &&\ar[ld] \De^{(0)}_{w_0(\varpi_3),\varpi_3}\ar[uu]
\\
&&\ar[ld] \ar[uu]\De^{(-1)}_{w_0(\varpi_2),\varpi_2} \ar[rd]&&
\\
&\ar[uu] \De^{(-1)}_{w_0(\varpi_1),\varpi_1} \ar[rd] &&\ar[ld] \De^{(-1)}_{w_0(\varpi_3),\varpi_3}\ar[uu]
\\
&&\ar[ld] \ar[uu] {\blue \De^{(-2)}_{w_0(\varpi_2),\varpi_2}} \ar[rd]&&
\\
&\ar[uu]{\blue \De^{(-2)}_{w_0(\varpi_1),\varpi_1}} &&\ar[uu] {\blue \De^{(-2)}_{w_0(\varpi_3),\varpi_3}} 
}
\qquad
\xymatrix@-1.0pc{
&&\ar[ld] {\blue (c^3(\varpi_2),\varpi_2)} \ar[rd]&&
\\
&{\blue (c^3(\varpi_1),\varpi_1)}\ar[rd]&
&\ar[ld] {\blue (c^3(\varpi_3),\varpi_3)}
\\
&&\ar[uu]\ar[ld] (c^2(\varpi_2),\varpi_2) \ar[rd]&&
\\
&\ar[uu] (c^2(\varpi_1),\varpi_1) \ar[rd] &&\ar[ld] (c^2(\varpi_3),\varpi_3)\ar[uu]
\\
&&\ar[ld] \ar[uu] (c(\varpi_2),\varpi_2) \ar[rd]&&
\\
&\ar[uu] (c(\varpi_1),\varpi_1) \ar[rd] &&\ar[ld] (c(\varpi_3),\varpi_3)\ar[uu]
\\
&&\ar[ld] \ar[uu] {\blue (\varpi_2,\varpi_2)} \ar[rd]&&
\\
&\ar[uu]{\blue (\varpi_1,\varpi_1)} &&\ar[uu] {\blue (\varpi_3,\varpi_3)} 
}
\]
\caption{\label{Fig6} {\it The initial seed of $F(\B^{(0)})$ in type $A_3$ for $\widetilde{c}=s_2s_1s_3$, and its bi-grading.}}
\end{figure}
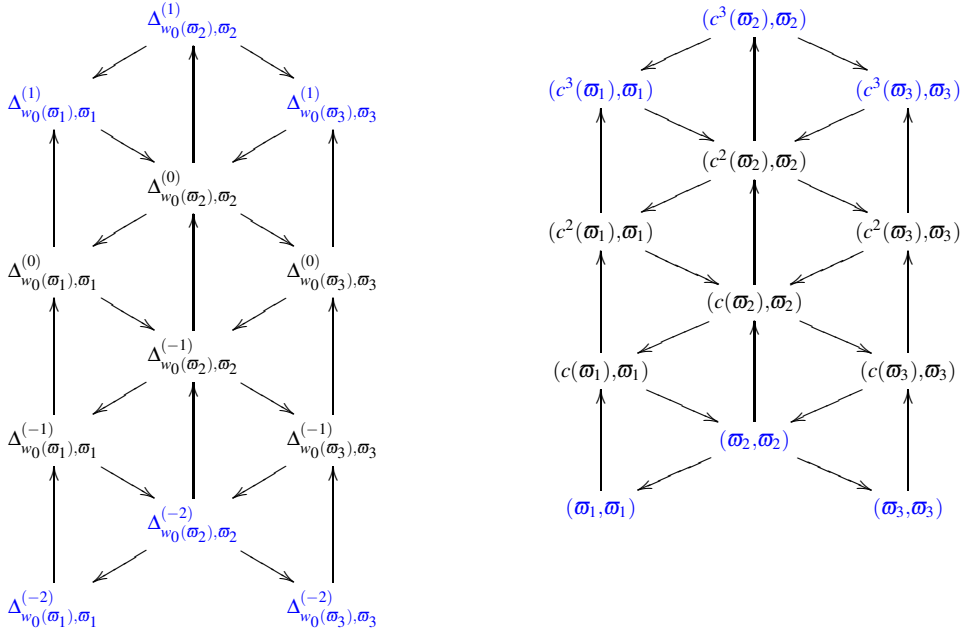

Using Proposition~\ref{lem: glueing formulas}, we can rewrite the initial cluster of $F(\B^{(0)})$
as 
\[
 \left\{\De^{(0)}_{c^k(\varpi_i),\varpi_i} \mid i\in I,\ 0\le k\le m_i \right\}
 \cup
 \left\{\De^{(1)}_{w_0(\varpi_i),\varpi_i} \mid i\in I \right\},
\]
and this shows that $F(\B^{(0)}) \subset R(G,c,0,1)^U$, where $R(G,c,0,1)$ is the ring of regular functions
on the space of finite bands $B(G,c,0,1)$. Hence $F(\B^{(0)}) \subset \pi^*_0( K[G])[\theta^{(0)}_i\mid i\in I]$.
So taking into account Eq.(\ref{Eq.27}), we see that if an element $x$ of $F(\B^{(0)})$ is of right degree 0, then
it belongs to $K[\theta^{(0)}_i\mid i\in I]$. Thus, using also the left degree and Eq.(\ref{Eq.28}), we obtain the following criterium.

\begin{Lem}\label{Lem.6.7}
If $x\in F(\B^{(0)})$ is homogeneous of bi-degree $(\varpi_i-c\varpi_i, 0)$, then $x$ is a scalar multiple
of $\theta^{(0)}_i$. \cqfd
\end{Lem}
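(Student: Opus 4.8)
The plan is to exploit the bigrading on $R(G,c)^U$ and reduce the whole statement to a linear-independence property of the weights $\varpi_j-c(\varpi_j)$. Much of the work is already done in the discussion preceding the lemma: since $x$ has right degree $0$ and $F(\B^{(0)}) \subset \pi_0^*(K[G])[\theta_j^{(0)}\mid j\in I]$, the identity $(R(G,c)^U)_0 = R(G,c)^G$ of Eq.~(\ref{Eq.27}) (together with Corollary~\ref{Cor-6.4}) forces $x$ to lie in the polynomial subring $K[\theta_j^{(0)}\mid j\in I]$. So the only point left to settle is which of the monomials in the $\theta_j^{(0)}$ can actually occur in $x$, and here the left grading takes over.

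First I would write $x = \sum_{\ba}\lambda_{\ba}\prod_{j\in I}(\theta_j^{(0)})^{a_j}$ with $\lambda_{\ba}\in K$, and recall from Eq.~(\ref{Eq.28}) that $\ldeg(\theta_j^{(0)}) = \varpi_j - c(\varpi_j)$. Hence the monomial indexed by $\ba=(a_j)_{j\in I}$ is homogeneous for the left grading, of degree $\sum_{j\in I}a_j(\varpi_j-c(\varpi_j))$. Since by hypothesis $x$ is homogeneous of left degree $\varpi_i-c(\varpi_i)$, only those $\ba$ with $\sum_{j\in I}a_j(\varpi_j-c(\varpi_j)) = \varpi_i-c(\varpi_i)$ contribute a nonzero term.

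The key step is then to observe that the family $\{\varpi_j-c(\varpi_j)\mid j\in I\}$ is linearly independent in $P\otimes\Q$. This is immediate once one notes that $\varpi_j-c(\varpi_j) = (\id-c)(\varpi_j)$ and that $\id-c$ is invertible on $P\otimes\Q$, a Coxeter element having no nonzero fixed vector on the reflection representation; equivalently, as explained in \S\ref{subsubsec:w_{i,k}}, these are exactly the dimension vectors $\dim(I_j)$ of the indecomposable injective representations of $Q$, which form a basis of the Grothendieck group. Linear independence forces the equation $\sum_{j\in I}a_j(\varpi_j-c(\varpi_j)) = \varpi_i-c(\varpi_i)$ to have the unique solution $a_i=1$ and $a_j=0$ for $j\neq i$. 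Consequently the only monomial surviving in $x$ is $\theta_i^{(0)}$, so that $x=\lambda_i\,\theta_i^{(0)}$ for some scalar $\lambda_i\in K$, which is exactly the assertion.

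I do not anticipate a genuine obstacle: all the substantive ingredients (the structure of $R(G,c)^U$ and $R(G,c)^G$ from Corollary~\ref{Cor-6.4} and Eq.~(\ref{Eq.27}), the containment $F(\B^{(0)})\subset\pi_0^*(K[G])[\theta_j^{(0)}\mid j\in I]$, and the left-degree formula of Eq.~(\ref{Eq.28})) are already established. The only step that warrants a line of justification is the linear independence of the $\varpi_j-c(\varpi_j)$, i.e.\ the invertibility of $\id-c$; everything else is bookkeeping with the two gradings.
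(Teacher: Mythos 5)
Your proposal is correct and follows essentially the same route as the paper: the paper states the lemma as an immediate consequence of the preceding discussion (the containment $F(\B^{(0)})\subset\pi_0^*(K[G])[\theta^{(0)}_j\mid j\in I]$, Eq.~(\ref{Eq.27}) forcing right-degree-$0$ elements into $K[\theta^{(0)}_j\mid j\in I]$, and then the left degrees of Eq.~(\ref{Eq.28})), which is exactly the bookkeeping you carry out. The only point the paper leaves implicit—the linear independence of the weights $\varpi_j-c(\varpi_j)$, i.e.\ the invertibility of $\id-c$ on $P\otimes\Q$—is the point you justify explicitly, and your justification (no eigenvalue $1$ for a Coxeter element, equivalently the dimension vectors of the indecomposable injectives form a basis) is valid.
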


The rest of the proof of Proposition~\ref{Prop-6.5} consists in describing an explicit sequence $M$ of
mutations of the initial seed of $F(B^{(0)})$ which produces cluster variables of the desired bi-degrees 
$(\varpi_i-c\varpi_i, 0) \ (i \in I)$. 

Write $\wt{c} = s_{i_1}\cdots s_{i_n}$. The sequence $M$ can be described as follows.
For $i\in I$ and $-m_i+1 \le j \le 0$, let us denote by $\mu_{(i,j)}$ the mutation at vertex $(i,r)$ such that
$r_i - 1 = j$. Furthermore, for $i\in I$ and $k\in \Z$ let us denote by $M_{i,k}$ the sequence of mutations 
$\mu_{(i,0)}$, $\mu_{(i,-1)}$, \ldots, $\mu_{(i,k)}$ if  $k\in [-m_i + 1,0]$, and $M_{i,k} = \emptyset$ otherwise. 
Then, we set:
\begin{eqnarray*}
 M &:=& M_{i_1,-m_{i_1}+1}, M_{i_2,-m_{i_2}+1}, \ldots, M_{i_n,-m_{i_n}+1},
 \\
 &&M_{i_1,-m_{i_1}+2}, M_{i_2,-m_{i_2}+2}, \ldots, M_{i_n,-m_{i_n}+2},
 \\
 &&M_{i_1,-m_{i_1}+3}, M_{i_2,-m_{i_2}+3}, \ldots, M_{i_n,-m_{i_n}+3},
 \\
 &&\ldots
\end{eqnarray*}
We can see that the total number of mutations in column $i$ is $m_i+(m_i-1)+\cdots +1 = m_i(m_i+1)/2$.
Thus
the sequence $M$ contains 
\[
 N = \sum_{i\in I}\frac{m_i(m_i+1)}{2}
\]
mutations in total.

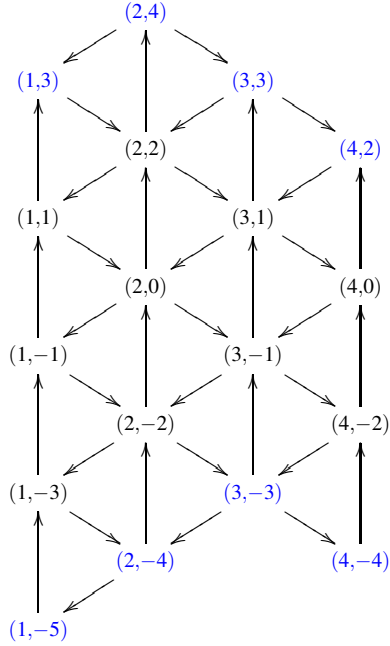
\begin{figure}[t]
\[
\def\objectstyle{\scriptstyle}
\def\lablestyle{\scriptstyle}
\xymatrix@-1.0pc{
&&\ar[ld] {\blue (2,4)} \ar[rd]&&
\\
&{\blue (1,3)}\ar[rd]&
&\ar[ld]{\blue (3,3)} \ar[rd]
\\
&&\ar[uu]\ar[ld] (2,2) \ar[rd]&&\ar[ld]{\blue (4,2)}
\\
&\ar[uu](1,1) \ar[rd] &&\ar[ld] (3,1)\ar[rd]\ar[uu]
\\
&&\ar[ld] \ar[uu](2,0) \ar[rd]&&\ar[ld](4,0)\ar[uu]
\\
&\ar[uu](1,-1) \ar[rd] &&\ar[ld] {(3,-1)}\ar[rd]\ar[uu]
\\
&&\ar[ld] \ar[uu]{ (2,-2)}\ar[rd]&& \ar[ld]{ (4,-2)}\ar[uu]
\\
&\ar[uu]{ (1,-3)}\ar[rd] && \ar[ld]{\blue (3,-3)}\ar[rd]\ar[uu]
\\
&& \ar[ld] \ar[uu]{\blue (2,-4)}&&{\blue (4,-4)}\ar[uu]
\\
&{\blue (1,-5)}\ar[uu]
}
\]
\caption{\label{Fig7} {\it The quiver $\G_0$ in type $A_4$ for $c = s_1s_2s_4s_3$.}}
\end{figure}

\begin{example}
{\rm
Let $G$ be of type $A_4$ and choose $c = s_1s_2s_4s_3$, so that $\widetilde{c} = s_2s_1s_3s_4$.
Then we have $m_1 = m_2 = 3$ and $m_3 = m_4 = 2$. The quiver $\Gamma^{(0)}$ is shown in Figure~\ref{Fig7},
with its frozen vertices painted blue.
The sequence $M$ consists of 18 mutations at the following vertices of $\Gamma^{(0)}$:
\begin{eqnarray*}
&&{(2,2)},\ {(2,0)},\ {(2,-2)},\ {(1,1)},\ {(1,-1)},\ {(1,-3)},\
{(3,1)},\ {(3,-1)},\ {(4,0)},\ {(4,-2)},
\\
&&{(2,2)},\ {(2,0)},\ {(1,1)},\ {(1,-1)},\ {(3,1)},\ {(4,0)},\
\\
&&{(2,2)},\ {(1,1)}.
\end{eqnarray*}
}
\end{example}

Recall that the weight lattice $P$ is endowed with the scalar product $(\cdot,\cdot)$ defined by
\[
 (\varpi_i, \alpha_j) = \delta_{ij},\qquad (i,j\in I).
\]
For $\la,\mu \in P$, let us write $(\la,\mu)_{\ge 0} := \max\{(\la,\mu),0\}$.

\begin{Lem}\label{Lem.6.9}
After performing the sequence of mutations $M$, the mutable cluster variable
\[
\De^{(0)}_{c^k(\varpi_i),\varpi_i} \qquad  (i\in I,\ 1\le k\le m_i)
\]
of the initial cluster of $F(\B^{(0)})$ of bi-degree $(c^k(\varpi_i),\varpi_i)$ is replaced 
by a cluster variable of bi-degree
\[
\left(\varpi_i - c^{m_i-k+1}\varpi_i + \sum_{j\in I} (\widetilde{c}^k(\varpi_i),\alpha_j)_{\ge 0}\,c^{m_j+1}\varpi_j  \ ,
\ \ \sum_{j\in I} (\widetilde{c}^k(\varpi_i),\alpha_j)_{\ge 0}\,\varpi_j\right).
\]
In particular, for $k = m_i$ we have $\widetilde{c}^{\,m_i}(\varpi_i) = -\varpi_{\nu(i)}$ hence 
$(\widetilde{c}^{\,m_i}(\varpi_i),\alpha_j)_{\ge 0} = 0$ for every $j\in I$. 
So after performing the sequence $M$ the cluster variable
$\De^{(0)}_{c^{m_i}(\varpi_i),\varpi_i}$ is replaced  by a cluster
variable of bi-degree $(\varpi_i - c\varpi_i, 0)$.
\end{Lem}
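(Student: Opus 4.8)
The plan is to track both degrees through the explicit sequence $M$ using only the rule by which a single mutation transforms degrees in a graded cluster algebra. Recall from the discussion around Example~\ref{Ex.6.6} that $F(\B^{(0)})$ is $P\times P^+$-bigraded and that this bigrading is preserved under mutation, so every seed reached along $M$ is homogeneous. Consequently, whenever we mutate at a vertex carrying the cluster variable $x$, the exchange relation $x\,x^{*}=\mathcal{M}_{+}+\mathcal{M}_{-}$ has both monomials $\mathcal{M}_{\pm}$ of one and the same bi-degree $D$, whence $\deg(x^{*})=D-\deg(x)$ and $\ldeg(x^{*})=\ldeg(D)-\ldeg(x)$, with $D$ read off the current quiver as the sum of the degrees of the neighbours joined to $x$ by the outgoing arrows (equivalently, by the incoming ones). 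The whole statement thus reduces to a piecewise-linear bookkeeping on the vertex labels of the quivers produced along $M$, carried out independently for the left and the right grading.

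First I would treat the right degrees, which are the simpler of the two. In the initial seed every vertex of column $i$ carries right degree $\varpi_{i}$, since the second index of each initial minor equals $\varpi_{i}$. Writing $\widetilde{c}=s_{i_{1}}\cdots s_{i_{n}}$ as in the definition of $M$, I would show by induction on the successive rounds of $M$ that, once the mutations have acted on the vertex originally labelled $\De^{(0)}_{c^{k}(\varpi_{i}),\varpi_{i}}$, its right degree has become $\sum_{j\in I}(\widetilde{c}^{\,k}(\varpi_{i}),\alpha_{j})_{\ge 0}\,\varpi_{j}$, the dominant part of $\widetilde{c}^{\,k}(\varpi_{i})$. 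The key point is that the mutations inside each round occur in the column order $i_{1},\dots,i_{n}$, so that one round of the degree rule reproduces exactly the piecewise-linear recursion carrying the dominant projection of $\widetilde{c}^{\,k}(\varpi_{i})$ to that of $\widetilde{c}^{\,k+1}(\varpi_{i})$ induced by the factorisation $\widetilde{c}=s_{i_{1}}\cdots s_{i_{n}}$.

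Next I would track the left degrees in parallel. They begin at $\ldeg(\De^{(0)}_{c^{k}(\varpi_{i}),\varpi_{i}})=c^{k}(\varpi_{i})$, and the same rule, now applied to the left grading, runs the analogous recursion with $c$ in place of $\widetilde{c}$; the passage between the two gradings is governed by the left--right mirror symmetry of the Auslander--Reiten quivers $\GG_{Q}$ and $\GG_{\wt{Q}}$ already used in the proofs of Lemma~\ref{Lem2-4} and Proposition~\ref{Prop:seed BFZ}. Matching the cross-column contributions of the two recursions, both supported on the indices $j$ for which $(\widetilde{c}^{\,k}(\varpi_{i}),\alpha_{j})_{\ge 0}>0$, then yields the asserted left degree $\varpi_{i}-c^{\,m_{i}-k+1}(\varpi_{i})+\sum_{j\in I}(\widetilde{c}^{\,k}(\varpi_{i}),\alpha_{j})_{\ge 0}\,c^{\,m_{j}+1}(\varpi_{j})$, the first two terms recording the evolution internal to column $i$ and the last the interaction with the neighbouring columns.

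The main obstacle is the nonlinearity introduced by the truncation $(\cdot)_{\ge 0}$ in the degree rule: a purely linear step would be routine, but the positive-part operation forces one to control the signs of the pairings $(\widetilde{c}^{\,k}(\varpi_{i}),\alpha_{j})$ at every intermediate seed and to check that the order of mutations in $M$ keeps these signs aligned with the ones governing the Coxeter recursion. I would resolve this by strengthening the induction hypothesis to record the entire list of pairings $(\widetilde{c}^{\,k}(\varpi_{i}),\alpha_{j})$, using that for $1\le k\le m_{i}$ the weights $\widetilde{c}^{\,k}(\varpi_{i})$ interpolate monotonically, with respect to the factorisation of $\widetilde{c}$, between the dominant weight $\varpi_{i}$ and the antidominant weight $w_{0}(\varpi_{i})$. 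Granting this, the special case $k=m_{i}$ follows at once: then $\widetilde{c}^{\,m_{i}}(\varpi_{i})=w_{0}(\varpi_{i})=-\varpi_{\nu(i)}$ is antidominant, all truncated pairings vanish, and the bi-degree collapses to $(\varpi_{i}-c(\varpi_{i}),\,0)$, which is exactly what is needed to apply Lemma~\ref{Lem.6.7}.
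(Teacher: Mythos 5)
Your overall framework---the bi-graded mutation rule $\deg(x^{*})=D-\deg(x)$, where $D$ is the common bi-degree of the two exchange monomials, applied inductively along the explicit sequence $M$---is the same one the paper uses. But there is a genuine gap at the heart of the proposal: the quantity $D$ is read off from the arrows entering the mutation vertex in the \emph{current} quiver, and that quiver changes at every step of $M$. Your argument never establishes any control over this evolving quiver; it simply asserts that one round of mutations ``reproduces exactly the piecewise-linear recursion'' carrying the dominant projection of $\widetilde{c}^{\,k}(\varpi_i)$ to that of $\widetilde{c}^{\,k+1}(\varpi_i)$. That assertion is precisely what needs proof, and it cannot be extracted from degree bookkeeping alone. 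In the paper's proof the essential combinatorial content consists of two facts about the intermediate quivers: (A) when the mutations $\mu_{(j,-1)},\dots,\mu_{(j,-m_j+1)}$ inside a block are performed, the only incoming arrows at the mutation vertex are the two vertical ones in its column; and (B) when the \emph{first} mutation of a block $M_{j,-m_j+t}$ is performed, the incoming arrows are one vertical arrow plus arrows from the frozen vertices $(c^{m_k+1}(\varpi_k),\varpi_k)$ with multiplicities exactly $(\widetilde{c}^{\,t}(\varpi_j),\alpha_k)_{\ge 0}$. These multiplicities of arrows from frozen vertices are where the truncated pairings in the final formula actually come from; they are a statement about quiver mutation, not about weights. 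Your proposed remedy for the nonlinearity---strengthening the induction hypothesis to record the full list of pairings $(\widetilde{c}^{\,k}(\varpi_i),\alpha_j)$---keeps track of more weight data but supplies no information about the arrows, so the recursion still cannot be run.

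Two further points. First, splitting the computation into a right-degree recursion and a separate left-degree recursion ``by mirror symmetry'' of $\GG_Q$ and $\GG_{\widetilde{Q}}$ is not how the degrees behave here: both gradings are determined simultaneously by the same incoming-arrow data at each mutation step, and the left degree $\varpi_i-c^{m_i-k+1}(\varpi_i)+\sum_j(\widetilde{c}^{\,k}(\varpi_i),\alpha_j)_{\ge 0}\,c^{m_j+1}(\varpi_j)$ mixes an in-column contribution with frozen-vertex contributions in a way that the mirror-symmetry heuristic does not by itself produce. Second, the correct shape of the strengthened induction is the one the paper uses: a two-case formula for the bi-degree at each vertex after each partial sequence $M^{i,s}$ (cases $s\le k$ and $s\ge k$), proved block by block using (A) and (B). If you add a proof of those two quiver facts (by induction on the blocks, from the mutation rule for quivers) and then run your degree recursion against them, your plan becomes a complete proof; without them it is a plausibility argument.
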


\begin{figure}[t]
\[
\def\objectstyle{\scriptstyle}
\def\lablestyle{\scriptstyle}
\xymatrix@-1.0pc{
&& {\blue (c^3(\varpi_2),\varpi_2)} &&
\\
&{\blue (c^3(\varpi_1),\varpi_1)}&
& {\blue (c^3(\varpi_3),\varpi_3)}
\\
&& (\varpi_2-c(\varpi_2), 0) &&
\\
& (\varpi_1-c(\varpi_1),0)  && (\varpi_3- c(\varpi_3), 0)
\\
&& (\varpi_2-c^2(\varpi_2)+c^3(\varpi_1)+c^3(\varpi_3),\ \varpi_1+\varpi_3) &&
\\
& (\varpi_1-c^2(\varpi_1)+c^3(\varpi_3),\varpi_3)  && (\varpi_3-c^2(\varpi_3)+c^3(\varpi_1),\varpi_1)
\\
&& {\blue (\varpi_2,\varpi_2)} &&
\\
&{\blue (\varpi_1,\varpi_1)} && {\blue (\varpi_3,\varpi_3)} 
}
\]
\caption{\label{Fig8} {\it The bi-grading of the mutated seed of $F(\B^{(0)})$ in type $A_3$ for $\widetilde{c}=s_2s_1s_3$.}}
\end{figure}
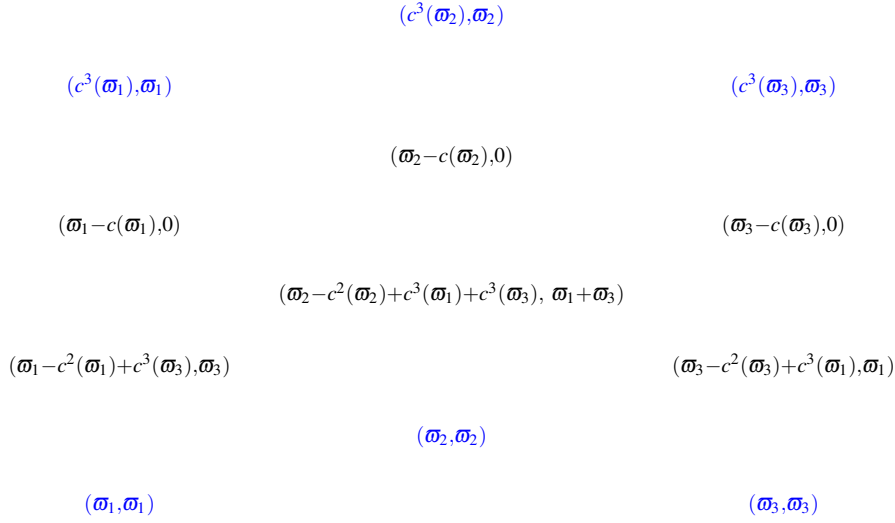

\begin{example}
{\rm
We continue Example~\ref{Ex.6.6}. Here we have $\widetilde{c} = s_2s_1s_3$ and $m_1 = m_2 = m_3 = 2$. We can calculate
\[
\widetilde{c}(\varpi_1) =  \varpi_1 - \alpha_1 -\alpha_2,\quad
\widetilde{c}(\varpi_2) =  \varpi_2 - \alpha_2,\quad
\widetilde{c}(\varpi_3) =  \varpi_3 - \alpha_2 -\alpha_3.
\]
Thus, for instance, 
\[
(\widetilde{c}(\varpi_1), \alpha_1)_{\ge 0} = (\widetilde{c}(\varpi_1), \alpha_2)_{\ge 0} = 0,
\qquad
(\widetilde{c}(\varpi_1), \alpha_3)_{\ge 0} = 1.
\]
Hence after performing the sequence of mutations $M$, the cluster variable 
$\De^{(0)}_{c(\varpi_1),\varpi_1}$ is replaced by a cluster variable of bi-degree
$(\varpi_1 - c^2\varpi_1 + c^3\varpi_3\ ,\ \ \varpi_3)$. 
The bi-degrees of all the cluster variables of the new seed obtained after performing the sequence $M$ are
displayed in Figure~\ref{Fig8}. 
}
\end{example}

\begin{proof}
Lemma~\ref{Lem.6.9} will follow immediately from the following more precise statement.

Recall that the mutation sequence $M$ is partitioned into blocks denoted by $M_{i,j}$.
For $i\in I$ and $1\le k\le m_i$, consider the vertex of $\Gamma^{(0)}$ whose initial cluster variable is
$\Delta^{(0)}_{c^k(\varpi_i),\varpi_i}$.
Let $1\le s \le m_i$, and consider the cluster variable sitting at the same vertex after
the sequence of mutations 
$M^{i,s}:=M_{i_1,-m_{i_1}+1}, M_{i_2,-m_{i_2}+1}, \ldots, M_{i_n,-m_{i_n}+1}, M_{i_1, -m_{i_1}+2}, \ldots, M_{i,-m_i+s}$. 
In other words, $M^{i,s}$ denotes the initial subsequence of $M$ that ends with the block $M_{i,-m_i+s}$.
We claim that the bi-degree of this mutated cluster variable is equal to 
\begin{eqnarray}
\left(c^{k-s}(\varpi_i)- c^{m_i-s+1}(\varpi_i) +
\sum_{j\in I} (\widetilde{c}^{\,s}(\varpi_i),\alpha_j)_{\ge 0}\,c^{m_j+1}\varpi_j\ ,\ 
\sum_{j\in I} (\widetilde{c}^{\,s}(\varpi_i),\alpha_j)_{\ge 0}\,\varpi_j\right)
& \mbox{if}\ s\le k,
\\
\left(\varpi_i- c^{m_i-k+1}(\varpi_i) +
\sum_{j\in I} (\widetilde{c}^{\,k}(\varpi_i),\alpha_j)_{\ge 0}\,c^{m_j+1}\varpi_j\ ,\ 
\sum_{j\in I} (\widetilde{c}^{\,k}(\varpi_i),\alpha_j)_{\ge 0}\,\varpi_j\right)
& \mbox{if}\ s \ge k.
\end{eqnarray}
Notice that
the cluster variable sitting at the chosen vertex after the mutation sequence $M$ is the same as the one obtained by applying $M^{i,m_i}$. Moreover, its bi-degree is given by the second equation.
Notice also that if we set $s=0$ in the 
first equation, we get 
\[
\left(c^{k}(\varpi_i)- c^{m_i+1}(\varpi_i) +
\sum_{j\in I} (\varpi_i,\alpha_j)_{\ge 0}\,c^{m_j+1}\varpi_j\ ,\ 
\sum_{j\in I} (\varpi_i,\alpha_j)_{\ge 0}\,\varpi_j\right) 
=
\left(c^{k}(\varpi_i)\ ,\ \varpi_i\right), 
\]
that is, we recover the bi-degree of the initial cluster variable.
 
The proof of this claim is by induction on the sequence of blocks of mutations $M_{j, -m_j + t}$  composing $M^{i,s}$.
The first block is $M_{i_1, -m_{i_1}+1} = \mu_{(i_1,0)}$, $\mu_{(i_1,-1)}$, \ldots, $\mu_{(i_1,-m_{i_1}+1)}$.
Consider the mutation $\mu_{(i_1,0)}$. The initial cluster variable at this vertex has bi-degree $(c^{m_{i_1}}(\varpi_{i_1}), \varpi_{i_1})$. 
The bi-degree of the mutated cluster variable can be expressed as the sum of  the bi-degrees of all the variables
sitting at sources of arrows going into this vertex, minus $(c^{m_{i_1}}(\varpi_{i_1}), \varpi_{i_1})$. By inspection of the quiver,
and noting that $\widetilde{c}$ starts with $s_{i_1}$, we see that this is equal to
\[
\sum_{j\ :\ c_{i_1j}=-1} (c^{\,m_{j}+1}(\varpi_j), \varpi_j) 
+ ({c}^{\,m_{i_1}-1}(\varpi_{i_1}), \varpi_{i_1}) - ({c}^{\,m_{i_1}}(\varpi_{i_1}), \varpi_{i_1}).
\]
Now, since $\widetilde{c}(\varpi_{i_1}) = s_{i_1}(\varpi_{i_1}) = \varpi_{i_1} - \alpha_{i_1}$, we obtain
that
$(\widetilde{c}(\varpi_{i_1}),\alpha_j)_{\ge 0} = 1$ 
if $c_{i_1j}=-1$, and $(\widetilde{c}(\varpi_{i_1}),\alpha_j)_{\ge 0} = 0$ otherwise. 
Hence this bi-degree can be rewritten as
\[
\left(c^{m_{i_1}-1}(\varpi_i)- c^{m_{i_1}}(\varpi_{i_1}) +
\sum_{j\in I} (\widetilde{c}(\varpi_{i_1}),\alpha_j)_{\ge 0}\,c^{m_j+1}\varpi_j\ ,\ 
\sum_{j\in I} (\widetilde{c}(\varpi_{i_1}),\alpha_j)_{\ge 0}\,\varpi_j\right), 
\]
as claimed. 

To prove the claim for the $m_{i_1}-1$ remaining mutations of $M_{i_1, -m_{i_1} + 1}$, 
one readily checks that
quiver mutation rules imply the following important feature of this particular mutation sequence:

\begin{quote}
(A) \emph{when each of the mutations $\mu_{(i_1,-1)}, \ldots ,\mu_{(i_1,-m_{i_1}+1)}$ is performed, there are only two arrows going \emph{into} the mutation vertex, namely the vertical arrow coming from the vertex immediately above and the vertical arrow coming from the vertex immediately below in the same column.} 
\end{quote}
Thus the desired formula readily follows
by induction from the above formula for the first mutation $\mu_{(i_1,0)}$ of $M_{i_1, -m_{i_1} + 1}$.

The calculation for the second group $M_{i_2, -m_{i_2} + 1}$ of mutations is entirely similar. The only 
difference is that after performing the sequence of mutations $M_{i_1, -m_{i_1} + 1}$, certain arrows of the 
initial quiver $\Gamma^{(0)}$ adjacent to the vertex where
$\mu_{(i_2,0)}$ is performed may have been 
modified. 
In fact, it is enough to know what are the arrows going \emph{into} this vertex at this stage to be able 
to calculate the bi-degree of the mutated variable. 

The second important feature of our particular mutation sequence, which can be deduced by induction from the quiver mutation rules is:

\begin{quote}
(B) \emph{when performing the \emph{first} mutation of the group $M_{j, -m_{j} + t}$, for every $k\in I$ the number of arrows with source the frozen vertex $(c^{m_k+1}(\varpi_k),\varpi_k)$ going into the vertex where the mutation is performed is equal to 
$(\widetilde{c}^{\ t}(\varpi_j),\alpha_k)_{\ge 0}$. The only other arrow going into this vertex is the 
vertical arrow coming from the vertex immediately below in the same column $j$.
} 
\end{quote}
Using this, we readily obtain our claim for the first mutation of $M_{i_2, -m_{i_2} + 1}$. Then for the remaining
$m_{i_2}-1$ mutations of $M_{i_2, -m_{i_2} + 1}$, we observe that statement (A) above is again valid for all these
mutations, and we deduce our claim for the entire sub-sequence $M_{i_2, -m_{i_2} + 1}$ of $M$.

The rest of the calculation proceeds in exactly the same way, applying rule (B) for the mutation
at the first vertex of each block $M_{j, -m_{j} + t}$, and rule (A) at every other vertex.
\cqfd
\end{proof}

We can now finish the proof of Proposition~\ref{Prop-6.5}.
By Lemma~\ref{Lem.6.9}, for every $i\in I$, the cluster algebra $F(\B^{(0)})$ contains a cluster variable of bi-degree
$(\varpi_i-c(\varpi_i), 0)$. By Lemma~\ref{Lem.6.7}, this cluster variable is a scalar multiple of $\theta^{(0)}_i$.
Finally by the translation invariance property, the same is true for $\theta^{(s)}_i$ for every $s\in \Z$.
\cqfd

\subsection{Upper cluster structure}

We are now in a position to state the main result of this section. Recall the cluster algebra $\B$ with initial seed
$(\Gamma, \{x_{(i,r)}\mid (i,r)\in \VV\})$, and the injective algebra homomorphism $F$ from $\B$ to $R(G,c)^U$.
Let $\Xi = (\Gamma, \{F(x_{(i,r)})\mid (i,r)\in \VV\})$ denote the initial seed of the cluster algebra $F(B)$.

\begin{Thm}\label{Thm.6.11}
The $K$-algebra $R(G,c)^U$ is equal to $K\otimes \mathcal{U}(\Xi)$, where $\mathcal{U}(\Xi)$ denotes 
the upper cluster algebra with initial seed $\Xi$. 
\end{Thm}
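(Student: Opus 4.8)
The plan is to prove the two inclusions $R(G,c)^U\subseteq K\otimes\mathcal{U}(\Xi)$ and $K\otimes\mathcal{U}(\Xi)\subseteq R(G,c)^U$ separately; in fact the argument will establish the stronger chain $R(G,c)^U=F(\B)=K\otimes\mathcal{U}(\Xi)$, so that here too the cluster algebra coincides with its upper counterpart. I will freely use that $F(\B)\subseteq R(G,c)^U$ (Proposition~\ref{Prop-6.2}) and that $F(\B)\subseteq K\otimes\mathcal{U}(\Xi)$ by the Laurent phenomenon, so that it remains to prove $R(G,c)^U\subseteq F(\B)$ and $K\otimes\mathcal{U}(\Xi)\subseteq R(G,c)^U$.

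For the inclusion $R(G,c)^U\subseteq F(\B)$ I would start from Corollary~\ref{Cor-6.4}, which presents $R(G,c)^U$ as the polynomial ring in the variables $\theta_i^{(s)}$ with coefficients in $\pi_0^*(K[G]^U)$. By Proposition~\ref{Prop-6.5} each $\theta_i^{(s)}$ is a scalar multiple of a cluster variable of $F(\B)$, hence lies in $F(\B)$, so it suffices to prove $\pi_0^*(K[G]^U)\subseteq F(\B)$. Here I would invoke the classical cluster structure on the base affine space $\overline{G/U}$, whose coordinate ring is $K[G]^U$ (see \cite[\S6.5]{FWZ} in type $A$ and \cite{GLS} in general). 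Choosing the reduced word of $w_0$ adapted to $c$ as in the proof of Proposition~\ref{Prop:seed BFZ}, its initial seed consists of the flag minors $\Delta_{c^k(\varpi_i),\varpi_i}$ for $0\le k\le m_i$; applying $\pi_0^*$ and the gluing formulas of Proposition~\ref{lem: glueing formulas} rewrites these as $\Delta^{(k-m_i)}_{w_0(\varpi_i),\varpi_i}$, which are exactly the initial black cluster variables of $\Xi$ sitting in a finite window of $\Gamma$, with the extremal minors $\Delta_{\varpi_i,\varpi_i}$ and $\Delta_{w_0(\varpi_i),\varpi_i}$ frozen. This identifies the base affine space cluster structure with a finite-rank sub-seed of $\Xi$, exactly as in the remark following Theorem~\ref{Thm.3.4}, and since $K[G]^U$ equals that cluster algebra its pullback lies in $F(\B)$. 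Combined with Proposition~\ref{Prop-6.2} this yields $R(G,c)^U=F(\B)$.

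For the inclusion $K\otimes\mathcal{U}(\Xi)\subseteq R(G,c)^U$ I would apply the Starfish lemma \cite[Corollary 6.4.6]{FWZ} in the form of Remark~\ref{rem:starfish lemma}. The ring $R(G,c)^U$ is a unique factorization domain with $(R(G,c)^U)^\times=K^\times$: by Corollary~\ref{Cor-6.4} it is a polynomial ring over $K[G]^U$, and $K[G]^U$ is factorial with unit group $K^\times$ for $G$ simply connected (the base affine space being a connected factorial multicone; alternatively this follows from the factoriality of $R(G,c)$ together with $U$ being unipotent, hence characterless). The initial cluster variables $\Delta^{(s)}_{w_0(\varpi_i),\varpi_i}$ of $\Xi$ and their one-step mutations all lie in $R(G,c)^U$ — the former because $F(\B)\subseteq R(G,c)^U$, the latter by the same one-step regularity argument as in Proposition~\ref{prop: one-step-mutations} — and they are irreducible and pairwise coprime by \cite[Theorem 1.3]{GLS} together with the UFD property, precisely as in the proof of Theorem~\ref{Thm4.10}. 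Since $\Gamma$ has infinite rank I would first run this estimate on the finite sub-seeds $\Xi^{(M)}$ obtained by restricting to growing windows of $\Gamma$ (the translates of the quiver $\Gamma^{(0)}$ introduced in \S\ref{sec_U_inv}) with frozen boundary, checking as in Corollary~\ref{cor:AU R(G,c)} that their exchange matrices have maximal rank and, using the cofinality results of \cite{Q2}, that $K\otimes\mathcal{U}(\Xi)=\bigcup_M K\otimes\mathcal{U}(\Xi^{(M)})$. Each $K\otimes\mathcal{U}(\Xi^{(M)})\subseteq R(G,c)^U$ by the Starfish lemma, so passing to the union gives $K\otimes\mathcal{U}(\Xi)\subseteq R(G,c)^U=F(\B)$. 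Since $F(\B)\subseteq K\otimes\mathcal{U}(\Xi)$, all three algebras coincide, proving the theorem.

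The step I expect to be the main obstacle is the identification of the coefficient ring $\pi_0^*(K[G]^U)$ with a finite-rank sub-cluster-algebra of $F(\B)$: one must match, arrow by arrow, the base affine space quiver for the $c$-adapted reduced word with the corresponding window of $\Gamma$ (orientations being irrelevant for the cluster algebra), and one must genuinely use that the base affine space coordinate ring equals its cluster algebra rather than merely its upper cluster algebra. A secondary technical point is the passage from the finite-window Starfish estimates to the infinite-rank upper cluster algebra, which relies on the maximal-rank and cofinality bookkeeping already used in Corollary~\ref{cor:AU R(G,c)}.
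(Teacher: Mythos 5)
The decisive flaw is in your first inclusion. You reduce $R(G,c)^U\subseteq F(\B)$, correctly, to showing $\pi_0^*(K[G]^U)\subseteq F(\B)$, and you justify this by asserting that $K[G]^U$ \emph{equals} the genuine cluster algebra of the base affine space seed, citing \cite[\S 6.5]{FWZ} "in type $A$" and \cite{GLS} "in general". This is precisely the point that is not available: the reference \cite{GLS} of this paper is about factoriality of cluster algebras and contains no such statement, and what the paper actually has at its disposal (Theorem~\ref{thm: cluster GU}, deduced from \cite{Fe} and \cite{Fr1}) is only that $K[G]^U$ is the \emph{upper} cluster algebra of the adapted Berenstein--Fomin--Zelevinsky seed, with non-invertible coefficients. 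The remark following Corollary~\ref{cor: upper finite Uinv} states explicitly that upgrading Theorem~\ref{Thm.6.11} from the upper cluster algebra to the ordinary one --- which is exactly your claim $R(G,c)^U=F(\B)$ --- is only known in type $A$, where it recovers Theorem~\ref{Thm.3.4}; the paper also points out in \S\ref{subsec-8.4} that this discrepancy with the genuine cluster structure on $K_0(O^+_\Z)$ is deliberate. So, as written, your argument proves the theorem only for $G$ of type $A$. The repair is to abandon the detour through $F(\B)$: after identifying $\Xi_{0,0}$ with the pullback under $\pi_0$ of the adapted BFZ seed (which you do, and which the paper verifies arrow by arrow), invoke Theorem~\ref{thm: cluster GU} to get $\pi_0^*(K[G]^U)=K\otimes\mathcal{U}(\Xi_{0,0})\subseteq K\otimes\mathcal{U}(\Xi)$, the last inclusion resting on the maximal-rank Lemma~\ref{lem : ximn max rank} together with \cite{Q2}; combined with Proposition~\ref{Prop-6.5} and Corollary~\ref{Cor-6.4}, this yields $R(G,c)^U\subseteq K\otimes\mathcal{U}(\Xi)$ without ever asserting that $K[G]^U$ is a genuine cluster algebra. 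This is how the paper argues.

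Your second inclusion, $K\otimes\mathcal{U}(\Xi)\subseteq R(G,c)^U$, does follow the paper's route (Starfish lemma on finite windows, plus the maximal-rank and cofinality bookkeeping of Corollary~\ref{cor:AU R(G,c)}), but with a technical lapse: \cite[Corollary 6.4.6]{FWZ} requires the ambient ring to be a \emph{finitely generated} UFD, and $R(G,c)^U$ is not finitely generated, being a polynomial ring in infinitely many variables $\theta_i^{(s)}$ over $K[G]^U$. The paper instead applies the Starfish lemma inside the finitely generated rings $R(G,c,-N,N)^U$, whose factoriality comes from \cite[Theorem 3.17]{VP}, and uses the identity $R(G,c)\cap K(G,c,-N,N)=R(G,c,-N,N)$ to place the one-step mutated variables in the correct finite-window invariant ring; your finite sub-seeds $\Xi^{(M)}$ must be paired with these finite-window ambient rings in the same way. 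This second issue is routine to fix; the first one is not, within the results the paper relies on.
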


The proof of Theorem \ref{Thm.6.11} requires some preparation.
Because of Lemma \ref{lem: glueing formulas}, the cluster variables  of the seed $\Xi$ can be written as
\begin{equation}
    \label{eq:seed GU}
   F(x_{(i,r)})= \left\{ \begin{array}{ll}
      \De^{(r_i-1)}_{w_0(\varpi_i), \varpi_i}   &  \mbox{if} \quad 1 \leq r_i,\\[0.5em]
       \De^{(0)}_{c^{m_i+r_i-1}(\varpi_i), \varpi_i} & \mbox{if} \quad -m_i+1 \leq r_i \leq 0,\\[0.5em]
       \De^{(r_i-1+m_i)}_{\varpi_i, \varpi_i}  & \mbox{if} \quad r_i \leq -m_i.
   \end{array} 
\right.
\end{equation}
To any $(i,r) \in \mathcal{V}$, we attach the integer $s_{(i,r)}$ such that in the previous expression we have $F(x_{(i,r)})= \De^{(s_{(i,r)})}_{v(\varpi_i), w(\varpi_i)}$ for appropriate $v,w \in W.$
For $(M,N) \in \P_0$, let $\Xi_{M,N}$ be the seed obtained from $\Xi$ by erasing the vertices $(i,r)$ for which $s_{(i,r)} < N$ or $s_{(i,r)} > M$, along with the associated cluster variables.
Additionally, in the quiver of  $\Xi_{M,N}$, the vertices on the top and on the bottom of any column  are frozen. 
Let $\mathcal{U}(\Xi_{M,N})$ be the upper cluster algebra of the seed $\Xi_{M,N}$ with non-invertible frozen variables.

\begin{Lem}
    \label{lem : ximn max rank}
    Let $(M,N) \in \P_0$. The seed $\Xi_{M,N}$ is of maximal rank.
\end{Lem}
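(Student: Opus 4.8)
The plan is to prove maximal rank directly, by producing inside the exchange matrix $B_{\Xi_{M,N}}$ a square submatrix, of size equal to the number of mutable vertices, which is triangular with $\pm 1$ on the diagonal. This follows the same philosophy as the maximal rank argument for $\Sigma_{M,N}$ in the proof of Corollary~\ref{cor:AU R(G,c)}, but the homogeneity of the quiver $\Gamma$ lets one carry it out by a single explicit computation, with no base case and no appeal to double Bruhat cells.

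First I recall the local shape of $\Gamma$: since $G$ is simply-laced, the only arrows incident to a vertex $(j,r)$ are the vertical arrows $(j,r)\to(j,r+2)$ and $(j,r-2)\to(j,r)$, together with the diagonal arrows $(j,r)\to(k,r-1)$ and $(k,r+1)\to(j,r)$ for each $k$ adjacent to $j$ in the Dynkin diagram. In $\Xi_{M,N}$ every column is a finite vertical segment whose top and bottom vertices are frozen, and arrows joining two frozen vertices are deleted. To each mutable vertex $v=(j,r)$ I associate its vertical successor $t_v:=(j,r+2)$; this is again a vertex of $\Xi_{M,N}$ (either mutable, or the top frozen vertex of its column) because $v$ is not the topmost vertex of its column, and $v\mapsto t_v$ is injective. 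Selecting the columns of $B_{\Xi_{M,N}}$ indexed by the vertices $t_v$ therefore produces a square submatrix $B'$ whose size is exactly the number of mutable vertices.

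I would then order the mutable vertices by decreasing value of the second coordinate $r$ (breaking ties between distinct columns arbitrarily, but using the same rule for rows and for columns), and order the selected columns by the decreasing second coordinate of $t_v$; the shift $r\mapsto r+2$ makes the two orders match. The diagonal entry of $B'$ at $v$ is the multiplicity of the arrow $v\to t_v$, namely $+1$. For an off-diagonal entry $B'_{v,t_{v'}}$ to be nonzero, the vertex $t_{v'}=(j',r'+2)$ must be one of the four neighbours of $v$ listed above; running through these cases gives $r'\in\{r-4,\,r-3,\,r-1\}$, so in every case $r'<r$, while two distinct vertices with the same $r$ in different columns contribute no arrow at all. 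Hence every nonzero off-diagonal entry of $B'$ lies strictly above the diagonal, so $B'$ is triangular with invertible diagonal and thus nonsingular. Consequently $B_{\Xi_{M,N}}$ has full row rank and $\Xi_{M,N}$ is of maximal rank. The only point requiring care is this neighbour bookkeeping, together with the observation that truncating $\Gamma$ to a finite window and deleting arrows between frozen vertices can only erase entries of $B'$, and therefore cannot spoil its triangular shape.
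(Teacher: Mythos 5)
Your proof is correct and is essentially the paper's own argument: the paper likewise forms the square submatrix pairing each mutable vertex $(j,r)$ with its vertical successor $(j,r+2)$, orders rows and columns by (normalized) height, and observes the result is triangular with $\pm 1$ on the diagonal. The only differences are cosmetic (transposed convention for the exchange matrix, raw $r$ versus $r_i$ with tie-breaking by $\wt{\xi}_i$, upper versus lower triangular), so the two proofs coincide in substance.
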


\begin{proof}
Let $B$ denote the exchange matrix of the seed $\Xi_{M,N}$. 
 Recall that the rows  of $B$ are indexed by the  vertices of the quiver of $\Xi_{M,N}$, while its columns are indexed by the mutable vertices.
Let $D$ be the square submatrix of $B$ whose rows are indexed by the vertices $(i,r)$ such that $(i,r-2)$ is a mutable vertex of the quiver of $\Xi_{M,N}$.
The columns of $D$ are indexed by all the mutable vertices, as those of $B$.
Choose a total order $\leq$ on the index set of the rows of $D$ such that, for any two rows $(i,r)$ and $(j,s)$, the following two conditions are satisfied.
\begin{enumerate}
    \item If $r_i > s_j$, then  $(i,r) < (j,s)$. 
    \item If $r_i = s_j$ and $\wt \xi_i > \wt \xi_j$, then  $(i,r) < (j,s)$. 
\end{enumerate}
Moreover, order the columns of $D$ so that the column $(i,r)$ is smaller than the column $(j,s)$ if and only if the row $(i,r+2)$ is smaller than the row $(j,s+2)$. 
%Notice that the latter is considered as a condition on the rows of $D$.
Arranging rows and columns of the matrix $D$ according to these orders, we get a lower triangular matrix with diagonal entries equal to $-1$. 
Thus, the matrix $B$ (and hence the seed $\Xi_{M,N}$) is of maximal rank
    \cqfd
\end{proof}

Recall that by \cite{BFZ}, the coordinate ring $K[G^{w_0,e}]$ of the double Bruhat cell $G^{w_0,e}:= Bw_0B \cap B^-$ is an upper cluster algebra with invertible frozen variables 
\begin{equation}
    \De_{\varpi_i, \varpi_i}, \qquad \De_{w_0 \varpi_i, \varpi_i}, \qquad (i \in I).
\end{equation}
Notice that $G^{w_0,e}$ is isomorphic to the open subset of the variety $G/U$ given by the non-vanishing of the previous collection of generalised minors.
The next theorem can be deduced from the results of \cite{Fe} or \cite{Fr1}.

\begin{Thm}
\label{thm: cluster GU}
   The ring $K[G]^U$ has an upper cluster algebra structure given by the same initial seeds as $K[G^{w_0,e}]$, 
but in which the frozen variables are not invertible. 
\end{Thm}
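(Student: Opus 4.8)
The plan is to prove this as the $K[G]^U$-analogue of Oya's Theorem~\ref{thm: Oya}, deducing it from the structure theorems of \cite{Fe} and \cite{Fr1} together with an application of the Starfish lemma (Remark~\ref{rem:starfish lemma}) that runs parallel to the proof of Theorem~\ref{Thm4.10}. First I would fix the geometric picture. The scheme $\Spec(K[G]^U)$ is the affine closure of the basic affine space $G/U$, and $G^{w_0,e}$ is precisely its open subvariety on which the $2n$ frozen minors $\Delta_{\varpi_i,\varpi_i}$ and $\Delta_{w_0(\varpi_i),\varpi_i}\ (i\in I)$ are simultaneously nonzero; hence $K[G^{w_0,e}]$ is the localization of $K[G]^U$ at the product of these minors. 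Writing $\mathcal{U}$ for the upper cluster algebra attached to a fixed BFZ seed with the frozen variables \emph{not} inverted, and $\mathcal{U}^{\mathrm{loc}}$ for the corresponding upper cluster algebra with frozen variables inverted, the theorem of \cite{BFZ} gives $\mathcal{U}^{\mathrm{loc}}=K[G^{w_0,e}]$, and by construction $\mathcal{U}^{\mathrm{loc}}$ is the localization of $\mathcal{U}$ at the product of the frozens. The goal is then the equality $\mathcal{U}=K[G]^U$.

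For the inclusion $\mathcal{U}\subseteq K[G]^U$ I would invoke the Starfish lemma. This requires checking that $K[G]^U$ is a finitely generated unique factorization domain with $\left(K[G]^U\right)^\times=K^\times$ (finite generation because $U$ sits inside a reductive group acting on $K[G]$; constancy of units since $\left(K[G]^U\right)^\times\subseteq K[G]^\times=K^\times$; and the UFD property being standard for the basic affine space of a simply connected $G$), that the initial cluster variables lie in $K[G]^U$ (they are the flag minors $\Delta_{w(\varpi_i),\varpi_i}$, which are $U$-invariant by Lemma~\ref{Lem-6.1}), and that each one-step mutation of an initial cluster variable is again a regular $U$-invariant function, irreducible and coprime to the variable it replaces. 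Irreducibility and coprimality can be imported from \cite[Theorem 1.3]{GLS} exactly as in the proof of Theorem~\ref{Thm4.10}.

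The genuine obstacle is the remaining regularity point: a single exchange relation of the BFZ seed only exhibits the mutated variable as a function regular on the open cell $G^{w_0,e}$, and one must show it extends across the divisors $\{\Delta_{\varpi_i,\varpi_i}=0\}$ and $\{\Delta_{w_0(\varpi_i),\varpi_i}=0\}$ to a regular function on all of $\Spec(K[G]^U)$. This de-localization is the heart of the matter, and it is exactly what \cite{Fe} and \cite{Fr1} establish for the basic affine space; I would cite their computation of the mutated seeds, matching their initial data to the chamber-ansatz description of the BFZ seed used in Proposition~\ref{Prop:seed BFZ}, to conclude that the one-step mutants are regular. With these inputs the Starfish lemma yields $\mathcal{U}\subseteq K[G]^U$.

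For the reverse inclusion $K[G]^U\subseteq\mathcal{U}$ the most direct route is again to invoke \cite{Fe} or \cite{Fr1}, which already present $K[G]^U$ as the (upper) cluster algebra attached to this very seed, so that the inclusion, indeed the full equality, is immediate once the seeds are matched. Should one instead wish to argue from the localization, one sets $p:=\prod_{i\in I}\Delta_{\varpi_i,\varpi_i}\Delta_{w_0(\varpi_i),\varpi_i}$ and uses $\mathcal{U}[p^{-1}]=\mathcal{U}^{\mathrm{loc}}=K[G^{w_0,e}]=K[G]^U[p^{-1}]$, so any $f\in K[G]^U$ lies in $\mathcal{U}$ after clearing a power of $p$; since each frozen factor of $p$ is a prime element of the UFD $K[G]^U$ and $f$ is regular, its Laurent expansion in the initial cluster involves the frozens only with nonnegative powers, and the saturation property of upper cluster algebras then forces $f\in\mathcal{U}$. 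Either way the two inclusions combine to give $K[G]^U=\mathcal{U}$, and independence of the chosen BFZ seed follows from the mutation equivalence of all such seeds.
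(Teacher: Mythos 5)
Your primary route---invoke \cite{Fe} or \cite{Fr1} for the upper cluster structure, with non-invertible frozen variables, on the coordinate ring of the basic affine space, and then match that seed with a Berenstein--Fomin--Zelevinsky seed of $K[G^{w_0,e}]$---is exactly the paper's proof, and once you grant yourself that citation the rest of your machinery becomes redundant: the paper uses no Starfish lemma here, because the statement you cite already yields both inclusions at once. Concretely, the paper fixes a reduced word $\bi$ of $w_0$, takes the BFZ datum $\ba=(-n,\ldots,-1,-i_1,\ldots,-i_N)$ with its seed $t_\ba$ of $K[G^{w_0,e}]$, invokes \cite[Theorem 8.3.2]{Fr1}---which is stated for $K[U^-\backslash G]$, \emph{not} for $K[G/U]$---and transfers it to $K[G]^U$ via the transpose anti-automorphism $g\mapsto g^T$, using $\De_{u\varpi_i,v\varpi_i}(g)=\De_{v\varpi_i,u\varpi_i}(g^T)$; it then verifies that the transferred seed has precisely the cluster variables $\De_{s_{i_1}\cdots s_{i_k}(\varpi_{i_k}),\varpi_{i_k}}$ and $\De_{\varpi_j,\varpi_j}$ of $t_\ba$, with the same quiver up to a global reversal of arrows. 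You omit the transpose step, without which the citation of \cite{Fr1} does not literally apply to $K[G]^U$; that is a small but real bookkeeping gap. A more serious caution concerns your fallback argument for $K[G]^U\subseteq\mathcal{U}$: primality of the frozen minors in the UFD $K[G]^U$ does not imply that a regular function has nonnegative frozen exponents in its initial-seed Laurent expansion. The divisor $\{\De_{\varpi_i,\varpi_i}=0\}$ lies outside the cluster torus, so a negative frozen exponent produces no visible pole along it, and the ``saturation'' you invoke is essentially the statement being proved---the argument is circular as written. Since you present that only as an alternative, your main line of reasoning still agrees with the paper's.
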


\begin{proof}
   Let $\bi=(i_N,i_{N-1}, \dots ,i_1)$ be a reduced expression of $w_0$, and let $\ba$ be the sequence
    \[
 \ba := (a_j\mid j\in -[1,n]\cup [1,N]) := (-n,-n+1,\ldots,-1,-i_1,\ldots,-i_N).
\]
The sequence $\ba$ corresponds to a seed $t_\ba$ of the Berenstein-Fomin-Zelevinsky cluster structure of $K[G^{w_0,e}]$. 
Recall that the \textit{transpose map} $g \longmapsto g^T \ (g \in G)$ is an involutive anti-automorphism of $G$ defined by \cite[Eq.~2.1]{FZ}, which sends $U$ to $U^-$, and that  by \cite[Proposition 2.7]{FZ} satisfies the property
\begin{equation}
\label{eq:trans}
    \De_{u\varpi_i, v \varpi_i}(g)= \De_{v\varpi_i, u \varpi_i}(g^T), \qquad (i \in I, \  u,v \in W, \ g \in G).
\end{equation}
In \cite[Theorem 8.3.2]{Fr1}, a seed is attached to the reduced expression $\ii$ and it is proved that the ring $K[U^- \setminus G]$ is the upper cluster algebra with non-invertible frozen variables of this seed.
Therefore, by means of the isomorphism $U^- \setminus G \lra G/U$ induced by the transpose map, we can associate a seed $s_\bi$ to the reduced expression $\bi$ whose upper cluster algebra with non-invertible frozen variables is $K[G/U]=K[G]^U.$
Notice that, by  \cite[Eq.~38, Lemma 8.3.4]{Fr1} and Eq.(\ref{eq:trans}), the cluster variables of the seed $s_\ii$ are the elements
$$
\Delta_{s_{i_1}\cdots s_{i_k} (\varpi_{i_k}), \varpi_{i_k}}, \qquad \De_{\varpi_{j}, \varpi_j}, \qquad (1 \leq k \leq N, \ j \in I).
$$
Comparing with \cite[\S 2.3]{BFZ}, we see that these are exactly the cluster variables of the seed $t_\ba$.
Moreover, the quiver of $s_\bi$ is described by \cite[Equation~37]{Fr1} and \cite[Proposition 10.1.13]{Fr2}.
These formulas allow to verify that the quiver of $s_\ii$ is the same as the one $t_\ba$, up to a global change of the orientation of the arrows. 
\cqfd
\end{proof}

\medskip\noindent
\emph{Proof of Theorem \ref{Thm.6.11}.---}
Lemma \ref{lem : ximn max rank} and the same argument used in the proof of Corollary \ref{cor:AU R(G,c)} allow to show that
\begin{equation}
    \label{eq:upper limit}
    K \otimes \mathcal{U}(\Xi)= \bigcup_{N \in \Z_{\ge 0}} K \otimes \mathcal{U}(\Xi_{-N,N}).
\end{equation}
We start by proving that the upper cluster algebra $K \otimes \mathcal{U}(\Xi)$ is contained in $R(G,c)^U$.
Because of Eq.(\ref{eq:upper limit}), it is sufficient to prove that for any $N \in \Z_{\ge 0}$ we have that $K \otimes \mathcal{U}(\Xi_{-N,N})$ is contained in $R(G,c,-N,N)^U.$ 
This inclusion  will follow from the Starfish lemma.
Thus, we now verify that all the hypothesis needed to apply \cite[Corollary 6.4.6]{FWZ} are satisfied (recall also Remark \ref{rem:starfish lemma}).

First, notice that the algebraic group $U$ is connected and its group of characters is trivial. 
Therefore, since the ring $R(G,c,-N,N)$ is a unique factorization domain and it is a  $K$-algebra of finite type by Corollary \ref{cor: geometric prop of finite bands}, then \cite[Theorem 3.17]{VP} implies that $R(G,c,-N,N)^U$ is also a unique factorization domain.
Moreover, the algebra  $R(G,c,-N,N)^U$ is isomorphic to a polynomial ring in finitely many variables with coefficients in $K[G]^U$.
It is well known that the algebra $K[G]^U$ is of finite type.
Thus, $R(G,c,-N,N)^U$ is also of finite type.
Then, it is clear that $\bigl(R(G,c,-N,N)^U\bigr)^\times = R(G,c,-N,N)^\times = K^\times.$
Next, notice that the initial cluster variables and the ones obtained after a one step mutation belong to $R(G,c)^U$ because of Proposition \ref{Prop-6.2}. 
Moreover, it is clear that the initial cluster variables belong to $R(G,c,-N,N)^U$. 
Hence,  the cluster variables obtained after a one step mutation belong to $R(G, c)^U \cap K(G,c,-N,N)$, where we recall that $K(G,c,-N,N)$ denotes the field of fractions of the ring $R(G,c,-N,N)$.
But using Corollary \ref{cor: bands product} one can easily deduce that $R(G,c) \cap K(G,c,-N,N)= R(G,c,-N,N).$
Therefore, the cluster variables obtained after a one step mutation belong to $R(G,c,-N,N)^U.$
Then, let $x$ be either an initial cluster variable or a cluster variable obtained after a one step mutation. 
We want to prove that $x$ is an irreducible element of $R(G,c,-N,N)^U.$
For this, it is sufficient to show that $x$ is irreducible in  $R(G,c,-N,N)$.
Because of the proof of Proposition \ref{Prop-6.2}, we know that $x$ is a cluster variable of the cluster structure of $R(G,c)$.
Hence, there exists some $N' \geq N$ such that $x$ is a cluster variable of the cluster structure of $R(G,c,-N',N')$. 
Thus, \cite[Theorem 1.3]{GLS} implies that $x$ is irreducible in $R(G,c,-N',N')$. 
But since $R(G,c,-N,N)^\times = R(G,c,-N',N')^\times$ and $x$ belongs to $R(G,c,-N,N)$, it follows that $x$ is irreducible in $R(G,c,-N,N)$.

All the hypothesis needed to apply \cite[Corollary 6.4.6]{FWZ} have been verified. 
It follows that the upper cluster algebra $K \otimes \mathcal{U}(\Xi_{-N,N})$ is contained in $R(G,c,-N,N)^U$ and that $K \otimes \mathcal{U}(\Xi)$ is contained in $R(G,c)^U$.

Next, we claim that $\Xi_{0,0}$ 
is the pullback under the morphism $\pi_0$ of a seed of the Berenstein-Fomin-Zelevinsky cluster structure of $K[G^{w_0,e}]$ (up to the orientation of the arrows of the corresponding quivers).
Indeed, let  $\bi=(i_1,i_2, \dots, i_N)$ be a reduced expression of $w_0$ which is adapted to $c$ (in the sense of \S\ref{subsubsec:w_{i,k}}). Consider the sequence
  \[
 \ba = (a_j\mid j\in -[1,n]\cup [1,N]) := (-n,-n+1,\ldots,-1,-i_1,\ldots,-i_N),
\]
and the induced seed $t_\ba$ of the upper cluster algebra structure of $K[G^{w_0,e}]$.
Because of Lemma \ref{Lem2-4}, the cluster variables of this seed are:
$$
\{ \De_{c^k(\varpi_i), \varpi_i} \mid i \in I, \ 0 \leq k \leq m_i\}.
$$
Thus, Eq.(\ref{eq:seed GU}) implies that the pullback of the cluster variables of the seed $t_\ba$ are the cluster variables of the seed~$\Xi_{0,0}$.
Moreover, frozen variables pullback to frozen variables.
Finally, one can readily check that by reversing all the arrows of the iced-quiver of the seed $t_\ba$ we obtain the iced-quiver of the seed~$\Xi_{0,0}$.
Therefore, Theorem \ref{thm: cluster GU} implies that $K \otimes \mathcal{U}(\Xi_{0,0})=\pi_0^*(K[G]^U)$.

The last assertion together with Proposition \ref{Prop-6.5}, Corollary \ref{Cor-6.4} and Eq.~(\ref{eq:upper limit}) imply that $R(G,c)^U $ is contained in $K \otimes \mathcal{U}(\Xi)$.
\cqfd

The proof of Theorem \ref{Thm.6.11}, with only minor modifications, also establishes the following result.

\begin{Cor}
\label{cor: upper finite Uinv}
    Let $(M,N) \in \P_0$. The $K$-algebra $R(G,c,M,N)^U$ is equal to $K\otimes \mathcal{U}(\Xi_{M,N})$. 
\end{Cor}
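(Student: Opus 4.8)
\medskip\noindent
The plan is to repeat the proof of Theorem~\ref{Thm.6.11} at a single fixed window $[M,N]$, omitting the colimit passage (Eq.~(\ref{eq:upper limit})) that was used there to pass from the truncations to the full seed. Thus I must establish the two inclusions $K\otimes\mathcal{U}(\Xi_{M,N})\subseteq R(G,c,M,N)^U$ and $R(G,c,M,N)^U\subseteq K\otimes\mathcal{U}(\Xi_{M,N})$.

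The first inclusion was in fact already carried out inside the proof of Theorem~\ref{Thm.6.11} for the symmetric window $(-N,N)$, and the argument does not use the symmetry of the window. I would invoke the Starfish lemma \cite[Corollary 6.4.6]{FWZ} (with the proviso of Remark~\ref{rem:starfish lemma}) after checking its hypotheses for the ground ring $R(G,c,M,N)^U$: it is a finitely generated unique factorization domain with $R(G,c,M,N)^{U,\times}=K^\times$, using Corollary~\ref{cor: geometric prop of finite bands}, the finite generation of $K[G]^U$, and \cite[Theorem 3.17]{VP} applied to the connected, character-free group $U$ acting on the finitely generated UFD $R(G,c,M,N)$. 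The initial cluster variables of $\Xi_{M,N}$ and those obtained by one mutation belong to $R(G,c,M,N)^U$ by Proposition~\ref{Prop-6.2} combined with the identity $R(G,c)\cap K(G,c,M,N)=R(G,c,M,N)$ coming from Corollary~\ref{cor: bands product}; and each such variable, being a cluster variable of the full structure on $R(G,c)$, is irreducible in $R(G,c,-N',N')$ for $N'$ large by \cite[Theorem 1.3]{GLS}, hence irreducible in $R(G,c,M,N)$ and a fortiori in $R(G,c,M,N)^U$, since these rings share the units $K^\times$.

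For the reverse inclusion I would first record the finite analogue of Corollary~\ref{Cor-6.4}, obtained verbatim from the isomorphism $\tau_{M,N}$ of Lemma~\ref{lem: partial bands aff space} (which privileges the zero component, see Remark~\ref{rem: zero compo tau}): the algebra $R(G,c,M,N)^U$ is the polynomial ring in the variables $\theta_i^{(s)}$ $(i\in I,\ M\le s\le N-1)$ with coefficients in $\pi_0^*(K[G]^U)$. It is therefore enough to place these generators inside $K\otimes\mathcal{U}(\Xi_{M,N})$. The coefficient ring is handled exactly as in Theorem~\ref{Thm.6.11}: by Eq.~(\ref{eq:seed GU}) the seed $\Xi_{0,0}$ is the $\pi_0$-pullback of a Berenstein--Fomin--Zelevinsky seed of $K[G^{w_0,e}]$, so Theorem~\ref{thm: cluster GU} gives $K\otimes\mathcal{U}(\Xi_{0,0})=\pi_0^*(K[G]^U)$; since $(0,0)\le(M,N)$ in $\P_0$ and all these seeds are of maximal rank by Lemma~\ref{lem : ximn max rank}, the comparison of upper cluster algebras \cite[Proposition 2.22]{Q2} gives $K\otimes\mathcal{U}(\Xi_{0,0})\subseteq K\otimes\mathcal{U}(\Xi_{M,N})$. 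For the variables $\theta_i^{(s)}$ I would use Proposition~\ref{Prop-6.5}, whose proof realizes $\theta_i^{(0)}$, up to a scalar, as a cluster variable obtained from the window seed $\B^{(0)}\subseteq R(G,c,0,1)^U$ by an explicit finite mutation sequence; translating by Proposition~\ref{prop translation} realizes each $\theta_i^{(s)}$ through the analogous sequence living in the window $[s,s+1]$. As $[s,s+1]\subseteq[M,N]$ for $M\le s\le N-1$ and the arrows of $\Gamma$ only join vertices in neighbouring rows, the whole mutation sequence takes place among mutable vertices of the truncated quiver of $\Xi_{M,N}$, so $\theta_i^{(s)}\in K\otimes\mathcal{U}(\Xi_{M,N})$.

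The steps that genuinely require care, and which I regard as the main obstacle, are the two window-compatibility points: that $R(G,c,M,N)^U$ is a UFD with constant units (so that the Starfish lemma applies), and that truncating the cluster structure to $[M,N]$ preserves both the exchange relations producing $\pi_0^*(K[G]^U)$ and the mutation sequences producing the $\theta_i^{(s)}$, together with the nesting $K\otimes\mathcal{U}(\Xi_{0,0})\subseteq K\otimes\mathcal{U}(\Xi_{M,N})$. Each of these rests on the maximal-rank statement of Lemma~\ref{lem : ximn max rank} and on Qin's comparison result, so no difficulty beyond the infinite-rank case of Theorem~\ref{Thm.6.11} should arise.
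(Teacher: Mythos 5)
Your proposal is correct and follows essentially the same route as the paper: the paper's proof of Corollary~\ref{cor: upper finite Uinv} consists of the single remark that the proof of Theorem~\ref{Thm.6.11} goes through with minor modifications, and your argument is precisely that proof carried out at a fixed window $[M,N]$ (the Starfish inclusion $K\otimes\mathcal{U}(\Xi_{M,N})\subseteq R(G,c,M,N)^U$ with the hypotheses checked via \cite[Theorem 3.17]{VP}, \cite[Theorem 1.3]{GLS} and Corollary~\ref{cor: bands product}; the finite analogue of Corollary~\ref{Cor-6.4}; the identification $K\otimes\mathcal{U}(\Xi_{0,0})=\pi_0^*(K[G]^U)$ together with the nesting of upper cluster algebras coming from maximal rank and \cite[Proposition 2.22]{Q2}; and the localization of the mutation sequence of Proposition~\ref{Prop-6.5} to windows $[s,s+1]\subseteq[M,N]$). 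All of these ingredients are exactly the ones the paper deploys in proving Theorem~\ref{Thm.6.11}, so your write-up amounts to spelling out the ``minor modifications'' the paper alludes to.
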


\begin{remark}
{\rm
If $G$ is of type $A$, it is well known that $K[G]^U$ is equal to the genuine cluster algebra with the same seed.
So in that case Theorem~\ref{Thm.6.11} can be improved by replacing the upper cluster algebra with initial seed $\Xi$
by the ordinary cluster algebra with the same initial seed. In other words, in that case we have proved that $R(G,c)^U = F(\B)$, as stated in Theorem~\ref{Thm.3.4}.
The same remark holds for the statement of Corollary \ref{cor: upper finite Uinv}.
}
\end{remark}

\begin{remark}
{\rm
Proposition~\ref{Prop-6.5} states that $\theta^{(s)}_i$ is a scalar multiple of a cluster variable.
We will show in Proposition~\ref{Prop.8.2} that this scalar factor is in fact equal to 1.
More generally, Proposition~\ref{Prop.8.2} shows that the function $\theta^{(s)}_{i,k}$ defined in the next Section is a cluster variable
for every $k\ge 1$.
}
\end{remark}

\section{$G$-invariants}
\label{sec_G_inv}

In this section we study the subalgebra $R(G,c)^G$ of $G$-invariant regular functions on $B(G,c)$.

\subsection{Functional relations}

By Eq.(\ref{Eq.27}), we have $R(G,c)^G = K[\theta^{(s)}_i\mid i\in I,\ s\in\Z]$, where
the functions $\theta^{(s)}_i$ are defined at the level of points by
\[
 \theta^{(s)}_i((g(t))_{t\in\Z}) = \Delta_{\varpi_i,\varpi_i}(g(s)g(s+1)^{-1}),\qquad (i\in I,\ s\in\Z).
\]
More generally the functions $\theta^{(s)}_{i,\,k}$ defined by
\[
 \theta^{(s)}_{i,\,k}((g(t))_{t\in\Z}) = \Delta_{\varpi_i,\varpi_i}(g(s)g(s+k)^{-1}),\qquad (i\in I,\ s\in\Z,\ k\ge 1)
\]
obviously belong to $R(G,c)^G$.

\begin{Prop}\label{Prop.7.1}
The functions $\theta^{(s)}_{i,\,k}$ satisfy the system of functional relations
\[
\theta^{(s)}_{i,\,k}\theta^{(s+1)}_{i,\,k} =
\theta^{(s)}_{i,\,k+1}\theta^{(s+1)}_{i,k-1} + \prod_{j:\ c_{ij}=-1} \theta^{(s+a_{ij})}_{j,k},
\qquad
(i\in I,\ s\in \Z,\ k\ge 1),
\]
where $\theta^{(s)}_{i,\,0} = 1$, and the integers $a_{ij}$ were defined in Eq.~(\ref{Eq.25}). 
\end{Prop}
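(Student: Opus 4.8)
The plan is to deduce the relation from a single instance of the Fomin--Zelevinsky generalized minor identity \cite[Theorem~1.17]{FZ}, in the same spirit as the proof of Proposition~\ref{prop translation}. By the reduction-to-points principle recorded after Proposition~\ref{lem: glueing formulas}, it suffices to verify the relation at an arbitrary $K$-rational band $b=(g(t))_{t\in\Z}$. Fix $i,s,k$ and abbreviate $a(t):=g(t)g(t+1)^{-1}\in U(c^{-1})\oc$, so that $g(t)g(t+k)^{-1}=a(t)\cdots a(t+k-1)$. The key idea is to set $g:=g(s)\,g(s+k+1)^{-1}$ and to rewrite all eight functions occurring in the relation as generalized minors of this \emph{single} element $g$.

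First I would record the two factorizations of each increment: writing $a(t)=u_t\oc$ with $u_t\in U(c^{-1})\subseteq U$, one also has $a(t)=\oc\,\eta_t$ with $\eta_t:=\oc^{-1}u_t\oc\in U^-$, since $U(c^{-1})=U\cap cU^-c^{-1}$. Using the left-$U^-$ and right-$U$ invariance of $\Delta_{\varpi_j,\varpi_j}$ together with $\Delta_{u\varpi_j,v\varpi_j}(h)=\Delta_{\varpi_j,\varpi_j}(\overline u^{-1}h\,\overline v)$, left multiplication by $a(s)^{-1}=\eta_s^{-1}\oc^{-1}$ strips the $U^-$-factor and contributes a weight $c\varpi_j$, while right multiplication by $a(s+k)^{-1}=\oc^{-1}u_{s+k}^{-1}$ strips the $U$-factor and contributes a weight $c^{-1}\varpi_j$. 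Concretely this yields
\[
\theta^{(s)}_{i,k+1}=\Delta_{\varpi_i,\varpi_i}(g),\quad
\theta^{(s+1)}_{i,k}=\Delta_{c\varpi_i,\varpi_i}(g),\quad
\theta^{(s)}_{i,k}=\Delta_{\varpi_i,c^{-1}\varpi_i}(g),\quad
\theta^{(s+1)}_{i,k-1}=\Delta_{c\varpi_i,c^{-1}\varpi_i}(g),
\]
and likewise $\theta^{(s+a_{ij})}_{j,k}=\Delta_{c^{a_{ij}}\varpi_j,\,c^{a_{ij}-1}\varpi_j}(g)$ for the neighbours $j$ of $i$.

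Next I would invoke the minor identity. Let $c=x\,s_i\,z$ be the factorization of the reduced word for $c$ around its unique letter $s_i$, so that $x\varpi_i=z^{-1}\varpi_i=\varpi_i$, $c\varpi_i=xs_i\varpi_i$ and $c^{-1}\varpi_i=z^{-1}s_i\varpi_i$. Applying \cite[Theorem~1.17]{FZ} with $u=x$, $v=z^{-1}$ and the index $i$ reproduces exactly the four leading minors above, together with the correction term $\prod_{j:\,c_{ij}=-1}\Delta_{x\varpi_j,z^{-1}\varpi_j}(g)$; the length-additivity hypotheses $\ell(xs_i)=\ell(x)+1$ and $\ell(z^{-1}s_i)=\ell(z^{-1})+1$ hold because $xs_i$ (\resp $z^{-1}s_i$) is a prefix of a reduced word for $c$ (\resp $c^{-1}$). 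A short computation then identifies $x\varpi_j$ with $c^{a_{ij}}\varpi_j$ and $z^{-1}\varpi_j$ with $c^{a_{ij}-1}\varpi_j$, the shift $a_{ij}=\delta(\xi_j>\xi_i)$ of Eq.~(\ref{Eq.25}) recording whether $s_j$ stands to the left or to the right of $s_i$ in $c$; this matches the correction term with $\prod_j\theta^{(s+a_{ij})}_{j,k}$.

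The main obstacle is the bookkeeping of the lifts: the reductions above produce $\oc^{-1}$ on the right where the definition of $\Delta_{\varpi_i,c^{-1}\varpi_i}$ uses the \emph{standard} lift $\overline{c^{-1}}$, and these differ by a torus element assembled from the relations $\overline{s_j}^{\,2}\in T$. Each such discrepancy contributes a sign $\varpi_\bullet(t)=\pm1$, and one must check that these signs combine so that the correction term enters with coefficient exactly $+1$. This step is genuinely delicate and not merely cosmetic: read directly, \cite[Theorem~1.17]{FZ} places the product on the \emph{opposite} side of the four-term identity, so it is precisely these torus signs that must flip it into the asserted form. A reassuring consistency check is Lemma~\ref{Lem3.5}: for $G=SL(3)$, $i=1$, $c=s_1s_2$ one has a single neighbour $j=2$ with $a_{12}=0$, and the relation collapses to $\theta^{(0)}_{1,2}=\theta^{(0)}_{1,1}\theta^{(1)}_{1,1}-\theta^{(0)}_{2,1}$, in agreement with the Pl\"ucker computation given there.
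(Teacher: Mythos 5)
Your overall strategy (reduce to $K$-points, rewrite all the functions as generalized minors of a single group element, invoke \cite[Theorem 1.17]{FZ}) is the same as the paper's, and your term-by-term identifications are correct up to the lift discrepancy you yourself flag. But that discrepancy is not a loose end to be tidied up later --- it is the entire content of the step, and your proposal leaves it unresolved. Concretely: with your pivot $g = g(s)g(s+k+1)^{-1}$ and your choice $u=x$, $v=z^{-1}$, Theorem 1.17 reads
\[
\Delta_{\varpi_i,\varpi_i}(g)\,\Delta_{c\varpi_i,\,c^{-1}\varpi_i}(g)
= \Delta_{c\varpi_i,\varpi_i}(g)\,\Delta_{\varpi_i,\,c^{-1}\varpi_i}(g)
+ \prod_{j:\,c_{ij}=-1}\Delta_{x\varpi_j,\,z^{-1}\varpi_j}(g).
\]
Every minor whose right weight is $c^{-1}\varpi_\bullet$ differs from the corresponding $\theta$-function by the sign $\varpi_\bullet(t_c)$, where $t_c := \oc\,\overline{c^{-1}}\in T$ (a product of $\alpha^\vee(-1)$ over the inversions of $c^{-1}$), while the minors with right weight $\varpi_\bullet$ match exactly. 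Carrying these signs through, the identity becomes
\[
\theta^{(s)}_{i,k+1}\theta^{(s+1)}_{i,k-1}
= \theta^{(s)}_{i,k}\theta^{(s+1)}_{i,k}
+ \varpi_i(t_c)\Bigl(\prod_{j:\ c_{ij}=-1,\ a_{ij}=0}\varpi_j(t_c)\Bigr)\prod_{j:\,c_{ij}=-1}\theta^{(s+a_{ij})}_{j,k},
\]
so the statement holds if and only if $\varpi_i(t_c)\prod_{j:\ c_{ij}=-1,\ a_{ij}=0}\varpi_j(t_c) = -1$ for every $i\in I$ and every Coxeter element $c$. You assert that the signs "must flip" the identity into the asserted form, but you give no argument; this parity claim about pairings of fundamental weights against $\rho^\vee - c\rho^\vee$ is exactly what needs proving, and your only evidence is one $SL(3)$ consistency check. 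As written, the proof is incomplete.

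The gap is avoidable, and this is what the paper does differently: take the pivot $h_{s,k} := g(s)g(s+k+1)^{-1}\oc^{-1}$ (i.e.\ absorb one copy of $\oc^{-1}$ into the group element) and apply Theorem 1.17 with $u=v=x$. Then the same two reduction moves you describe identify all the relevant functions with minors of $h_{s,k}$ whose weights lie in $\{\varpi_\bullet,\ c\varpi_\bullet\}$ only --- never $c^{-1}\varpi_\bullet$. These identifications are exact, because a left weight $c\varpi_\bullet$ inserts $\oc^{-1}$ (the inverse of the standard lift, which is what the band relation produces) and a right weight $c\varpi_\bullet$ inserts $\oc$ itself; no comparison between $\oc^{-1}$ and $\overline{c^{-1}}$ ever arises. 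With this configuration the FZ identity lands directly in the asserted form, with the product term on the correct side and coefficient $+1$. So either supply the missing sign lemma, or shift the pivot by $\oc^{-1}$ and the problem disappears.
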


\begin{proof}
Let $b=(g(t))_{t\in\Z}\in B(G,c)$.
By definition of $B(G,c)$, for every $s\in\Z$ there exists $u_s\in U(c^{-1})$ such that $g(s) = u_s\overline{c}g(s+1)$. 
By definition of $U(c^{-1})$, we can write $u_s\overline{c} = \overline{c}u'_s$ where
$u'_s\in U^-$. It follows that
\begin{eqnarray*}
 \Delta_{c(\varpi_i),\varpi_i}(g(s)g(s+k)^{-1}) &=& 
 \Delta_{c(\varpi_i),\varpi_i}(\overline{c}u'_sg(s+1)g(s+k)^{-1}) \\
 &=&
 \Delta_{\varpi_i,\varpi_i}(\overline{c}^{-1}\overline{c}u'_sg(s+1)g(s+k)^{-1}) \\
 &=&
 \Delta_{\varpi_i,\varpi_i}(u'_sg(s+1)g(s+k)^{-1}) \\
 &=&
 \Delta_{\varpi_i,\varpi_i}(g(s+1)g(s+k)^{-1}).
\end{eqnarray*}
Here the second equality follows from the definition of the generalized minors, and the last 
one comes from the fact that $u'_s\in U^-$. 

Similarly, we have that
\begin{eqnarray*}
 \Delta_{\varpi_i,\varpi_i}(g(s)g(s+k-1)^{-1}) &=&
 \Delta_{\varpi_i,\varpi_i}(g(s)g(s+k)^{-1}\overline{c}^{-1}u_{s+k-1}^{-1}) \\
 &=&
 \Delta_{\varpi_i,\varpi_i}(g(s)g(s+k)^{-1}\overline{c}^{-1}) 
\end{eqnarray*}
where the second equality follows from the fact that $u_{s+k-1}^{-1}\in U$.
Using these two relations, and noting that
\[
\Delta_{c(\varpi_i),\varpi_i}(g(s)g(s+k)^{-1}) = 
\Delta_{c(\varpi_i),\varpi_i}(g(s)g(s+k)^{-1}\overline{c}^{-1}\overline{c}) =
\Delta_{c(\varpi_i),c(\varpi_i)}(g(s)g(s+k)^{-1}\overline{c}^{-1})
\]
we can rewrite:
\begin{eqnarray*}
\theta^{(s)}_{i,\,k}(b) &=& \Delta_{\varpi_i,\varpi_i}(h_{s,k}),\\
\theta^{(s+1)}_{i,\,k}(b) &=& \Delta_{c(\varpi_i),c(\varpi_i)}(h_{s,k}),\\
\theta^{(s)}_{i,\,k+1}(b) &=& \Delta_{\varpi_i,c(\varpi_i)}(h_{s,k}),\\
\theta^{(s+1)}_{i,\,k-1}(b) &=& \Delta_{c(\varpi_i),\varpi_i}(h_{s,k}),\\
\theta^{(s)}_{j,\,k}(b) &=& \Delta_{\varpi_j,\varpi_j}(h_{s,k}),\\
\theta^{(s+1)}_{j,\,k}(b) &=& \Delta_{c(\varpi_j),c(\varpi_j)}(h_{s,k}),
\end{eqnarray*}
where $h_{s,k}:=g(s)g(s+k+1)^{-1}\overline{c}^{-1}$.
Let us write a reduced decomposition of $c$ in the form $c=xs_iy$, where $x$ and $y$ are products of
Coxeter generators $s_j \not = s_i$. Then $x(\varpi_i) = \varpi_i$ and $xs_i(\varpi_i) = c(\varpi_i)$.
Moreover $x(\varpi_j) = c(\varpi_j)$ if $s_j$ is a factor of $x$, and otherwise $x(\varpi_j) = \varpi_j$.
That is, $x(\varpi_j) = c(\varpi_j)$ if $a_{ij}=1$ and $x(\varpi_j) = \varpi_j$ if $a_{ij}=0$.
Now, by \cite[Theorem 1.17]{FZ} (where we take $u=v=x$), we have the following  identity of elements of $K[G]$:
\[
 \Delta_{\varpi_i,\,\varpi_i}\Delta_{c(\varpi_i),\,c(\varpi_i)} =
 \Delta_{\varpi_i,\,c(\varpi_i)}\Delta_{c(\varpi_i),\,\varpi_i} + 
 \prod_{j:\ c_{ij}=-1,\ a_{ij} = 0 }\Delta_{\varpi_j,\,\varpi_j}
 \prod_{j:\ c_{ij}=-1,\ a_{ij} = 1 }\Delta_{c(\varpi_j),\,c(\varpi_j)}.
\]
Evaluating this identity on the element $h_{s,k}$ of $G$ yields the desired equation
\[
\theta^{(s)}_{i,\,k}(b)\theta^{(s+1)}_{i,\,k}(b) =
\theta^{(s)}_{i,\,k+1}(b)\theta^{(s+1)}_{i,k-1}(b) + \prod_{j:\ c_{ij}=-1} \theta^{(s+a_{ij})}_{j,k}(b),
\qquad (b\in B(G,c)).
\]
\cqfd
\end{proof}

\subsection{Cluster structure}

Proposition~\ref{Prop.7.1} will allow us to endow $R(G,c)^G$ with the structure of a cluster algebra.

To do this, we first introduce the following quiver $\Lambda$.
The vertex set of $\Lambda$ is $\WW = I\times \Z_{>0}$. 
There is an arrow between two vertices $(i,r)$ and $(j,s)$ if and only if one of the two following
conditions is satisfied:
\begin{itemize}
 \item[(i)] $i = j$ and $|r-s| = 1$, or
 \item[(ii)] $r=s$ and $c_{ij} = -1$.
\end{itemize}
The orientation of these arrows is fixed by the following rules: 
\begin{itemize}
 \item[(iii)] the vertical subquivers $\Lambda_i$ with vertex set $\{(i,s) \mid s>0\}$ are in sink-source orientation,
 \item[(iv)] the horizontal subquivers $\Lambda^{(s)}$ with vertex set $\{(i,s)\mid i\in I\}$ are in sink-source orientation,
 \item[(v)] if $c_{ij}=-1$ the square supported on vertices $(i,s)$, $(i,s+1)$, $(j,s)$, $(j,s+1)$, is an oriented $4$-cycle:
 \[
\mbox{either}\qquad 
  \begin{array}{ccc}
   (i,s)&\rightarrow &(j,s)\\
   \uparrow&&\downarrow\\
   (i,s+1)&\leftarrow &(j,s+1)
  \end{array}
\qquad\mbox{or}\qquad
  \begin{array}{ccc}
   (i,s)&\leftarrow &(j,s)\\
   \downarrow&&\uparrow\\
   (i,s+1)&\rightarrow &(j,s+1)
  \end{array}
  \]
\end{itemize}
This quiver $\Lambda$ is sometimes called the product of a sink-source Dynkin quiver of type $A_\infty$ by 
a sink-source Dynkin quiver of the same type as $G$. 

Strictly speaking, there are two quivers $\Lambda^+$
and $\Lambda^-$ satisfying the above rules. They are related to each other by a change of orientation of all arrows.
Let us pick one of the two, say $\Lam:=\Lambda^+$, and let us attach to every vertex $(i,s)$ of $\Lambda$ the function
$\theta^{(n(i,s))}_{i,s}\in R(G,c)^G$, where the integer $n(i,s)$ is uniquely determined by the following rules:
\begin{itemize}
 \item[(vi)] choose $i\in I$ and set $n(i,1)=0$,
 \item[(vii)] if a horizontal arrow $(j,s) \to (k,s)$ has the same orientation as the corresponding arrow between $j$ and $k$ in the quiver $Q$ associated with $c$ as in \S\ref{sec_cluster_bands}, then $n(k,s) = n(j,s)-1$, otherwise $n(k,s) = n(j,s)$,
 \item[(viii)] if a vertical arrow between $(j,s)$ and $(j,s+1)$ points towards $(j,s+1)$, then $n(j,s+1) = n(j,s)$,
 otherwise $n(j,s+1) = n(j,s) -1$.
\end{itemize}
We denote by $\Theta$ the quiver $\Lambda$ with its labelling by the functions $\theta^{(n(i,s))}_{i,s}$. 
Observe that, while the choice of $c$ plays a role in Rule (vii) of the labelling, the definition of $\Gamma$ is independent of the Coxeter element $c$.

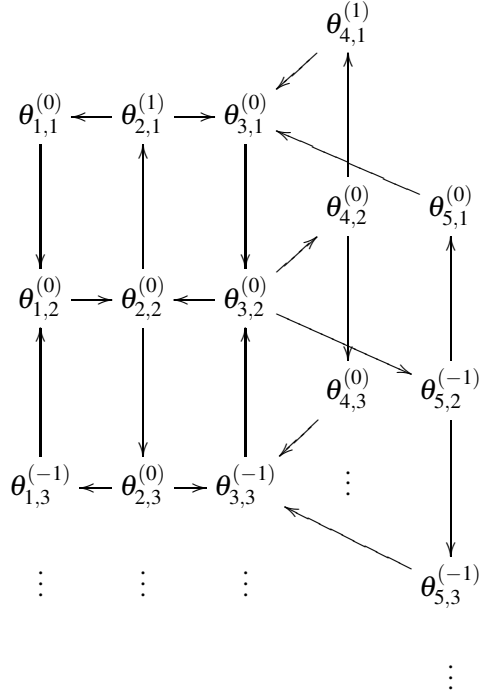
\begin{figure}[t]
\[
\def\objectstyle{\small}
\def\lablestyle{\small}
\xymatrix@-1.0pc{
 &&&\ar[ld]\theta^{(1)}_{4,1}
\\
\theta^{(0)}_{1,1}\ar[dd]& \ar[l]\theta^{(1)}_{2,1}\ar[r] &\theta^{(0)}_{3,1}\ar[dd]
\\
&&&\theta^{(0)}_{4,2}\ar[uu]\ar[dd]&\theta^{(0)}_{5,1}\ar[llu]
\\
\theta^{(0)}_{1,2}\ar[r]& \theta^{(0)}_{2,2}\ar[uu]\ar[dd] &\ar[l]\theta^{(0)}_{3,2}\ar[ur]\ar[rrd]
\\
&&&\ar[ld]\theta^{(0)}_{4,3}&\theta^{(-1)}_{5,2}\ar[uu]\ar[dd]
\\
\theta^{(-1)}_{1,3}\ar[uu]& \ar[l]\theta^{(0)}_{2,3}\ar[r] &\ar[uu]\theta^{(-1)}_{3,3}&\vdots
\\
\vdots&\vdots&\vdots&&\theta^{(-1)}_{5,3}\ar[llu]
\\
&&&&\vdots
}
\]
\caption{\label{Fig9} {\it The first 3 layers of the initial seed $\Theta$ in type $D_5$ for $c=s_2s_4s_1s_3s_5$.}}
\end{figure}
 
\begin{example}
{\rm
Let $G$ be of type $D_5$. Take $ c = s_2s_4s_1s_3s_5$, so that the corresponding quiver $Q$ is
\[
\def\objectstyle{\scriptstyle}
\def\lablestyle{\scriptstyle}
\xymatrix@-0.8pc{
&&&\ar[ld]4
\\
1&\ar[l]2\ar[r]&3\ar[rd]
\\
&&&5
}
\] 
Pick $i=1$. The corresponding labelled quiver $\Theta$ is displayed in Figure~\ref{Fig9}.
}
\end{example}

\begin{Thm}\label{Thm.7.3}
The ring $R(G,c)^G$ of $G$-invariant functions on $B(G,c)$ has a cluster algebra structure with an initial seed given by $\Theta$. 
\end{Thm}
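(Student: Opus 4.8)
The plan is to identify the Fomin--Zelevinsky exchange relations of the seed $\Theta$ with the functional relations of Proposition~\ref{Prop.7.1}, and then to run the limit-plus-Starfish argument already used for Theorems~\ref{Thm4.4} and~\ref{Thm.6.11}. The key point, which I expect to be the main obstacle, is a single one-step mutation computation carried out with care. Since $\Lambda$ is the product of two sink--source orientations, rule~(v) forces every vertex $(i,k)$ to be a source in one of the two directions and a sink in the other, with no further arrows; hence the neighbours of $(i,k)$ split as the vertical pair $\{(i,k-1),(i,k+1)\}$ on one side and the horizontal set $\{(j,k)\mid c_{ij}=-1\}$ on the other. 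Suppose $(i,k)$ is a vertical source and write $s:=n(i,k)$. Rule~(viii) then gives $n(i,k+1)=s$ and $n(i,k-1)=s+1$, so the vertical product reads $\theta^{(s)}_{i,k+1}\theta^{(s+1)}_{i,k-1}$; rule~(vii), combined with the definition $a_{ij}=\delta(\xi_j>\xi_i)$ of Eq.~(\ref{Eq.25}) and the relation between $\xi$ and the orientation of $Q$, gives $n(j,k)=s+a_{ij}$, so the horizontal product reads $\prod_{j:\,c_{ij}=-1}\theta^{(s+a_{ij})}_{j,k}$. Comparing with Proposition~\ref{Prop.7.1} shows that mutation at $(i,k)$ replaces $\theta^{(s)}_{i,k}$ by $\theta^{(s+1)}_{i,k}$ (and by $\theta^{(s-1)}_{i,k}$ when $(i,k)$ is instead a vertical sink). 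In particular every one-step mutation stays inside the set $\{\theta^{(s)}_{i,k}\}\subset R(G,c)^G$, and the resulting superscript shift plays the role of a translation invariance, exactly as in Proposition~\ref{prop translation}.

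Next I would reduce to finite rank. For $(M,N)\in\P$ the ring $R(G,c,M,N)^G$ is the polynomial ring $K[\theta^{(s)}_i\mid M\le s\le N-1]$: by the isomorphism $\tau$ of Corollary~\ref{cor: bands product} the $G$-action is right translation on the factor $X_0=G$, so the invariants form the coordinate ring of $\prod_{n\ne 0}X_n$, the finite analogue of Corollary~\ref{cor: inv function on bands}; this is a finitely generated unique factorization domain with $R(G,c,M,N)^{G,\times}=K^\times$. Inside it I would consider the finite sub-seed $\Theta_{M,N}$ of $\Theta$ supported on the vertices $(i,k)$ with $M\le n(i,k)$ and $n(i,k)+k\le N$, the vertices adjacent to the complement being declared frozen. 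By the first step its initial cluster variables and all their one-step mutations are of the form $\theta^{(s)}_{i,k}$, hence lie in $R(G,c,M,N)^G$ and, by \cite[Theorem~1.3]{GLS}, are irreducible. Mutating along the bottom row $k=1$ (where $\theta^{(s)}_{i,0}=1$) produces every $\theta^{(s)}_i$ with $M\le s\le N-1$, so $R(G,c,M,N)^G\subseteq\mathcal{A}(\Theta_{M,N})$; a comparison of the number of cluster variables of $\Theta_{M,N}$, which one checks equals $(N-M)|I|$, with $\trdeg_K R(G,c,M,N)^G=(N-M)|I|$ then yields algebraic independence and field generation, so $\Theta_{M,N}$ is a genuine seed. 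The Starfish lemma \cite[Corollary~6.4.6]{FWZ}, applied with the constant-unit caveat of Remark~\ref{rem:starfish lemma}, gives $\mathcal{U}(\Theta_{M,N})\subseteq R(G,c,M,N)^G$, whence $\mathcal{A}(\Theta_{M,N})=\mathcal{U}(\Theta_{M,N})=R(G,c,M,N)^G$.

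Finally I would pass to the colimit exactly as in the proof of Corollary~\ref{cor:AU R(G,c)}. A triangular block argument on the exchange matrix, arranging vertices by decreasing $n(i,k)$, shows that each $\Theta_{M,N}$ is of maximal rank; by \cite{Q2} the upper cluster algebras $\mathcal{U}(\Theta_{M,N})$ then form a directed system with colimit $\mathcal{U}(\Theta)$. Since the regions $\{(i,k)\mid M\le n(i,k),\ n(i,k)+k\le N\}$ exhaust $I\times\Z_{>0}$ as $M\to-\infty$ and $N\to+\infty$, and since tensoring with $K$ commutes with colimits, the finite-rank equalities assemble into $K\otimes\mathcal{A}(\Theta)=K\otimes\mathcal{U}(\Theta)=\bigcup_{(M,N)}R(G,c,M,N)^G=R(G,c)^G$, which is Theorem~\ref{Thm.7.3}. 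The translation invariance from the first step is what guarantees that the limit seed is $\Theta$ itself and that no frozen variable survives, the frozen boundary of each $\Theta_{M,N}$ receding to infinity.
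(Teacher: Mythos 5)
Your first paragraph is correct, and it is precisely observation (A) in the paper's proof of Theorem~\ref{Thm.7.3}: every one-step mutation of $\Theta$ is an instance of the functional relations of Proposition~\ref{Prop.7.1}, so mutation at a vertical source (\resp sink) $(i,k)$ replaces $\theta^{(s)}_{i,k}$ by $\theta^{(s+1)}_{i,k}$ (\resp $\theta^{(s-1)}_{i,k}$); your verification through rules (v), (vii), (viii) and $a_{ij}=\delta(\xi_j>\xi_i)$ is sound. The gap is in your finite-rank reduction. You truncate by the band-window invariants $R(G,c,M,N)^G=K[\theta^{(s)}_i\mid M\le s\le N-1]$ and by the sub-seed of $\Theta$ supported on $\{(i,k)\,:\, M\le n(i,k),\ n(i,k)+k\le N\}$, claiming this region has $(N-M)|I|$ vertices and that its mutations produce every $\theta^{(s)}_i$ with $M\le s\le N-1$. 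Both claims fail for general $(M,N)$. Take $G=SL(2)$ with $(1,1)$ a source of $\Lambda$, so that $n(1,k)=-\lfloor (k-1)/2\rfloor$, and take $(M,N)=(-2,2)$: your region is $\{(1,1),(1,2),(1,3)\}$, three vertices, while $\trdeg_K R(G,c,-2,2)^G=4$; worse, the three labels $\theta^{(0)}_{1,1},\theta^{(0)}_{1,2},\theta^{(-1)}_{1,3}$ are polynomials in $\theta^{(-1)}_1,\theta^{(0)}_1,\theta^{(1)}_1$ only, so no sequence of mutations of this sub-seed can ever produce $\theta^{(-2)}_1$, and the containment $R(G,c,-2,2)^G\subseteq\AA(\Theta_{-2,2})$ is false. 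The structural reason is that in each column the pair $(n(i,k),\,n(i,k)+k)$ advances by $(-1,+1)$ every two steps of $k$, so the number of vertices your window cuts out in column $i$ is roughly $\min(-2M+2,\,2N-1)$, which equals $N-M$ only for windows in a special position depending on the parity phase of that column; in higher rank the phases differ from column to column, so a uniform band window is generically incompatible with $\Theta$.

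There is a second, related, insufficiency: ``mutating along the bottom row $k=1$'' cannot produce more than one new variable per vertex, since mutation is involutive. To reach $\theta^{(s)}_i$ with $|s-n(i,1)|\ge 2$ you must interleave the global translation mutations $\tau_0,\tau_1$ (at all vertices of one parity class), and inside a finite region each such translation is only available on a sub-region that shrinks by two rows per application. The bookkeeping of exactly which $\theta^{(s)}_i$ are reachable from a height-$N$ truncation is the content of the paper's Proposition~\ref{Prop.7.4}, and it dictates the paper's choice of truncation, which is different from yours: not $R(G,c,M,N)^G$, but the column-adapted subalgebras $R(G,c)^G_N=K[\theta^{(s)}_i\mid n(i,N)\le s\le n(i,N)+N-1]$, paired with the rectangular seed $\Theta_N$ on $I\times\{1,\dots,N\}$ with bottom row frozen, for which the vertex count $N|I|$ matches the number of reachable generators exactly. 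Your argument could be repaired by restricting the colimit to the cofinal family of windows adapted to $\Theta$ (those for which the staircase counts match in every column simultaneously) and by replacing the bottom-row claim with the iterated-translation argument; but as written, the middle step fails, and with it the final colimit identification.
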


Before proving Theorem~\ref{Thm.7.3}, let us observe two crucial properties of the candidate initial seed~$\Theta$, which can easily be checked
by inspection.
The first property is that :
\begin{quote}
(A) \emph{Every first step mutation of $\Theta$ is an instance of the functional relations of Proposition~\ref{Prop.7.1}.}  
\end{quote}
More precisely, mutating at the vertex of $\theta^{(n(i,s))}_{i,s}$, we get the new cluster variable $\theta^{(n(i,s)+1)}_{i,s}$ 
(\resp $\theta^{(n(i,s)-1)}_{i,s}$) if $\theta^{(n(i,s))}_{i,s}$ is a source (\resp a sink)
of the vertical subquiver $\Lambda_i$.

Let $\WW_0$ (\resp $\WW_1$) denote the subset of $\WW$ consisting
of the vertices which are sources (\resp sinks) of the vertical subquivers $\Lambda_i\ (i\in I)$. Then we 
get a partition $\WW = \WW_0 \sqcup \WW_1$ such that no arrow connects two vertices of $\WW_0$
(\resp of $\WW_1$).
It follows that mutations at vertices of $\WW_0$ (\resp of $\WW_1$) pairwise commute. 
Let us consider the product $\tau_0$ (\resp $\tau_1$) of mutations at all the vertices of $\WW_0$ (\resp $\WW_1$).
These products are infinite, but in practice we will only need to look at their effect on a finite initial section of $\Theta$.
The second main feature is the translation invariance property~:
\begin{quote}
(B) \emph{The product $\tau_1\tau_0$ (\resp $\tau_0\tau_1$) replaces $\Theta$ by $\Theta^+$ (\resp $\Theta^-$)
whose quiver is identical to $\Lambda$, and in which every function $\theta^{(n(i,s))}_{i,s}$ is replaced by
$\theta^{(n(i,s)+1)}_{i,s}$ (\resp by $\theta^{(n(i,s)-1)}_{i,s}$).
}
\end{quote}

The strategy of the proof of Theorem~\ref{Thm.7.3} is similar to that of Theorem~\ref{Thm4.4}. 
We will first consider an increasing sequence of subalgebras of $R(G,c)^G$ whose union is equal to $R(G,c)^G$, 
and then show using (A) and (B) that each of them is a finite rank cluster algebra with initial seed a finite segment of $\Theta$.

More precisely, for $N>0$ we put 
\[
 R(G,c)^G_N := K[\theta_i^{(s)}\mid i\in I,\ n(i,N)\le s \le n(i,N)+N-1].
\]
Since for every $i \in I$ the sequence $(n(i,N))_{N>0}$ is non-increasing and satisfies 
\[
 -\left\lfloor\frac{N}{2}\right\rfloor \le n(i,N)- n(i,1) \le -\left\lfloor\frac{N}{2}\right\rfloor + 1,
\]
we see that 
\[
\lim_{N\to +\infty} n(i,N) = -\infty, \qquad\lim_{N\to +\infty} n(i,N) + N-1 = +\infty.
\]
Moreover 
\[
 R(G,c)^G_N \subset R(G,c)^G_{N+1},\qquad \mbox{and}\qquad \bigcup_{N>0} R(G,c)^G_N = R(G,c)^G.
\]

Let $\Lambda_N$ be the full sub-quiver of $\Lambda$ with vertex set
$\WW_N = I \times \{1,\ldots, N\}$, in which the $|I|$ lowest vertices $(i,N)\ (i\in I)$ are declared frozen.
Let $\Theta_N$ be the corresponding labelled sub-quiver of $\Theta$.
The proof of Theorem~\ref{Thm.7.3} will follow from
\begin{Prop}\label{Prop.7.4}
For every $N>0$, the algebra $R(G,c)^G_N$ is a cluster algebra with initial seed $\Theta_N$. 
\end{Prop}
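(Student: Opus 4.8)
The plan is to prove Proposition~\ref{Prop.7.4} by the same ``Starfish'' strategy already deployed for Theorems~\ref{Thm4.10} and~\ref{Thm.6.11}, exploiting the translation-invariance feature (B) to reduce everything to a single application of \cite[Corollary 6.4.6]{FWZ}. First I would record the basic geometric facts about the target ring. By Corollary~\ref{cor: inv function on bands} and Eq.~(\ref{Eq.27}) the full invariant ring $R(G,c)^G$ is a polynomial ring in the variables $\theta^{(s)}_i\ (i\in I,\ s\in\Z)$, hence the finitely generated subalgebra $R(G,c)^G_N = K[\theta^{(s)}_i\mid i\in I,\ n(i,N)\le s\le n(i,N)+N-1]$ is itself a polynomial ring in finitely many variables. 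In particular it is a finitely generated unique factorization domain whose units are exactly $K^\times$. These are precisely the ambient-ring hypotheses required by the version of the Starfish lemma recalled in Remark~\ref{rem:starfish lemma}.

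Next I would verify the cluster-theoretic hypotheses. The seed $\Theta_N$ is a labelling of the finite iced quiver $\Lambda_N$ by the functions $\theta^{(n(i,s))}_{i,s}$, all of which lie in $R(G,c)^G_N$ by construction. Feature~(A) says that every one-step mutation of $\Theta_N$ is an instance of the functional relations of Proposition~\ref{Prop.7.1}; reading off the exchange relation at the vertex of $\theta^{(n(i,s))}_{i,s}$ therefore produces the Laurent polynomial $\theta^{(n(i,s)\pm 1)}_{i,s}$, which again lies in the polynomial ring $R(G,c)^G_N$ (one must check the shifted index still falls in the allowed range, which it does since mutating a source increases $s$ and mutating a sink decreases it, and in both cases the resulting variable is one of the generators of $R(G,c)^G_N$ or can be rewritten as such via Proposition~\ref{Prop.7.1}). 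So the initial and one-step-mutated cluster variables are all regular and indeed irreducible, being distinct coordinate functions of a polynomial ring, so that no two of them are associate; this is the coprimality condition of the lemma. Finally I would check the maximal-rank (full-rank exchange matrix) condition needed to identify $\mathcal U(\Theta_N)$ with the cluster algebra, by the same triangular-block argument used in Lemma~\ref{lem : ximn max rank}: ordering vertices by the layer index $s$ and using that the frozen bottom layer contributes an invertible $|I|\times|I|$ block coming from the vertical arrows into it.

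With all hypotheses in place, \cite[Corollary 6.4.6]{FWZ} yields the inclusion $\mathcal U(\Theta_N)\subseteq R(G,c)^G_N$ and the standard inclusion $\mathcal A(\Theta_N)\subseteq\mathcal U(\Theta_N)$ gives the cluster algebra inside the upper cluster algebra. For the reverse inclusion $R(G,c)^G_N\subseteq \mathcal A(\Theta_N)$ I would show that every generator $\theta^{(s)}_i$ of the polynomial ring appears as a cluster variable of some seed mutation-equivalent to $\Theta_N$. This is exactly what feature~(B) provides: iterating $\tau_1\tau_0$ (resp.\ $\tau_0\tau_1$) shifts the whole labelling by $+1$ (resp.\ $-1$) in the superscript, so repeated application sweeps the second index $s$ across the entire interval $[n(i,N),\,n(i,N)+N-1]$ for each $i$, realising every generator of $R(G,c)^G_N$ as a cluster variable. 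Hence $R(G,c)^G_N = \mathcal A(\Theta_N) = \mathcal U(\Theta_N)$, which is Proposition~\ref{Prop.7.4}; Theorem~\ref{Thm.7.3} then follows by taking the union over $N$ exactly as in the proof of Theorem~\ref{Thm4.4}, using that $\bigcup_{N>0} R(G,c)^G_N = R(G,c)^G$ and that the finite seeds $\Theta_N$ exhaust $\Theta$. The main obstacle I anticipate is the careful bookkeeping in feature~(B): one must confirm that the infinite products $\tau_0,\tau_1$ act on any fixed finite section of $\Theta_N$ through only finitely many genuinely effective mutations, and that the boundary (frozen) layers do not obstruct the claimed uniform shift. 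This is the step where the precise orientation conventions (iii)--(viii) and the sink/source partition $\WW=\WW_0\sqcup\WW_1$ must be used most delicately, and it is essentially the analogue of the delicate mutation-sequence analysis carried out in Lemma~\ref{Lem.6.9}.
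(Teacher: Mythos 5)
Your high-level plan --- Starfish lemma for $\AA(\Theta_N)\subseteq R(G,c)^G_N$, a mutation sweep for the reverse inclusion --- is indeed the paper's, but two of your steps fail as written. The serious one is the sweep. Feature (B) is a property of the \emph{infinite} seed $\Theta$ and is false for the truncated seed $\Theta_N$: the bottom row of $\Theta_N$ is frozen, so its labels never change under mutation, and the translation property degrades as one approaches this boundary. Taken literally, your claim that iterating $\tau_1\tau_0$ ``shifts the whole labelling by $+1$'' would exhibit $\theta^{(s)}_i$ as a cluster variable of $\AA(\Theta_N)$ for \emph{every} $s\in\Z$, contradicting the inclusion $\AA(\Theta_N)\subseteq R(G,c)^G_N$ that you get from the Starfish lemma, since the generators of $R(G,c)^G_N$ occupy a window of length exactly $N$. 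You do flag the boundary as your main anticipated obstacle, but the fix is not a refinement of the uniform-shift claim (nor an analogue of Lemma~\ref{Lem.6.9}); it is the paper's \emph{nested} sweep: apply $\tau_{1,N-1}\tau_{0,N}$ (mutable vertices only); by (B) the top $N-2$ rows of the result form a copy of $\Theta_{N-2}$ with all superscripts shifted by $1$; then apply $\tau_{1,N-3}\tau_{0,N-2}$ to that copy, and so on, losing two rows per sweep, and symmetrically in the other direction. Collecting the variables appearing at the vertices $(i,1)$ gives exactly $\theta^{(s)}_i$ for $n(i,N)\le s\le n(i,N)+N-1$; the shrinking of the quiver is precisely what cuts the window to length $N$. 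This argument also supplies a step you omit entirely: it shows that the $N|I|$ labels of $\Theta_N$ generate the fraction field of $R(G,c)^G_N$, hence, by the transcendence-degree count, are algebraically independent, i.e.\ that $\Theta_N$ is a valid seed. Since the labels with second index $s\ge 2$ are \emph{not} coordinates of the polynomial ring $R(G,c)^G_N$ but polynomials in the generators (Proposition~\ref{Prop.7.1} gives e.g.\ $\theta^{(m)}_{i,2}=\theta^{(m)}_{i,1}\theta^{(m+1)}_{i,1}-\prod_{j}\theta^{(m+a_{ij})}_{j,1}$), this is not automatic, and without it the realization of $\AA(\Theta_N)$ and $\mathcal{U}(\Theta_N)$ inside that fraction field, and hence the Starfish hypotheses, are not even in place.

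For the same reason, your coprimality argument is wrong: the cluster variables $\theta^{(n(i,s))}_{i,s}$ with $s\ge 2$ and their one-step mutations are not ``distinct coordinate functions of a polynomial ring'', so their irreducibility in $R(G,c)^G_N$ requires proof. The paper imports it, as in the proof of Theorem~\ref{Thm4.10}, from \cite[Theorem 1.3]{GLS}: once the sweep has established $R(G,c)^G_N\subseteq\AA_N$, every cluster variable is irreducible in $\AA_N$ and $\AA_N^\times=K^\times=\bigl(R(G,c)^G_N\bigr)^\times$, whence irreducibility in the subring $R(G,c)^G_N$. This forces the opposite order of the two halves from yours: sweep first, Starfish second. (A minor point: the maximal-rank check you propose is not needed for Proposition~\ref{Prop.7.4} itself; it only enters when upgrading to $\AA=\mathcal{U}$ for the infinite seed, as in Remark~\ref{rem: AU R(G,c)G}.)
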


\begin{proof}
Let us denote by $\tau_{0,N}$ (\resp $\tau_{1,N}$) the product of mutations  
$\tau_0$ (\resp $\tau_1$) restricted to the mutable vertices of $\Theta_N$. 
It follows from (B) that if we apply $\tau_{1,N-1}\tau_{0,N}$ (\resp $\tau_{0,N-1}\tau_{1,N}$) to $\Theta_N$  
we will replace the quiver $\Lambda_N$ by a new quiver $\Lambda^{(1)}_N$ such that the 
full subquiver of $\Lambda^{(1)}_N$ taken on the first $N-2$ rows is identical to $\Lambda_{N-2}$.
Moreover, every $\theta^{(n(i,s))}_{i,s}$ with $s\le N-2$ will be replaced by $\theta^{(n(i,s)+1)}_{i,s}$
(\resp $\theta^{(n(i,s)-1)}_{i,s}$). So the full labelled subquiver supported on the first $N-2$ rows
will be identical to $\Theta_{N-2}$, except that the upper index $n(i,s)$ of each variable 
has been shifted by $1$ (resp by $-1$). Therefore, 
we may then apply $\tau_{1,N-3}\tau_{0,N-2}$ (\resp $\tau_{0,N-3}\tau_{1,N-2}$)
to this labelled quiver, and get a new labelled subquiver whose restriction to the first $N-4$ rows
is identical to $\Theta_{N-4}$, except that the upper index $n(i,s)$ of each variable 
has been shifted by $2$ (resp by $-2$). Iterating this procedure, and collecting all the 
variables that we obtain at vertex $(i,1)$ after performing each of these products of 
mutations, we get exactly the list
\[
\theta_i^{(s)},\qquad (n(i,N)\le s \le n(i,N)+N-1). 
\]
This shows that the functions $\theta^{(n(i,s))}_{i,s}$ labelling the vertices of $\Theta_N$ generate the fraction
field of $R(G,c)^G_N \ $:
\[
K\left(\theta^{(s)}_i \mid i\in I,\ n(i,N)\le s \le n(i,N)+N-1\right).
\]
Moreover, the number of vertices $N|I|$ of $\Theta_N$ equals the number of variables of this 
field of rational functions, so the functions $\theta^{(n(i,s))}_{i,s}$ labelling the vertices of $\Theta_N$
form a free generating set of this field. In other words, $\Theta_N$ is a valid seed in the fraction field
of $R(G,c)^G_N$. 

Let $\AA_N$ be the cluster algebra with initial seed $\Theta_N$ and with coefficients extended to $K$,
regarded as a subalgebra of this fraction field. 
The discussion above shows that all generators of the polynomial ring $R(G,c)^G_N$ are cluster
variables of $\AA_N$. Hence $R(G,c)^G_N$ is contained in $\AA_N$. 

To prove the reverse inclusion, we can apply again the Starfish lemma.
First $R(G,c)^G_N$ is a polynomial ring, hence a unique factorization domain. 
Observation (A) shows that all variables obtained by a first step mutation of $\Theta_N$ belong to $R(G,c)^G_N$. 
Therefore we can conclude the proof in exactly the same way as the proof of Theorem~\ref{Thm4.10}. 
\cqfd
\end{proof}

\medskip\noindent
\emph{Proof of Theorem \ref{Thm.7.3}.---}
As already mentioned, we have 
\[
 R(G,c)^G_N \subset R(G,c)^G_{N+1},\qquad \mbox{and}\qquad \bigcup_{N>0} R(G,c)^G_N = R(G,c)^G.
\]
Moreover the initial seeds $\Theta_N$ form an increasing sequence of subseeds of $\Theta$ with limit $\Theta$, so
by Proposition~\ref{Prop.7.4} the limit algebra $R(G,c)^G$ will inherit the structure of a 
cluster algebra with initial seed $\Theta$. \cqfd

\begin{remark}
\label{rem: AU R(G,c)G}
\rm{
    As for Theorem \ref{Thm4.10}, our proof of Proposition \ref{Prop.7.4} also shows that the upper cluster algebra $\mathcal{U}_N$ with initial seed $\Theta_N$ equals the ordinary cluster algebra $\AA_N$. 
    Indeed, in the notation of the proof of Proposition \ref{Prop.7.4}, the Starfish lemma actually implies that $\mathcal{U}_N \subseteq R(G,c)^G_N$, from which the equality $\AA_N= \mathcal{U}_N$ follows.
    Moreover, the same argument used in the proof of Corollary \ref{cor:AU R(G,c)} can be used to show that the cluster algebra of the seed $\Theta$ is also equal to the upper cluster algebra of the same seed.
    }
\end{remark}

\section{Bands and quantum affine algebras: a dictionary}
\label{sec_band_qaf}

As explained in the introduction, the motivation of this work comes from the representation theory
of quantum affine algebras. In this final section, we will discuss the relations between our construction
and this theory.

\subsection{Quantum affine algebras and their categories of representations}

We refer to the survey lectures \cite{HL5} for an introduction to this subject and its connections with cluster algebras.

Let $\g = \mathrm{Lie}(G)$, and let $U_q(\hg)$ be the quantum enveloping algebra of the affine Lie algebra $\hg$
associated with $\g$. We assume that the quantum deformation parameter $q\in \C^*$ is not a root of unity.
The category $\CC$ of finite-dimensional $U_q(\hg)$-modules has been studied by many authors, see for example \cite{CP,FR,N1, K2}.

In \cite{HL}, a monoidal subcategory $\CC_\Z$ of $\CC$ was introduced, and it was conjectured that the Grothendieck ring 
$K_0(\CC_\Z)$ has the structure of a cluster algebra such that all cluster monomials are classes of simple objects
of $\CC_\Z$. This conjecture, called the monoidal categorification conjecture, is now proved \cite{KKOP2, Q}.

Let $U_q(\widehat{\mathfrak{b}})$ be the Borel subalgebra of $U_q(\hg)$. In \cite{HJ}, Hernandez and Jimbo have introduced 
a category $O$ of representations of $U_q(\widehat{\mathfrak{b}})$ containing all finite-dimensional representations and also some infinite-dimensional ones. Since every finite-dimensional $U_q(\hg)$-module remains irreducible by restriction to $U_q(\widehat{\mathfrak{b}})$, the Grothendieck ring of $\CC$ can be regarded as a subring of the Grothendieck ring of $O$.

In \cite{HL2}, two subcategories $O^+_\Z$ and $O^-_\Z$ of $O$ were introduced, 
both containing all restrictions of simple objects of $\CC_\Z$, 
and it was shown that $K_0(O^+_\Z)$ and $K_0(O^-_\Z)$ are isomorphic and have the same cluster algebra structure. Moreover a monoidal categorification conjecture was formulated, and it was shown that it would follow from a proof of the conjecture of \cite{HL}. Thus, based on \cite{KKOP2, Q}, this conjecture is also proved.

In 2019, Finkelberg and Tsimbaliuk introduced a new class of algebras $U_{q,\mu}(\hg)$ depending on a weight $\mu\in P$, called shifted quantum affine algebras \cite{FT}.
In \cite{H}, a category $\O_\mu$ of $U_{q,\mu}(\hg)$-modules was defined and studied, and it was shown that the direct
sum $\O^{\mathrm{shift}} := \oplus_{\mu \in P} \O_\mu$ is endowed with a fusion product, which yields a ring structure on its Grothendieck group. The category $\O^{\mathrm{shift}}$ contains the subcategory $\CC^{\mathrm{shift}}$ of
finite-dimensional representations. In the same work \cite{H}, Hernandez has introduced a subcategory $\CC^{\mathrm{shift}}_\Z$ of $\CC^{\mathrm{shift}}$ and shown that its Grothendieck ring is isomorphic to the Grothendieck ring of $O_\Z^+$ (as based rings), hence 
possesses the same cluster algebra structure. Because of the above result on $O^+_\Z$, the monoidal categorification conjecture also holds for $\CC_\Z^{\mathrm{shift}}$.  

Finally, in \cite{GHL} the subcategory $\O^{\mathrm{shift}}_\Z$ of $\O^{\mathrm{shift}}$ was introduced, and its Grothendieck group was shown to have a cluster structure. A monoidal categorification conjecture for $\O^{\mathrm{shift}}_\Z$ extending the one for $\CC^{\mathrm{shift}}_\Z$ was formulated, and it is still open (except for $\g = \mathfrak{sl}_2$).

As a result, we have a chain of Grothendieck rings with cluster structures:
\[
 K_0(\CC_\Z) \subset K_0(O^+_\Z) \simeq K_0(O^-_\Z) \simeq K_0(\CC^{\mathrm{shift}}_\Z) \subset K_0(\O^{\mathrm{shift}}_\Z).
\]

\subsection{$R(G,c)$ and the category $\O^{\mathrm{shift}}_\Z$}

As explained in the introduction, the initial motivation of this paper was to construct an algebro-geometric object
whose coordinate ring has the same cluster structure $\AA$ as the one of $K_0(\O^{\mathrm{shift}}_\Z)$ discovered in
\cite{GHL}. By Theorem~\ref{Thm2}, the affine scheme $B(G,c)$ is a solution to this problem for any choice of Coxeter element $c$.

Let us consider in more details the correspondence between the coordinate ring $R(G,c)$ and $K_0(\O^{\mathrm{shift}}_\Z)$.
By comparing the initial seed of $R(G,c)$ given by Theorem~\ref{Thm4.4} and the initial seed of $K_0(\O^{\mathrm{shift}}_\Z)$ of \cite[Theorem 7.4]{GHL},  we see that we have the following correspondence between regular functions in $R(G,c)$ and elements of $K_0(\O^{\mathrm{shift}}_\Z)$ :
\begin{equation}\label{Eq.35}
 \Delta^{(s)}_{c^k(\varpi_i),\ \widetilde{c}^{\,l}(\varpi_i)} 
 \quad\longleftrightarrow\quad 
 \underline{Q}_{\widetilde{c}^{\,l}(\varpi_i),\ q^{2(s+k)-\xi_i}}\,,\qquad
 (s\in\Z,\ i\in I,\ 0\le k,l \le m_i),
\end{equation}
where the integers $\xi_i$ were defined in \S\ref{sec: statement of main theorem}.
Note that here we have used the fact that $\xi_i+\widetilde{\xi}_i-2m_i$ is independent of $i$.
More generally, using cluster mutations on both sides, we can extend the correspondence to
\begin{equation}\label{Eq.35b}
 \Delta^{(s)}_{c^k(\varpi_i),\ w(\varpi_i)} 
 \quad\longleftrightarrow\quad 
 \underline{Q}_{w(\varpi_i),\ q^{2(s+k)-\xi_i}}\,,\qquad
 (s\in\Z,\ i\in I,\ 0\le k \le m_i,\ w\in W).
\end{equation}
Here $\underline{Q}_{w(\varpi_i), q^{2s-\xi_i}}$ is one of the (slightly renormalized) elements of the Grothendieck ring defined by Frenkel and Hernandez by means of their Weyl group action on $q$-characters
\cite{FH1,FH2}. It is proved in \cite{GHL} that $\underline{Q}_{\varpi_i,\ q^{2s-\xi_i}}$ 
(\resp $\underline{Q}_{w_0(\varpi_i),\ q^{2s-\xi_i}}$) is the class of a simple object, namely a 
positive (\resp negative) prefundamental representation. Moreover, it is conjectured that the remaining $Q$-variables are also classes of simple objects (this conjecture is proved in type $A_2$).

Concerning this Weyl group action, it was noted in \cite[\S7.3]{GHL} that one could define a variant of it, that is, an action of the finite covering $\overline{W}$ of $W$ which is compatible with the renormalized $Q$-variables $\underline{Q}$.
From the point of view of $(G,c)$-bands this modified action is very natural. Indeed, since $\overline{W}$ is a subgroup of
$G$, it acts on $B(G,c)$ by right translation. This induces a left action of $\overline{W}$ on $R(G,c)$ such that 
\[
\overline{w} \cdot \Delta^{(s)}_{u(\varpi_i),\,v(\varpi_i)} = \Delta^{(s)}_{u(\varpi_i),\,wv(\varpi_i)}  
\]
if $l(wv) = l(w)+l(v)$. Using the correspondence given by Eq.(\ref{Eq.35}), we see that this matches the formula of
\cite[Remark 7.3]{GHL}.

Frenkel and Hernandez proved that the above elements $\underline{Q}_{w(\varpi_i),\ q^{r}}$ of $K_0(\O^{\,\mathrm{shift}}_\Z)$ satisfy the following system of functional equations called (extended) $QQ$-system \cite[Theorem 5.6]{FH2}. Namely, 
for $(i,r)\in \VV$ and $w\in W$ such that $ws_i > w$, 
we have
\begin{equation}
{\bQ}_{ws_i(\varpi_i),\,q^{r}} \bQ_{w(\varpi_i),\,q^{r-2}} -\ {\bQ}_{ws_i(\varpi_i),\,q^{r-2}} \bQ_{w(\varpi_i),\,q^{r}}
=\prod_{j:\ c_{ij}=-1} \bQ_{w(\varpi_j),\,q^{r-1}}. 
\end{equation}
We refer to \cite{FH2} for the interesting history of the $QQ$-system and its connections with mathematical physics and opers. From the point of view of $(G,c)$-bands, the corresponding system
satisfied by the regular functions $\Delta^{(s)}_{c^k(\varpi_i),\ w(\varpi_i)}$ follows 
directly from the Fomin-Zelevinsky generalized minor identities \cite[Theorem 1.17]{FZ}.
The fact that the $QQ$-system results from relations between generalized minors 
was first discovered by Koroteev and Zeitlin in their paper on $(G,q)$-Wronskians \cite{KZ}.

Finally we should warn the reader that, according to \cite[Theorem 7.4]{GHL}, the image 
of $R(G,c)$ under the above correspondence becomes equal to the entire Grothendieck ring $K_0(\O^{\,\mathrm{shift}}_\Z)$ only after taking closure with respect to the topology of 
$K_0(\O^{\,\mathrm{shift}}_\Z)$. It is expected though, and proved in the case of $G= SL(2)$,
that the image of $R(G,c)$ coincides with $K_0(\O^{\,\mathrm{shift, f}}_\Z)$, 
where $\O^{\,\mathrm{shift, f}}_\Z$ is the full subcategory of finite length modules in $\O^{\,\mathrm{shift}}_\Z$.
Recently Hernandez and Zhang have shown that this category $\O^{\,\mathrm{shift, f}}_\Z$ is closed under fusion products,  and this allowed them to formulate a precise conjecture \cite[\S6.2]{HZ}.

\subsection{$R(G,c)^U$ and the categories $O^+_\Z$ and $\CC^{\,\mathrm{shift}}_\Z$}

Comparing Theorem~\ref{Thm.6.11}, \cite[Theorem 8.8]{H} and \cite[Theorem 4.2]{HL2}, we see that the 
cluster algebras of these three theorems are again defined by the same infinite quivers, and that their initial cluster variables are related by the following correspondence
\begin{equation}\label{Eq.36}
\Delta^{(s)}_{\varpi_i,\ \varpi_i} 
 \quad\longleftrightarrow\quad 
 \underline{Q}_{\varpi_i,\ q^{2s-\xi_i}}
 \quad\longleftrightarrow\quad  
 [-(2s-\xi_i)\varpi_i/2][L(\Psi_{i,\ q^{2s-\xi_i}})]
 \,,
 (s\in\Z,\ i\in I).
\end{equation}
Here $\underline{Q}_{\varpi_i,\ q^{2s-\xi_i}}$ stands for the class of a (renormalized) positive prefundamental representation
in $\O^{\mathrm{\,shift}}_\Z$, which is one-dimensional, so can also be regarded as an object of
$\CC^{\mathrm{\,shift}}_\Z$. On the other hand, $L(\Psi_{i,\ q^{2s-\xi_i}})$ is a positive prefundamental
representation in $O^+_\Z$, which is infinite-dimensional, and $[-(2s-\xi_i)\varpi_i/2]$ is the normalization factor (compare \cite[\S7.2]{GHL} and \cite[Theorem 4.2]{HL2}).

Thus, the invariant subalgebra $R(G,c)^U$ corresponds in a natural way to the Grothendieck rings 
of the categories $O^+_\Z$ and $\CC^{\,\mathrm{shift}}_\Z$.
Note however that, in contrast to Theorem~\ref{Thm.6.11} which involves the upper cluster algebra 
$\mathcal{U}(\Xi)$, 
the Grothendieck rings of \cite[Theorem 4.2]{HL2} and \cite[Theorem 8.8]{H} are genuine cluster algebras. 

One may ask what happens if we replace $U$ by $U^-$. From the point of view of $(G,c)$-bands
it is obvious that $R(G,c)^U$ and $R(G,c)^{U^-}$ are isomorphic. But the natural coordinate system in 
$R(G,c)^{U^-}$ is given by the regular functions $\Delta^{(s)}_{\varpi_i,\  w_0(\varpi_i)}$, which correspond under Eq.(\ref{Eq.35b}) to classes of \emph{negative} prefundamental representations. 
So clearly, $R(G,c)^{U^-}$ corresponds to the Grothendieck ring of the subcategory $O^-_\Z$ of 
the Hernandez-Jimbo category $O$. In terms of representations of shifted quantum affine algebras, what we get is a subcategory of $\O^{\,\mathrm{shift}}_\Z$ having a Grothendieck ring isomorphic to that of $\CC^{\,\mathrm{shift}}_\Z$, but which contains infinite-dimensional objects.

\subsection{$R(G,c)^G$ and the category $\CC_\Z$}
\label{subsec-8.4}

The functional relations of Proposition~\ref{Prop.7.1} are well known and have a long history. 
They appeared in mathematical physics in the 90's under the name of $T$-system \cite{KNS1} (see \cite{KNS2}
for a thorough survey). They are also deeply connected with the representation theory of quantum affine 
algebras.

In order to relate $R(G,c)^G$ with $K_0(\CC_\Z)$, let us recall some basic results. 
It is known that $K_0(\CC_\Z)$ is the polynomial ring in the classes  of the fundamental
$U_q(\hg)$-modules: 
\[
L(Y_{i,q^r}),\qquad ((i,r+1)\in\VV).
\]
The fundamental modules belong to the larger family of Kirillov-Reshetikhin modules:
\[
 W^{(i)}_{k,q^r} := L(Y_{i,q^r}Y_{i,q^{r+2}}\cdots Y_{i,q^{r+2(k-1)}}), \qquad ((i,r+1)\in \VV,\ k>0).
\]
To express the classes $[W^{(i)}_{k,q^r}]\in K_0(\CC_\Z)$ as polynomials in the $[L(Y_{i,q^r})]$, one uses 
the fact that they satisfy the system of equations
\[
 [W^{(i)}_{k,q^r}][W^{(i)}_{k,q^{r+2}}] = [W^{(i)}_{k+1,q^r}][W^{(i)}_{k-1,q^{r+2}}]
 \ + \prod_{j:\ c_{ij}=-1} [W^{(j)}_{k,q^{r+1}}],
\]
first conjectured by Kuniba, Nakanishi and Suzuki \cite{KNS1}, and then proved by Nakajima \cite{N}.

The following result then follows:

\begin{Prop}\label{Prop.8.1}
Let $\iota : R(G,c)^G = K[\theta^{(s)}_{i,1}\mid i\in I,\ s\in \Z] \to 
K\otimes K_0(\CC_\Z) = K[L(Y_{i,q^r})\mid (i,r+1)\in \VV]$ 
be the $K$-algebra isomorphism defined by 
\[
 \iota\left(\theta^{(s)}_{i,1}\right) = \left[L(Y_{i,\,q^{2s+1-\xi_i}})\right],\qquad (i\in I,\ s\in \Z). 
\]
Then for every $i\in I$, $s\in \Z$ and $k>0$ we have
\begin{equation}\label{Eq.34}
 \iota\left(\theta^{(s)}_{i,k}\right) = \left[W^{(i)}_{k,\,q^{2s+1-\xi_i}}\right].
\end{equation}
\end{Prop}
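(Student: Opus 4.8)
The plan is to establish the two key facts that pin down the isomorphism $\iota$ and then to show that both sides of Eq.~(\ref{Eq.34}) satisfy the same recursion with the same initial data. First I would observe that $\iota$ is well-defined as an isomorphism of polynomial rings precisely because, by Theorem~\ref{Thm.7.3} (or already by the fact that $R(G,c)^G = K[\theta^{(s)}_{i,1}\mid i\in I,\ s\in\Z]$), the functions $\theta^{(s)}_{i,1}$ are algebraically independent, and by the analogous statement that $K\otimes K_0(\CC_\Z)$ is the polynomial ring on the classes $[L(Y_{i,q^r})]$. So the map sending generators to generators extends uniquely to a $K$-algebra isomorphism, and the content of the proposition is entirely about the non-generating elements $\theta^{(s)}_{i,k}$ for $k>1$.

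The core of the argument is a comparison of two systems of functional relations. On the geometric side, Proposition~\ref{Prop.7.1} gives
\[
\theta^{(s)}_{i,k}\theta^{(s+1)}_{i,k} = \theta^{(s)}_{i,k+1}\theta^{(s+1)}_{i,k-1} + \prod_{j:\ c_{ij}=-1}\theta^{(s+a_{ij})}_{j,k},
\]
with $\theta^{(s)}_{i,0}=1$. On the representation-theoretic side, the Kirillov-Reshetikhin classes satisfy the $T$-system of Kuniba-Nakanishi-Suzuki, proved by Nakajima:
\[
[W^{(i)}_{k,q^r}][W^{(i)}_{k,q^{r+2}}] = [W^{(i)}_{k+1,q^r}][W^{(i)}_{k-1,q^{r+2}}] + \prod_{j:\ c_{ij}=-1}[W^{(j)}_{k,q^{r+1}}],
\]
with $[W^{(i)}_{0,q^r}]=1$. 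The plan is then to prove Eq.~(\ref{Eq.34}) by induction on $k$. The base case $k=1$ is the definition of $\iota$ on generators, and the case $k=0$ is the convention that both sides equal $1$. For the inductive step I would apply $\iota$ to the geometric relation, use the inductive hypothesis for indices $k$ and $k-1$, and then match it against the $T$-system relation after substituting $r = 2s+1-\xi_i$. The delicate point is to verify that the shift $a_{ij}$ appearing in the geometric relation is exactly compatible with the half-integer shift $r\mapsto r+1$ in the $T$-system, i.e. that $\theta^{(s+a_{ij})}_{j,k}$ corresponds under $\iota$ to $[W^{(j)}_{k,q^{r+1}}]$. This requires checking the spectral-parameter bookkeeping: the class $\theta^{(s+a_{ij})}_{j,k}$ maps to $[W^{(j)}_{k,q^{2(s+a_{ij})+1-\xi_j}}]$, and one must confirm that $2(s+a_{ij})+1-\xi_j = (2s+1-\xi_i)+1 = r+1$, which reduces to the identity $\xi_j = \xi_i - 2a_{ij} + 1$; by the definition $a_{ij} = \delta(\xi_j>\xi_i)$ and the fact that $|\xi_i-\xi_j|=1$ for $c_{ij}=-1$ (since $\xi$ drops by $1$ along each arrow of $Q$), this holds.

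Once the shifts are verified to agree, the two recursions are literally the same recursion in the variables $\iota(\theta^{(s)}_{i,k})$ and $[W^{(i)}_{k,\,q^{2s+1-\xi_i}}]$, with identical initial conditions at $k=0,1$. Since the recursion expresses the $(k+1)$-st term as a Laurent (in fact polynomial, after clearing the single denominator on solving for $\theta^{(s)}_{i,k+1}$) function of lower terms, and since the $\theta^{(s)}_{i,1}$ are algebraically independent so that division is unambiguous in the fraction field, the two families coincide term by term. The main obstacle I anticipate is not the induction itself but the careful normalization of spectral parameters: one must be scrupulous that the map $\theta^{(s)}_{i,1}\mapsto [L(Y_{i,q^{2s+1-\xi_i}})]$, the convention $(i,r+1)\in\VV$ indexing the fundamental modules, and the shift $a_{ij}$ all line up consistently, since an off-by-one error in any of these would break the inductive step. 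Modulo that bookkeeping, the proof is a direct matching of two identical recursions.
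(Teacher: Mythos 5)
Your proposal follows essentially the same route as the paper's proof: reduce to showing that $\iota$ transforms the functional relations of Proposition~\ref{Prop.7.1} into the $T$-system, then argue by induction on $k$ (with $k=0,1$ as initial data), the whole matter resting on the spectral-parameter bookkeeping for the term $\theta^{(s+a_{ij})}_{j,k}$. The paper does exactly this, phrasing the key check as: for $c_{ij}=-1$, one has $a_{ij}=1$ if and only if $\xi_j=\xi_i+1$, and $a_{ij}=0$ if and only if $\xi_i=\xi_j+1$.

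However, the identity you reduce the bookkeeping to is mis-stated, and as written it is false. Requiring $2(s+a_{ij})+1-\xi_j=(2s+1-\xi_i)+1$ gives $\xi_j=\xi_i+2a_{ij}-1$, not $\xi_j=\xi_i-2a_{ij}+1$. Your version fails in both cases: if $\xi_j>\xi_i$ (so $\xi_j=\xi_i+1$ and $a_{ij}=\delta(\xi_j>\xi_i)=1$), your formula yields $\xi_j=\xi_i-1$, a contradiction; similarly when $\xi_j<\xi_i$. The correct identity $\xi_j=\xi_i+2a_{ij}-1$ does follow from precisely the two facts you invoke (the definition $a_{ij}=\delta(\xi_j>\xi_i)$ and $|\xi_i-\xi_j|=1$ for adjacent $i,j$), so the slip is repairable and the rest of your argument — including the observation that the recursion plus algebraic independence of the $\theta^{(s)}_{i,1}$ determines all higher $\theta^{(s)}_{i,k}$ unambiguously — is sound. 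But since you explicitly identified this verification as the delicate point of the whole proof, the sign error is exactly the kind of off-by-one you warned against, and it must be fixed for the inductive step to close.
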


\begin{proof}
The functional relations of Proposition~\ref{Prop.7.1} allow to express $\theta^{(s)}_{i,k+1}$ 
in terms of $\theta^{(t)}_{j,l}$ with $l<k+1$.
Thus by induction they allow to calculate the expression
of every $\theta^{(s)}_{i,k}$ in terms of the generators $\theta^{(t)}_{j,1}$. 
Similarly, the $T$-system relations satisfied by the classes of the Kirillov-Reshetikhin modules allow
to express every $[W^{(i)}_{k,q^{r}}]$ in terms of the $[L(Y_{j,q^{t}})]$. 
Therefore to prove the proposition it is enough to show that the assignment of Eq.~(\ref{Eq.34}) transforms the relations
of Proposition~\ref{Prop.7.1} into the $T$-system relations.

Suppose that $i,j\in I$ are such that $c_{ij}=-1$.
Then $a_{ij} = 1$ (\resp $a_{ij} = 0$) if and only if $s_j$ precedes $s_i$ 
(\resp $s_i$ precedes $s_j$) in a reduced decomposition of $c$.
That is, if and only if $\xi_j = \xi_i + 1$
(\resp $\xi_i = \xi_j + 1$).
It follows by induction on $k$ that the assignment of Eq.~(\ref{Eq.34}) transforms the relations
of Proposition~\ref{Prop.7.1} into
\[
\left[W^{(i)}_{k,q^{2s+1-\xi_i}}\right]
\left[W^{(i)}_{k,q^{2s+3-\xi_i}}\right] 
= 
\left[W^{(i)}_{k+1,q^{2s+1-\xi_i}}\right]
\left[W^{(i)}_{k-1,q^{2s+3-\xi_i}}\right]
 \ + \prod_{j:\ c_{ij}=-1} \left[W^{(j)}_{k,q^{2s+2-\xi_j}}\right] 
\]
as required.
\cqfd
\end{proof}

Recall the initial seed $\Theta = (\Lambda, \{\theta_{i,s}^{n(i,s)}\mid (i,s)\in \WW\})$ of the cluster algebra structure on $R(G,c)^G$.
Let $\iota(\Theta) := (\Lambda, \{\iota(\theta_{i,s}^{n(i,s)})\mid (i,s)\in \WW\})$ denote the seed of $K_0(\CC_\Z)$ obtained by applying the isomorphism $\iota$.
It is known that $K_0(\CC_\Z)$ has a cluster structure with initial seed $\iota(\Theta)$
\cite{KKOP2}. In other words, $\iota$ is a cluster algebra isomorphism. 

We will now use this remark to show that
\begin{Prop}\label{Prop.8.2}
For every $i\in I$, $s\in\Z$, $k\in \Z_{> 0}$, the function $\theta^{(s)}_{i,k}$ is a cluster
variable of $R(G,c)^U$.
\end{Prop}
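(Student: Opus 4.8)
The plan is to upgrade Proposition~\ref{Prop-6.5} from a single degree-$0$ variable to the entire $T$-system ladder, working inside the finite-rank cluster subalgebra $F(\B^{(0)})$ of $R(G,c)^U$ and then propagating by translation invariance. Since the bigrading of $R(G,c)^U$ is available and each $\theta^{(s)}_{i,k}$ has right-degree $0$ and left-degree $c^s(\varpi_i)-c^{s+k}(\varpi_i)$ (computed exactly as in Eq.~(\ref{Eq.28})), the first task is to produce, for every pair $(i,k)$, a cluster variable of $F(\B)$ with precisely this bidegree. Recall that the mutation sequence $M$ already turns the initial variable $\De^{(0)}_{c^{m_i}(\varpi_i),\varpi_i}$ into a cluster variable of bidegree $(\varpi_i-c(\varpi_i),0)$, which is the bidegree of $\theta^{(0)}_{i,1}$, and that by the translation invariance of Proposition~\ref{prop translation} the same holds for every $\theta^{(s)}_{i,1}$.

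For $k\ge 2$ I would continue mutating. The key observation is that after performing $M$ (and its translates) the degree-$0$ cluster variables of the resulting seed sit on a subquiver isomorphic to the layer $\Lambda^{(1)}$ of the quiver $\Lambda$ of $\Theta$, the one carrying the $\theta^{(s)}_{i,1}$, and that the further mutations needed to raise $k$ — the products $\tau_0,\tau_1$ used in the proof of Theorem~\ref{Thm.7.3} — can be realized as honest mutations of $F(\B)$. Concretely I would verify that each such mutation is a cluster exchange relation whose two competing monomials carry the bidegrees dictated by the $T$-system of Proposition~\ref{Prop.7.1}. Because the output of an exchange relation lies in $R(G,c)^U$ and its left and right degrees are fixed, a graded count generalizing Lemma~\ref{Lem.6.7} then shows that the resulting cluster variable lies in $R(G,c)^G$ and is a scalar multiple of $\theta^{(s)}_{i,k}$.

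It then remains to show every scalar equals $1$, which simultaneously settles the $k=1$ case left open after Proposition~\ref{Prop-6.5}. I would argue by induction on $k$: writing $\theta^{(s)}_{i,k}=\lambda^{(s)}_{i,k}X^{(s)}_{i,k}$ with $X^{(s)}_{i,k}$ the cluster variable constructed above and $\lambda^{(s)}_{i,k}\in K^\times$, the cluster exchange relation satisfied by $X^{(s)}_{i,k}$ becomes, after clearing the $\lambda$'s, exactly the $T$-system identity of Proposition~\ref{Prop.7.1}; comparing coefficients and using the boundary value $\theta^{(s)}_{i,0}=1$ yields a multiplicative recursion whose only solution is $\lambda^{(s)}_{i,k}=1$. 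For the base case $k=1$ I would pin the scalar by a leading-term argument, using that a cluster variable is a Laurent polynomial in the initial cluster whose distinguished ($g$-vector) monomial has coefficient $1$, and reading off that the corresponding leading coefficient of $\theta^{(s)}_{i,1}$ is $1$, exactly as in the explicit $SL(3)$ expansion of $\theta^{(0)}_{1}$ in \S\ref{sec:SL(n) cst}. Proposition~\ref{Prop.8.1} gives an independent check, since it identifies $\theta^{(s)}_{i,k}$ with the Kirillov--Reshetikhin class $[W^{(i)}_{k,\,q^{2s+1-\xi_i}}]$ and thereby confirms that these are genuine cluster variables of $R(G,c)^G$.

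The main obstacle will be the combinatorial bookkeeping of the quiver: verifying that after $M$ the relevant vertices of the (infinite) mutated quiver of $F(\B)$ really form a copy of a finite window of $\Lambda$, and that the $k$-raising mutations interact with the remaining, higher-degree cluster variables only through the exact monomials demanded by the $T$-system. Once this matching of exchange relations with Proposition~\ref{Prop.7.1} is in place, the bidegree identification and the scalar induction are routine, and translation invariance extends the conclusion from a bounded window to all $(i,s,k)$.
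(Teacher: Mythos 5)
Your strategy has a genuine gap at the construction step for $k\ge 2$, in two concrete places. First, the mutations $\tau_0,\tau_1$ from the proof of Theorem~\ref{Thm.7.3} cannot ``raise $k$'': by observation (A) there, mutating at the vertex carrying $\theta^{(n(i,k))}_{i,k}$ produces $\theta^{(n(i,k)\pm 1)}_{i,k}$, i.e.\ these mutations only shift the spectral parameter $s$ within a fixed level $k$. Moreover, the $T$-system exchange of Proposition~\ref{Prop.7.1} that involves a level-$k$ variable, namely $\theta^{(s)}_{i,k}\theta^{(s+1)}_{i,k}=\theta^{(s)}_{i,k+1}\theta^{(s+1)}_{i,k-1}+\prod_{j}\theta^{(s+a_{ij})}_{j,k}$, requires the levels $k-1$ and $k+1$ to be \emph{already present} in the seed as neighbours of the mutation vertex; so starting from the post-$M$ seed of Lemma~\ref{Lem.6.9}, whose only right-degree-zero variables are the $|I|$ level-one functions, no $T$-system-shaped exchange can ever create a level-two variable. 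What is actually needed is, for each $(i,k)$, an explicit mutation sequence in $F(\B)$ from $\Xi$ reaching $\theta^{(s)}_{i,k}$ --- the Kirillov--Reshetikhin analogue of the algorithm of \cite{HL1} --- and your proposal does not supply one. This is exactly the nontrivial input the paper imports from representation theory: it identifies the sequence $M$ with the algorithm of \cite{HL1} for fundamental modules (via \cite[Proposition 6.1]{HL2}), deduces that all classes $[W^{(i)}_{k,q^r}]$ are cluster variables of $K_0(O^+_\Z)$ ``for the same reasons'', transports them to $R(G,c)^U$ through the homomorphism $\zeta$, and then handles all $k\ge 2$ at once \emph{without exhibiting any mutation sequence}, by uniqueness of solutions of the $T$-system given the $k=1$ values.

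Second, your graded identification fails for $k\ge 2$. Lemma~\ref{Lem.6.7} pins down $\theta^{(0)}_i$ because the weights $\varpi_j-c(\varpi_j)$ are linearly independent, so the relevant weight space is one-dimensional. But $\ldeg\bigl(\theta^{(s)}_{i,k}\bigr)=c^s(\varpi_i)-c^{s+k}(\varpi_i)=\sum_{t=s}^{s+k-1}\ldeg\bigl(\theta^{(t)}_{i,1}\bigr)$, so the monomial $\theta^{(s)}_{i,1}\theta^{(s+1)}_{i,1}\cdots\theta^{(s+k-1)}_{i,1}$ (and likewise $\theta^{(s)}_{i,2}\theta^{(s+2)}_{i,1}\cdots$, etc.) has exactly the same bidegree as $\theta^{(s)}_{i,k}$; the weight space is multi-dimensional, and no ``graded count'' can conclude that your candidate cluster variable is a scalar multiple of $\theta^{(s)}_{i,k}$. (Your closing remark also conflates the two structures: Proposition~\ref{Prop.8.1} identifies the $\theta^{(s)}_{i,k}$ inside $R(G,c)^G$, whereas the statement concerns the cluster structure of $R(G,c)^U$; membership in the former does not formally give membership in the latter --- that implication is the whole content of Proposition~\ref{Prop.8.2}.) Your scalar-normalization scheme is reasonable in outline --- the paper instead pins $\gamma_i=1$ by evaluating at two bands differing only in the coordinate $\Delta_{\varpi_i,\varpi_i}(g(0))$, and never needs a scalar recursion for $k\ge2$ --- but since it rests on the two unproven steps above, the proposal as written does not prove the proposition.
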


\begin{proof}
By Proposition~\ref{Prop-6.5}, the function $\theta^{(s)}_{i,1} = \theta^{(s)}_{i}$ is a scalar
multiple of a cluster variable of $R(G,c)^U$. We first show that this scalar multiple is equal to 1.

For this we note that $K_0(\CC_\Z)$ can be regarded as a subalgebra of 
$K_0(\CC^{\,\mathrm{shift}}_\Z) \equiv K_0(O^+_\Z)$, and that in this embedding, the classes $[L(Y_{i,q^{2s+1-\xi_i}})]\ (s\in \Z)$ become cluster variables
of $K_0(O^+_\Z)$. In fact, as shown in the proof of \cite[Proposition 6.1]{HL2}, one can use the algorithm of \cite{HL1} to calculate the cluster expansion
of $[L(Y_{i,q^{2s+1-\xi_i}})]$ with respect to the initial cluster consisting of positive prefundamental representations of $K_0(O^+_\Z)$. This algorithm provides an explicit sequence of mutations from this
initial cluster to the class of a given fundamental module. It turns out that this sequence of 
mutations is the same as the one used in the proof of Lemma~\ref{Lem.6.9}. This implies that the 
cluster variable $x_i$ of bi-degree $(\varpi_i - c\varpi_i, 0)$ obtained in Lemma~\ref{Lem.6.9} is nothing else but the image of $[L(Y_{i,q^{1-\xi_i}})]$ under the unique ring homomorphism 
$\zeta : K_0(O^+_\Z) \to R(G,c)^U$ extending the correspondence of Eq.~(\ref{Eq.36}).

It is well known that the $q$-character of a fundamental module $L(Y_{i,q^r})$ is of the form
\begin{equation}\label{Eq.38}
 \chi_q(L(Y_{i,q^r})) = Y_{i,q^r} + P_{i,r},
\end{equation}
where $P_{i,r}$ is a Laurent polynomial in the variables $Y_{j,q^s}$ with $j\in I$ and $s>r$.
It then follows from the generalized Baxter $TQ$-relations of Frenkel and Hernandez 
\cite[Theorem 4.8]{FH0} that
to obtain the cluster expansion of $[L(Y_{i,q^r})]$ with respect to the cluster of positive prefundamental representations, we only have to  
perform in Eq.~(\ref{Eq.38}) the formal substitution 
\[
Y_{j,q^s} \quad\longrightarrow\quad 
[\varpi_j]\frac{[L(\Psi_{j,\ q^{s-1}})]}{[L(\Psi_{j,\ q^{s+1}})]} 
=
\frac{[-(s-1)\varpi_j/2][L(\Psi_{j,\ q^{s-1}})]}{[-(s+1)\varpi_j/2][L(\Psi_{j,\ q^{s+1}})]}.
\]
It follows that the cluster expansion of $x_i$ is of the form
\[
x_i = \frac{\Delta^{(0)}_{\varpi_i,\,\varpi_i}}{\Delta^{(1)}_{\varpi_i,\,\varpi_i}} 
+ P_i,
\]
where $P_i$ is a Laurent polynomial in the variables 
\begin{equation}\label{eq-45}
\frac{\Delta^{(s)}_{\varpi_j,\,\varpi_j}}{\Delta^{(s+1)}_{\varpi_j,\,\varpi_j}},
\qquad (j\in I,\ s\ge 0). 
\end{equation}
Moreover, if in (\ref{eq-45}) we have $j = i$, then the stronger inequality $s > 0$ holds. 

By Lemma~\ref{Lem.6.9}, there exists a nonzero scalar $\gamma_i$ such that
$\theta^{(0)}_i = \gamma_i\, x_i$.
Let $b = ((g(t))_{t\in\Z})$ be a $(G,c)$-band. Evaluating functions at $b$ gives
\[
\Delta_{\varpi_i,\varpi_i}(g(0)g(1)^{-1}) =
\gamma_i\left(\frac{\Delta_{\varpi_i,\,\varpi_i}(g(0))}{\Delta_{\varpi_i,\,\varpi_i}(g(1))}
+ \widetilde{P}_i
\right)
\]
where $\widetilde{P}_i$ is a Laurent polynomial  in the 
$\Delta_{\varpi_j,\varpi_j}(g(s))\ (j\not = i,\ s\ge 0)$ and $\Delta_{\varpi_i,\varpi_i}(g(s))\ (s> 0)$.
If we choose, as we may, the $(G,c)$-band $b$ so that $g(1) = e$, the unit element of $G$, we get
\[
\Delta_{\varpi_i,\varpi_i}(g(0)) =
\gamma_i\left(\Delta_{\varpi_i,\,\varpi_i}(g(0))
+ \widetilde{P}_i
\right).
\]
Let $b'= ((g'(t))_{t\in\Z})$ be another band such that $g'(t) = g(t)$ for $t\ge 1$,
$\De_{\varpi_j,\varpi_j}(g'(0)) = \De_{\varpi_j,\varpi_j}(g(0))$ for $j\not = i$, but 
$\De_{\varpi_i,\varpi_i}(g'(0)) \not =  \De_{\varpi_i,\varpi_i}(g(0))$.
Such a band always exists because the functions $\De_{\varpi_k,\varpi_k}(g(0))=\theta^{(0)}_k(b)$ are the $|I|$ 
canonical coordinates on the affine space $U(c^{-1})\bar{c}$ to which $g(0) = g(0)g(1)^{-1}$ belongs.
Then comparing evaluations at $b$ and at $b'$ we get 
\[
\Delta_{\varpi_i,\varpi_i}(g(0)) - \Delta_{\varpi_i,\varpi_i}(g'(0))=
\gamma_i(\Delta_{\varpi_i,\,\varpi_i}(g(0))
- \Delta_{\varpi_i,\,\varpi_i}(g'(0))). 
\]
Since $\Delta_{\varpi_i,\varpi_i}(g(0)) \not = \Delta_{\varpi_i,\varpi_i}(g'(0))$, this forces $\gamma_i = 1$, as required.
Thus, we have proved the proposition for $k=1$ and $s=0$. 
By translation invariance, the statement  also holds for $k=1$ and every $s\in\Z$.

To prove it in general, we note that for the same reasons as above, 
the classes of all Kirillov-Reshetikhin modules $[W^{(i)}_{k,q^{2s+1-\xi_i}}]\ (s\in \Z)$ are cluster variables of $K_0(O^+_\Z)$.
Since these classes satisfy the same functional relations as the $\theta^{(s)}_{i,k}$, and 
since the solutions of this system of functional relations are completely determined by their
values for $k=1$, we get that $\zeta([W^{(i)}_{k,q^{2s+1-\xi_i}}]) = \theta^{(s)}_{i,k}$
for every $k\ge 1$ and $s \in \Z$.
Clearly, the above definition of $\zeta$ implies that it maps cluster variables
to cluster variables. This completes the proof.
\cqfd
\end{proof}

\begin{remark}
{\rm
The proof of Proposition~\ref{Prop.8.2} shows that the restriction of $\zeta : K\otimes K_0(O^+_\Z) \to R(G,c)^U$ to $K\otimes K_0(\CC_\Z)$
coincides with the isomorphism $\iota^{-1} : K\otimes K_0(\CC_\Z) \to R(G,c)^G$ of Proposition~\ref{Prop.8.1}.
}
\end{remark}

\begin{remark}\label{Rem.8.4}
{\rm
As explained in the proof of Proposition~\ref{Prop.8.2}, the Baxter $TQ$-relations of Frenkel and Hernandez translate as follows in terms of $(G,c)$-bands. Let $x\in R(G,c)^G$ be an element corresponding to the class of a finite-dimensional module $V$ of $\CC_\Z$, for example a cluster variable or a cluster monomial of $R(G,c)^G$. Then, since $R(G,c)^G\subset R(G,c)^U$, we can write $x$ as a Laurent polynomial in the cluster variables $\Delta^{(s)}_{\varpi_i,\varpi_i}$ of the initial cluster of $R(G,c)^U$. The Baxter $TQ$-relations amount to say that this cluster expansion of $x$ is in fact a Laurent polynomial in the variables  
\[
\frac{\Delta^{(s)}_{\varpi_j,\,\varpi_j}}{\Delta^{(s+1)}_{\varpi_j,\,\varpi_j}},
\qquad (j\in I,\ s \in \Z),
\]
and that replacing in this expansion each of the above variables by $Y_{j,\,q^{2s+1-\xi_j}}$, we get exactly the $q$-character of $V$. For example, calculating the cluster expansion of $\theta^{(0)}_i$ is equivalent to calculating the $q$-character of $L(Y_{i,q^{1-\xi}})$, a non trivial computation in general as we saw in Proposition~\ref{Prop-6.5}.  
}
\end{remark}

\subsection{A dictionary}
We summarize the main items of the correspondence between $(G,c)$-bands and 
representations of (shifted) quantum affine algebras in Table~\ref{table1}.
\begin{table}[t]%
\begin{center}
\begin{tabular}{c|c}
Bands & Quantum affine algebras \\[2mm]
\hline
\\
$R(G,c)$ & $K_0(\O^{\mathrm{shift}}_\Z)$  \\[2mm]
$\Delta^{(s)}_{c^k(\varpi_i),\ w(\varpi_i)}$ & $\underline{Q}_{w(\varpi_i),\ q^{2(s+k)-\xi_i}}$ \\[3 mm]
\hline
\\
$R(G,c)^U$ & $K_0(O^+_\Z) \equiv K_0(\CC^{\mathrm{shift}}_\Z)$ \\[2mm]
$\Delta^{(s)}_{\varpi_i,\ \varpi_i}$ & $[\frac{(\xi_i-2s)\varpi_i)}{2}]\left[L(\Psi_{i,\ q^{2s-\xi_i}})\right]$\\[3mm]
\hline
\\
$R(G,c)^G$ & $K_0(\CC_\Z)$ \\[2mm]
$\theta^{(s)}_{i,k}$ & $\left[W^{(i)}_{k,\,q^{2s+1-\xi_i}}\right]$
\end{tabular}
\end{center}
\bigskip
\caption{Dictionary}
\label{table1}
\end{table}

Note that the weight space decomposition of $R(G,c)$ under the action of the torus $T$
\[
 R(G,c) = \bigoplus_{\mu\in P} R(G,c)_\mu 
\]
matches the natural decomposition
\[
K_0(\O^{\mathrm{shift}}_\Z) = \bigoplus_{\mu\in P}  K_0(\O_{\mu,\Z})
\]
coming from the definition $\O^{\mathrm{shift}}_\Z = \bigoplus_{\mu\in P} \O_{\mu,\Z}$.
Similarly the weight space decomposition
\[
 R(G,c)^U = \bigoplus_{\lambda\in P^+} R(G,c)^U_\lambda
\]
reflects the decomposition
\[
K_0(\CC^{\mathrm{shift}}_\Z) = \bigoplus_{\lambda\in P^+}  K_0(\CC_{\lambda,\Z})
\]
(the category $\O_\mu$ contains finite-dimensional modules if and only if $\mu$ is a dominant weight \cite{H}).

\medskip
We hope that this dictionary will stimulate progress on both sides. 
Here are two possible future directions.

The Grothendieck ring $K_0(\O^{\mathrm{shift}}_\Z)$ has a canonical (topological) basis given by the classes of simple objects. It is conjectured in \cite{GHL} that this basis contains all cluster monomials. Transferring it on the side of $(G,c)$-bands, we get a 
canonical basis of $R(G,c)$. Recall that for every $s\in\Z$, we have a subring $R(G,c,s,s)=\pi_s^*(K[G]) \simeq K[G]$. We expect that this canonical basis contains the pullback of Kashiwara's upper global
basis of $K[G]$ under $\pi_s$. Comparing \cite{K} with \cite[\S9.4]{GHL}, it is easy to see that this is true for $G=SL(2)$.

In the other direction, we have seen that $R(G,c)$ has the natural structure of a $G$-module.
Recalling that $R(G,c)$ is isomorphic to the polynomial ring $\pi_0^*(K[G])[\theta^{(s)}_i \mid i \in I, \ s \in \Z]$, where the 
variables $\theta^{(s)}_i$ are $G$-invariant and where the action on $\pi_0^*(K[G]) \simeq K[G]$ comes from the action of $G$ on itself by right translations, we see that $R(G,c)$ decomposes as a sum of finite-dimensional $G$-modules. Transferring this action of $G$ on the side of quantum affine algebras, we get an action 
of $G$ on $K\otimes K_0(\O^{\mathrm{shift}}_\Z)$, and a similar decomposition into 
finite-dimensional $G$-modules. We believe that this could be a useful tool to understand 
which simple objects of $\O^{\mu}_\Z$ descend to 
a given \emph{truncated} shifted quantum affine algebra (see \cite[\S11]{H}). 
Truncated shifted quantum affine algebras
are a family of quotient algebras $U^Z_{q,\mu}(\hg)$ of $U_{q,\mu}(\hg)$ parametrized by tuples of truncation parameters~$Z$, introduced by Finkelberg and Tsimbaliuk \cite{FT} and conjectured to be isomorphic to certain $K$-theoretic Coulomb branches. Every set of parameters $Z$ determines a unique dominant weight~$\lambda$, and by analogy with the theory of (truncated) shifted Yangians
(see in particular \cite{KT1,KT2}), one expects
that the number of simple representations of $U^Z_{q,\mu}(\hg)$ is equal to the multiplicity of the weight $\mu$ in a certain $G$-module with highest weight $\lambda$.

We were recently informed by Frenkel and Hernandez that they have
constructed \cite{FH4} an action of the Lie algebra $\g$ of $G$ on the Grothendieck
ring of the category $O$ of $U_q(\widehat{\mathfrak{b}})$ by
means of derivations coming from the screening operators. They also
expect applications of this action to the problem of descent to
truncated algebras.
We are very grateful to them for sharing these ideas with us.

%%%%%%%%%%%%%%%%%%%%%%%%%%%%%%%%%%%%%%%%%%%%%%%%%%%%%%%%%

\bigskip
\small
\noindent
\begin{tabular}{ll}
Luca {\sc Francone} & Università degli studi di Roma Tor Vergata,\\
&Dipartimento di Matematica,\\
&Via della Ricerca Scientifica 1, 00133 Roma \\
& email : {\tt francone@mat.uniroma2.it}
\\[5mm]
Bernard {\sc Leclerc}  & Universit\'e de Caen Normandie,\\
&CNRS UMR 6139 LMNO,\\ &14032 Caen, France\\
&email : {\tt bernard.leclerc@unicaen.fr}
\end{tabular}

\end{document}